\documentclass[a4paper,10pt]{article}

\usepackage{amssymb, amsmath, mathtools, amsfonts, times, amsthm, amscd, setspace, leftidx, enumerate, xfrac}

\input xy 
\xyoption{all}

\usepackage[titletoc,title]{appendix}
\usepackage{graphicx}
\graphicspath{ {Graphics/} }

\theoremstyle{plain}

\newtheorem{proposition}{Proposition}[section]
\newtheorem{defi}[proposition]{Definition}
\newtheorem{lemma}[proposition]{Lemma}
\newtheorem{thm}[proposition]{Theorem}

\newtheorem{rmk}[proposition]{Remark}
\newtheorem{prop}[proposition]{Proposition}
\newtheorem{ex}[proposition]{Example}

\newcommand{\cnc}[0]{\nabla}
\newcommand{\ct}[1]{\mathcal{#1}}
\newcommand{\ov}[1]{\overline{#1}}
\newcommand{\ovl}[1]{\overleftarrow{#1}}
\newcommand{\ovr}[1]{\overrightarrow{#1}}
\newcommand{\pr}[1]{\prescript{\vee}{}{#1}}
\newcommand{\un}[0]{\mathtt{1}}
\newcommand{\field}[0]{\mathbb{K}}
\newcommand{\biml}[0]{\prescript{l}{A}{\mathcal{E}}_{A}}
\newcommand{\Ibim}[0]{\prescript{}{A}{\mathcal{IE}}_{A}}
\newcommand{\bim}[0]{\prescript{}{A}{\mathcal{M}}_{A}}
\newcommand{\XA}[0]{\mathfrak{X}} 
\newcommand{\omx}[3]{\prescript{}{#1}{#2}_{#3}}
\newcommand{\bm}[1]{\boldsymbol{#1}}
\newcommand{\fr}[1]{\mathfrak{#1}}
\newcommand{\sbt}[0]{{\scriptstyle\bullet}}
\newcommand{\und}[1]{\underline{#1}}
\pagestyle{headings}

\allowdisplaybreaks
\begin{document}

\title{Hopf Algebroids, Bimodule Connections and Noncommutative Geometry}
\author{Aryan Ghobadi \\ \small{Queen Mary University of London }\\\small{ School of Mathematics, Mile End Road}\\\small{ London E1 4NS, UK }\\ \small{Email: a.ghobadi@qmul.ac.uk}}
\date{}

\maketitle
\begin{abstract}
We construct new examples of left bialgebroids and Hopf algebroids, arising from noncommutative geometry. Given a first order differential calculus $\Omega^{1}$ on an algebra $A$, with the space of left vector fields $\XA^{1}$, we construct a left $A$-bialgeroid $B\XA^{1}$, whose category of left modules is isomorphic to the category of left bimodule connections over the calculus. When $\Omega^{1}$ is a pivotal bimodule, we construct a Hopf algebroid $H\XA^{1}$ over $A$, by restricting to a subcategory of bimodule connections which intertwine with both $\Omega^{1}$ and $\XA^{1}$ in a compatible manner. Assuming the space of 2-forms $\Omega^{2}$ is pivotal as well, we construct the corresponding Hopf algebroid, $\ct{D}\XA$, for flat bimodule connections, and recover Lie-Rinehart Hopf algebroids as a quotient of our construction in the commutative case. We use these constructions to provide explicit examples of Hopf algebroids over noncommutative bases.
\end{abstract}\begin{footnotesize}2010\textit{ Mathematics Subject Classification}: Primary 16T99, 58B32 18D10; Secondary 20L05, 16T05 ,22A22
\\\textit{Keywords}: Hopf algebroid, bialgebroid, noncommutative geometry, bimodule connection, quantum group, Lie algebroid, Lie-Rinehart algebra, quiver path algebra\end{footnotesize}
\section{Introduction}
The relationship between Hopf algebroids and Hopf algebras is analogous to that of groupoids and groups and since the discovery of significant Hopf algebras or \emph{quantum groups} in the 1980s, there were several attempts to define an analogous notion of Hopf algebroids or quantum groupoids \cite{lu1996hopf,xu1998quantum,xu2001quantum}. Today, the different formulations of these structures are well understood \cite{bohm2018hopf}, and there exists an extensive literature \cite{BOHM2009173,bohm2004hopf,hai2008tannaka,el2016finite,schauenburg2000algebras,szlachanyi2003monoidal},   generalising various properties of Hopf algebras to the setting of Hopf algebroids. Despite this, there continues to be a shortage of examples of Hopf algebroids with noncommutative base algebras. Since classically Lie algebroids and groupoids arise naturally in differential geometry \cite{mackenzie2005general}, we choose to tackle this problem in the setting of noncommutative differential geometry \cite{beggs2019quantum}. In the same spirit, Lie-Rinehart algebras \cite{rinehart1963differential} are regarded as algebraic generalisations of Lie algebroids and their universal enveloping algebras provide a family of example of Hopf algebroids \cite{kowalzig2009hopf,kowalzig2010duality,moerdijk2008enveloping}, with applications to differential geometry \cite{huebschmann1990poisson,huebschmann1998lie} and differential equations \cite{el2016finite}. However, the Lie-Rinehart construction is limited to the commutative setting, whereas we obtain a Hopf algebroid associated to any unital algebra $A$ equipped with a pivotal first order differential calculus of 1-forms. From an algebraic perspective, Hopf algebroids and bialgebroids \cite{schauenburg2000algebras,takeuchi1977groups} lift the closed monoidal structure of the category of $A$-bimodules, over a possibly noncommutative algebra $A$. Meanwhile, bimodule connections \cite{bresser1996non} were introduced to provide a subcategory of connections over a noncommutative space, with a monoidal structure, which lifts that of $A$-bimodules. We construct the left bialgebroid representing this category in Theorem \ref{TBXAbiml}. Assuming the space of 1-forms is a pivotal bimodule, as defined in Section \ref{SPiv}, we construct the Hopf algebroid representing a subcategory of bimodule connections which lifts the closed monoidal structure of the category of $A$-bimodules. When provided with a space of 2-forms which has a pivotal structure compatible with that of the space of 1-forms, we extend this construction to a Hopf algebroid representing flat bimodule connection. In the commutative setting, a Lie-Rinehart algebra with a finitely generated and projective space of vector fields contains all the aforementioned data and its corresponding Hopf algebroid can be recovered as a quotient of our construction in the flat case. In Sections \ref{SExHopf} and \ref{SExFlat}, we utilise our constructions to provide several explicit examples of Hopf algebroids over noncommutative algebras. 

While a single definition for bialgebroids has now been accepted, several definitions of Hopf algebroids have been explored. A bialgebra is called a Hopf algebra if it admits a linear endomorphism called the \emph{antipode}, which lifts the inner homs of $\ct{VEC}$ to its module category. The corresponding generalisation for Hopf algebroids is that of Schauenburg \cite{schauenburg2000algebras}. However, a Schauenburg Hopf algebroid does not need to admit an antipode and an example of such a Hopf algebroid was presented in \cite{krahmer2015lie}. The alternative versions, which involve an antipode are that of Lu \cite{lu1996hopf} and B{\"o}hm, and Szlach{\'a}nyi \cite{bohm2004hopf}, the latter of which are examples of Schauenburg Hopf algebroids. The Hopf algebroids constructed here satisfy Schauenburg's axioms, however in Theorems \ref{THXaAnti} and \ref{TDXAAnti}, we present a criterion for each of our examples to admit invertible antipodes, in the sense of B{\"o}hm, and Szlach{\'a}nyi. In Section \ref{SBial}, we review the relevant definitions of closed monoidal categories and the theory of Hopf algebroids. 

The flavour of noncommutative geometry we employ here is that of noncommutative Riemannian geometry, as presented in \cite{beggs2019quantum}, which is somewhat different from, but not incompatible with, Connes' more well known approach \cite{connes1994noncommutative} coming out of spectral triples and cyclic homology. The algebra of continuous functions on a manifold is replaced by an arbitrary algebra $A$ and the additional data of 1-forms on the manifold is replaced by an $A$-bimodule $\Omega^{1}$ and a linear map $d:A\rightarrow \Omega^{1}$ satisfying the Leibnitz rule, as in Definition \ref{DDifCal}. To capture a more complete picture of geometry, we would require the additional data of higher differential forms. However, our constructions up to Section \ref{SFlat}, only require a first order differential calculus. We review the relevant definitions and provide several examples of such structures in Section \ref{SNCG}. 
 
An important tool in geometry is to understand vector bundles over a manifold. The Serre-Swan theorem tells us that this is the same as looking at finitely generated projective modules over the algebra of smooth functions on the manifold. In differential geometry, one would like to understand differentiation on smooth bundles, which translates to viewing covariant derivatives on these modules. The algebra of smooth functions on a manifold is commutative, and any left module over this algebra can be viewed as a bimodule, with the same left action acting on the right. In particular, one can tensor connections over the algebra. Over a noncommutative algebra however, one must distinguish between left and right connections and there is no natural monoidal structure on either category. To overcome this issue, one must look at left (or right) bimodule connection which consist of a bimodule $M$, instead of a left module, with a left connection $\cnc: M\rightarrow \Omega^{1}\otimes M$ and a bimodule map $\sigma :M\otimes \Omega^{1} \rightarrow \Omega^{1}\otimes M$ called an $\Omega^{1}$-intertwining, satisfying compatibility conditions which are presented in Definition \ref{Dbimcnc}. As demonstrated in \cite{bresser1996non}, the category of left bimodule connections has a monoidal structure, with the tensor of two bimodules with such data having a natural left connection and a compatible $\Omega^{1}$-intertwining. Bimodule connections originally arose in \cite{dubois1996first,DUBOISVIOLETTE1996218} and have continued to be of interest in noncommutative geometry \cite{beggs2017spectral,beggs2014noncommutative,beggs2019quantum,fiore2000leibniz,majid2013noncommutative,majid2018generalised}.

Classically, vector fields over the manifold are dual to the space of 1-forms. However, in the noncommutative case, the bimodule $\Omega^{1}$ can have a left dual bimodule $\XA^{1}$ or a right dual bimodule $\mathfrak{Y}^{1} $. In \cite{beggs2014noncommutative}, given compatible bimodule connections on $\Omega^{1}$ and $\XA^{1}$, the algebra $T\XA^{1}_{\sbt}$ is defined by an associative product on $T_{A}\XA^{1}$, such that the action of elements in $\XA^{\otimes n}$, captures local geometry and the action of vector fields. In Proposition 6.15 of \cite{beggs2019quantum}, it is demonstrated that the category of left $T\XA^{1}_{\sbt}$-modules is isomorphic to the category of left connections over the calculus. Hence, as an algebra $T\XA^{1}_{\sbt}$ is independent of the choice of bimodule connection on $\Omega^{1}$, up to isomorphism. We review this construction and the relevant definitions in Section \ref{Scnc}.

In Section \ref{SBXA}, we construct a left $A$-bialgebroid $B\XA^{1}$ whose category of left modules is isomorphic to the category of left bimodule connections, $\biml$. We first construct a smaller bialgebroid in Section \ref{SInter}, whose category of modules is isomorphic to the category of $A$-bimodules with $\Omega^{1}$-intertwinings, $\bim^{\Omega^{1}}$. We denote this algebra by $B(\Omega^{1} )$ and construct $B\XA^{1}$ as a quotient of the free product of $B(\Omega^{1} )$ and $T\XA^{1}_{\sbt}$ by the relevant relations. In Section \ref{SExBia}, we describe $B\XA^{1}$ by generators and relations for several differential calculi. 

The authors of \cite{beggs2014noncommutative} conclude by stating that a bialgebroid or Hopf algebroid structure on $T\XA^{1}_{\sbt}$ would be desirable, while a coproduct does not seem to be available. It is well-known that a Hopf algebra $H$, comes equipped with the structure of a commutative algebra in the center of the category of left $H$-modules. A similar phenomenon was conjectured in \cite{beggs2014noncommutative}, since $T\XA^{1}_{\sbt}$ was found to have a commutative algebra structure in the center of the monoidal category $\biml$. While $T\XA^{1}_{\sbt}$ does not admit a bialgebroid structure, it is a subalgebra of the bialgebroid $B\XA^{1}$ whose representations form $\biml$. In Section \ref{SCCATX}, we recover the lax braiding making $T\XA^{1}_{\sbt}$ an object in the monoidal center in \cite{beggs2014noncommutative}, by restricting the coproduct of $B\XA^{1}$ to $T\XA^{1}_{\sbt}$. 

Although the category of left bimodule connections is monoidal, it does not lift the closed structure of $\bim$. In Section \ref{SInv}, we consider bimodule connections with invertible $\Omega^{1}$-intertwinings. In this case, left and right bimodule connections correspond (Remark \ref{RTXop}) and it is the first step towards obtaining a closed monoidal category of connections. Consequently, we construct the bialgebroids $IB(\Omega^{1})$ and $IB\XA^{1}$, which represent the category of bimodules with invertible $\Omega^{1}$-intertwinings and that of invertible bimodule connections, respectively. To obtain a closed monoidal category, we require $\Omega^{1}$ to be \emph{pivotal}. We say a bimodule is pivotal if its left and right dual bimodules are isomorphic. In other words, the space of left vectorfields, $\XA^{1}$, and that of right vectorfields, $\mathfrak{Y}^{1} $, are isomorphic. For a commutative algebra, any left module is a pivotal bimodule when considered as a bimodule. In Section \ref{SPiv}, we show that several examples of differential calculi which are of interest, such as quiver calculi, bicovariant calculi on Hopf algebras and calculi admitting a quantum Riemannian metric, all have a pivotal structure. 

In Section \ref{SPivHpf}, we construct a quotient of $IB(\Omega^{1})$, $H(\Omega^{1})$ so that it admits a bijective antipode. Any bimodule with an invertible $\Omega^{1}$-intertwining map has two induced intertwinings with bimodules $\XA^{1}$ and $\mathfrak{Y}^{1} $, (\ref{EqXsig}) and (\ref{EqsigX}). Since $\Omega^{1}$ is pivotal, the additional relations present in $H(\Omega^{1} )$ make the induced $\XA^{1}$-intertwinings on $H(\Omega^{1})$-modules, inverses. We construct $H\XA^{1}$ as the quotient of $IB\XA^{1}$ by the same relations and observe that it admits a Hopf algebroid structure, Theorem \ref{THXAHopf}. In Theorem \ref{THXaAnti}, we demonstrate that $H\XA^{1}$ admitting an antipode is equivalent to the existence of a suitable linear map $\Upsilon:\XA^{1}\rightarrow A$, which satisfies condition (\ref{EqSpade}).

When provided with the space of 2-forms, $\Omega^{2}$, one can define the notion of curvature for connections and what it means for a connection to have zero curvature or to be \emph{flat}. In Chapter 6 of \cite{beggs2019quantum}, a quotient of $T\XA^{1}_{\sbt}$ called $\ct{D}_{A}$ is constructed to represent the category of flat connections. However, to obtain a monoidal category of flat bimodule connections one needs to assume that the $\Omega^{1}$-intertwinings of the connections extend to $\Omega^{2}$-intertwinings. After briefly reviewing this theory in Section \ref{SFlat}, we construct the corresponding quotient of $H\XA^{1}$ for flat bimodule connections and denote it by $\ct{D}\XA$, Theorem \ref{TDXAHpf}. In Theorem \ref{TDXAAnti}, we provide a criterion for when $\ct{D}\XA$ admits an invertible antipode in the sense of B{\"o}hm, and Szlach{\'a}nyi.

In Section \ref{SCommLie}, we review our construction in the commutative setting. A Lie-Rinehart algebra consists of a commutative algebra $A$ and a Lie algebra $(\XA^{1} ,[,])$, such that $\XA^{1}$ is an $A$-module and $A$ is a $\XA^{1}$-module satisfying additional compatibility conditions. When $\XA^{1}$ is finitely generated and projective, with $\Omega^{1}$ as its dual module, the data of a Lie-Rinehart algebra translates exactly to $\Omega^{1}$ being a first order calculus over $A$ and the calculus extending to $\Omega^{2}=\bigwedge^{2}(\Omega^{1})$, where $\bigwedge^{2}(\Omega^{1})$ is the exterior power of $\Omega^{1}$ as an $A$-module. We review this correspondence and show that the universal enveloping algebra of $(A,\XA^{1})$ is isomorphic to $\ct{D}_{A}$. More generally, if $A$ is commutative and $\Omega^{1}$ is a symmetric bimodule, $T\XA^{1}_{\sbt}$ has a natural Hopf algebroid structure. We remark that both Hopf algebroid structures of $T\XA^{1}_{\sbt}$ and $\ct{D}_{A}$, can be recovered as quotients of $H\XA^{1}$ and $\ct{D}\XA$, in the commutative and Lie-Rinehart settings, respectively. 

In Sections \ref{SExBia} and \ref{SExHopf}, we provide several examples of left bialgebroids and Hopf algebroids, respectively, in terms of generators and relations. For any finite quiver $\Gamma =(V,E)$, we construct a Hopf algebroid over the algebra $\field (V)$, which contains the quiver path algebra as a subalgebra. We describe the structure of $H\XA^{1}$ over a base Hopf algebra, for an arbitrary bicovariant calculus and calculate an explicit example for the group algebra of the Dihedral group of order 6, $\mathbb{C}D_{6}$. Other examples include derivation calculi on any algebra and a specific inner calculus over the algebra of complex 2-by-2 matrices $M_{2}(\mathbb{C})$. In Section \ref{SExFlat}, we construct $\ct{D}\XA$ explicitly in the cases of finite quivers with no loops and $\mathbb{C}D_{6}$.

\section{Preliminaries}
\textbf{Notation.} Throughout this work, $\field$ will denote a field and $A$ an algebra over this field. When necessary we denote the multiplication of $A$ by $.:A\otimes_{\field} A\rightarrow A$ and otherwise we denote $a.b$ by $ab$ for brevity, where $a,b\in A$. We use the notation $[a,b]=ab-ba$ for the commutator of two elements $a,b$. For a vectorspace $V$, $TV$ will denote the free associative algebra $\field\oplus V\oplus V\otimes_{\field}V \oplus\dots $ over the vectorspace $V$. If $R$ and $S$ are two algebras, $R\star S$ will denote the free product of associative algebras $R$ and $S$. We will denote actions of an algebra $A$ on its (left) module $M$, by $am$, where $a\in A$ and $m\in M$, unless otherwise noted. We denote the category of $A$-bimodules by $\bim$ and the category of vectorspaces by $\ct{VEC}$. For any algebra $R$ and an $R$-bimodule $M$, $T_{R}M$ will denote the free monoid generated by $M$ in $\bim$, which is defined on the vectorspace 
$$T_{R}M= R\oplus M\oplus (M\otimes_{R} M)\oplus (M\otimes_{R} M \otimes_{R} M)\oplus\dots $$
For a natural number $n$, we denote $M\otimes_{R}M\otimes_{R}\cdots \otimes_{R} M$ for n copies of $M$, by $M^{\otimes_{R} n}$. Throughout this work $\otimes$ will denote the tensor product over the algebra $A$ and $\otimes_{\field}$ the tensor product over $\field$. We use Sweedler's notation for coproducts of coalgebras and $R|R$-corings $(C,\Delta ,\epsilon)$: for an element $c$, $\Delta (c)=c_{(1)}\otimes c_{(2)}$ where the right hand side is a sum of elements of the form $c_{(1)}\otimes c_{(2)}$ in $C\otimes C$. We denote the 2-by-2 matrix with $1$ in the $(i,j)$-th position and zeros elsewhere by $E_{ij}$. All sums $\sum_{i}$ will be taken over a free index $i$ with values in a finite set. We have omitted $\sum_{i}$ when the sum is taking place over dual bases arising from coevaluation maps and whenever such terms appear with free indeces, summation is implicit.  
\subsection{Bialgebroids and Hopf Algebroids}\label{SBial}
We briefly recall the theory of monoidal categories and refer the reader to \cite{mac2013categories} for additional details. We call $(\mathcal{C}, \otimes, 1_{\otimes},\alpha,l,r)$ a monoidal category where $\mathcal{C}$ is a category, $1_{\otimes}$ an object of $\mathcal{C}$, $\otimes:\mathcal{C}\times \mathcal{C}\rightarrow\mathcal{C}$ a bifunctor and $\alpha: (\mathrm{id}_{\mathcal{C}}\otimes  \mathrm{id}_{\mathcal{C}})\otimes \mathrm{id}_{\mathcal{C}}\rightarrow \mathrm{id}_{\mathcal{C}}\otimes(\mathrm{id}_{\mathcal{C}}\otimes \mathrm{id}_{\mathcal{C}}), l:1_{\otimes}\otimes \mathrm{id}_{\mathcal{C}}\rightarrow \mathrm{id}_{\mathcal{C}}$ and $r:\mathrm{id}_{\mathcal{C}}\otimes 1_{\otimes} \rightarrow \mathrm{id}_{\mathcal{C}}$ natural isomorphisms satisfying coherence axioms as presented in Chapter VII of \cite{mac2013categories}. In what follows $\alpha,l,r$ will all be trivial isomorphisms, hence we will avoid discussing them. The main examples of monoidal categories which we consider here, are the category of vectorspaces over a field and the category of bimodules over an algebra. 

A functor $F:\mathcal{C}\rightarrow\mathcal{D}$ between monoidal categories is said to be \emph{(strong) monoidal} if the exists a natural (isomorphism) transformation $F_{2}(-,-): F(-)\otimes_{\mathcal{D}}F(-)\rightarrow F(-\otimes_{\mathcal{C}}-)$ and a (isomorphism) morphism $F_{0} : 1_{\otimes}\rightarrow F(1_{\otimes})$ satisfying 
\begin{align*}
F_{2}(X\otimes Y,Z)( F_{2}(X,Y)\otimes \mathrm{id}_{F(Z)})=&
\\F_{2}(X, Y\otimes Z)&(\mathrm{id}_{F(X)}\otimes  F_{2}(Y,Z))\alpha_{F(X),F(Y),F(Z)}
\\F(r)F_{2}(X,1_{\otimes})(\mathrm{id}_{F(X)}\otimes F_{0})=&\mathrm{id}_{F(X)}=F(l)F_{2}(1_{\otimes},X)(F_{0}\otimes \mathrm{id}_{F(X)})
\end{align*}
where we have omitted the subscripts denoting the ambient categories, since they are clear from context. If $F$ has a left adjoint, it is said to be part of a \emph{comonoidal adjunction}, and the resulting monad on $\ct{D}$ is called a \emph{bimonad}. Although, we do not use bimonads directly, we are viewing bialgebroids as an example of bimonads and refer the reader to \cite{bohm2018hopf,bruguieres2011hopf}. 

An \emph{algebra} or \emph{monoid} in a monoidal category $\ct{C}$ consists of a triple $(M,\mu ,\eta)$, where $M$ is an object of $\ct{C}$ and $\mu : M\otimes M\rightarrow M$ and $\eta :1_{\otimes} \rightarrow M$ are morphisms in $\ct{C}$ satisfying $\mu (\mathrm{id}_{M}\otimes \eta ) =\mathrm{id}_{M}= \mu (\eta\otimes \mathrm{id}_{M} )$ and $\mu (\mathrm{id}_{M}\otimes\mu )= \mu ( \mu\otimes \mathrm{id}_{M}) \alpha_{M,M,M} $. A \emph{coalgebra} or \emph{comonoid} in $\ct{C}$ can be defined by simply reversing the arrows in the definition of a monoid. 

For an object $X$ in a monoidal category $\mathcal{C}$, we say an object $\pr{X}$ is a \emph{left dual} of $X$, if there exist morphisms $\mathrm{ev}_{X}:\pr{X}\otimes X\rightarrow \un_{\otimes}$ and $\mathrm{coev}_{X}:\un_{\otimes}\rightarrow X\otimes \pr{X}$ such that 
$$(\mathrm{ev}_{X}\otimes \mathrm{id}_{\pr{X}})(\mathrm{id}_{\pr{X}}\otimes \mathrm{coev}_{X})=\mathrm{id}_{\pr{X}} , \quad (\mathrm{id}_{X}\otimes \mathrm{ev}_{X} )(\mathrm{coev}_{X}\otimes \mathrm{id}_{X})=\mathrm{id}_X$$ 
In such a case, we call $X$ a \emph{right dual} for $\pr{X}$. Furthermore, a right dual of an object $X$ is denoted by $X^{\vee}$, with evalutation and coevaluation maps denoted by $\underline{\mathrm{ev}}_{X}:X\otimes X^{\vee}\rightarrow \un_{\otimes}$ and $\underline{\mathrm{coev}}_{X}:\un_{\otimes}\rightarrow X^{\vee}\otimes X$, respectively. The category $\mathcal{C}$ is said to be left (right) \emph{rigid} or \emph{autonomous} if all objects have left (right) duals. If a category is both left and right rigid, we simply call it \emph{rigid}. We call a category $\mathcal{C}$ \emph{left (right) closed} if for any object $X$ there exists an endofunctor $[X,-]^{l}$ (resp. $[X,-]^{r}$) on $\mathcal{C}$ which is right adjoint to $-\otimes X$ (resp. $X\otimes -$). By definition $[-,-]^{l},[-,-]^{r}:\mathcal{C}^{op}\times\mathcal{C}\rightarrow \mathcal{C}$ are bifunctors. If a category is left and right closed, we call it \emph{closed}. Observe that if $X$ has a left (right) dual $\pr{X}$ (resp. $X^{\vee}$), the functor $-\otimes \pr{X}$ (resp. $X^{\vee}\otimes -$) is left adjoint to $- \otimes X$ (resp. $X\otimes -$) and $\pr{X}$ (resp. $X^{\vee}$) is unique up to isomorphism. Furthermore,  if $X$ has a left (right) dual, $\pr{X}\cong [X,1_{\otimes}]^{l}$ (resp. $X^{\vee}\cong [X,1_{\otimes}]^{r}$). We have adopted the notation of \cite{bruguieres2011hopf} here, and what we refer to as a left closed structure is referred to as a right closed structure in various other sources \cite{el2016finite,schauenburg2000algebras}.

It is well known that strong monoidal functors preserve dual objects i.e. $F(\pr{X})\cong \pr{F(X)}$ with $F_{0}F(\mathrm{ev})F_{2}(\pr{X},X)$ and $ F_{2}^{-1}(X,\pr{X}) F(\mathrm{coev})F_{0}^{-1}$ acting as the evaluation and coevaluation morphisms for $F(\pr{X})$. For left (right) closed monoidal categories $\ct{C}$ and $\ct{D}$, we say a monoidal functor $F:\ct{C}\rightarrow \ct{D}$ is left (right) closed if the canonical morphism $F[X,Y]^{l (r)}_{\ct{C}}\rightarrow [F(X),F(Y)]^{l (r)}_{\ct{D}}$ is an isomorphism for any pair of objects $X,Y$ in $\ct{C}$. 

Before introducing bialgebroids, we briefly recall the theory of Hopf algebras. An algebra $A$ is said to have a bialgebra structure if $(A,\delta , \nu)$ is a coalgebra in the category of vectorspaces satisfying 
$a_{(1)}a'_{(1)}\otimes_{\field} a_{(2)}a'_{(2)}= (aa')_{(1)}\otimes_{\field}  (aa')_{(2)}$ for any $a,a'\in A$, where $\delta (a)=a_{(1)}\otimes_{\field} a_{(2)}$ by Sweedler's notation. There are three additional axioms involving $1$ and $\nu$, which can be found in Chapter 4 \cite{bohm2018hopf}. The coproduct $\delta$ of a Hopf algebra is usually denoted by $\Delta$, but we choose to reserve $\Delta$ for the coproduct of bialgebroids. The category of left $A$-modules for a bialgebra $A$ has a natural monoidal structure which makes the forgetful functor $\prescript{}{A}{\ct{M}}\rightarrow \ct{VEC}$ a strong monoidal functor. A bialgebra is called a \emph{Hopf algebra}, if there exists an anti-multiplicative linear map $S:A\rightarrow A$ satisfying $S(a_{(1)})a_{(2)}= a_{(1)}S(a_{(2)})=\nu (a)1_{A}$ for any $a\in A$. The map $S$ is called the \emph{antipode} and exists if and only if the forgetful functor $\prescript{}{A}{\ct{M}}\rightarrow \ct{VEC}$ is left closed. Moreover, $S$ is bijective if and only if the forgetful functor is closed. 

For an algebra $A$, the \emph{opposite algebra} $A^{op}$ is the algebra structure defined on $A$ by $(\ov{a})(\ov{b})=\ov{ba}$, where we denote elements of the opposite ring with a line above i.e $a,b\in A$ and $\ov{a},\ov{b}\in A^{op}$. It is a well-known fact that $A$-bimodules correspond to left $A\otimes_{\field} A^{op}$-modules, where $A^{e}=A\otimes_{\field} A^{op}$ is called the \emph{enveloping algebra} of $A$. More concretely, there exists an equivalence of categories, between the category of $A$-bimodules $\prescript{}{A}{\ct{M}}_{A} $ and that of left $A^{e}$-modules $\prescript{}{A^{e}}{\ct{M}} $. Hence, we use $\prescript{}{A^{e}}{\ct{M}} $ and $\prescript{}{A}{\ct{M}}_{A} $ interchangeably. We will denote elements of $A^{e}=A\otimes_{\field}A^{op}$ by $a\ov{b}$ where $a\in A$ and $\ov{b}\in A^{op}$.

The category of $A$-bimodules has a natural monoidal structure by tensoring bimodules over the algebra $A$, denoted by $\otimes$, and the algebra $A$ regarded as an $A$-bimodule acting as the unit object. It is well known that a bimodule has a left (right) dual in the monoidal category $\bim$ if and only if it is finitely generated and projective, \emph{fgp} for short, as a right (left) $A$-module. A straight forward proof is presented in Proposition 3.8 of \cite{beggs2019quantum}. In particular, $\bim$ is closed with 
\begin{align*}
[M,N]^{l}:=\mathrm{Hom}_{A}(M,N), \hspace{1cm} &[a\ov{b}f](m)= af(bm),\quad f\in \mathrm{Hom}_{A}(M,N)
\\ [M,N]^{r}:=\prescript{}{A}{\mathrm{Hom}}(M,N),  \hspace{1cm} & [a\ov{b}g](m)= g(ma)b,\quad g\in \prescript{}{A}{\mathrm{Hom}}(M,N)
\end{align*}
where $a\ov{b}\in A^{e}$ and $\mathrm{Hom}_{A}(M,N)$ and $\prescript{}{A}{\mathrm{Hom}(M,N)}$ denote the vectorspaces of right and left $A$-module morphisms from $M$ to $N$, respectively. Explicitly, the units and counits of the adjunctions for the left and right closed structures, are given by 
\begin{align}\label{EqAdj}
&\xymatrix@C+3pt@R-26pt{\varrho^{M}_{N}: N\longrightarrow \mathrm{Hom}_{A}(M,N\otimes M),& \varepsilon^{M}_{N}: \mathrm{Hom}_{A}(M,N)\otimes M\longrightarrow N
\\ \hspace{0.5cm}n\longmapsto f_{n}:(m\mapsto n\otimes m) & \hspace{1.3cm} f \otimes m\longmapsto f(m)}
\\&\xymatrix@C+8pt@R-26pt{\Theta^{M}_{N}: N\longrightarrow \prescript{}{A}{\mathrm{Hom}}(M,M\otimes N),&\hspace{-0.25cm} \Pi^{M}_{N}: M\otimes \prescript{}{A}{\mathrm{Hom}}(M,N)\longrightarrow N
\\ \hspace{0.5cm}n\longmapsto g_{n}:(m\mapsto m\otimes n) & \hspace{1.1cm}m \otimes g\longmapsto g(m)}\nonumber
\end{align}
for any pair of $A$-bimodules $M$ and $N$. Consequently, for a right or left fgp bimodule $M$, we identify $\pr{M}$ by $\mathrm{Hom}_{A}(M,A)$ and $M^{\vee}$ by $\prescript{}{A}{\mathrm{Hom}(M,A)}$. 

The notation for Hopf algebroids and bialgebroids varies quite a bit depending on the reference, but here we refer to \cite{bohm2018hopf}. The Eilenberg-Watts theorem \cite{watts1960intrinsic} tells us that any additive left adjoint functor $F: \prescript{}{A^{e}}{\ct{M}}\rightarrow \prescript{}{A^{e}}{\ct{M}} $ is isomorphic to a functor $\prescript{}{A^{e}}{B}\otimes_{A^{e}} -  $, where $B$ is an $A^{e}$-bimodule.  For an $A^{e}$-bimodule $B$ we denote the functor $\prescript{}{A^{e}}{B}\otimes_{A^{e}} -  $ by $B\boxtimes -:\bim \rightarrow \bim $. This functor absorbs the bimodule structure via its right  $ A^{e} $-action and produces new bimodule actions via its left $ A^{e} $-action. Explicitly, for an $A$-bimodule $M$
\begin{align*}
B \boxtimes M = B\otimes_{\field} M/ \lbrace (br\ov{s})\otimes_{\field} m &- b\otimes_{\field} (rms)\mid m\in M,\  r,s\in A,\   b\in B \rbrace
\\ r (b\boxtimes m)s= ( r\ov{s}b ) \boxtimes m &\quad  \forall m\in M,\  \forall r,s\in A,\  \forall b\in B
\end{align*}
An $A^{e}$-bimodule $B$, can be considered as an $A$-bimodule either by its right or left $A^{e}$-action, and we denote the latter $A$-bimodule by $|B$. We continue to adapt the notation of \cite{bohm2018hopf} and recall the following definitions from Chapter 5. 
\begin{defi}\label{Dalgebroid} Let $A$ be an algebra and $B$ an $A^{e}$-bimodule. 
\begin{enumerate}[(I)]
\item An $A^{e}$-\emph{ring} structure on $B$ consists of a $\field$-algebra structure $(\mu ,1_{B})$ on $B$ with an algebra homomorphism $\eta : A^{e}\rightarrow B$, such that the $A^{e}$-bimodule structure on $B$ is induced by the algebra homomorphism  i.e. $ \mu(\eta\otimes_{\field} \mathrm{id}_{B})$ coincides with the left action of $A^{e}$ and $ \mu(\mathrm{id}_{B}\otimes_{\field}\eta )$ with the right action of $A^{e}$. Equivalently, an $A^{e}$-\emph{ring} structure on $B$ consists of $A^{e}$-bimodule maps $\mu_{A^{e}}: B\otimes_{A^{e}} B\rightarrow B$ and $\eta_{A^{e}}: A^{e} \rightarrow B$, which provide $B$ with the structure of a monoid in the category of $A^{e}$-bimodules. 
\item An $A|A$-\emph{coring} structure on $B$ consists of bimodule maps $\Delta :|B\rightarrow |B\otimes |B$ and $\epsilon :|B\rightarrow A$ satisfying
\begin{align} b_{(1)}\otimes(b_{(2)})_{(1)}\otimes(b_{(2)})_{(2)}=&(b_{(1)})_{(1)}\otimes(b_{(1)})_{(2)}\otimes b_{(2)} \label{EqDelAss} \\ 
\epsilon (b_{(1)})b_{(2)} =b&= \ov{\epsilon (b_{(2)})}b_{(1)} \label{EqCoun}\\ 
\Delta (br\ov{s})=& b_{(1)} r\otimes b_{(2)} \ov{s} \label{EqDelrs}  \\ 
\epsilon (br) =& \epsilon (b\ov{r})\label{EqCounrs}
\end{align} 
for any $b\in B$ and $r,s\in A$, where $\Delta (b)= b_{(1)}\otimes b_{(2)}$ is denoted by Sweedler's notation. Conditions (\ref{EqDelAss}), (\ref{EqCoun}) are equivalent to $(|B,\Delta, \epsilon )$ being a comonoid in the category of $A$-bimodules.
\item A \emph{left} $A$-\emph{bialgebroid} structure on $B$ consists of an $A^{e}$-ring structure $(\mu ,\eta)$ and an $A|A$-coring structure $(\Delta, \epsilon )$ on $B$ satisfying  
\begin{align}
(bb')_{(1)}\otimes (bb')_{(2)} &=b_{(1)}b_{(1)}' \otimes b_{(2)}b_{(2)}' , \label{EqDelMul}
\\ \Delta (1_{B})&=1_{B}\otimes 1_{B}
\\\epsilon (1_{B} )&=1_{A}
\\ \epsilon (b b')= \epsilon &( b\epsilon (b') )=\epsilon \left( b\ov{\epsilon (b')} \right)\label{EqCounMul}
\end{align}
\end{enumerate}
for any $b,b'\in B$, where $1_{B}=\eta (1_{A^{e}})$.
\end{defi}
From the above axioms for an $A|A$-coring $B$, one can deduce that the image of $\Delta$ lands in 
\begin{equation*}\label{Takeuchi}
B\times_{A} B:=\left\lbrace \sum_{i} b_{i}\otimes b'_{i} \in |B\otimes |B \middle\vert \sum_{i} b_{i}\ov{a}\otimes b'_{i} =\sum_{i} b_{i}\otimes b'_{i}a,\  a\in A \right\rbrace
\end{equation*}
Bialgebroids are often defined with reference to $B\times_{A} B$, the \emph{Takeuchi $\times$-product} \cite{takeuchi1977groups}, and often called $\times$-bialgebras. The equivalence of the above definition and the more popular variation is present in both \cite{bohm2018hopf,bohm2004hopf}.

Any $A^{e}$-ring $B$ comes equipped with an algebra map $\eta :A^{e}\rightarrow B$, therefore by restriction of scalars, any $B$-module is equipped with an $A$-bimodule structure and there exists a forgetful functor $U: \prescript {}{B}{\ct{M}} \rightarrow \bim$. In fact, $B\boxtimes -\dashv U:\prescript {}{B}{\ct{M}} \leftrightarrows \bim$ form a free/forgetful adjunction and a left action of $B$ on a bimodule $M$, $B\otimes_{\field} M\rightarrow M$ factors through an $A$-bimodule map $B\boxtimes M\rightarrow M$. In this setting, $B$ has the additional structure of an $A$-bialgebroid, if and only if $U$ is strong monoidal. In particular, the map  
\begin{equation}\label{EqDelMN}
\xymatrix@R-27pt{\Delta_{M,N}:B\boxtimes (M\otimes N)\longrightarrow (B\boxtimes M)\otimes (B\boxtimes N)
\\ \hspace{1.85cm}b\boxtimes (m\otimes n) \longmapsto (b_{(1)}\boxtimes m)\otimes (b_{(2)}\boxtimes n )}
\end{equation}
is well-defined and a bimodule map, for any pair of bimodules $M,N$. Hence, if $(M,\triangleright_{M})$ and $(N,\triangleright_{N})$ are $B$-modules, the $B$-action on $M\otimes N$ is defined by the composition $(\triangleright_{M}\otimes\triangleright_{N})\Delta_{M,N}$. Moreover, the counit $\epsilon$ provides the monoidal unit $A$, with a $B$-action $\epsilon_{0}:B\boxtimes A\rightarrow A$ defined by $b\boxtimes a \mapsto \epsilon (ba)$. 

We must point out that the theory described above is not symmetric. A \emph{right} $A$-bialgebroid structure on $B$ arises when we ask the category of right $B$-modules to be monoidal so that the forgetful functor $\ct{M}_{B}\rightarrow \bim $ is strong monoidal. 

There have been several variations of the Hopf condition for bialgebroids to mimic the Hopf condition for bialgebras. The choice which interests us, is to say a bialgebroid $B$ is Hopf when the forgetful functor $\prescript {}{B}{\ct{M}} \rightarrow \bim $ is closed. This would be the case for Schauenburg Hopf algebroids as introduced in \cite{schauenburg2000algebras}. A class of such Hopf algebroids are those introduced by B{\"o}hm-Szlach{\'a}nyi \cite{bohm2004hopf}, which admit an antipode-like map. 
\begin{defi}\label{DHgebroid} \begin{enumerate}[(I)]
\item A \emph{Schauenburg Hopf algebroid} or $\times$-\emph{Hopf algebra} structure on $B$ consists of an $A$-bialgebroid structure as above, such that the maps 
\begin{align}
\xymatrix@R-26pt{\beta:B\otimes_{A^{op}} B\longrightarrow  B\diamond B & \vartheta:B\odot B\longrightarrow  B\diamond B
\\ \hspace{1.2cm}b\otimes_{A^{op}}b' \mapsto b_{(1)}\diamond b_{(2)}b' &\hspace{1.2cm}b\odot b' \mapsto b_{(1)}b'\diamond b_{(2)} }
\end{align}
where we define
\begin{align*}B\otimes_{A^{op}} B&= B\otimes_{\field} B/ \lbrace b\ov{s}\otimes_{\field}b'- b\otimes_{\field}\ov{s} b' \mid b, b'\in B,\ov{s}\in A^{op} \rbrace  
\\B\odot B&= B\otimes_{\field} B/ \lbrace br\otimes_{\field}b'- b\otimes_{\field}r b' \mid b, b'\in B, r\in A \rbrace 
\\ B\diamond B&= B\otimes_{\field} B/ \lbrace \ov{s}b \otimes_{\field}b'- b\otimes_{\field}s b' \mid b, b'\in B, s\in A \rbrace  \end{align*} 
are invertible.
\item A \emph{B{\"o}hm-Szlach{\'a}nyi Hopf algebroid} structure on $B$ consists of an $A$-bialgebroid structure as above and an anti-algebra automorphism $S:B\rightarrow B$ satisfying
\begin{align}
S(\eta(\ov{a}))=\eta(a)\quad &
\\S(b_{(1)})_{(1)}b_{(2)}\diamond S(b_{(1)})_{(2)}&= 1\diamond S(b) \label{EqSCond}
\\ S^{-1}(b_{(2)})_{(1)}\diamond S^{-1}(b_{(2)})_{(2)}b_{(1)}&=S^{-1}(b)\diamond 1 \label{EqSinvCond}
\end{align}
for all $b\in B$ and $\ov{a}\in A^{op}$.
\end{enumerate}
\end{defi}
\textbf{Notation.} In what follows, we will simply write $a$ and $\ov{a}$ to refer to $\eta (a)$ or $\eta (\ov{a})$ as the images of elements $a\in A$ and $\ov{a}\in A^{op}$ in an $A^{e}$-ring $B$. This is not an abuse of notation, since the multiplication and the module structure of an $A^{e}$-ring $B$ coincide.

If $B$ is a Schauenburg Hopf algebroid and $\beta, \vartheta$ are invertible, we denote $\beta^{-1} (b \diamond 1) =b_{(+)}\otimes_{A^{op}} b_{(-)}$ and $\vartheta^{-1} (1\diamond b) =b_{[+]}\odot b_{[-]} $. In this case, the closed structure of $\bim$ is lifted to $\prescript{}{B}{\ct{M}}$ via the following $B$-actions:
\begin{equation}\label{EqClosHpf}
\xymatrix@R-26pt@C-10pt{B\boxtimes \mathrm{Hom}_{A}(M,N)\rightarrow \mathrm{Hom}_{A}(M,N) & B\boxtimes \prescript{}{A}{\mathrm{Hom}}(M,N)\rightarrow \prescript{}{A}{\mathrm{Hom}}(M,N)
\\ b\boxtimes f\mapsto (m\mapsto b_{(+)}f( b_{(-)}m)) & b\boxtimes g\mapsto (m\mapsto b_{[+]}g( b_{[-]}m))}
\end{equation}
for any pair of $A$-bimodules $M,N$. Equivalently, $\vartheta^{-1}$ and $\beta^{-1}$ can be recovered, if one has a well-defined actions of $B$ on the inner homs, such that the units and counits presented in (\ref{EqAdj}) are $B$-module morphisms. For a left bialgebroid $B$, this is precisely what it means for the forgetful functor $\prescript{}{B}{\ct{M}}\rightarrow \bim$ to be closed.

If $B$ is a B{\"o}hm-Szlach{\'a}nyi Hopf algebroid with an invertible antipode $S:B\rightarrow B$ then the inverses of $\beta,\vartheta$ are given by 
\begin{align}\label{EqClosAntip}
\beta^{-1} (b \diamond b') &=S^{-1}(S(b)_{(2)})\otimes_{A^{op}} S(b)_{(1)}b'
\\ \vartheta^{-1} (b\diamond b') &=S\left( S^{-1}(b')_{(2)}\right)\odot S^{-1}(b')_{(1)}b 
\end{align}

Finally, we refer the reader to Chapter 5 of \cite{bohm2018hopf} and \cite{bohm2004hopf} for further details on these elementary facts. We conclude by presenting the following Theorem which motivates our work when looking at the category of bimodule connections: 
\begin{thm}\label{TSzl}\cite{szlachanyi2003monoidal} For an algebra $A$ and an abelian monoidal category $\ct{C}$, if $F:\ct{C}\rightarrow \bim$ is an additive functor with a left adjoint $G$, such that $FG:\bim\rightarrow \bim$ has a right adjoint , then $F$ is (closed) strong monoidal if and only if $\ct{C}$ is equivalent to $\prescript {}{B}{\ct{M}}$ for a left (Hopf) bialgebroid $B$. 
\end{thm}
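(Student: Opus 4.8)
This is a Tannaka--Krein--style reconstruction theorem, and the plan is to realise $\ct{C}$ as the Eilenberg--Moore category of the monad $FG$ on $\bim$ and then to match the monoidal data of $F$ with a (Hopf) bialgebroid structure on the representing object. The reverse implication is essentially contained in the discussion preceding the theorem: if $\ct{C}\simeq\prescript{}{B}{\ct{M}}$ compatibly over $\bim$ for a left bialgebroid $B$, then $F$ is forced to agree, up to the equivalence, with the forgetful functor $U$ (both carrying $B\boxtimes-$ as their associated monad), and the strong monoidal structure of $U$ is exactly $(\Delta_{M,N},\epsilon_{0})$ from (\ref{EqDelMN}); when $B$ is a Schauenburg Hopf algebroid, closedness of $U$ is witnessed by the $B$-actions on inner homs in (\ref{EqClosHpf}). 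I would therefore dispatch ``$\Leftarrow$'' by invoking these recalled facts and concentrate on the forward direction.

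First I would represent the monad. The adjunction $G\dashv F$ equips $T:=FG:\bim\to\bim$ with a monad structure, with unit the unit $\iota:\mathrm{id}\to FG$ of the adjunction and multiplication $F\,e\,G$ induced by the counit $e:GF\to\mathrm{id}_{\ct{C}}$. By hypothesis $T$ has a right adjoint, so $T$ is cocontinuous and the Eilenberg--Watts theorem applies, producing an $A^{e}$-bimodule $B$ and a natural isomorphism $T\cong B\boxtimes-$; concretely $B=FG(A^{e})$ with its two commuting $A^{e}$-actions. Transporting the monad structure of $T$ across this isomorphism endows $B$ with an $A^{e}$-ring structure $(\mu,\eta)$ as in Definition \ref{Dalgebroid}(I), identifying the Eilenberg--Moore category $\bim^{T}$ with $\prescript{}{B}{\ct{M}}$.

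Next I would show the comparison functor $K:\ct{C}\to\bim^{T}=\prescript{}{B}{\ct{M}}$ is an equivalence, i.e. that $F$ is monadic, by verifying Beck's conditions: that $F$ reflects isomorphisms and preserves coequalizers of $F$-split pairs. The abelian structure of $\ct{C}$ supplies the relevant coequalizers, while additivity of $F$ together with the two adjunctions is used to check reflection of isomorphisms and preservation. Granting monadicity, it remains to upgrade the ring $B$ to a bialgebroid. By doctrinal adjunction the strong monoidal structure of $F$ corresponds to an opmonoidal (comonoidal) structure on $G$, making $T$ a bimonad; under $T\cong B\boxtimes-$ the comultiplication and counit of this bimonad are represented by maps $\Delta:\,|B\to\,|B\otimes|B$ and $\epsilon:\,|B\to A$, with $\Delta$ read off from $F_{2}$ on free objects via (\ref{EqDelMN}) and $\epsilon$ from $F_{0}$ via $\epsilon_{0}$. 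The coherence of $F_{2},F_{0}$ then translates into the coring and bialgebroid axioms (\ref{EqDelAss})--(\ref{EqCounMul}), the Takeuchi condition emerging from naturality. For the parenthetical refinement, closedness of $F$ means the units and counits (\ref{EqAdj}) lift to $B$-module maps, which by the discussion around (\ref{EqClosHpf}) is precisely the invertibility of the maps $\beta,\vartheta$ of Definition \ref{DHgebroid}, so $B$ is a Schauenburg Hopf algebroid.

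The hard part will be the monadicity step. Under the stated hypotheses it is not automatic that $F$ reflects isomorphisms or preserves the needed coequalizers, since $F$, being a right adjoint, is a priori only left exact; exhibiting Beck's conditions is where the genuine work lies and where the interplay of the abelian structure, additivity, and the two adjunctions must be exploited. A secondary, more bookkeeping-heavy task is verifying that the bimonad structure transported to $B$ lands in the Takeuchi product $B\times_{A}B$ and satisfies (\ref{EqDelrs})--(\ref{EqCounMul}) exactly, compatibly with the Eilenberg--Watts identification.
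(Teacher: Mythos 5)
There is no in-paper proof to compare you against: Theorem \ref{TSzl} is quoted from \cite{szlachanyi2003monoidal} purely as motivation, so the fair benchmark is the cited source together with the surrounding material of Section \ref{SBial}. Measured against that, your architecture is the standard (and essentially Szlach\'anyi's) one: the adjunction $G\dashv F$ makes $T=FG$ a monad with the unit and multiplication you describe; the right adjoint hypothesis plus Eilenberg--Watts gives $T\cong B\boxtimes -$ for an $A^{e}$-ring $B$, identifying the Eilenberg--Moore category with $\prescript{}{B}{\ct{M}}$; doctrinal adjunction converts the strong monoidal structure on the right adjoint $F$ into an opmonoidal structure on $G$, hence a bimonad structure on $T$, from which $\Delta$ and $\epsilon$ are read off consistently with (\ref{EqDelMN}) and $\epsilon_{0}$; and for the parenthetical refinement, your identification of closedness of $F$ with $B$-module structures on inner homs making the maps in (\ref{EqAdj}) equivariant, equivalently invertibility of $\beta,\vartheta$, is exactly the paper's discussion around (\ref{EqClosHpf}). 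Your reading of the equivalence as one over $\bim$, under which $F$ corresponds to the forgetful functor, is also the only sensible reading; the backward direction is then immediate, as you say. The bookkeeping you call secondary (landing of $\Delta$ in the Takeuchi product and verifying (\ref{EqDelrs})--(\ref{EqCounMul})) is indeed routine given naturality.

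The genuine gap is the one you flag yourself, and it is worse than ``hard'': under the hypotheses as literally stated, Beck's conditions can \emph{fail}, so no unwinding of abelianness and additivity will verify them. Concretely, take any left bialgebroid $B$, any nontrivial abelian monoidal category $\ct{D}$, set $\ct{C}=\prescript{}{B}{\ct{M}}\times \ct{D}$ with the componentwise monoidal structure, and let $F(M,X)=M$ regarded in $\bim$. Then $F$ is additive and strict monoidal, $G(N)=(B\boxtimes N,0)$ is a left adjoint, and $FG\cong B\boxtimes -$ has a right adjoint; yet $F$ reflects neither isomorphisms nor zero objects, and the comparison functor to the Eilenberg--Moore category of $FG$ is the projection, not an equivalence. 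Hence your plan of realising $\ct{C}$ as $\bim^{T}$ cannot be executed from the stated hypotheses alone: the transcription of the theorem in this paper suppresses whatever makes the comparison an equivalence in \cite{szlachanyi2003monoidal}, and to complete the forward direction you must either import such a hypothesis explicitly (for instance, $F$ faithful and exact: a faithful exact additive right adjoint between abelian categories preserves cokernels, hence coequalizers of split pairs, and reflects isomorphisms, so crude monadicity applies) or restate the content as a correspondence between strong monoidal structures on the forgetful functor of $\bim^{T}$ and bialgebroid structures on $B$. As written, your proof is incomplete precisely at its load-bearing step; everything else in the outline is sound.
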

From this point onwards, we only consider left bialgebroids and left Hopf algebroids, when refering to bialgebroids or Hopf algebroids. 
\subsection{Noncommutative Geometry Framework and Examples}\label{SNCG}
Here we provide a brief introduction to noncommutative Riemmanian geometry as presented in \cite{beggs2019quantum}. In particular, all details and proofs relating to the examples presented here can be found in Chapter 1 of \cite{beggs2019quantum}. 
\begin{defi}\label{DDifCal} By a \emph{(first order) differential calculus} over an algebra $A$, we refer to an $A$-bimodule $\Omega^{1}$ along with a linear map $d:A\rightarrow \Omega^{1}$ satisfying $d(ab)=(da)b+a(db)$, for any $a,b\in A$. 
\end{defi} 
In \cite{beggs2019quantum} and most of the literature, the additional condition $\Omega^{1} = \mathrm{Span}_{\field}\lbrace adb\mid a,b\in A\rbrace$ (the \emph{surjectivity condition}) is also required. If this property does not hold, $(\Omega^{1},d)$ is often called a \emph{generalized calculus} \cite{majid2018generalised}. However, in what will follow, we do not require the surjectivity condition. If $\mathrm{ker} (d) = \field .1 $, where $1$ is the unit of algebra $A$, we say the calculus is \emph{connected}. Every algebra has a natural largest connected differential calculus, namely the universal calculus $\Omega^{1}_{uni}=\mathrm{ker}(.)\subseteq A\otimes_{\field} A$, with differential $da= 1\otimes a -a\otimes 1 $. Any first order differential calculus satisfying the surjectivity condition arises as a quotient of the universal calculus. 
\begin{ex}\label{Eclassical} [Classical Example] Let $\mathsf{M}$ be a smooth manifold, $A=\ct{C}^{\infty}(\mathsf{M})$ the algebra of smooth functions on $\mathsf{M}$, $\Omega^{1}$ the space of 1-forms and $d:A\rightarrow \Omega^{1}$ the usual differential on smooth functions. In this case, $A$ is commutative and $\Omega^{1}$ has a bimodule structure where the left and right module structure agree. 
\end{ex}
We say a differential calculus is called \emph{inner} if there exists an element $\theta \in \Omega^{1}$ such that $da= [\theta ,a]$. Notice that even over a commutative algebra $A$, inner calculi are only possible because we are not requiring $\Omega^{1}$ to have the same left and right module structure. 
\begin{ex}\label{EGrph} [Finite Quivers \cite{connes1994noncommutative,majid2013noncommutative}] Let $V$ be a finite set, and $A= \field (V)=\lbrace f:V\rightarrow \field \rbrace$ be the algebra of functions on $V$. There exists a natural basis for $A$, namely $\lbrace f_p \mid p\in V\rbrace$, where $f_{p}(q)=\delta_{p,q}$ for any $p,q\in V$. In fact, $A$ is the finite dimensional algebra with a complete set of idempotents $T:=\lbrace f_p \mid p\in V\rbrace$ as its basis and is thereby semisimple. Any $A$-bimodule $M$ decomposes as $M=\bigoplus_{p,q\in V} \prescript{}{p}{M}_{q}$ such that $f_{x}mf_{y}=\delta_{x,p}\delta_{y,q} m$, for $m\in \prescript{}{p}{M}_{q}$. Hence, a bimodule over $A$ corresponds to the choice of a \emph{directed graph} or \emph{quiver}, on the set of points $V$: for a set of edges $E\subset V\times V$, and an edge $e\in E$, we denote its corresponding basis element in $\Omega^{1}$ by $\ovr{e}$, so that 
$$ \prescript{}{p}{\Omega^{1}}_{q} = \mathrm{Span}_{\field} \left\lbrace \ovr{e}\mid s(e)=p,\ t(e)=q\right\rbrace $$
where $s,t:E\rightarrow V$ are the usual source and target maps. The differential structure is defined by
$$df = \sum_{e\in E} [f(t(e))-f(s(e))]\ovr{e}$$
The calculus is inner with $\theta = \sum_{e\in E}\ovr{e}$. The surjectivity condition holds if and only if no edge has the same source and target and two points have at most one edge between them.
\end{ex}

If $\Omega^{1}$ is a left (right) free module over $A$ with a basis of cardinality $n$, we say $\Omega^{1}$ is left (right) \emph{parallelised} with left (right) \emph{cotangent dimension} $n$. If $\Omega^{1}$ is both left and right parallelised, we call it simply parallelised. Although our work does not require $\Omega^{1}$ to be parallelised, such bimodules facilitate our calculations when producing examples.

\begin{ex}\label{EDrv}[\emph{Derivation Calculus}] First order differential calculi on $\Omega^{1}=A$, regarded as an $A$-bimodule, are just \emph{derivations} $d: A\rightarrow A$ i.e. endomorphisms $d$ satisfying the Leibnitz rule as presented in Definition \ref{DDifCal}.
\end{ex}
\begin{ex}\label{EM2C}[$M_{2}(\mathbb{C})$] The complete moduli of surjective first order calculi for the algebra of $2$-by-$2$ matrices $A=M_{2}(\mathbb{C})$ has been described in Example 1.8 of \cite{beggs2019quantum}. An example of such calculi is $\Omega^{1}=M_{2}(\mathbb{C})\oplus M_{2}(\mathbb{C})$ as a free bimodule, equipped with an inner calculus by $\theta= E_{12}\oplus E_{21}$.  
\end{ex}

It is well known that \emph{bicovariant calculi} \cite{woronowicz1989differential} or \emph{Hopf bimodules} over Hopf algebras are parallelised. In particular, a Hopf module $\Omega^{1}$ over a Hopf algebra $(A,\delta,\nu , s)$ is free as a right $A$-module and decomposes as $\Omega^{1} \cong \Lambda\otimes_{\field} A$, for a particular subspace $\Lambda\subseteq \Omega^{1}$. Under this decomposition, the right $A$-action arises from $A$, solely. The left $A$-action on $\Omega^{1}$ arises by considering $\Lambda\otimes_{\field} A$ as the tensor of two left $A$-modules, where $\Lambda$ has an induced left $A$-action $\triangleright$ defined by $a\triangleright\lambda = a_{(1)}\lambda s(a_{(2)})$, for any $a\in A$ and $ \lambda\in \Lambda$. Consequently, the left action of $\Omega^{1}$ translates to
$$b(\lambda\otimes_{\field} a ) = b_{(1)}\triangleright \lambda \otimes_{\field} b_{(2)}a =b_{(1)}\lambda s(b_{(2)}) \otimes_{\field}b_{(3)}a$$
As we will see in Section \ref{SPiv}, $\Omega^{1}$ is free as a left $A$-module, in a symmetric manner. A Hopf bimodule has compatible $A$-bimodule and $A$-bicomodule structures, which give rise to the above structure. In particular, $\Lambda = \lbrace \omega\in\Omega^{1} \mid \delta_{R}(\omega)=\omega\otimes_{\field}1 \rbrace$, where $\delta_{R}$ denotes the right $A$-coaction. The left coaction of $\Omega^{1}$ restricts to $\Lambda$ and along with the left action $\triangleright$, make $\Lambda$ a left \emph{Yetter-Drinfeld module}. A first order differential calculus $\Omega^{1}$ over a Hopf algebra is called a bicovariant differential calculus if $\Omega^{1}$ is a Hopf bimodule. Bicovariant differential calculi which satisfy the surjectivity condition are in bijection with \emph{Ad-stable} left ideals of $A^{+}=\mathrm{ker}(\nu)$. For further detail on bicovariant calculi, we refer the reader to Section 2.3 of \cite{beggs2019quantum} and conclude with a particular example of bicovariant calculi over a Hopf algebra.  

\begin{ex}\label{EGrpAlg}[\emph{Group Algebra} \cite{majid1998classification}] Given a group $G$, a left module $G$-module $(\Lambda,\triangleright )$ and a 1-cocycle $\zeta\in Z^{1}(G,\Lambda)$ i.e. a map $\zeta:\field G \rightarrow \Lambda $ such that $\zeta (gh)=g \triangleright\zeta (h)+ \zeta (g)$, there is a corresponding differential calculus $\Omega^{1} =\Lambda \otimes_{\field} \field G $ over the group algebra $\field G$ with the differential defined by 
$$d(g)= \zeta (g)\otimes_{\field} g$$ 
The calculus is inner if and only if $\zeta$ is exact i.e. there exists an element $\theta \in \Lambda$ such that $\zeta(g) =g\triangleright \theta -\theta $. When $G$ is finite and $|G|$ is invertible in $\field $, then the calculus is always inner with $\theta =\frac{1}{|G|}\sum_{g\in G} \zeta (g)$.
\end{ex}
When looking at the classical case, first order differential calculus only contains the data for 1-forms. To capture a true generalisation of classical geometry one must consider the space of all differential forms.
\begin{defi}\label{DDGA} A \emph{differential graded algebra} or \emph{DGA} on an algebra $A$ is a graded algebra $(\Omega^{\sbt}= \oplus_{n\geq 0} \Omega^{n}, \wedge)$ with $\Omega^{1}_{0}=A$ and a differential $d:\Omega^{n}\rightarrow \Omega^{n+1}$ such that $d^{2}=0$ and 
$d(\omega\wedge \rho ) = (d\omega)\wedge\rho + (-1)^{n}\omega\wedge (d\rho )$, 
where $\rho\in\Omega^{\sbt}$ and $\omega\in\Omega^{n}$, hold for all $n\geq 0$.
\end{defi}
If a DGA is generated by $A$ and $dA$, we refer to it as an \emph{exterior algebra} on $A$. Observe that given a DGA $(\Omega^{\sbt}, \wedge)$ on $A$, $(\Omega^{1},d)$ form the data for a first order differential calculus. Conversely, every first order calculus $(\Omega^{1},d)$ can be extended to an exterior algebra on $A$ called its \emph{maximal prolongation}, such that any exterior algebra on $A$, which agrees with $(\Omega^{1},d)$ on its first grading and differential, is a quotient of the maximal prolongation by a differential ideal. Further details can be found in Section 1.5 of \cite{beggs2019quantum}.
\subsection{Connections}\label{Scnc}
\begin{defi}\label{Dcnc} If $(\Omega^{1} ,d)$ is a differential calculus on the algebra $A$, by a \emph{left connection} or \emph{left covariant derivative}, we mean a left $A$-module $M$ and a linear map $\cnc :M\rightarrow\Omega^{1}\otimes M$ satisfying 
$$\cnc (am) = a \cnc(m) +da\otimes m $$
for all $a\in A$ and $m\in M$. 
\end{defi}
A right connection can be described similarly, as a right $A$-module $M$ with a linear map $\cnc :M\rightarrow M\otimes\Omega^{1}$ satisfying $\cnc (ma) =  \cnc(m)a + m\otimes da$. The category of left (right) connections on a differential calculus which has left (right) connections $(M,\cnc_{M})$ as objects and left (right) module maps $f:M\rightarrow N$ satisfying $(\mathrm{id}_{\Omega^{1}}\otimes f)\cnc_{M} = \cnc_{N} f$ (resp. $(f \otimes\mathrm{id}_{\Omega^{1}} )\cnc_{M} = \cnc_{N} f$), as morphisms between $f:(M,\cnc_{M})\rightarrow (N,\cnc_{N}) $, is denoted by $\prescript{}{A}{\mathcal{E}}{}$ (resp. $\mathcal{E}_{A}$).

A natural question which arises is when can one describe $\prescript{}{A}{\mathcal{E}}$ as modules over an algebra. This question was answered in Chapter 6 of \cite{beggs2019quantum}. When $\Omega^{1}$ is right fgp, we denote $\XA^{1}:=\pr{\Omega^{1}}$ with $\mathrm{ev}:\XA^{1}\otimes\Omega^{1} \rightarrow A$ and $\mathrm{coev}:A\rightarrow \Omega^{1}\otimes\XA^{1} $ as the respective evaluation and coevalution maps for dual bimodules, as described in Section \ref{SBial}. The bimodule $\XA^{1}$ can be thought of as the space of vector fields on the noncommutative space, since it is dual to the space of 1-forms. In this setting, $\prescript{}{A}{\mathcal{E}}\cong\prescript{}{T\XA^{1}_{\sbt}}{\mathcal{M}}$, where $T\XA^{1}_{\sbt}$ is the associative algebra defined as 
$$T\XA^{1}_{\sbt}=A*T\XA^{1} / \left\langle a\sbt x -ax,\  x\sbt a-xa -\mathrm{ev}(x,da)\mid a\in A,\  x\in\XA^{1} \right\rangle $$ 
where $\sbt$ denotes the associative product in $A*T\XA^{1}$ and  a left $T\XA^{1}_{\sbt}$-module $M$ has a left $A$-module structure by restriction of scalars. Hence, the action of $T\XA^{1}_{\sbt}$ on $M$ restricts to a map $\triangleright :T\XA^{1}_{\sbt}\otimes M\rightarrow M$ and the corresponding left connection $\cnc :M\rightarrow \Omega^{1}\otimes M$ is defined by 
$$\cnc = (\mathrm{id}_{\Omega^{1}}\otimes \triangleright )(\mathrm{coev} \otimes \mathrm{id}_{M}) $$
Conversely, any left connection $(M,\cnc)$ induces an action of $T\XA^{1}_{\sbt}$ on $M$, with the action of $A$ agreeing with the left $A$-module structure on $M$ and the action of elements of $\XA^{1}$ being defined by $ (\mathrm{ev}\otimes \mathrm{id}_{M} ) (\mathrm{id}_{\XA^{1}}\otimes_{\field}\cnc ) $. 

\begin{rmk}\label{RTXA} The ideal quotiented out from $A*T\XA^{1}$ demonstrates that we can describe $T\XA^{1}_{\sbt}$ via an associative product on $T_{A}\XA^{1}$. We have an isomorphism of vectorspaces  
\begin{equation}\label{EqXtenX}
\XA^{1}\otimes_{\field}\XA^{1}\cong (\XA^{1}\otimes\XA^{1}) \oplus \mathrm{Span}\lbrace xa\otimes_{\field} y- x\otimes_{\field} ay \mid x,y\in\XA^{1}, a\in A \rbrace
\end{equation}
If $x\otimes_{\field} y= \sum_{i} x_{i}\otimes y_{i} \oplus \sum_{j}(w_{j}a_{j}\otimes_{\field} z_{j}- w_{j}\otimes_{\field} a_{j}z_{j})$ by the above decomposition, then $x\sbt y = \sum_{i} x_{i}\sbt y_{i} + \sum_{j}\mathrm{ev}(w_{j},da_{j}) z_{j}$ in $T\XA^{1}_{\sbt}$. Extending this idea to iterated products of elements of $\XA^{1}$, we can organise $T\XA^{1}_{\sbt}$ as an associative product on $T_{A}\XA^{1}$. In \cite{beggs2014noncommutative}, $T\XA^{1}_{\sbt}$ is presented as associative product on the vector space $T_{A}\XA^{1}$ to begin with. However, the multiplication of elements of $\XA^{\otimes m}$ and $\XA^{\otimes n}$ are defined iteratively, by requiring $\Omega^{1}$ and $\XA^{1}$ to have compatible bimodule connections. This description of $T\XA^{1}_{\sbt}$ is meant to encode the classical action of vector fields. Since we are only interested in $T\XA^{1}_{\sbt}$ as an algebra and $T\XA^{1}_{\sbt}$ is independent of the choice of bimodule connection on $\Omega^{1}$, up to isomorphism, the above definition is satisfactory. But we must emphasise that arranging $T\XA^{1}_{\sbt}$ as a product on $T_{A}\XA^{1}$, as above, will not produce the same product as the method of \cite{beggs2014noncommutative} via bimodule connection, but an isomorphic one. 
\end{rmk}
\begin{defi}\label{Dbimcnc} If $(\Omega^{1} ,d)$ is a differential calculus on the algebra $A$, by a \emph{left bimodule connection}, we mean an $A$-bimodule $M$ and a linear map $\cnc :M\rightarrow\Omega^{1}\otimes M$ such that there exists a bimodule map $\sigma :M\otimes\Omega^{1} \rightarrow \Omega^{1}\otimes M$ satisfying 
$$\cnc (am) = a\cnc (m) +da\otimes m, \quad \quad  \cnc (ma) = \cnc (m)a +\sigma(m\otimes da)$$
for all $a\in A$ and $m\in M$. 
\end{defi}
A right bimodule connection is defined symmetrically as a bimodule $M$ with a right connection $\cnc$ and a \emph{left} $\Omega^{1}$-intertwining $\sigma :\Omega^{1}\otimes M\rightarrow M\otimes \Omega^{1}$ satisfying $\cnc (am) = a\cnc (m) +\sigma (da \otimes m)$ for all $m\in M$ and $a\in A$. Observe that a left bimodule connection structure on a bimodule does not imply the existence of a right bimodule connection structure. The category of left bimodule connections on a differential calculus, which has left bimodule connections $(M,\cnc_{M},\sigma)$ as objects and bimodule maps $f:M\rightarrow N$ satisfying $(\mathrm{id}_{\Omega^{1}}\otimes f)\cnc_{M} = \cnc_{N} f$ and $\sigma_{N}(f\otimes \mathrm{id}_{\Omega^{1}})=(\mathrm{id}_{\Omega^{1}}\otimes f) \sigma_{N}$ as morphisms $f:(M,\cnc_{M},\sigma_{M})\rightarrow (N,\cnc_{N},\sigma_{N}) $, is denoted by $\prescript{l}{A}{\mathcal{E}}_{A}$. The category of right bimodule connections is defined symmetrically and denoted by $\prescript{r}{A}{\mathcal{E}}_{A}$. 

For a surjective calculus, a triple $(M,\cnc,\sigma )$ being a left bimodule connection is a property for a given bimodule $M$ with a left connection $\cnc$ and $\sigma$ is not additional data. Although we do not focus on surjective calculi, we comment on the features of our construction in the surjective setting in Remark \ref{RSrjBim}. In the classical setting, where $A$ is a commutative algebra and we regard any left module as a bimodule with the right action coinciding with the left action, every left connection is a left bimodule connection with the flip map as $\sigma$. We look at the classical case in more detail in Section \ref{SCommLie}. 

The benefit of working with bimodule connections is that $\biml$ admits a monoidal structure. If $(M,\cnc_{M},\sigma_{M})$ and $(N,\cnc_{N},\sigma_{N}) $ are left bimodule connections, then one can define $(M,\cnc_{M},\sigma_{M})\otimes (N,\cnc_{N},\sigma_{N}) $ as the triple $(M\otimes N,\cnc_{M\otimes N},\sigma_{M\otimes N})$ where 
$$\cnc_{M\otimes N}= \cnc_{M} \otimes \mathrm{id}_{N} +(\sigma_{M} \otimes \mathrm{id}_{N})(\mathrm{id}_{M}\otimes \cnc_N ) $$
$$\sigma_{M\otimes N}=(\sigma_{M} \otimes \mathrm{id}_{N})(\mathrm{id}_{M} \otimes\sigma_{N})  $$ 
One must of course check that $\cnc_{M\otimes N}: M\otimes N \rightarrow \Omega^{1}\otimes M\otimes N$ is a well defined map, which is demonstrated in Section 3.4.2 of \cite{beggs2019quantum}: 
\begin{prop}\hspace{0.001cm}[Theorem 3.78 \cite{beggs2019quantum}]  The category $\biml$ is monoidal with the tensor product defined as above and the triple $(A,d, \mathrm{id}_{\Omega^{1}})$ as the unit object.  
\end{prop}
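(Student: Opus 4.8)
The plan is to lift the monoidal structure of $\bim$ along the faithful forgetful functor $U:\biml\rightarrow\bim$ sending $(M,\cnc_{M},\sigma_{M})$ to its underlying bimodule $M$. Since the underlying bimodule of a tensor product is the tensor product of the underlying bimodules, and since we intend to take the associator $\alpha$ and the unitors $l,r$ to be exactly the $A$-bimodule isomorphisms of $\bim$, the functor $U$ is strict monoidal on the candidate data. Consequently the pentagon and triangle coherence axioms, as well as the naturality of $\alpha,l,r$, are inherited for free: $U$ is faithful and carries each structure morphism to the corresponding one in $\bim$, so any equality of composites holding in $\bim$ also holds in $\biml$. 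The substantive content therefore reduces to four checks: (i) the triple $(M\otimes N,\cnc_{M\otimes N},\sigma_{M\otimes N})$ is again a left bimodule connection; (ii) $f\otimes g$ is a morphism of bimodule connections whenever $f,g$ are; (iii) the unit triple $(A,d,\mathrm{id}_{\Omega^{1}})$ is a left bimodule connection; and (iv) $\alpha,l,r$ lift to morphisms of $\biml$.

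For (iii) I would observe that, under the canonical identifications $\Omega^{1}\otimes A\cong\Omega^{1}\cong A\otimes\Omega^{1}$, the left Leibniz rule $d(ab)=a\,db+da\,b$ of Definition \ref{DDifCal} is precisely the condition $\cnc(ab)=a\cnc(b)+da\otimes b$, while $\sigma=\mathrm{id}_{\Omega^{1}}$ turns the same identity into the right Leibniz condition of Definition \ref{Dbimcnc} and is trivially a bimodule map. For (i), the subtlest step, and the one flagged in the text, is well-definedness of $\cnc_{M\otimes N}$ over $A$, namely that it takes the same value on $ma\otimes n$ and on $m\otimes an$. Neither summand $\cnc_{M}\otimes\mathrm{id}_{N}$ nor $(\sigma_{M}\otimes\mathrm{id}_{N})(\mathrm{id}_{M}\otimes\cnc_{N})$ descends to $M\otimes N$ on its own: the first contributes an excess term $\sigma_{M}(m\otimes da)\otimes n$ arising from the right Leibniz rule of $\cnc_{M}$, while the second contributes $-\sigma_{M}(m\otimes da)\otimes n$, obtained by applying $\sigma_{M}$ to the term $da\otimes n$ that the left Leibniz rule of $\cnc_{N}$ produces. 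The key point is that the \emph{same} intertwining $\sigma_{M}$ governs the right Leibniz rule of $M$ and enters the definition of $\cnc_{M\otimes N}$, so these two excess terms cancel exactly and the map descends.

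Once well-definedness is settled, the remaining parts of (i) are direct computations: the left Leibniz rule for $\cnc_{M\otimes N}$ follows from left $A$-linearity of $\sigma_{M}$, and the right Leibniz rule follows from the right Leibniz rule of $\cnc_{N}$ together with the formula $\sigma_{M\otimes N}=(\sigma_{M}\otimes\mathrm{id}_{N})(\mathrm{id}_{M}\otimes\sigma_{N})$, while right $A$-linearity of $\sigma_{M}$ and $\sigma_{N}$ shows that $\sigma_{M\otimes N}$ is a bimodule map. Step (ii) is routine diagram chasing from the intertwining equations for $f$ and $g$ separately. For (iv), since $\alpha$ is the identity up to reassociation on underlying spaces, I would expand both $\cnc_{(M\otimes N)\otimes P}$ and $\cnc_{M\otimes(N\otimes P)}$ by applying the tensor formula twice; the three resulting summands match term by term, and similarly $\sigma_{(M\otimes N)\otimes P}$ agrees with $\sigma_{M\otimes(N\otimes P)}$ by associativity of composition, so $\alpha$ is a morphism in $\biml$. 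The unitor $l_{M}$ is compatible with the connection because $d(1)=0$ annihilates the first summand of $\cnc_{A\otimes M}$ and $\sigma_{A}=\mathrm{id}_{\Omega^{1}}$ leaves the second untouched, and its compatibility with $\sigma$ is immediate; $r_{M}$ is symmetric.

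I expect the main obstacle to be the well-definedness in (i), since this is the one place where the interaction between the right Leibniz rule of the left factor and the left Leibniz rule of the right factor must conspire through $\sigma_{M}$ for the two excess terms to cancel; everything else is either inherited from $\bim$ by faithfulness of $U$ or a mechanical unwinding of Definition \ref{Dbimcnc}.
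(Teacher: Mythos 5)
Your proposal is correct, and it follows the same route as the paper, which itself defers the proof to Theorem 3.78 and Section 3.4.2 of \cite{beggs2019quantum}: the one point the paper explicitly flags as needing verification is precisely the well-definedness of $\cnc_{M\otimes N}$ over $\otimes_{A}$, which you correctly identify as the crux and resolve by the cancellation of the excess term $\sigma_{M}(m\otimes da)\otimes n$ coming from the right Leibniz rule of $\cnc_{M}$ against the term produced by applying $\sigma_{M}$ to $da\otimes n$ from the left Leibniz rule of $\cnc_{N}$. The remaining checks (the Leibniz rules and bimodule-map property for the tensor triple, the unit object, morphisms, and the coherence isomorphisms inherited via the strict monoidal forgetful functor) are carried out exactly as in the cited reference.
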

The category of bimodule connections comes equipped with a forgetful funtor $U:\biml\rightarrow \bim$ which sends a triple $(M,\cnc_{M},\sigma_{M})$ to its underlying bimodule $M$. Furthermore, the described monoidal structure on $\biml$ applies the usual bimodule tensor product on the underlying bimodules of the bimodules connections. In other words, $U$ is strong monoidal. By Theorem \ref{TSzl}, $\biml$ can be written as the category of modules over a bialgebroid if and only if it is abelian and $U$ is co-continous and has a left adjoint. This is the case when $\Omega^{1}$ is right fgp. 
\section{Bialgebroids Representing Bimodule Connections}
Before we construct the bialgebroid representing $\biml$, we must look at the category of bimodules which intertwine with $\Omega^{1}$ and construct the bialgebroid representing this category. 
\subsection{Category of Intertwining Modules}\label{SInter}
Let $\Omega^{1}$ be a right fgp $A$ bimodule and $\XA^{1}$ be its left dual with $\mathrm{coev}: A\rightarrow \Omega^{1}\otimes\XA^{1}$ and $\mathrm{ev}: \XA^{1} \otimes \Omega^{1}\rightarrow A$ as described in Section \ref{SBial}. Denote $\mathrm{coev}(1)=\sum_{i} \omega_{i}\otimes x_{i}$ so that $\sum_{i} a\omega_{i}\otimes x_{i}=\sum_{i} \omega_{i}\otimes x_{i}a=\mathrm{coev}(a)$ holds for any $a\in A$. 
\begin{defi}\label{Dintcat} For an $A$ bimodule $\Omega^{1}$, we define the category of $\Omega^{1}$\emph{-intertwined bimodules} to have pairs $(M,\sigma_{M} )$, where $M$ is an $A$-bimodule along with a bimodule map $\sigma_M : M\otimes\Omega^{1}\rightarrow\Omega^{1}\otimes M$, as objects and $f:M\rightarrow N$ bimodule maps satisfying $\sigma_{N}(f\otimes \mathrm{id}_{\Omega^{1}})=(\mathrm{id}_{\Omega^{1}}\otimes f) \sigma_{N}$ as morphisms. We denote this category by $\prescript{}{A}{\mathcal{M}}^{\Omega^{1}}_{A}$.
\end{defi}
Let $\mathfrak{M}= \XA^{1}\otimes_{\field}\Omega^{1}$, then $\mathfrak{M}$ has a $A^{e}$-bimodule structure:
$$a\ov{a'}(x,\omega) b\ov{b'}= (axb, b'\omega a') $$
for any $a,a',b,b'\in A$, where we denote arbitrary elements of $\mathfrak{M}$ by $(x,\omega )$. Hence, define $B(\Omega^{1}):=T_{A^{e}}(\XA^{1}\otimes_{\field}\Omega^{1})$ as an algebra and denote its multiplication by $\sbt$ so that
\begin{align}
a\sbt (x,\omega)&= (ax,\omega ) ,\hspace{1.5cm} (x,\omega)\sbt a= (xa,\omega )\label{EqXAOMA}
\\ (x,\omega)\sbt \ov{a}&= (x,a\omega ),\hspace{1.5cm} \ov{a}\sbt (x,\omega)=(x,\omega a ) \label{EqXAOMAop}
\end{align} 
hold for $(x,\omega)\in\mathfrak{M}$ and $a\in A$. Equivalently, $B(\Omega^{1})$ is isomorphic to the quotient of the algebra $T(\mathfrak{M}\oplus A)$ by the ideal generated by the set of relations (\ref{EqXAOMA}) and (\ref{EqXAOMAop}), for all $(x,\omega)\in \mathfrak{M}$ and $a\in A$.

To obtain a bialgebroid structure on $B(\Omega^{1})$, we define the coproduct and counit for elements of $A^{e}$ and $\mathfrak{M}$, and extend them multiplicatively to $B(\Omega^{1})$ by $\Delta (m\sbt n)= m_{(1)}\sbt n_{(1)}\otimes m_{(2)}\sbt n_{(2)}$ and $\epsilon (m\sbt n) = \epsilon (m \sbt \epsilon (n))$. 
\begin{align}
\Delta (a\ov{b})&= a\otimes \ov{b}
\\ \Delta ((x,\omega ))&=  (x,\omega_{i} )\otimes (x_{i},\omega )
\\ \epsilon (a\ov{b})= ba \hspace{1cm} & \hspace{1cm} \epsilon ((x,\omega))=\mathrm{ev}(x,\omega)
\end{align}
for $a\ov{b}\in A^{e}$ and $(x,\omega )\in \mathfrak{M}$.
\begin{prop}\label{PBomega} The algebra $B(\Omega^{1})$ along with $\Delta, \epsilon$ has a left $A^{e}$-bialgebroid structure.  
\end{prop}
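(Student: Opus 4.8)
The plan is to exploit that $B(\Omega^{1})=T_{A^{e}}(\mathfrak M)$ is \emph{freely} generated as an $A^{e}$-ring by $\mathfrak M=\XA^{1}\otimes_{\field}\Omega^{1}$, so that any structure map specified on $A^{e}$ and $\mathfrak M$ and extended multiplicatively is automatically determined. Consequently the real content is (i) checking that $\Delta$ and $\epsilon$ are \emph{well-defined}, and (ii) verifying the coring axioms (\ref{EqDelAss}), (\ref{EqCoun}), for which multiplicativity lets me restrict to the generators. Throughout I would use the two duality (zig-zag) identities $\sum_{i}\mathrm{ev}(x,\omega_{i})x_{i}=x$ and $\sum_{i}\omega_{i}\,\mathrm{ev}(x_{i},\omega)=\omega$ arising from $\mathrm{coev}(1)=\sum_{i}\omega_{i}\otimes x_{i}$, together with the balancing identity $\sum_{i}a\omega_{i}\otimes x_{i}=\sum_{i}\omega_{i}\otimes x_{i}a=\mathrm{coev}(a)$.

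For the coproduct, I would first show that $\Delta$ is a well-defined algebra homomorphism into the Takeuchi product $B(\Omega^{1})\times_{A}B(\Omega^{1})$, which is an $A^{e}$-ring under componentwise multiplication. By the universal property of the free $A^{e}$-ring it suffices to produce an algebra map $A^{e}\to B(\Omega^{1})\times_{A}B(\Omega^{1})$, $a\ov b\mapsto a\otimes\ov b$, and an $A^{e}$-bimodule map $\mathfrak M\to B(\Omega^{1})\times_{A}B(\Omega^{1})$, $(x,\omega)\mapsto\sum_{i}(x,\omega_{i})\otimes(x_{i},\omega)$. The crucial point is that these values actually lie in the Takeuchi product: by (\ref{EqXAOMA})--(\ref{EqXAOMAop}) one has $(x,\omega_{i})\ov a=(x,a\omega_{i})$ and $(x_{i},\omega)a=(x_{i}a,\omega)$, so the Takeuchi condition $\sum_{i}(x,\omega_{i})\ov a\otimes(x_{i},\omega)=\sum_{i}(x,\omega_{i})\otimes(x_{i},\omega)a$ is precisely the balancing identity $\sum_{i}a\omega_{i}\otimes x_{i}=\sum_{i}\omega_{i}\otimes x_{i}a$ transported into $\mathfrak M\otimes\mathfrak M$. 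The $A^{e}$-bimodule compatibility $\Delta(a\ov{a'}(x,\omega)b\ov{b'})=a\ov{a'}\Delta((x,\omega))b\ov{b'}$ is then a direct consequence of (\ref{EqXAOMA})--(\ref{EqXAOMAop}). This step simultaneously yields (\ref{EqDelMul}), (\ref{EqDelrs}) and $\Delta(1_{B})=1_{B}\otimes 1_{B}$.

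For the counit I would avoid checking the recursion $\epsilon(m\sbt n)=\epsilon(m\sbt\epsilon(n))$ directly, and instead define a left $B(\Omega^{1})$-module structure on $A$ and set $\epsilon(b)=b\triangleright 1_{A}$. Again by the universal property, such an action is the same as the left $A^{e}$-action $a\ov b\triangleright c=acb$ on $A$ together with an $A^{e}$-bimodule map $\mathfrak M\to\mathrm{End}_{\field}(A)$ sending $(x,\omega)$ to $c\mapsto\mathrm{ev}(xc,\omega)=\mathrm{ev}(x,c\omega)$; the $A$-balancedness of $\mathrm{ev}$ makes this a genuine $A^{e}$-bimodule map. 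With this action $\epsilon(b)=b\triangleright 1$ recovers $\epsilon(a\ov b)=ba$ and $\epsilon((x,\omega))=\mathrm{ev}(x,\omega)$, the module law $(bb')\triangleright 1=b\triangleright(b'\triangleright 1)$ gives (\ref{EqCounMul}) immediately, and specialising $b'=r$, $b'=\ov r$ gives (\ref{EqCounrs}), while $\epsilon(1_{B})=1_{A}$ is clear.

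It then remains to verify the coring axioms (\ref{EqDelAss}) and (\ref{EqCoun}) on generators. On $A^{e}$ they are trivial since $\Delta(a\ov b)=a\otimes\ov b$ and $\epsilon(a\ov b)=ba$. On a generator $(x,\omega)$ coassociativity reduces to $\sum_{i,j}(x,\omega_{j})\otimes(x_{j},\omega_{i})\otimes(x_{i},\omega)=\sum_{i,k}(x,\omega_{i})\otimes(x_{i},\omega_{k})\otimes(x_{k},\omega)$, which is the standard matrix-type coassociativity and holds after relabelling the two coevaluation indices. The counit conditions in (\ref{EqCoun}) become $\sum_{i}\mathrm{ev}(x,\omega_{i})(x_{i},\omega)=(x,\omega)$ and $\sum_{i}\ov{\mathrm{ev}(x_{i},\omega)}(x,\omega_{i})=(x,\omega)$, which are exactly the two zig-zag identities read through (\ref{EqXAOMA})--(\ref{EqXAOMAop}). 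I expect the main obstacle to be the well-definedness of $\Delta$ in the second paragraph, namely confirming that the generators genuinely land in $B(\Omega^{1})\times_{A}B(\Omega^{1})$ so that multiplicative extension is legitimate; once this is secured, every remaining axiom is a routine application of the duality and balancing identities.
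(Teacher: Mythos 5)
Your proposal is correct, and it proves the same facts from the same two elementary inputs (the balancing identity $\sum_{i}a\omega_{i}\otimes x_{i}=\sum_{i}\omega_{i}\otimes x_{i}a$ and the two zig-zag identities), but it packages the argument genuinely differently from the paper. The paper defines $\Delta$ and $\epsilon$ on generators, extends them by the recursions $\Delta(m\sbt n)=m_{(1)}\sbt n_{(1)}\otimes m_{(2)}\sbt n_{(2)}$ and $\epsilon(m\sbt n)=\epsilon(m\sbt\epsilon(n))$, and then verifies by direct computation that both maps respect the relations (\ref{EqXAOMA})--(\ref{EqXAOMAop}), are bimodule maps, and satisfy (\ref{EqDelrs}) and (\ref{EqCounrs}), with coassociativity and the counit condition left to the reader and the bialgebroid axioms declared automatic. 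You instead invoke the universal property of the free $A^{e}$-ring $T_{A^{e}}\mathfrak{M}$ twice: once to produce $\Delta$ as an $A^{e}$-ring map into the Takeuchi product $B(\Omega^{1})\times_{A}B(\Omega^{1})$, and once to produce a left $B(\Omega^{1})$-action on $A$ with $\epsilon(b)=b\triangleright 1_{A}$. This buys two things the paper leaves implicit: first, factorwise multiplication is not defined on all of $|B\otimes|B$, so ``extending $\Delta$ multiplicatively'' is only legitimate because the generator images satisfy the Takeuchi condition --- you correctly isolate this as the crucial point and reduce it to the balancing identity; second, the module-theoretic counit makes (\ref{EqCounMul}) and (\ref{EqCounrs}) tautological and removes any well-definedness issue for the recursive formula (your action $(x,\omega)\triangleright c=\mathrm{ev}(xc\otimes\omega)$ is exactly the counit action $\epsilon_{0}$ on the monoidal unit that the general theory demands). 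The cost is reliance on two standard background facts the paper's computational route avoids naming: that $B\times_{A}B$ is an $A^{e}$-ring under componentwise multiplication, and that coassociativity and the counit condition, once $\Delta$ is an algebra map into the Takeuchi product, may be checked on generators; both are standard (cf.\ \cite{takeuchi1977groups,bohm2018hopf}) but deserve a citation or a short multiplicativity argument, playing the same role as the paper's ``left to the reader.''

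One small discrepancy you should fix: with your action $a\ov{b}\triangleright c=acb$, the definition $\epsilon(b)=b\triangleright 1_{A}$ gives $\epsilon(a\ov{b})=ab$, not $ba$ as you assert. In fact $ab$ is the only formula compatible with $\epsilon:|B(\Omega^{1})\rightarrow A$ being an $A$-bimodule map (take $b=1_{B}$ in $\epsilon(a\cdot b\cdot b')=a\,\epsilon(b)\,b'$), so the displayed formula $\epsilon(a\ov{b})=ba$ in the construction is itself a slip unless $A$ is commutative; your module-theoretic definition quietly produces the consistent one, and you should state it as such rather than claim agreement with $ba$.
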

\begin{proof} It is easy to see that $\Delta$ and $\epsilon$ are well defined with respect to relations (\ref{EqXAOMA}) and (\ref{EqXAOMAop}). We must also check that $\Delta$ and $\epsilon$ are bimodule maps:
\begin{align*}
\Delta (a\sbt (x,\omega) )=&\Delta((ax,\omega ))= a\sbt(x,\omega_{i} )\otimes (x_{i},\omega )=a\Delta((x,\omega ))
\\ \Delta (\ov{a}\sbt (x,\omega) )=&\Delta((x,\omega a ))= (x,\omega_{i} )\otimes \ov{a}\sbt (x_{i},\omega )=\Delta((x,\omega ))a
\\ \epsilon (a\sbt (x,\omega))= \mathrm{ev}(ax,\omega)&=a\epsilon ((x,\omega)) \hspace{0.6cm} \epsilon (\ov{a}\sbt (x,\omega ))= \mathrm{ev}(x\otimes\omega a)= \epsilon((x,\omega))a
\end{align*} 
where $(x,\omega )\in \mathfrak{M}$ and $a\in A$. Now check that $(B(\Omega^{1}),\Delta ,\epsilon )$ is an $A|A$-coring. Coassociativity (\ref{EqDelAss}) and the counit condition (\ref{EqCoun}) follow easily by the definition of $\Delta,\epsilon$ on the generators and  are left to the reader. We briefly check (\ref{EqDelrs}) and (\ref{EqCounrs}) for $a\ov{b}\in A^{e}$ and $(x,\omega )\in \mathfrak{M}$:
\begin{align*}
\Delta((x,\omega)\sbt a\ov{b})=\Delta ((xa,b\omega ))=  (x,\omega_{i} )\sbt a\otimes (x_{i},\omega ) \sbt  \ov{b} 
\\\epsilon ((x,\omega)\sbt a)=\mathrm{ev} (xa \otimes\omega )=\mathrm{ev} (x\otimes a\omega )=\epsilon ((x,\omega)\sbt \ov{a})
\end{align*}
Since $\Delta$ and $\epsilon$ are well-defined on the generators and (\ref{EqDelrs}) holds, they can be extended multiplicatively to an $A|A$-coring structure on $B(\Omega^{1} )$. By defining the comultiplication and counit multiplicatively, $B(\Omega^{1})$ automatically satisfies the bialgebroid axioms. \end{proof} 
Notice that for $(x_{i},\omega_{i})\in\mathfrak{M}$, where $1\leq i\leq n$, 
$$ \epsilon \big((x_{1},\omega_{1})\sbt (x_{2},\omega_{2})\sbt \cdots \sbt (x_{n},\omega_{n})\big)= \mathrm{ev}^{\langle n\rangle} (x_{1}\otimes x_{2}\otimes \dots \otimes x_{n}\otimes \omega_{n}\otimes\dots \otimes\omega_{1})$$
where $\mathrm{ev}^{\langle n\rangle}$ is defined iteratively by $\mathrm{ev}^{\langle n+1\rangle}= \mathrm{ev}(\mathrm{id}_{\XA^{1}} \otimes \mathrm{ev}^{\langle n\rangle}\otimes \mathrm{id}_{\Omega^{1}})$ and $\mathrm{ev}^{\langle 1\rangle} =\mathrm{ev}$.
\begin{thm}\label{TBOmga} There exists an isomorphism of categories $\prescript{}{B(\Omega^{1} )}{\mathcal{M}}\cong \prescript{}{A}{\mathcal{M}}^{\Omega^{1}}_{A}$. 
\end{thm}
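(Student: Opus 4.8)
The plan is to reduce the statement to the universal property of the free tensor algebra and then to the left duality between $\XA^{1}$ and $\Omega^{1}$. Since $B(\Omega^{1})=T_{A^{e}}(\mathfrak{M})$ is the free $A^{e}$-ring on the $A^{e}$-bimodule $\mathfrak{M}=\XA^{1}\otimes_{\field}\Omega^{1}$, a left $B(\Omega^{1})$-module structure on an $A$-bimodule $M$ is, by the universal property of the free monoid in $\bim$, exactly the action of the generators, i.e.\ a single $A$-bimodule map $\triangleright:\mathfrak{M}\otimes_{A^{e}}M\rightarrow M$; and morphisms of $B(\Omega^{1})$-modules are precisely the bimodule maps intertwining these generator actions. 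So the first step is to record this reduction: $\prescript{}{B(\Omega^{1})}{\mathcal{M}}$ is isomorphic to the category of pairs $(M,\triangleright)$ with $\triangleright$ a bimodule map $\mathfrak{M}\otimes_{A^{e}}M\rightarrow M$.

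Second, I would identify $\mathfrak{M}\otimes_{A^{e}}M$ explicitly. Using the $A^{e}$-bimodule structure $a\ov{a'}(x,\omega)b\ov{b'}=(axb,\,b'\omega a')$, the defining relations of $\otimes_{A^{e}}$ yield a natural isomorphism of $A$-bimodules $\mathfrak{M}\otimes_{A^{e}}M\cong\XA^{1}\otimes M\otimes\Omega^{1}$ via $(x,\omega)\otimes_{A^{e}}m\mapsto x\otimes m\otimes\omega$, where the outer $A$-bimodule structure acts by left multiplication on $\XA^{1}$ and right multiplication on $\Omega^{1}$. Thus a generator action is the same datum as a bimodule map $\triangleright:\XA^{1}\otimes M\otimes\Omega^{1}\rightarrow M$.

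Third, which is the heart of the argument, I would set up the bijection between such $\triangleright$ and the intertwinings $\sigma:M\otimes\Omega^{1}\rightarrow\Omega^{1}\otimes M$ using the left duality $\XA^{1}=\pr{\Omega^{1}}$. Explicitly, given $\triangleright$ put $\sigma(m\otimes\omega)=\omega_{i}\otimes\big((x_{i},\omega)\triangleright m\big)$ with $\mathrm{coev}(1)=\omega_{i}\otimes x_{i}$, and given $\sigma$ put $(x,\omega)\triangleright m=(\mathrm{ev}\otimes\mathrm{id}_{M})(\mathrm{id}_{\XA^{1}}\otimes\sigma)(x\otimes m\otimes\omega)$. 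One checks that $\sigma$ so defined is a bimodule map, using the $A$-bilinearity $\mathrm{coev}(a)=a\omega_{i}\otimes x_{i}=\omega_{i}\otimes x_{i}a$ together with bilinearity of $\triangleright$, and dually that $\triangleright$ is a bimodule map, using that $\mathrm{ev}$ and $\sigma$ are bimodule maps. The two assignments are mutually inverse: substituting one construction into the other produces a double sum that collapses to the identity upon applying the snake identities $\mathrm{ev}(x\otimes\omega_{i})\,x_{i}=x$ and $\omega_{i}\,\mathrm{ev}(x_{i}\otimes\omega)=\omega$ (and, in the second direction, the left $A$-linearity of $\triangleright$ to absorb $\mathrm{ev}(x\otimes\omega_{i})$ into the first argument).

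Finally I would match morphisms: a bimodule map $f:M\rightarrow N$ commutes with all generator actions, $f\big((x,\omega)\triangleright m\big)=(x,\omega)\triangleright f(m)$, if and only if $\sigma_{N}(f\otimes\mathrm{id}_{\Omega^{1}})=(\mathrm{id}_{\Omega^{1}}\otimes f)\sigma_{M}$, both implications being immediate from the two explicit formulas. Since objects correspond bijectively, the morphism sets are literally the same bimodule maps, and identities and composition are visibly preserved, this gives the claimed isomorphism $\prescript{}{B(\Omega^{1})}{\mathcal{M}}\cong\prescript{}{A}{\mathcal{M}}^{\Omega^{1}}_{A}$. I expect the main obstacle to be the bookkeeping in the third step: keeping the several left and right $A$-actions straight through the identification of the second step, and verifying the mutual-inverse property through the zigzag identities without side errors.
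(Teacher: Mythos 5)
Your proposal is correct and takes essentially the same route as the paper: both reduce a $B(\Omega^{1})$-module structure to the action of the generators $(x,\omega)$ and pass to the intertwining via the identical formulas $\sigma(m\otimes\omega)=\omega_{i}\otimes(x_{i},\omega)m$ and $(x,\omega)m=(\mathrm{ev}\otimes\mathrm{id}_{M})(x\otimes\sigma(m\otimes\omega))$, with the correspondences inverse to each other by the duality between $\XA^{1}$ and $\Omega^{1}$. The only difference is presentational: you make the freeness of $T_{A^{e}}\mathfrak{M}$, the identification $\mathfrak{M}\otimes_{A^{e}}M\cong\XA^{1}\otimes M\otimes\Omega^{1}$, and the snake identities explicit, where the paper verifies well-definedness and the bimodule property directly from the relations (\ref{EqXAOMA}) and (\ref{EqXAOMAop}).
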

\begin{proof} Any $B(\Omega^{1})$-module $M$ has an induced $A$-bimodule structure, by restriction of scalars to $A^{e}$. Moreover, $M$ has an induced $\Omega^{1}$-intertwining $\sigma$ defined by 
$$\sigma (m\otimes\omega)=  \omega_{i}\otimes (x_{i},\omega)m $$
for $m\in M$ and $\omega\in \Omega^{1}$. The left column of relations in (\ref{EqXAOMA}) and (\ref{EqXAOMAop}) imply that the map $\sigma$ is well defined, while the right column of relations make $\sigma$ a bimodule map. Conversely, an $A$-bimodule $M$ with an $\Omega^{1}$-intertwining map, $\sigma$, has an induced action of $B(\Omega^{1})$ defined on the generators of the algebra by 
$$(x,\omega)m= (\mathrm{ev}\otimes \mathrm{id}_{M})(x\otimes\sigma(m\otimes\omega )), \quad (a\ov{b})m= amb$$ 
where $(x,\omega)\in\mathfrak{M}$, $a\ov{b}\in A^{e}$ and $m\in M$. The two correspondences described are each others inverses and their functoriality follows easily.  \end{proof}
\subsection{Mutation of $T\mathfrak{X}^{1}_{\sbt}$ for Bimodule Connections}\label{SBXA}
In this section we construct the bialgerboid whose category of left modules recovers the category of left bimodule connections, $\biml$. Any left bimodule connection in $\biml$, is a bimodule with an $\Omega^{1}$-intertwining and a left connections. Hence, every left bimodule connection has an induced $B(\Omega^{1})$-action and a $T\XA^{1}_{\sbt}$-action arising from its $\Omega^{1}$-intertwining and left connection, respectively. The only additional data defining a left bimodule connection, is how its left connection and right $A$-action interact. We define $B\XA^{1}$ to be the quotient of algebra $T(\mathfrak{M}\oplus \XA^{1} \oplus A^{e})$ by the ideal generated by the set of relations (\ref{EqXAOMA}), (\ref{EqXAOMAop}) and 
\begin{align}
 a\sbt x&=ax \label{EqAX}
 \\  x\sbt a &=xa+\mathrm{ev}(x,da)\label{EqXA}
 \\ x\sbt\ov{a}&=\ov{a}\sbt x+ (x,da) \label{EqBXA}
\end{align}
for all $x\in \XA^{1}, \omega\in\Omega^{1}, a\in A$. Equivalently, $B\XA^{1}$ is the quotient of the free product of algebras $B(\Omega^{1})\star T\XA^{1}$ by the ideal which the set of relations (\ref{EqAX}), (\ref{EqXA}) and (\ref{EqBXA}) generate. 

We extend the coproduct and counit of $B(\Omega^{1} )$ to $B\XA^{1}$ by defining it on elements of $\XA^{1}$ and extending them multiplicatively to $B\XA^{1}$, by $\Delta (m\sbt n)= m_{(1)}\sbt n_{(1)}\otimes m_{(2)}\sbt n_{(2)}$ and $\epsilon (m\sbt n) = \epsilon (m \epsilon (n))$:
\begin{equation}
\Delta (x) = x\otimes 1 +  (x,\omega_{i})\otimes x_{i} \hspace{2cm} \epsilon (x)=0
\end{equation}
for $x\in\XA^{1}$. 
\begin{lemma}\label{LBXA} The coproduct $\Delta$ and counit $\epsilon$ are well-defined maps on $B\XA^{1}$ and provide $B\XA^{1}$ with a left $A$-bialgebroid structure.
\end{lemma}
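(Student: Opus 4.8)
The plan is to mimic the proof of Proposition~\ref{PBomega}: since $\Delta$ and $\epsilon$ are defined on generators and extended multiplicatively by $\Delta(m\sbt n)=m_{(1)}\sbt n_{(1)}\otimes m_{(2)}\sbt n_{(2)}$ and $\epsilon(m\sbt n)=\epsilon(m\epsilon(n))$, the multiplicativity axioms (\ref{EqDelMul})--(\ref{EqCounMul}) hold by construction, and coassociativity and the counit condition need only be checked on generators. As $\Delta,\epsilon$ already restrict to the bialgebroid structure of $B(\Omega^{1})$ on the subalgebra generated by $\mathfrak{M}$ and $A^{e}$ (Proposition~\ref{PBomega}), the two genuinely new tasks are: (i) to verify that $\Delta$ and $\epsilon$ respect the three new relations (\ref{EqAX}), (\ref{EqXA}) and (\ref{EqBXA}), so that they descend to the quotient $B\XA^{1}$; and (ii) to verify the coring axioms (\ref{EqDelAss})--(\ref{EqCounrs}) on the new generators $x\in\XA^{1}$.

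For well-definedness I would treat each relation in turn. Relation (\ref{EqAX}) is immediate, since $\epsilon(x)=0$ gives $\epsilon(a\sbt x)=0=\epsilon(ax)$, while $\Delta(a\sbt x)=a\sbt\Delta(x)=ax\otimes 1+(ax,\omega_{i})\otimes x_{i}=\Delta(ax)$ by (\ref{EqXAOMA}). For (\ref{EqXA}) I would compute $\Delta(x\sbt a)=\Delta(x)\Delta(a)=(x\sbt a)\otimes 1+(xa,\omega_{i})\otimes x_{i}$ and compare with $\Delta(xa+\mathrm{ev}(x,da))=\bigl(xa+\mathrm{ev}(x,da)\bigr)\otimes 1+(xa,\omega_{i})\otimes x_{i}$; these agree precisely because (\ref{EqXA}) holds in the left tensor leg. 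The analogous computation for (\ref{EqBXA}) shows that both $\Delta(x\sbt\ov{a})$ and $\Delta(\ov{a}\sbt x+(x,da))$ equal $x\otimes\ov{a}+(x,\omega_{i})\otimes(x_{i}\sbt\ov{a})$, the match this time hinging on reusing (\ref{EqBXA}) inside the right tensor leg together with $\Delta((x,da))=(x,\omega_{i})\otimes(x_{i},da)$. On the counit side, $\epsilon|_{\XA^{1}}=0$ and $\epsilon((x,\omega))=\mathrm{ev}(x,\omega)$ force both (\ref{EqXA}) and (\ref{EqBXA}) to assign the common value $\epsilon(x\sbt a)=\epsilon(x\sbt\ov{a})=\mathrm{ev}(x,da)$, so no inconsistency arises.

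For the coring axioms on $x$, coassociativity follows by expanding $\Delta(x)=x\otimes 1+(x,\omega_{i})\otimes x_{i}$ and invoking the already-established coassociativity of $\Delta$ on $\mathfrak{M}$, after relabelling the dual-basis indices. The counit condition (\ref{EqCoun}) reduces to $\epsilon(x_{(1)})x_{(2)}=\sum_{i}\mathrm{ev}(x,\omega_{i})x_{i}=x$, which is exactly the snake identity for the left duality between $\XA^{1}$ and $\Omega^{1}$ (the left action being that of (\ref{EqAX})), together with $\ov{\epsilon(x_{(2)})}x_{(1)}=x$ coming from $\epsilon(x_{i})=0$. The bimodule-map conditions (\ref{EqDelrs}) and (\ref{EqCounrs}) on $x$ are checked by acting with $a$ and $\ov{a}$: the left $A$-action uses (\ref{EqAX}) and the right (i.e.\ $A^{op}$) action uses (\ref{EqBXA}), and in each case multiplicativity reproduces the required identity. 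That $\Delta$ lands in the Takeuchi product $B\XA^{1}\times_{A}B\XA^{1}$ is then automatic, as noted after Definition~\ref{Dalgebroid}.

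I expect the main obstacle to be the well-definedness checks for (\ref{EqXA}) and (\ref{EqBXA}), where the defect terms $\mathrm{ev}(x,da)$ and $(x,da)$---which encode the failure of the left connection to be right $A$-linear---must be reproduced correctly by $\Delta$ and $\epsilon$. The subtlety is that the relation one must invoke inside a tensor leg is the very relation being verified, so the argument is a genuine consistency check rather than a formality; its resolution rests on the duality (snake) identities $\sum_{i}\mathrm{ev}(x,\omega_{i})x_{i}=x$ and $\sum_{i}\omega_{i}\,\mathrm{ev}(x_{i},\omega)=\omega$ for the dual bimodules $\XA^{1}$ and $\Omega^{1}$.
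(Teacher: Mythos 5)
Your proposal is correct and follows essentially the same route as the paper's proof: define $\Delta,\epsilon$ on generators, verify well-definedness against the three new relations (\ref{EqAX})--(\ref{EqBXA}) by using those same relations inside the tensor legs of the quotient, check coassociativity and the counit condition on $x\in\XA^{1}$ via the snake identity $\mathrm{ev}(x,\omega_{i})x_{i}=x$, and let multiplicativity deliver the remaining bialgebroid axioms. The individual computations you sketch (including the values $\epsilon(x\sbt a)=\epsilon(x\sbt\ov{a})=\mathrm{ev}(x,da)$ and the matching of $\Delta(x\sbt\ov{a})$ with $\Delta(\ov{a}\sbt x+(x,da))$) agree with those in the paper.
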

\begin{proof} Since we have defined $\Delta$ and $\epsilon$ on the generators of the algebra and extended them multiplicatively to the rest of the algebra, we must first check if they are well-defined:
\begin{align*} 
\Delta (a\sbt x) &=a\sbt x_{(1)}\otimes x_{(2)}= a\sbt x \otimes 1+ a\sbt (x,\omega_{i})\otimes x_{i}=\Delta (ax)
\\ \Delta (x\sbt a) &=x_{(1)}\sbt a\otimes x_{(2)}= x \sbt a\otimes 1+  (x,\omega_{i})\sbt a\otimes x_{i}
\\& = \Delta(xa)  + \mathrm{ev}(x \otimes da )\otimes 1 = \Delta (xa+\mathrm{ev}(x\otimes da ))
\\ \Delta (\ov{a}\sbt x) &=x_{(1)}\otimes x_{(2)}\sbt \ov{a}= x \otimes \ov{a}+  (x,\omega_{i})\otimes x_{i}\sbt \ov{a}
\\ &=\Delta (\ov{a}\sbt x) -  (x,\omega_{i})\otimes (x_{i},da)= \Delta (\ov{a}\sbt x-(x,da))
\end{align*}
The map $\Delta$ being a bimodule morphism follows from the above calculations. Now, we check that $\epsilon$ is well-defined:
\begin{align*}
\epsilon (a\sbt x)&=\epsilon (a \sbt \epsilon(x))=0=\epsilon(ax)
\\ \epsilon(x\sbt a )&= \epsilon (x \sbt \epsilon(a))=\epsilon (xa +\mathrm{ev}(x\otimes da)) = \epsilon (xa) +\epsilon (\mathrm{ev}(x\otimes da))
\\ \epsilon(x\sbt \ov{a} )&=\epsilon (x \sbt \epsilon(\ov{a}))=\epsilon (x \sbt \epsilon(a))= \mathrm{ev}(x\otimes da)= \epsilon ((x,da))
\\& = \epsilon (\ov{a}\sbt \epsilon (x))+\epsilon ((x,da))=  \epsilon ( \ov{a}\sbt x +(x,da))
\end{align*}
It also follows that $\epsilon$ is a bimodule map. Now we demonstrate coassociativity (\ref{EqDelAss}) and the counit condition (\ref{EqCoun})
\begin{align*}
(\Delta \otimes \mathrm{id}_{B\XA^{1}})\Delta (x)= &x\otimes 1\otimes 1+ (x,\omega_{i})\otimes x_{i}\otimes 1 
\\&+ (x,w_{j})\otimes (x_{j},\omega_{i}) \otimes x_{i} =(\mathrm{id}_{B\XA^{1}}\otimes \Delta )\Delta (x)
\\\epsilon (x_{(1)})x_{(2)} &= \epsilon (x)1+ \epsilon((x,\omega_{i}))x_{i}= x
\\ \ov{\epsilon (x_{(2)})}x_{(1)}&= \ov{\epsilon(1)}x+\ov{\epsilon(x_{i})}(x,\omega_{i})= x
\end{align*}
The other coring axioms are easy to check and are left to the reader. The bialgebroid axioms hold since we defined the coproduct and counit multiplicatively.\end{proof}
\begin{thm}\label{TBXAbiml} There exists an isomorphism of categories $\prescript{}{B\XA^{1}}{\mathcal{M}}\cong \biml$. 
\end{thm}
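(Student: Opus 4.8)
The plan is to exhibit the isomorphism of categories $\prescript{}{B\XA^{1}}{\mathcal{M}}\cong \biml$ by constructing functors in both directions on objects and morphisms, and checking they are mutually inverse. The key structural observation is that $B\XA^{1}$ is built as a quotient of the free product $B(\Omega^{1})\star T\XA^{1}$, so a $B\XA^{1}$-module is precisely an $A$-bimodule $M$ which simultaneously carries a $B(\Omega^{1})$-action and a $T\XA^{1}_{\sbt}$-action, subject to the compatibility relations (\ref{EqAX}), (\ref{EqXA}), (\ref{EqBXA}). By Theorem \ref{TBOmga}, the $B(\Omega^{1})$-action is exactly the datum of an $\Omega^{1}$-intertwining $\sigma_M$, and by the discussion in Section \ref{Scnc}, a $T\XA^{1}_{\sbt}$-action (on a left $A$-module) is exactly the datum of a left connection $\cnc_M = (\mathrm{id}_{\Omega^{1}}\otimes \triangleright)(\mathrm{coev}\otimes \mathrm{id}_M)$. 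So the bulk of the work reduces to verifying that the three relations in $B\XA^{1}$ correspond precisely to the bimodule-connection axioms of Definition \ref{Dbimcnc}.

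First I would define the functor $\prescript{}{B\XA^{1}}{\mathcal{M}}\rightarrow \biml$. Given a $B\XA^{1}$-module $M$, restrict scalars along $\eta$ to get the underlying $A$-bimodule; then $\sigma_M(m\otimes\omega)=\omega_i\otimes (x_i,\omega)m$ and $\cnc_M(m)=\omega_i\otimes x_i m$ (abbreviating the action of $x_i\in\XA^1$). Relation (\ref{EqAX}), $a\sbt x = ax$, guarantees the left-Leibniz rule $\cnc(am)=a\cnc(m)+da\otimes m$ once one feeds the definition of $\cnc$ through $\mathrm{coev}(a)$; relation (\ref{EqXA}), $x\sbt a = xa + \mathrm{ev}(x,da)$, is what makes $\triangleright$ a genuine $T\XA^{1}_{\sbt}$-action. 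The crucial relation is (\ref{EqBXA}), $x\sbt\ov{a}=\ov{a}\sbt x + (x,da)$: applying this to $m$ and unwinding the definitions of $\cnc_M$ and $\sigma_M$ should yield exactly the right-hand Leibniz identity $\cnc(ma)=\cnc(m)a+\sigma_M(m\otimes da)$. Conversely, given $(M,\cnc_M,\sigma_M)\in\biml$, I would define the $B\XA^{1}$-action on generators by $(a\ov{b})m=amb$, $(x,\omega)m=(\mathrm{ev}\otimes\mathrm{id}_M)(x\otimes\sigma_M(m\otimes\omega))$ as in Theorem \ref{TBOmga}, and $xm=(\mathrm{ev}\otimes\mathrm{id}_M)(\mathrm{id}_{\XA^1}\otimes\cnc_M)(x\otimes m)$ as in Section \ref{Scnc}, then check this respects the defining ideal of $B\XA^{1}$.

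The main obstacle, and the step deserving genuine care, is showing the two constructions are well-defined on the relation (\ref{EqBXA}) and are mutually inverse — in particular that the $\sigma_M$ recovered from a $B\XA^{1}$-module coincides with the intertwining that the relation $x\sbt\ov{a}-\ov{a}\sbt x=(x,da)$ forces. Concretely one must verify that evaluating relation (\ref{EqBXA}) against $M$ and contracting with $\mathrm{ev}$ reproduces precisely the $\sigma_M$ appearing in the right-Leibniz rule, rather than some a priori different bimodule map; this uses the duality identities $(\mathrm{ev}\otimes\mathrm{id}_{\XA^1})(\mathrm{id}_{\XA^1}\otimes\mathrm{coev})=\mathrm{id}_{\XA^1}$ and $(\mathrm{id}_{\Omega^1}\otimes\mathrm{ev})(\mathrm{coev}\otimes\mathrm{id}_{\Omega^1})=\mathrm{id}_{\Omega^1}$ to pass between the $\mathrm{coev}(1)=\omega_i\otimes x_i$ expansion and the abstract map $\sigma_M$. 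Once well-definedness on generators and relations is established in both directions, functoriality on morphisms is routine: a $B\XA^{1}$-module map is simultaneously $A$-bilinear, intertwines $\sigma$, and commutes with $\cnc$, which is exactly a morphism in $\biml$. I would close by noting that the two functors are inverse on objects because the reconstruction formulas invert the extraction formulas (again by the duality identities), and inverse on morphisms trivially, so the correspondence is an isomorphism of categories rather than a mere equivalence.
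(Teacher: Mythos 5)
Your proposal is correct and takes essentially the same approach as the paper: both extract the $\Omega^{1}$-intertwining and the left connection by restriction of scalars to $B(\Omega^{1})$ (via Theorem \ref{TBOmga}) and to $T\XA^{1}_{\sbt}$ (via Section \ref{Scnc}), verify the right Leibniz rule $\cnc(ma)=\cnc(m)a+\sigma(m\otimes da)$ from relation (\ref{EqBXA}), and obtain the inverse functor by noting that the actions induced by a bimodule connection satisfy the defining relations of $B\XA^{1}$. The only minor slip is attributing the left Leibniz rule to (\ref{EqAX}) — the $da\otimes m$ term actually arises from the $\mathrm{ev}(x,da)$ term of (\ref{EqXA}) pushed through $\mathrm{coev}$, while (\ref{EqAX}) ensures compatibility of the left $A$-action with the $\XA^{1}$-action — but since (\ref{EqAX}) and (\ref{EqXA}) together are precisely the $T\XA^{1}_{\sbt}$ relations on which you rely, this does not affect the argument.
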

\begin{proof} By restriction of scalars to $B(\Omega^{1})$ and Theorem \ref{TBOmga}, a $B\XA^{1}$-module $M$ has an $A$-bimodule structure and an induced $\Omega^{1}$-intertwining defined by $\sigma (m\otimes\omega )= \omega_{i}\otimes (x_{i}, \omega)m$. As described in Section \ref{Scnc}, by restriction to $T\XA^{1}_{\sbt}$, the module $M$ has a left connection defined by $\cnc (m)= \sum_{i} \omega_{i}\otimes x_{i}m$. The induced connection $\cnc$ is a left bimodule connection 
\begin{align*}
 \cnc (ma)&=  \omega_{i}\otimes x_{i}(ma)=  \omega_{i}\otimes (x_{i}\sbt \ov{a} )m
\\ &=  \omega_{i}\otimes (\ov{a}\sbt x_{i})m +  \omega_{i}\otimes (x_{i}, da)m
\\ &=  \omega_{i}\otimes (x_{i}m)a+ \sigma (m\otimes da )= \cnc (m) a+ \sigma (m\otimes da )
\end{align*}
for all $a\in A$ and $m\in M$. Functoriality follows easily and the functor in the opposite direction is formed by realising that the induced $T\XA^{1}_{\sbt}$ and $B(\Omega^{1})$ actions for a bimodule connection satisfy relation (\ref{EqBXA}) and induce an action of $B\XA^{1}$.
\end{proof}
\begin{rmk}\label{RSrjBim} When the calculus is surjective, a triple $(M,\cnc,\sigma )$ being a left bimodule connection is a property for a given bimodule $M$ with a left connection $\cnc$ i.e. the $\Omega^{1}$-intertwining $\sigma$ is not additional data and either exists or not. We observe that in this case the generators of the form $\XA^{1}\otimes_{\field}\Omega^{1}$ are made redundant in the definition of $B\XA^{1}$, because $\Omega^{1}$ is spanned by elements of the form $adb$, where $a,b\in A$, and for any $(x,adb)\in \XA^{1}\otimes_{\field}\Omega^{1}$ we have
$$ (x,adb)= \big( x\sbt \ov{b} - \ov{b}\sbt x\big)\sbt \ov{a}= [x,\ov{b}]\sbt \ov{a}$$
Thereby, $B\XA^{1}$ reduces to a quotient of $A^{e}\star T\XA^{1}$ with relations of $T\XA^{1}_{\sbt}$, (\ref{EqAX}), (\ref{EqXA}) and relations arising from $[x,\ov{b}]\sbt \ov{a}$ being regarded as elements of $\XA^{1}\otimes_{\field}\Omega^{1}$. We do this reduction for Example \ref{EBiaM2C}.
\end{rmk}
\subsection{$T\mathfrak{X}^{1}_{\sbt}$ as a Central Commutative Algebra in $\biml$}\label{SCCATX}
In this section we consider the $A$-bimodule structure on $T\XA^{1}_{\sbt}$ which arises from $A$ being a subalgebra of $T\XA^{1}_{\sbt}$. In \cite{beggs2014noncommutative}, $T\XA^{1}_{\sbt}$ is presented with the additional structure of a commutative algebra in the lax center of $\biml$. We briefly recall the definition of the center of a monoidal category from \cite{majid1992braided}.

If $(\mathcal{C},\otimes,1_{\otimes},\alpha ,l, r )$ is a monoidal category as described in Section \ref{SBial}, then the \emph{(lax) center} of $\mathcal{C}$ has pairs $(X,\tau)$ as objects, where $X$ is an object in $\mathcal{C}$ and $\tau : X\otimes -\rightarrow -\otimes X$ is a natural (transformation) isomorphism satisfying
\begin{equation}\label{EqBrai}
\tau_{1_{\otimes}}= l_{X}^{-1}r_{X}, \hspace{1cm}(\mathrm{id}_{M}\otimes \tau_{N})(\tau_{M}\otimes \mathrm{id}_{N})\alpha_{X,M,N}^{-1}=\alpha_{M,N,X}\tau_{M\otimes N}
\end{equation}
and morphisms $f: X\rightarrow Y$ of $\mathcal{C}$ satisfying $(\mathrm{id}_{\mathcal{C}}\otimes f)\tau =\nu (f\otimes \mathrm{id}_{\mathcal{C}})$, as morphism $f:(X,\tau)\rightarrow (Y,\nu )$. 
We denote the lax center and center by $Z^{lax}(\mathcal{C})$ and $Z(\mathcal{C})$, respectively. This construction is often referred to as the \emph{Drinfeld-Majid center}. The lax center is also referred to as the \emph{prebraided} or \emph{weak} center. The (lax) center has a monoidal structure via 
$$ (X,\tau )\otimes (Y, \nu):= (X\otimes Y , (\tau \otimes \mathrm{id}_{Y})(\mathrm{id}_{X}\otimes\nu ) )$$ 
and $(1_{\otimes}, l^{-1}r)$ acting as the monoidal unit, so that the forgetful functor to $\mathcal{C}$ is strong monoidal. 

First we observe that if we restrict the coproduct $\Delta$ to $T\XA^{1}_{\sbt}$, we obtain a map 
$$\ov{\Delta} : T\XA^{1}_{\sbt} \rightarrow |\langle \mathfrak{M}\rangle \otimes T\XA^{1}_{\sbt} \oplus T\XA^{1}_{\sbt} \otimes 1 $$
where $\langle \mathfrak{M}\rangle$ is the ideal generated by elements of $\mathfrak{M}$ in $B\XA^{1}$. Notice that we are abusing notation here and should be writing $T\XA^{1}_{\sbt}$ instead of $1$. However, we do this to emphasise that the image of the map is $1\in T\XA^{1}_{\sbt}$. 

For any bimodule $M$, we can restrict $\Delta_{M,A}$, as described in (\ref{EqDelMN}), to $T\XA^{1}_{\sbt}$:
$$\ov{\Delta}_{M} : T\XA^{1}_{\sbt} \otimes M \rightarrow (\langle \mathfrak{M}\rangle\boxtimes M )\otimes T\XA^{1}_{\sbt} \oplus (T\XA^{1}_{\sbt} \otimes M)\otimes 1 $$
Observe that $\ov{\Delta}_{M}$ is in fact an $A$-bimodule morphism. This is because $(B\XA^{1}\boxtimes M)\otimes (B\XA^{1}\boxtimes A)$ is the image of $\Delta_{M,A}$ and $B\XA^{1}\boxtimes A = B\XA^{1}/ \lbrace b\sbt a =b\sbt \ov{a}\mid b\in B\XA^{1},\ a\in A\rbrace$. Therefore, for any $b\in B\XA^{1}$ and $m\in M$ 
$$\Delta_{M,A} (b\boxtimes ma) = \Delta_{M,A} (b\sbt\ov{a}\boxtimes m)= (b_{(1)} \boxtimes m)\otimes b_{(2)}\sbt\ov{a}= (b_{(1)} \boxtimes m)\otimes b_{(2)}\sbt a$$ 
holds and $\ov{\Delta}_{M}$ is an $A$-bimodule morphism.

Consequently, for any $B\XA^{1}$-module $(M,\triangleright :B\XA^{1}\boxtimes M\rightarrow M)$, the composition 
$$\xymatrix@C+3.9pt{\lambda_{M}:T\XA^{1}_{\sbt} \otimes M \ar[r]^{\hspace{0.3cm}\ov{\Delta}_{M}\hspace{2cm}}&(\langle \mathfrak{M}\rangle\boxtimes M )\otimes T\XA^{1}_{\sbt} \oplus (T\XA^{1}_{\sbt} \otimes M)\otimes 1 \ar[r]^{\hspace{2.2cm}\triangleright \otimes \mathrm{id}_{T\XA^{1}_{\sbt}}} &M\otimes T\XA^{1}_{\sbt}}
$$
is an $A$-bimodule map. Recall that the algebra $T\XA^{1}_{\sbt}$ has a natural $A$-bimodule structure due to $A$ being its subalgebra, which makes $T\XA^{1}_{\sbt}$ a left $A^{e}$-module. We can extend this left $A^{e}$-action on $T\XA^{1}_{\sbt}$ to a left $B\XA^{1}$-module structure, where the elements of $T\XA^{1}_{\sbt}$ act by the multiplication of the algebra, and the action of the ideal $\langle \mathfrak{M}\rangle$ is zero. Equivalently, as a left bimodule connection we obtain the triple $(T\XA^{1}_{\sbt} , \sum_{i}\omega_{i} \otimes x_{i}\sbt -, 0)$. Consequently, $\lambda_{M}$ becomes a morphisms of bimodule connections i.e. $\lambda_{M}$ respects the $B\XA^{1}$-action since the coproduct respects multiplication by (\ref{EqDelMul}). Furthermore, for any morphism of left bimodule connections $f:M\rightarrow N$, the right square below commutes 
$$\xymatrix@C+8pt{T\XA^{1}_{\sbt} \otimes M \ar[d]^{\mathrm{id}_{T\XA^{1}_{\sbt}}\otimes f}\ar[r]^{\ov{\Delta}_{M}\hspace{2cm}}&(\langle \mathfrak{M}\rangle\boxtimes M )\otimes T\XA^{1}_{\sbt} \oplus (T\XA^{1}_{\sbt} \otimes M)\otimes 1 \ar[d]^{(\mathrm{id}_{B\XA^{1}}\boxtimes f)\otimes \mathrm{id}_{T\XA^{1}_{\sbt}}}\ar[r]^{\hspace{2.1cm}\triangleright \otimes \mathrm{id}_{T\XA^{1}_{\sbt}}} &M\otimes T\XA^{1}_{\sbt}\ar[d]^{f\otimes \mathrm{id}_{T\XA^{1}_{\sbt}}}
\\ T\XA^{1}_{\sbt} \otimes N \ar[r]^{\ov{\Delta}_{N}\hspace{2cm}}&(\langle \mathfrak{M}\rangle\boxtimes N )\otimes T\XA^{1}_{\sbt} \oplus (T\XA^{1}_{\sbt} \otimes N)\otimes 1 \ar[r]^{\hspace{2.1cm}\triangleright \otimes \mathrm{id}_{T\XA^{1}_{\sbt}}} &N\otimes T\XA^{1}_{\sbt}} $$
and thereby $\lambda_{N}(\mathrm{id}_{T\XA^{1}_{\sbt}}\otimes f)=(f\otimes \mathrm{id}_{T\XA^{1}_{\sbt}})\lambda_{M}$. This implies that 
$$\lambda : T\XA^{1}_{\sbt} \otimes \mathrm{id}_{\biml} \rightarrow \mathrm{id}_{\biml} \otimes T\XA^{1}_{\sbt} $$
is a natural transformation. It follows directly from the definition of $\ov{\Delta}_{M}$, the coassociativity of $\Delta$, (\ref{EqDelAss}), and the counit condition, (\ref{EqCoun}), that $\lambda $ satisfies the braiding conditions (\ref{EqBrai}). 
\begin{thm}\hspace{0.001cm}[Theorem 8.2 \cite{beggs2014noncommutative}] The triple $(T\XA^{1}_{\sbt} ,\sum_{i} \omega_{i} \otimes x_{i}\sbt -, 0)$ along with braiding $\lambda $ becomes an object in the lax center $Z^{lax}\left(\biml \right)$. 
\end{thm}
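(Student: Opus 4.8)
The plan is to take the half-braiding $\lambda$ and the bimodule-connection structure on $T\XA^{1}_{\sbt}$ as already assembled in the preceding discussion, and to reduce the theorem to verifying the two defining axioms \ref{EqBrai} of a lax central object. First I would record that the triple really is an object of $\biml$: the left regular action of $T\XA^{1}_{\sbt}$ on itself, together with the zero action of the ideal $\langle\mathfrak{M}\rangle$, is compatible with relations \ref{EqXA} and \ref{EqBXA} (on $T\XA^{1}_{\sbt}$ the relation \ref{EqBXA} reads $x(ta)=(xt)a$, so the term $(x,da)$ may indeed be sent to zero), and hence defines a genuine $B\XA^{1}$-module. Under the isomorphism of Theorem \ref{TBXAbiml} this module corresponds to the connection $\cnc=\sum_{i}\omega_{i}\otimes x_{i}\sbt-$ and, since every $(x_{i},\omega)\in\langle\mathfrak{M}\rangle$ acts by zero, to the intertwining $\sigma=0$. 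The naturality of $\lambda$ and the fact that each $\lambda_{M}$ is a morphism of $\biml$ were checked in the discussion, so only \ref{EqBrai} remains.

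For the hexagon (the second identity in \ref{EqBrai}) triviality of the associators reduces the claim to $(\mathrm{id}_{M}\otimes\lambda_{N})(\lambda_{M}\otimes\mathrm{id}_{N})=\lambda_{M\otimes N}$. Unwinding $\lambda_{M}=(\triangleright\otimes\mathrm{id}_{T\XA^{1}_{\sbt}})\ov{\Delta}_{M}$ and using that the $B\XA^{1}$-action on $M\otimes N$ is $(\triangleright_{M}\otimes\triangleright_{N})\Delta_{M,N}$ from \ref{EqDelMN}, both composites send $t\otimes m\otimes n$ to $(t_{(1)(1)}\triangleright m)\otimes(t_{(1)(2)}\triangleright n)\otimes t_{(2)}$, and the two bracketings of the iterated coproduct coincide by coassociativity \ref{EqDelAss}; this settles the identity.

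For the unit condition $\tau_{1_{\otimes}}=l^{-1}r$ I would evaluate $\lambda_{A}$ on the monoidal unit $(A,d,\mathrm{id}_{\Omega^{1}})$, whose $B\XA^{1}$-action is $\epsilon_{0}:b\boxtimes a\mapsto\epsilon(ba)$. Then $\lambda_{A}(t\otimes a)=\epsilon(t_{(1)}a)\otimes t_{(2)}$; moving the scalar $\epsilon(t_{(1)}a)$ across $\otimes_{A}$ and applying the counit axiom \ref{EqCoun}, together with the dual-basis relation $\sum_{i}\mathrm{ev}(y,\omega_{i})x_{i}=y$ and relation \ref{EqXA}, collapses this to $1\otimes(t\sbt a)$, where $t\sbt a$ is the right $A$-action on $T\XA^{1}_{\sbt}$. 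Since $l^{-1}r(t\otimes a)=1\otimes(t\sbt a)$, the unit axiom holds.

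The main obstacle is not any single computation but the bookkeeping forced by $\ov{\Delta}$ being the restriction of $\Delta$ to the subalgebra $T\XA^{1}_{\sbt}$: its image lies in $(\langle\mathfrak{M}\rangle\boxtimes-)\otimes T\XA^{1}_{\sbt}\oplus(T\XA^{1}_{\sbt}\otimes-)\otimes1$, and one must check that coassociativity is compatible with this two-summand decomposition, so that the outgoing leg of each iterated coproduct stays in $T\XA^{1}_{\sbt}$ while the remaining legs act through $\langle\mathfrak{M}\rangle$ exactly as the formula for $\lambda$ requires. Once this is in place the two axioms follow as above. Finally I would observe that $\lambda$ need not be invertible, so the construction lands in the lax (prebraided) center $Z^{lax}(\biml)$ rather than the genuine center, which is consistent with the statement.
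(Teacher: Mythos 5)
Your proposal is correct and follows essentially the same route as the paper: the paper's argument is exactly the pre-theorem discussion (the restricted coproduct $\ov{\Delta}$, the $B\XA^{1}$-module structure with $\langle\mathfrak{M}\rangle$ acting by zero, naturality of $\lambda$) followed by the observation that the braiding axioms (\ref{EqBrai}) follow from coassociativity (\ref{EqDelAss}) and the counit condition (\ref{EqCoun}), which is precisely your hexagon and unit verification. You merely spell out the Sweedler-notation details (including the two-summand bookkeeping for $\ov{\Delta}$) that the paper asserts follow "directly", so there is nothing to correct.
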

The braiding presented for the left bimodule connection $(T\XA^{1}_{\sbt} , \sum_{i}\omega_{i} \otimes x_{i}\sbt -, 0)$ in \cite{beggs2014noncommutative}, coincides with our definition of $\lambda$ on the elements of $\XA^{1}$ and $A$, and is extended iteratively for their basis of $T\XA^{1}_{\sbt}$ and ultimately gives the same braiding. Additionally, in \cite{beggs2014noncommutative}, $T\XA^{1}_{\sbt}$ forms a commutative algebra with the braiding $\lambda$ i.e. $\sbt (\lambda_{T\XA^{1}_{\sbt}} )=\sbt$. This follows from the image of $\ov{\Delta}_{M}$ on the right component being the identity i.e. the diagram
$$\xymatrix@C+1.5pt{T\XA^{1}_{\sbt}\otimes M\ar[dr]_-{\ov{\Delta}_{M}}\ar[rr]^{\mathrm{id}_{T\XA^{1}_{\sbt} \otimes M}\otimes 1} & & T\XA^{1}_{\sbt} \otimes M \otimes 1
\\ &(\langle \mathfrak{M}\rangle\boxtimes M )\otimes T\XA^{1}_{\sbt} \oplus (T\XA^{1}_{\sbt} \otimes M)\otimes 1\ar[ur]_-{\hspace{0.8cm} 0 \oplus \mathrm{id}_{T\XA^{1}_{\sbt} \otimes M \otimes 1} }&} $$
commutes. When $M=T\XA^{1}_{\sbt}$, the action of $\langle \mathfrak{M}\rangle$ on $T\XA^{1}_{\sbt}$ is zero and 
$$\xymatrix@C+3pt{T\XA^{1}_{\sbt}\otimes T\XA^{1}_{\sbt}\ar[dr]_-{\ov{\Delta}_{ T\XA^{1}_{\sbt}}}\ar[r]^{\sbt}  & T\XA^{1}_{\sbt} &T\XA^{1}_{\sbt}\otimes 1\ar[l]_{\sbt}
\\ &(\langle \mathfrak{M}\rangle\boxtimes T\XA^{1}_{\sbt} )\otimes T\XA^{1}_{\sbt} \oplus (T\XA^{1}_{\sbt} \otimes T\XA^{1}_{\sbt})\otimes 1\ar[ur]_-{\hspace{0.8cm}0 \otimes \mathrm{id}_{T\XA^{1}} \oplus  \sbt \otimes \mathrm{id}_{T\XA^{1}}}& } $$
commutes. 

The author would like to point out that although the above description answers why $T\XA^{1}_{\sbt}$ appears as a commutative algebra in the lax center of $\biml$ and provides a framework for the work presented in \cite{beggs2014noncommutative}, it does not seem to relate to previous work on bialgebroids. As demonstrated in \cite{bruguieres2011hopf}, central commutative algebras should be viewed equivalent to Hopf comonads. However, the resulting comonad is not a part of the picture below.
$$\xymatrix{\prescript{}{B(\Omega^{1})}{\mathcal{M}}\cong\prescript{}{A}{\mathcal{M}}^{\Omega^{1}}_{A}\ar@<1ex>[r] &\bim\ar[l]\ar[dl] \\ \prescript{}{B\XA^{1}}{\mathcal{M}}\cong\biml\ar[u]\ar@<1ex>[ur]&} $$
The forgetful functor $\biml\rightarrow\prescript{}{A}{\mathcal{M}}^{\Omega^{1}}_{A}$ does not appear to have a left adjoint. In other words, $B\XA^{1}$ does not arise as the composition of two bimonads as defined in \cite{bruguieres2011hopf}. It is also not an extension by a central commutative algebra, as described in Section 3.4.7 of \cite{BOHM2009173}, since $T\XA^{1}_{\sbt}$ is not a commutative algebra in the center of $ \prescript{}{A}{\mathcal{M}}^{\Omega^{1}}_{A}$.
\subsection{Examples of Bialgebroids}\label{SExBia}
Now we present several examples of left bialgebroids by generators and relations, arising from the differential calculi presented in Section \ref{SNCG}. In the examples below we will not repeat how the coproduct and counit are defined on elements of $A^{e}$ in $B\XA^{1}$, since they follow from the bialgebroid axioms.
\begin{ex}\label{EBiaDeriv}[Derivation Calculus] Recall that for any derivation $d$ on an algebra $A$, we regard $\Omega^{1}=A$ as a bimodule, so that $\XA^{1}=A$, where the evaluation morphism is given by multiplication and the coevaluation morphism is given by $\mathrm{coev}(1)=1\otimes 1$. It is easy to see that $T\XA^{1}_{\sbt}$ is isomorphic to
$$T\XA^{1}_\sbt = A\star\field [D]/ \langle D\sbt a =a\sbt D +da \mid a\in A \rangle $$
where $D=1\in\XA^{1}$. In this case we say the algebra factorizes as $A.\field [D]$ under the commuting relations $D\sbt a =a\sbt D +da$, for $a\in A$. The bialgebroid $B\XA^{1}$ has the additional generator $F=1\otimes_{\field} 1\in \XA^{1}\otimes_{\field} \Omega^{1}$ and factorises as $A^{e}.\field\langle D,F\rangle$ with the commutation relations 
$$[D, a]=da, \hspace{1cm} [D, \ov{a}]=\ov{da} \sbt F, \hspace{1cm}[F , a]= [F , \ov{a}]=0 $$
where $a\in A$. The coproduct and counit are are defined on the generators by $\Delta (D)=D\otimes 1+F\otimes D$, $\Delta (F)=F\otimes F$ with $\epsilon (D)=0$ and $\epsilon (F)=1$. 
\end{ex}
\begin{ex}\label{EBiaGrph} [Finite Quivers] Example \ref{EGrph} provided a setting for differential geometry on a finite quiver $\Gamma =(V,E)$, with $A=\field (V)$ and $\Omega^{1}=\oplus_{e\in E}\field\ovr{e}$. Consequently, $\XA^{1} = \oplus_{e\in E}\field\ovl{e}$ where $f_{p}\ovl{e}f_{q}= \delta_{p,t(e)}\delta_{q,s(e)}\ovl{e}$. In this case $T\XA^{1}_{\sbt} =\field \langle f_{p},\ovl{e}\mid p\in V, e\in E\rangle / \mathcal{U}$ where $\mathcal{U}$ is the ideal generated by relations 
\begin{align}
f_{p}\sbt f_{q}&= \delta_{p,q}f_{q} ,\quad\quad f_{p}\sbt \ovl{e} =\delta_{p,t(e)} \ovl{e}
\\  \ovl{e}\sbt f_{p} &= \delta_{p,s(e)} [\ovl{e} -f_{t(e)}] + \delta_{p,t(e)} f_{t(e)} \label{EqGrphTX}
\end{align} 
for all $e\in E$ and $p,q\in V$. In Lemma 4.1 of \cite{majid2018generalised}, it was pointed out that a left connection over this calculus corresponds to a \emph{quiver representation} in the classical sense \cite{auslander1997representation}. We can explain this by observing that the \emph{quiver path algebra} $\field\Gamma$, whose module category recovers the category of quiver representations, is isomorphic to $T\XA^{1}_{\sbt}$. The quiver algebra $\field\Gamma$ has the same generators, however it has $\ovl{e}\sbt f_{p} = \delta_{p,s(e)} \ovl{e}$ as a relation instead of (\ref{EqGrphTX}). There exists an isomorphism of algebras $\field\Gamma \rightarrow T\XA^{1}_{\sbt}$ define by 
$$f_{p}\longmapsto f_{p},\quad \quad \ovl{e}\longmapsto  \ovl{e}-f_{t(e)}$$
Hence, the bialgebroid $B\XA^{1}$ is the quotient of $\field\Gamma\left\langle \ov{f_{p}},\ (\ovl{e_{1}},\ovr{e_{2}})\mid p\in S, \ e_{1},e_{2}\in E \right\rangle$ by the additional relations 
\begin{align*}
\ov{f_{p}}\sbt \ov{f_{q}}= \delta_{p,q}\ov{f_{q}} , \quad &f_{p}\sbt \ov{f_{q}}=\ov{f_{q}}\sbt f_{p}
\\ (\ovl{e_{1}},\ovr{e_{2}})\sbt f_{p}\sbt \ov{f_{q}} = &(\ovl{e_{1}},\ovr{e_{2}})\delta_{p,s(e_{1})}\delta_{q,s(e_{2})}
\\ f_{p}\sbt \ov{f_{q}}\sbt(\ovl{e_{1}},\ovr{e_{2}}) =& (\ovl{e_{1}},\ovr{e_{2}})\delta_{p,t(e_{1})}\delta_{q,t(e_{2})}
\\ \ovl{e_{1}}\sbt \ov{f_{q}}= \ov{f_{q}}\sbt \ovl{e_{1}} +\sum_{e\in E,\  t(e)=q} & (\ovl{e_{1}},\ovr{e})- \sum_{e\in E,\   s(e)=q}  (\ovl{e_{1}},\ovr{e})
\end{align*}
and the coproduct and counit  are defined by 
\begin{align*}
\Delta ((\ovl{e_{1}},\ovr{e_{2}}))= \sum_{e\in E} (\ovl{e_{1}},\ovr{e})\otimes (\ovl{e},\ovr{e_{2}}), \quad\epsilon ((\ovl{e_{1}},\ovr{e_{2}}))= \delta_{e_{1},e_{2}} f_{t(e_{1})}
\\ \Delta (\ovl{e_{1}}) = \ovl{e_{1}}\otimes 1 + \sum_{e\in E} (\ovl{e_{1}},\ovr{e})\otimes (\ovl{e}+f_{t(e)}), \quad \epsilon (\ovl{e_{1}}) =-f_{t(e_{1})} 
\end{align*}
for all $e_{1},e_{2}\in E$ and $p,q\in V$. 
\end{ex}
\begin{ex}\label{EBiaM2C}[$M_{2}(\mathbb{C})$] For the calculus of Example \ref{EM2C}, we denote elements $1\oplus 0$ and $0\oplus 1$ in $\Omega^{1}$ by $\mathsf{s}$ and $\mathsf{t}$, respectively. Hence, $\XA^{1}$ is a free bimodule with $f_{\mathsf{s}}$ and $f_{\mathsf{t}}$ as the dual basis to $\mathsf{s}$ and $\mathsf{t}$. The algebra $T\XA^{1}_{\sbt}$ was described in Chapter 6 of \cite{beggs2019quantum}, and factorises as $A.\mathbb{C}\langle f_{\mathsf{s}},f_{\mathsf{t}}\rangle$ with commutation relations 
$$f_{\mathsf{s}}\sbt a = a \sbt f_{\mathsf{s}}+ [E_{12},a] ,\hspace{0.6cm} f_{\mathsf{t}}\sbt a = a \sbt f_{\mathsf{t}}+ [E_{21},a]$$
The bialgebroid $B\XA^{1}$ factorises as $A^{e}.\mathbb{C}\langle f_{i},\omx{i}{\gamma}{j}\mid i,j\in \lbrace \mathsf{s},\mathsf{t}\rbrace\rangle$ with additional relations 
\begin{align*}
f_{i} \sbt \ov{a} = \ov{a} \sbt f_{i} + &\ov{[E_{12},a]}\omx{i}{\gamma}{s}+\ov{[E_{21},a]}\omx{i}{\gamma}{t}, \quad [\omx{i}{\gamma}{j}, a\ov{b} ]=0
\end{align*}
for $i,j\in \lbrace \mathsf{s},\mathsf{t}\rbrace$. The coproduct and counit are defined by 
\begin{align*}
\Delta (f_{i})&=f_{i}\otimes 1 + \omx{i}{\gamma}{\mathsf{s}}\otimes f_{\mathsf{s}}+ \omx{i}{\gamma}{\mathsf{t}}\otimes f_{\mathsf{t}} \hspace{1cm} \epsilon (f_{i})=0  
\\ \Delta (\omx{i}{\gamma}{j})&= \omx{i}{\gamma}{\mathsf{s}}\otimes\omx{\mathsf{s}}{\gamma}{j}+ \omx{i}{\gamma}{\mathsf{t}}\otimes\omx{\mathsf{t}}{\gamma}{j}\hspace{1.8cm} \epsilon(\omx{i}{\gamma}{j})= \delta_{i,j}
\end{align*}
for $i,j\in \lbrace \mathsf{s},\mathsf{t}\rbrace$. The calculus in this case is surjective with $\mathsf{s}=  (dE_{21})E$ and $\mathsf{t}=  (dE_{12})E$, where $E=E_{11}-E_{22}$. By Remark \ref{RSrjBim} generators of the form $\omx{i}{\gamma}{j}$ become redundant: 
$$\omx{i}{\gamma}{\mathsf{s}}= \ov{E}\sbt [f_{i},\ov{E_{21}} ],\quad  \omx{i}{\gamma}{\mathsf{t}}= -\ov{E}\sbt [f_{i},\ov{E_{12} }]$$
where $i\in \lbrace \mathsf{s},\mathsf{t}\rbrace$. Thereby, $B\XA^{1}$ factorizes as $A^{e}.\mathbb{C}\langle f_{\mathsf{s}},f_{\mathsf{t}}\rangle$ with the $T\XA^{1}_{\sbt}$ relations as above and the additional relations 
$$[f_{i},\ov{a}]= \ov{E[E_{12},a]} \sbt [f_{i},\ov{E_{21}} ]+\ov{E[E_{21},a]} \sbt [f_{i},\ov{E_{12} }] $$ 
for all $i\in \lbrace \mathsf{s},\mathsf{t}\rbrace$ and $a\in A$.
\end{ex}
\begin{ex}\label{EBiaHopf}[Hopf Bimodules] If $(A,\delta ,\nu ,s)$ is a Hopf algebra and $\Omega^{1} = \Lambda \otimes_{\field} A$ a Hopf bimodule, then $\Omega^{1}$ being right fgp is equivalent to $\Lambda$ being a finite dimensional vectorspace, with basis $\lbrace\lambda_{i}\rbrace_{i=1}^{n}$. Hence, $\XA^{1}\cong A\otimes_{\field} \Lambda^{\star}$ is free as a left module, where $\Lambda^{\star}$ is the dual vectorspace to $\Lambda$ with dual basis $\lbrace f_{i}\rbrace_{i=1}^{n}$. Here, $\Lambda^{\star}$ has an induced right $A$-action corresponding to the left $A$-action of $\Lambda$ defined by $f\triangleleft a =f(a\triangleright -)$ for all $f\in\Lambda^{\star}$ and $a\in A$. In this case, $T\XA^{1}_{\sbt}$ was described in Chapter 6 of \cite{beggs2019quantum} and factorises as $A.T\Lambda^{\star}$ with commutation relation 
$$f_{i}\sbt a =a_{(2)}\sbt f_{i}\triangleleft a_{(1)} +\partial^{i}(a) $$ 
where $\partial^{i}(a)=\mathrm{ev}(1\otimes_{\field} f_{i}\otimes da)$. The $A^{e}$-bimodule $\XA^{1}\otimes_{\field}\Omega^{1}$ is free as a left $A^{e}$-module and isomorphic to $ A^{e}\otimes_{\field} (\Lambda^{\star}\otimes_{\field} \Lambda)$. We denote the basis of $\Lambda^{\star}\otimes_{\field} \Lambda$ by $(f_{i},\lambda_{j})$. Hence, the bialgebroid $B\XA^{1}$ factorizes as $A.T\mathfrak{L}$ where $\mathfrak{L}=\Lambda^{\star}\oplus (\Lambda^{\star}\otimes_{\field} \Lambda)$, with additional commutation relations
\begin{align*}
(f_{i},\lambda_{j})\sbt a\ov{b} = & a_{(2)}\ov{b_{(2)}}\sbt(f_{i}\triangleleft a_{(1)} , b_{(1)}\triangleright \lambda_{j} )
\\ [f_{i}, \ov{a}] &= \sum_{j=1}^{n}\ov{\partial^{j}(a)} \sbt (f_{i}, \lambda_{j})
\end{align*}
for all $1\leq i,j\leq n$. The coproduct and counit are given by 
\begin{align*}
\Delta (f_{i})&=f_{i}\otimes 1 + \sum_{j=1}^{n}(f_{i},\lambda_{j})\otimes f_{j},  \hspace{1cm}\epsilon (f_{i})=0
\\ \Delta ((f_{i},\lambda_{j}))&= \sum_{k=1}^{n}(f_{i},\lambda_{k})\otimes (f_{k},\lambda_{j}),\hspace{1cm} \epsilon((f_{i},\lambda_{j}))=\delta_{i,j} 
\end{align*}
for all $1\leq i,j\leq n$.
\end{ex}
\begin{ex}\label{EBiaD6}[$\mathbb{C}D_{6}$] Let $D_{6}$ denote the Dihedral group with 6 elements with presentation $\langle a,b\mid a^{3}=b^{2}=1,\ a^{2}b=ba\rangle$ and $\Lambda$ its 2-dimensional irreducible complex representation with basis $\xi ,\tau$ defined by  
$$ a\triangleright \xi = \frac{1}{2} (\xi +\sqrt{3}\tau),\quad  b\triangleright \xi =\xi, \quad a\triangleright \tau = \frac{1}{2} (-\sqrt{3}\xi +\tau),\quad b\triangleright \tau =-\tau $$
Recall from Example \ref{EGrpAlg} that we obtain an inner calculus on $A=\mathbb{C}D_{6}$, by taking $\theta=\xi +\tau$, so that $d:\mathbb{C}D_{6}\rightarrow \Omega^{1}$ satisfies 
$$ d(a)= \frac{1}{2} [ -(1+\sqrt{3})\xi +(\sqrt{3}-1)\tau]\otimes_{\mathbb{C}} a ,\quad d(b)=-2\tau \otimes_{\mathbb{C}} b $$
Consequently, $\Lambda^{*}$ has a dual basis to $\Lambda$, denoted by $f_{\xi},f_{\tau}$ and $T\XA^{1}_{\sbt}$ factorizes as $A.\mathbb{C}\langle f_{\xi},f_{\tau}\rangle$ with commutation relations 
\begin{align*}
f_{\xi}\sbt a& = \frac{1}{2} a \sbt (f_{\xi} -\sqrt{3}f_{\tau}) -\frac{1}{2}(1+\sqrt{3})a, \hspace{0.6cm}f_{\xi}\sbt b= b\sbt f_{\xi} 
\\ f_{\tau}\sbt a &= \frac{1}{2} a \sbt (\sqrt{3}f_{\xi} +f_{\tau}) +\frac{1}{2}(\sqrt{3}-1)a,\hspace{0.6cm} f_{\tau}\sbt b= -b\sbt f_{\tau} -2b
\end{align*}
The resulting bialgebroid $B\XA^{1}$ factorises as $A^{e}.\mathbb{C}\langle f_{\xi},f_{\tau}, \omx{\xi}{\gamma}{\xi},\omx{\tau}{\gamma}{\xi},\omx{\xi}{\gamma}{\tau},\omx{\tau}{\gamma}{\tau}\rangle$ with additional relations
\begin{align*}
[f_{i},\ov{a}] &= -\frac{1}{2}(1+\sqrt{3})\ov{a}\sbt\omx{i}{\gamma}{\xi}+\frac{1}{2}(\sqrt{3}-1)a\sbt\omx{i}{\gamma}{\tau} ,\quad [f_{i},\ov{b}]=-2\ov{b}\sbt \omx{i}{\gamma}{\tau}
\\ &\omx{\xi}{\gamma}{i}\sbt a = \frac{1}{2} (\omx{\xi}{\gamma}{i} -\sqrt{3}\omx{\tau}{\gamma}{i}), \quad \omx{\tau}{\gamma}{i}\sbt a = \frac{1}{2} (\sqrt{3}\omx{\xi}{\gamma}{i} +\omx{\tau}{\gamma}{i}) 
\\ &\omx{i}{\gamma}{\xi}\sbt \ov{a} = \frac{1}{2} (\omx{i}{\gamma}{\xi} +\sqrt{3}\omx{i}{\gamma}{\tau}), \quad \omx{i}{\gamma}{\tau}\sbt \ov{a} = \frac{1}{2} (-\sqrt{3}\omx{i}{\gamma}{\xi} +\omx{i}{\gamma}{\tau}) 
\\ \omx{\xi}{\gamma}{i}\sbt b &=\omx{\xi}{\gamma}{i}, \quad \omx{\tau}{\gamma}{i}\sbt b =-\omx{\tau}{\gamma}{i} ,\quad  \omx{i}{\gamma}{\xi}\sbt \ov{b}= \omx{i}{\gamma}{\xi},\quad  \omx{i}{\gamma}{\tau}\sbt \ov{b}= -\omx{i}{\gamma}{\tau}
\end{align*}
for $i\in \lbrace \xi,\tau\rbrace$. The coproduct and counit take the form of 
\begin{align*}
\Delta (f_{i})&=f_{i}\otimes 1 + \omx{i}{\gamma}{\xi}\otimes f_{\xi}+ \omx{i}{\gamma}{\tau}\otimes f_{\tau} \hspace{1cm} \epsilon (f_{i})=0  
\\ \Delta (\omx{i}{\gamma}{j})&= \omx{i}{\gamma}{\xi}\otimes\omx{\xi}{\gamma}{j}+ \omx{i}{\gamma}{\tau}\otimes\omx{\tau}{\gamma}{j}\hspace{1.8cm} \epsilon(\omx{i}{\gamma}{j})= \delta_{i,j}
\end{align*}
for $i,j\in \lbrace \xi,\tau\rbrace$.
\end{ex}
\section{Hopf Algebroids for Pivotal Calculi}
We would like the monoidal category of connections which we consider to lift the closed monoidal structure of $\bim$. In this a situation, if a bimodule with such a connection is right (left) fgp, its dual bimodule $\pr{M}$ (resp. $M^{\vee}$) will have an induced connection making it left (right) dual to the original connection in this monoidal category of connections. In Section 3.4.2 of \cite{beggs2019quantum}, several statements are presented, demonstrating that if $M$ is a right (left) fgp bimodule with a left (right) bimodule connection $(M,\cnc ,\sigma)$ such that $\sigma$ is invertible, then $\pr{M}$ (resp. $M^{\vee}$) has a compatible right (left) bimodule connection structure. The subcategory of \emph{invertible bimodule connections}, with left bimodule connections with invertible $\Omega^{1}$-intertwinings as objects, is hence considered as a nicer category to work with. In particular, left and right bimodule connections with invertible intertwining morphisms coincide. However, the category of invertible bimodule connections is not closed: given a right fgp bimodule $M$ with an invertible left bimodule connection $(M,\cnc ,\sigma)$, its left dual bimodule $\pr{M}$ will have a right bimodule connection structure denoted by $(\pr{M},\pr{\cnc} ,\sigma^{\sharp})$, but the $\Omega^{1}$-intertwining $\sigma^{\sharp}$ is not necessarily invertible. In fact, there is a natural way of defining connections on inner homs of invertible bimodule connections, but to obtain the correct closed monoidal category lifting the structure of $\bim$, we must find a subcategory of $\prescript{}{A}{\mathcal{M}}^{\Omega^{1}}_{A}$ which lifts the closed structure of $\bim$.
\subsection{Invertible Bimodule Connections}\label{SInv}
To agree with \cite{beggs2019quantum}, we denote the category of invertible bimodule connections i.e. the subcategory of $\biml$, where objects $(M,\cnc ,\sigma)$ have invertible $\Omega^{1}$-intertwinings $\sigma$, by $\Ibim $. Furthermore, we denote the subcategory of $\bim^{\Omega^{1}}$ of bimodules with invertible $\Omega^{1}$-intertwinings by $\prescript{}{A}{\mathcal{IM}}_{A}^{\Omega^{1}}$. It should be clear that $\prescript{}{A}{\mathcal{IM}}_{A}^{\Omega^{1}}$ is a monoidal subcategory of $\prescript{}{A}{\mathcal{M}}^{\Omega^{1}}_{A}$. 
\begin{lemma}\label{LSigInv} An object of $\prescript{}{A}{\mathcal{M}}^{\Omega^{1}}_{A}$, $(M,\sigma )$ has a (right) left dual, if and only if $M$ is (left) right fgp and $\sigma$ is invertible. 
\end{lemma}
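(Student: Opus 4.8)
The plan is to work inside the monoidal category $\prescript{}{A}{\ct{M}}^{\Omega^{1}}_{A}\cong\prescript{}{B(\Omega^{1})}{\ct{M}}$ of Theorem \ref{TBOmga}, whose unit object is $(A,\mathrm{id}_{\Omega^{1}})$ and whose tensor reads $(P,\sigma_{P})\otimes(Q,\sigma_{Q})=(P\otimes Q,(\sigma_{P}\otimes\mathrm{id}_{Q})(\mathrm{id}_{P}\otimes\sigma_{Q}))$. I will prove the statement for left duals; the right-dual case follows by the evident left--right symmetric argument, replacing $\pr{M}=\mathrm{Hom}_{A}(M,A)$ by $M^{\vee}=\prescript{}{A}{\mathrm{Hom}}(M,A)$ and ``right fgp'' by ``left fgp''. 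First, the forgetful functor $U:\prescript{}{A}{\ct{M}}^{\Omega^{1}}_{A}\rightarrow\bim$ is strong (indeed strict) monoidal, so it preserves left duals; hence if $(M,\sigma)$ admits a left dual then $M=U(M,\sigma)$ admits a left dual in $\bim$, which forces $M$ to be right fgp. Since duals are unique up to isomorphism, I may take the underlying object of the dual to be $\pr{M}=\mathrm{Hom}_{A}(M,A)$ equipped with the standard evaluation $\mathrm{ev}_{M}$ and coevaluation $\mathrm{coev}_{M}$ of (\ref{EqAdj}). It then remains to link invertibility of $\sigma$ with the existence of a compatible intertwining $\sigma'$ on $\pr{M}$.

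For the ``if'' direction, suppose $M$ is right fgp and $\sigma$ is invertible. I would define the candidate intertwining on the dual by the mate formula
\[\sigma'=(\mathrm{ev}_{M}\otimes\mathrm{id}_{\Omega^{1}}\otimes\mathrm{id}_{\pr{M}})(\mathrm{id}_{\pr{M}}\otimes\sigma^{-1}\otimes\mathrm{id}_{\pr{M}})(\mathrm{id}_{\pr{M}}\otimes\mathrm{id}_{\Omega^{1}}\otimes\mathrm{coev}_{M}),\]
which is manifestly a bimodule map, being a composite of the bimodule maps $\mathrm{ev}_{M}$, $\mathrm{coev}_{M}$ and $\sigma^{-1}$; thus $(\pr{M},\sigma')$ is an object of $\prescript{}{A}{\ct{M}}^{\Omega^{1}}_{A}$. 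The crux is to check that $\mathrm{ev}_{M}$ and $\mathrm{coev}_{M}$ are morphisms in the category, i.e. that they satisfy the intertwining conditions of Definition \ref{Dintcat} with respect to $\sigma'$, $\sigma$ and $\mathrm{id}_{\Omega^{1}}$. Both verifications are string-diagram computations that unwind the definition of $\sigma'$ and cancel an $\mathrm{ev}_{M}$--$\mathrm{coev}_{M}$ zig-zag; the appearance of $\sigma^{-1}$ inside $\sigma'$ is exactly what allows the free $\Omega^{1}$-strand to be slid across $M$. Finally, the triangle (snake) identities for $(\pr{M},\sigma')$ need not be re-proved: they are equations of bimodule maps whose underlying maps already hold in $\bim$, so they hold in $\prescript{}{A}{\ct{M}}^{\Omega^{1}}_{A}$ because $U$ is faithful. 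This exhibits $(\pr{M},\sigma')$ as a left dual of $(M,\sigma)$.

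For the ``only if'' direction, assume $(M,\sigma)$ has a left dual $(\pr{M},\sigma')$ with $\mathrm{ev}_{M},\mathrm{coev}_{M}$ as above. I would reverse the previous construction and set
\[\bar{\sigma}=(\mathrm{id}_{M}\otimes\mathrm{id}_{\Omega^{1}}\otimes\mathrm{ev}_{M})(\mathrm{id}_{M}\otimes\sigma'\otimes\mathrm{id}_{M})(\mathrm{coev}_{M}\otimes\mathrm{id}_{\Omega^{1}}\otimes\mathrm{id}_{M}):\Omega^{1}\otimes M\rightarrow M\otimes\Omega^{1},\]
and claim $\bar{\sigma}=\sigma^{-1}$. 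The two identities $\bar{\sigma}\sigma=\mathrm{id}_{M\otimes\Omega^{1}}$ and $\sigma\bar{\sigma}=\mathrm{id}_{\Omega^{1}\otimes M}$ follow by feeding the intertwining conditions for $\mathrm{ev}_{M}$ and for $\mathrm{coev}_{M}$ (which hold precisely because they are morphisms of $\prescript{}{A}{\ct{M}}^{\Omega^{1}}_{A}$) into the composite and then collapsing the resulting zig-zags via the snake identities of the duality $\pr{M}\dashv M$ in $\bim$. Hence $\sigma$ is invertible, completing the equivalence.

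The main obstacle is the pair of zig-zag verifications, carried out twice (once to see that $\mathrm{ev}_{M},\mathrm{coev}_{M}$ are morphisms for the constructed $\sigma'$, once to see that $\bar{\sigma}$ is a two-sided inverse). Conceptually these are routine string-diagram manipulations, but one must handle the unit coherence identifications $A\otimes\Omega^{1}\cong\Omega^{1}\cong\Omega^{1}\otimes A$ carefully and keep track of the left and right $A$-actions, since $\sigma'$ and $\bar{\sigma}$ are required to be genuine $A$-bimodule maps and not merely $\field$-linear. Everything else---necessity of the fgp condition and inheritance of the triangle identities---is immediate from the strong monoidality and faithfulness of $U$.
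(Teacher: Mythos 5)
Your proposal is correct and follows essentially the same route as the paper: deduce right fgp-ness and identify the dual's underlying bimodule with $\pr{M}$ via strong monoidality of the forgetful functor, use that $\mathrm{ev}$ and $\mathrm{coev}$ are morphisms in $\prescript{}{A}{\mathcal{M}}^{\Omega^{1}}_{A}$, and construct $\sigma^{-1}$ and the dual intertwining by exactly the same zig-zag/mate formulas (your $\sigma'$ and $\bar{\sigma}$ coincide with the paper's $\sigma^{\sharp}$ and its candidate inverse). Your added remark that the snake identities are inherited through the faithful forgetful functor is a valid shortcut that the paper leaves implicit.
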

\begin{proof} First, observe that since the forgetful functor from $\prescript{}{A}{\mathcal{M}}^{\Omega^{1}}_{A}$ to $\bim$ is strong monoidal, if $(N,\tau)$ is a left dual of $(M,\sigma)$, then $N\cong \pr{M}$ and $M$ is right fgp. Furthermore, the evaluation and coevaluation morphisms $\mathrm{ev}$ and $\mathrm{coev}$ must commute with the intertwining maps i.e. 
\begin{align*}
 \mathrm{ev}\otimes \mathrm{id}_{\Omega^{1}} &= (\mathrm{id}_{\Omega^{1}}\otimes \mathrm{ev})(\tau\otimes \mathrm{id}_{M})(\mathrm{id}_{N}\otimes\sigma )
\\ \mathrm{id}_{\Omega^{1}} \otimes \mathrm{coev}&=(\tau\otimes \mathrm{id}_{M})(\mathrm{id}_{N}\otimes\sigma )(\mathrm{coev} \otimes \mathrm{id}_{\Omega^{1}})
\end{align*}
From the above equations, it is easy to check that the morphism $(\mathrm{id}_{M\otimes\Omega^{1}}\otimes \mathrm{ev}) (\mathrm{id}_{M}\otimes \tau \otimes \mathrm{id}_{M}) (\mathrm{coev} \otimes \mathrm{id}_{\Omega^{1}\otimes M})$ becomes the inverse of $\sigma$. Conversely, if $M$ is right fgp with left dual $\pr{M}$ and $\sigma$ is invertible, we define $(\pr{M},\sigma^{\sharp} )$ by
\begin{equation}
\sigma^{\sharp}= (\mathrm{ev} \otimes \mathrm{id}_{\Omega^{1}\otimes \pr{M}})(\mathrm{id}_{\pr{M}}\otimes \sigma^{-1} \otimes \mathrm{id}_{\pr{M}})(\mathrm{id}_{\pr{M}\otimes\Omega^{1}}\otimes \mathrm{coev})
\end{equation}
so that $(\pr{M},\sigma^{\sharp} )$ is left dual to $(M,\sigma)$ in $\prescript{}{A}{\mathcal{M}}^{\Omega^{1}}_{A}$, via $\mathrm{coev}$ and $\mathrm{ev}$.\end{proof}
For $\prescript{}{A}{\mathcal{IM}}_{A}^{\Omega^{1}}$ to be representable, we need the additional requirement for $\Omega^{1}$ to be left fgp as well as right fgp, with its right dual bimodule denoted by $\mathfrak{Y}^{1} $. Let $\underline{\mathrm{coev}}:A\rightarrow \mathfrak{Y}^{1}  \otimes\Omega^{1}$ and $\underline{\mathrm{ev}}: \Omega^{1}\otimes \mathfrak{Y}^{1} \rightarrow A $ denote the respective coevaluation and evaluation maps and denote $\underline{\mathrm{coev}}(1)= \sum_{j} y_{j}\otimes \rho_{j}$. Parallel to Section \ref{SInter}, we consider $\Omega^{1}\otimes_{\field}\mathfrak{Y}^{1} $ as an $A^{e}$-bimodule via (\ref{EqYFrak}), so that $T_{A^{e}}(\Omega^{1}\otimes_{\field}\mathfrak{Y}^{1} )$-modules have the structure of $A$-bimodules $M$ with a bimodule map $\Omega^{1}\otimes M\rightarrow M\otimes\Omega^{1}$. We denote this category by $\prescript{\Omega^{1}}{A}{\ct{M}}_{A}$ and observe that $\prescript{}{T_{A^{e}}(\Omega^{1}\otimes_{\field}\mathfrak{Y}^{1} )}{\mathcal{M}}\cong \prescript{\Omega^{1}}{A}{\mathcal{M}}_{A}$. This can be proved in a completely symmetric manner to the arguments in Section \ref{SInter}. Consequently, the bialgebroid whose module category is isomorphic to $\prescript{}{A}{\mathcal{IM}}_{A}^{\Omega^{1}}$ is a quotient of the free product of algebras $B(\Omega^{1} )$ and $T_{A^{e}}(\Omega^{1}\otimes_{\field}\mathfrak{Y}^{1} )$ by an ideal which imposes the induced intertwinings with $\Omega^{1}$ to be inverses. 

For a bimodule $M$, when necessary we distinguish bimodule morphisms $\Omega^{1}\otimes M\rightarrow M\otimes\Omega^{1}$ and $M\otimes \Omega^{1}\rightarrow \Omega^{1}\otimes M$ by referring to them by \emph{left} and \emph{right} $\Omega^{1}$-intertwinings. Otherwise, we refer to both morphisms as $\Omega^{1}$-intertwinings and the domain and codomain of morphisms will be clear from context. 

Let $\mathfrak{Z}:=(\XA^{1}\otimes_{\field}\Omega^{1})\oplus (\Omega^{1}\otimes_{\field}\mathfrak{Y}^{1} )$ as a vectorspace and $R:=T_{A^{e}}\mathfrak{Z}$ as an algebra, where the $A^{e}$ bimodule structure of $\mathfrak{Z}$ is defined as follows
\begin{align}
a\ov{a'}(x,\omega) b\ov{b'}&= (axb, b'\omega a')\label{EqMfrak}
\\ a\ov{a'}(\rho,y) b\ov{b'}&= (a\rho b, b'y a')\label{EqYFrak}
\end{align}
where $a,a',b,b'\in A$, $(x,\omega )\in \XA^{1}\otimes_{\field}\Omega^{1}$ and $(\rho ,y)\in\Omega^{1}\otimes_{\field}\mathfrak{Y}^{1} $. It is easy to check that the bialgebroid structures of $T_{A^{e}}(\XA^{1}\otimes_{\field}\Omega^{1})$ and its symmetric counterpart $T_{A^{e}}(\Omega^{1}\otimes_{\field}\mathfrak{Y}^{1} )$, lift to $R$ multiplicatively. Alternatively, we can view $R$ as the free product of $A^{e}$-algebras $T_{A^{e}}(\XA^{1}\otimes_{\field}\Omega^{1})$ and $T_{A^{e}}(\Omega^{1}\otimes_{\field}\mathfrak{Y}^{1} )$. From this point of view, it is easy to see that the free product of two $A^{e}$-algebras with $A$-bialgebroid structures will have a natural $A$-bialgebroid structure: modules over the free product algebra are simply $A$-bimodules with actions from both algebras and the tensor of two such bimodules over $A$ will have an induced action from both biaglebroids, which induces an action of the free product algebra. Ultimately, the coproduct and counit induced on the free product algebra, from the categorical point of view, extend the coproduct and counit of each bialgebroid to the free product algebra, multiplicatively. 

We define $IB(\Omega^{1} )$ as the quotient of algebra $R$ by the set of relations 
\begin{align}
  (\omega_{i},y)\sbt (x_{i},\omega )&=\ov{\underline{\mathrm{ev}}(\omega\otimes y)} \label{EqRelInv1}
\\   (x,\rho_{j})\sbt(\omega, y_{j})&=\mathrm{ev}(x\otimes\omega)\label{EqRelInv2}
\end{align}
for any $x\in X , \omega\in\Omega^{1} , y\in\mathfrak{Y}^{1} $. 
\begin{lemma}\label{LInvBialg} The bialgebroid structure of $R$ descends to a well defined bialgebroid structure on $IB(\Omega^{1} )$. 
\end{lemma}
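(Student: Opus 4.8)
The plan is to prove that the two-sided ideal $I\subseteq R$ generated by (\ref{EqRelInv1}) and (\ref{EqRelInv2}) is a coideal, so that the coproduct $\Delta$ and counit $\epsilon$ already carried by $R$ descend to $IB(\Omega^1)=R/I$. Since $\Delta$ is an algebra map into the Takeuchi product and $\ker(\pi\otimes\pi)$, where $\pi:R\to IB(\Omega^1)$ is the projection, is a two-sided ideal there, while $\epsilon$ is an algebra map into $A$, it will suffice to check the two defining relations on generators: namely that $\Delta$ sends the two sides of each of (\ref{EqRelInv1}), (\ref{EqRelInv2}) to elements agreeing modulo $\ker(\pi\otimes\pi)$, and that $\epsilon$ sends them to equal elements of $A$. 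The remaining bialgebroid axioms then transfer to $IB(\Omega^1)$ automatically, because $\pi$ is a surjective $A^e$-ring map intertwining the structure maps of $R$ with their descended versions.

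I would first record the structure maps on the two families of generators: $\Delta((x,\omega))=(x,\omega_i)\otimes(x_i,\omega)$ with $\epsilon((x,\omega))=\mathrm{ev}(x,\omega)$ on $\XA^1\otimes_\field\Omega^1$, and, by the symmetric construction of Section \ref{SInter}, $\Delta((\rho,y))=(\rho,y_j)\otimes(\rho_j,y)$ with $\epsilon((\rho,y))=\underline{\mathrm{ev}}(\rho\otimes y)$ on $\Omega^1\otimes_\field\mathfrak{Y}^1$. For the counit I would apply (\ref{EqCounMul}) to replace the right factor of each relation by an element of $A^{op}$, absorb it using the module relations (\ref{EqXAOMAop}) and (\ref{EqYFrak}), and finally collapse the dual-basis sums with the snake identities
\[
\sum_i \omega_i\,\mathrm{ev}(x_i\otimes\omega)=\omega,\qquad \sum_j \underline{\mathrm{ev}}(\omega\otimes y_j)\,\rho_j=\omega .
\]
This returns $\underline{\mathrm{ev}}(\omega\otimes y)$ for the left side of (\ref{EqRelInv1}) and $\mathrm{ev}(x\otimes\omega)$ for that of (\ref{EqRelInv2}), matching $\epsilon$ of the respective right-hand sides.

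The substance is the coproduct check. Expanding $\Delta$ multiplicatively across (\ref{EqRelInv2}) and reducing the second leg by the same relation leaves $(x,\omega_k)(\omega,y_l)\otimes\mathrm{ev}(x_k\otimes\rho_l)$; transporting the scalar $\mathrm{ev}(x_k\otimes\rho_l)\in A$ across the balanced tensor $|R\otimes|R$ turns it into a left $A^{op}$-action on $(x,\omega_k)$, whereupon the first snake identity rebuilds $(x,\rho_l)$, and a second application of (\ref{EqRelInv2}) collapses the result to $\mathrm{ev}(x\otimes\omega)\otimes 1=\Delta(\mathrm{ev}(x\otimes\omega))$. The argument for (\ref{EqRelInv1}) is the mirror image: one reduces the first leg, moves the resulting element of $A^{op}$ onto the right factor, uses the second snake identity to rebuild $(\omega_k,y)$, and reduces once more to obtain $1\otimes\ov{\underline{\mathrm{ev}}(\omega\otimes y)}=\Delta(\ov{\underline{\mathrm{ev}}(\omega\otimes y)})$.

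I expect this coproduct step to be the main obstacle, and it is the crux of the lemma: after the first reduction the leftover $A$-valued scalar sits in the ``wrong'' tensor leg, and everything hinges on the fact that moving it across $|R\otimes|R$ and invoking the duality identities regenerates exactly the configuration needed to apply the defining relation a second time. In other words, relations (\ref{EqRelInv1}) and (\ref{EqRelInv2}) are self-reproducing under $\Delta$, which is precisely what makes $I$ a coideal. Compatibility of $I$ with the $A^e$-ring structure is immediate since $I$ is two-sided, so no further work is needed there.
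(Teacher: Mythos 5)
Your proposal is correct and follows essentially the same route as the paper: since the bialgebroid structure of $R$ is extended multiplicatively from the generators, one only checks $\Delta$ and $\epsilon$ on the defining relations, and your mechanism --- reduce one tensor leg by the relation, transport the leftover element of $A$ (or $A^{op}$) across the balanced tensor product $|R\otimes |R$, rebuild a dual-basis pair via a snake identity, and apply the relation a second time --- is precisely the paper's calculation (the paper writes out (\ref{EqRelInv1}) and leaves (\ref{EqRelInv2}), which you treat explicitly, to the reader). One minor caveat: $\epsilon$ is \emph{not} an algebra map into $A$ for a bialgebroid (it only satisfies the weak multiplicativity (\ref{EqCounMul})), but this slip in your framing is harmless since your actual counit verification correctly invokes (\ref{EqCounMul}) rather than genuine multiplicativity.
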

\begin{proof} Since the bialgebroid structure on $R$ is defined by multiplicatively, we only need to check that the comultiplication and counit are well defined on its quotient $IB(\Omega^{1})$. To do this we look at the relations generating the ideal quotiented from $R$. For relation (\ref{EqRelInv1}) we demonstrate this by the following calculations
\begin{align*}
\Delta ( (\omega_{i},y)&\sbt (x_{i},\omega ))= (\omega_{i},y_{j})\sbt (x_{i},\omega_{k} )\otimes (\rho_{j},y)\sbt (x_{k},\omega )
\\ &=\ov{\underline{\mathrm{ev}}(\omega_{k}\otimes y_{j})} \otimes (\rho_{j},y)\sbt (x_{k},\omega )= 1\otimes\underline{\mathrm{ev}}(\omega_{k}\otimes y_{j}) \sbt (\rho_{j},y)\sbt (x_{k},\omega ) 
\\&=1\otimes(\omega_{k},y)\sbt (x_{k},\omega ) = 1 \otimes \ov{\underline{\mathrm{ev}}(\omega\otimes y)}= \Delta \left(\ov{\underline{\mathrm{ev}}(\omega\otimes y)}\right)
\end{align*} 
and 
$$\epsilon \left( (\omega_{i},y)\sbt (x_{i},\omega )\right) = \underline{\mathrm{ev}} (\omega_{i} \mathrm{ev}(x_{i}\otimes\omega )\otimes y)=\underline{\mathrm{ev}} (\omega\otimes y)=\epsilon \left(\ov{\underline{\mathrm{ev}}(\omega\otimes y)}\right) $$
where $y\in \mathfrak{Y}^{1} $ and $\omega\in\Omega$. The morphisms $\Delta,\epsilon$ being well defined for relation (\ref{EqRelInv2}), follows similarly and is left to the reader.\end{proof}
\begin{thm}\label{TIBXA} There is an isomorphism of categories $\prescript{}{IB(\Omega^{1} )}{\mathcal{M}}\cong \prescript{}{A}{\mathcal{IM}}_{A}^{\Omega^{1}}$.
\end{thm}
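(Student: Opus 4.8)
The plan is to mirror the proof of Theorem \ref{TBOmga}, exploiting the presentation of $R=T_{A^{e}}\mathfrak{Z}$ as a free product of $A^{e}$-algebras and then showing that the defining relations (\ref{EqRelInv1}) and (\ref{EqRelInv2}) are precisely the conditions forcing the two induced intertwinings to be mutually inverse. First I would describe the modules over $R$ before quotienting. Since $R$ is the free product of the $A^{e}$-algebras $B(\Omega^{1})=T_{A^{e}}(\XA^{1}\otimes_{\field}\Omega^{1})$ and $T_{A^{e}}(\Omega^{1}\otimes_{\field}\mathfrak{Y}^{1})$, an $R$-module is exactly an $A$-bimodule $M$ carrying an action of each factor. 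By Theorem \ref{TBOmga}, the first action is equivalent to a (right) $\Omega^{1}$-intertwining $\sigma:M\otimes\Omega^{1}\to\Omega^{1}\otimes M$ with $\sigma(m\otimes\omega)=\omega_{i}\otimes(x_{i},\omega)m$ and $(x,\omega)m=(\mathrm{ev}\otimes\mathrm{id}_{M})(x\otimes\sigma(m\otimes\omega))$. By the symmetric isomorphism $\prescript{}{T_{A^{e}}(\Omega^{1}\otimes_{\field}\mathfrak{Y}^{1})}{\mathcal{M}}\cong\prescript{\Omega^{1}}{A}{\mathcal{M}}_{A}$, the second action is equivalent to a (left) $\Omega^{1}$-intertwining $\tau:\Omega^{1}\otimes M\to M\otimes\Omega^{1}$ with $\tau(\omega\otimes m)=(\omega,y_{j})m\otimes\rho_{j}$ and $(\rho,y)m=(\mathrm{id}_{M}\otimes\underline{\mathrm{ev}})(\tau(\rho\otimes m)\otimes y)$. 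Thus an $R$-module is simply a bimodule equipped with both $\sigma$ and $\tau$, with no compatibility yet imposed.

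The heart of the argument is to translate the two relations into the single statement $\tau=\sigma^{-1}$. Evaluating both sides of (\ref{EqRelInv1}) on $m\in M$ with $y=y_{j}$, the left side reads $(\omega_{i},y_{j})(x_{i},\omega)m$ and the right side reads $m\,\underline{\mathrm{ev}}(\omega\otimes y_{j})$; substituting into $\tau\sigma(m\otimes\omega)=(\omega_{i},y_{j})(x_{i},\omega)m\otimes\rho_{j}$ and applying the zig-zag identity $\underline{\mathrm{ev}}(\omega\otimes y_{j})\rho_{j}=\omega$ shows that relation (\ref{EqRelInv1}) holds on $M$ if and only if $\tau\sigma=\mathrm{id}_{M\otimes\Omega^{1}}$. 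Symmetrically, evaluating (\ref{EqRelInv2}) and using $\omega_{i}\,\mathrm{ev}(x_{i}\otimes\omega)=\omega$ shows it holds if and only if $\sigma\tau=\mathrm{id}_{\Omega^{1}\otimes M}$. For the converse implications needed below, one runs these computations backwards: applying $(\mathrm{id}_{M}\otimes\underline{\mathrm{ev}})(-\otimes y)$ to $\tau\sigma=\mathrm{id}$ and absorbing the resulting scalar into the generator via $y_{j}\,\underline{\mathrm{ev}}(\rho_{j}\otimes y)=y$ recovers (\ref{EqRelInv1}), and dually for (\ref{EqRelInv2}). Hence an $R$-module descends to an $IB(\Omega^{1})$-module exactly when $\sigma$ is invertible with $\sigma^{-1}=\tau$.

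Finally I would assemble the two mutually inverse functors. An $IB(\Omega^{1})$-module yields the pair $(M,\sigma)$ with $\sigma$ invertible, an object of $\prescript{}{A}{\mathcal{IM}}_{A}^{\Omega^{1}}$; conversely, given such a pair one defines the $B(\Omega^{1})$-action from $\sigma$ and the $T_{A^{e}}(\Omega^{1}\otimes_{\field}\mathfrak{Y}^{1})$-action from $\tau:=\sigma^{-1}$, and the computation above guarantees that (\ref{EqRelInv1}) and (\ref{EqRelInv2}) are satisfied, producing a genuine $IB(\Omega^{1})$-action. On morphisms there is nothing new: a bimodule map is an $IB(\Omega^{1})$-module map if and only if it commutes with both actions, but since $\tau=\sigma^{-1}$ it commutes with $\tau$ as soon as it commutes with $\sigma$, which is precisely the defining condition for a morphism in $\prescript{}{A}{\mathcal{IM}}_{A}^{\Omega^{1}}$. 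The two assignments are therefore mutually inverse and functorial, giving the claimed isomorphism of categories. I expect the only real difficulty to be the bookkeeping in the key computation of the second paragraph, namely correctly tracking the implicit dual-basis summations from $\mathrm{coev}$ and $\underline{\mathrm{coev}}$ and the placement of the $A$-valued outputs of $\mathrm{ev}$ and $\underline{\mathrm{ev}}$ across the balanced tensor products; the conceptual structure is supplied entirely by the free-product description and Theorem \ref{TBOmga}.
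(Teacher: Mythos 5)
Your proposal is correct and follows essentially the same route as the paper: restrict scalars along the two free-product factors to extract $\sigma$ and $\tau$, observe that relations (\ref{EqRelInv1}) and (\ref{EqRelInv2}) acting on a module are exactly the statements $\tau\sigma=\mathrm{id}$ and $\sigma\tau=\mathrm{id}$ (via the zig-zag identities), and invert the construction by letting $\tau:=\sigma^{-1}$ generate the second action. The only difference is one of detail: you spell out the converse direction (recovering the relations from $\tau\sigma=\mathrm{id}$ via $y_{j}\,\underline{\mathrm{ev}}(\rho_{j}\otimes y)=y$) and the functoriality on morphisms, both of which the paper leaves implicit.
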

\begin{proof} For a $IB(\Omega^{1})$-module $M$, we can obtain left and right $\Omega^{1}$-intertwinings $\sigma,\tau$ on $M$ by restriction of scalars to subalgebras $T_{A^{e}}(\XA^{1}\otimes_{\field}\Omega^{1})$ and $T_{A^{e}}(\Omega^{1}\otimes_{\field}\mathfrak{Y}^{1} )$:
$$ \sigma (m\otimes\omega)= \omega_{i}\otimes (x_{i},\omega)m, \quad \tau(\omega\otimes m )= (\omega, y_{j})m\otimes \rho_{j} $$
For any $m\otimes\omega\in M\otimes\Omega^{1}$,
\begin{align*}
\tau\sigma (m\otimes\omega)&=\tau \left( \omega_{i}\otimes (x_{i},\omega)m\right)=  (\omega_{i}, y_{j})\sbt (x_{i},\omega ) m\otimes \rho_{j}
\\ &=  \ov{\underline{\mathrm{ev}}(\omega,y_{j})}m\otimes \rho_{j}=  m\underline{\mathrm{ev}}(\omega,y_{j})\otimes \rho_{j} =m\otimes \omega 
\end{align*}
holds by relation (\ref{EqRelInv1}). Similarly, $\sigma\tau =\mathrm{id}_{M\otimes\Omega^{1}}$ follows from relation (\ref{EqRelInv2}). The converse statement follows by looking at the induced actions of $T_{A^{e}}(\Omega^{1}\otimes_{\field}\mathfrak{Y}^{1} )$ and $T_{A^{e}} (\XA^{1}\otimes_{\field} \Omega^{1})$ on the underlying $A$-bimodule of any object $(M,\sigma)$ in $\prescript{}{A}{\mathcal{IM}}_{A}^{\Omega^{1}}$. This gives rise to an action of $R$ on $M$ and by the calculation above relations (\ref{EqRelInv1}) and (\ref{EqRelInv2}) annihilate $M$, making $M$ an $IB(\Omega^{1})$-module. \end{proof}
We can obtain the left bialgebroid $IB\XA^{1}$ whose module category recovers left bimodule connections with invertible $\Omega^{1}$-intertwinings, as the quotient of the free product of $T\XA^{1}\star IB(\Omega^{1} )$, by the relations (\ref{EqAX}), (\ref{EqXA}), (\ref{EqBXA}). 
\begin{rmk}\label{RTXop} By symmetry, we can describe the category of right connections, $\ct{E}_{A}$, as left modules over the algebra $T\mathfrak{Y}^{1} _\sbt$ which is defined as the quotient of the algebra $A^{op}\star T\mathfrak{Y}^{1} $ by relations 
$$\ov{a}\sbt y= ya, \hspace{0.6cm} y\sbt \ov{a} = ay+ \ov{\underline{\mathrm{ev}}(da \otimes y)} $$ 
for $y\in \mathfrak{Y}^{1} $ and $\ov{a}\in A^{op}$. In Lemma 3.70 of \cite{beggs2019quantum}, it is noted that a left bimodule connection $(M,\cnc ,\sigma )$ with invertible $\sigma$, has an induced right bimodule connection structure with $(M,\sigma^{-1}\cnc ,\sigma^{-1})$. We can view this as $T\mathfrak{Y}^{1} _\sbt$ being isomorphic to the subalgebra of $IB\XA^{1}$ generated by
$$ y\longmapsto  (\omega_{i},y)\sbt x_{i} ,\hspace{0.6cm} \ov{a}\longmapsto \ov{a}$$
for $y\in\mathfrak{Y}^{1} $ and $\ov{a}\in A^{op}$. 
\end{rmk}

As explained in Theorem \ref{TIBXA}, the relations (\ref{EqRelInv1}) and (\ref{EqRelInv2}) imply that the intertwining map on a $IB(\Omega^{1})$-module $M$ defined via $\sigma (m,\omega )=\sum_{i}\omega_{i} \otimes (x_{i},\omega )m$, is invertible. To do this we had to add a number of generators to the algebra ($\Omega^{1}\otimes_{\field}(\Omega^{1})^{\vee}$) and impose some minor relations, (\ref{EqRelInv1}) and (\ref{EqRelInv2}), on their interaction with the previous generators. However, as mentioned before $\sigma$ being invertible for a right fgp bimodule, does not make $\sigma^{\sharp}$ invertible and $\prescript{}{A}{\mathcal{IM}}_{A}^{\Omega^{1}}$ does not lift the closed structure of $\bim$. We need a suitable subcategory where $\sigma^{\sharp}: \pr{M}\otimes \Omega^{1} \rightarrow \Omega^{1}\otimes \pr{M}$ is invertible as well. Hence, we need to translate this condition to $(\sigma^{\sharp})^{\vee}: M\otimes (\Omega^{1} )^{\vee}\rightarrow (\Omega^{1})^{\vee}\otimes M$ being invertible. We can impose this condition on the bialgebroid $IB(\Omega^{1})$, by adding generators of the form $(\Omega^{1}) ^{\vee}\otimes_{\field}(\Omega^{1})^{\vee\vee} $ and similar relations to (\ref{EqRelInv1}) and (\ref{EqRelInv2}). On the other hand, $(\sigma^{\sharp})^{\sharp}$ will not necessarily be invertible, and we will have to repeat the process infinitely. Instead, in the next section we focus on the case where $\Omega^{1}\cong (\Omega^{1})^{\vee\vee}$ so that all the genrators required already exist in $R$ and by imposing the correct relations the arguments mentioned become cyclic. 
\subsection{Pivotal Modules} \label{SPiv}
\begin{defi}\label{DpivM} We say a bimodule $M$ is a \emph{pivotal bimodule} if there exists a bimodule isomorphism $\pr{M}\cong M^{\vee}$, or equivalently $M\cong M^{\vee\vee}$. 
\end{defi}
Many familiar examples of differential calculi are pivotal bimodules. In the classical case, if $A$ is commutative and $\Omega^{1}$ has the same left and right $A$-actions, then $\pr{\Omega^{1}}\cong\prescript{}{A}{\mathrm{Hom}(\Omega^{1},A)}$ and $(\Omega^{1})^{\vee}\cong \mathrm{Hom}_{A}(\Omega^{1},A)$ are naturally isomorphic.
\begin{ex}\hspace{0.01cm} [\emph{Quantum Riemannian Metric} \cite{beggs2019quantum}] We say a differential calculus $\Omega^{1}$ on algebra $A$ has a \emph{quantum metric} if $\Omega^{1}$ is self-dual i.e. $\pr{\Omega^{1}}\cong\Omega^{1}\cong (\Omega^{1})^{\vee}$ as an $A$-bimodule with evaluation and coevaluation maps $\mathrm{ev}$, $\mathrm{coev}$ satisfying 
$$(\mathrm{ev}\otimes \mathrm{id}_{\Omega^{1}}) (\mathrm{id}_{\Omega^{1}}\otimes \mathrm{coev}) =\mathrm{id}_{\Omega^{1}}= (\mathrm{id}_{\Omega^{1}}\otimes \mathrm{ev})(\mathrm{coev}\otimes \mathrm{id}_{\Omega^{1}}) $$
In this case, $g=\mathrm{coev}(1)$ is called a \emph{quantum metric} for the calculus. 
\end{ex} 
Of course any free bimodule such as the calculus over $M_{2}(\mathbb{C})$, presented in Example \ref{EM2C} is also pivotal and self dual. 
\begin{ex}\label{ExPivGrph}[Finite Quivers] Any quiver calculus as described in Example \ref{EGrph} is pivotal. Recall that $\XA^{1} =\mathrm{Span}_{\field}\lbrace \ovl{e} \mid e\in E\rbrace$, where $f\ovl{e} g=f(t(e))\ovl{e}g(s(e))$ for any pair $f,g\in \field (V)$. The evaluation and coevaluation maps are given by  
$$\mathrm{coev}(1)= \sum _{e\in E} \ovr{e}\otimes \ovl{e},\hspace{1cm} \mathrm{ev}(\ovl{e_{1}}\otimes \ovr{e_{2}}) = \delta_{e_{1},e_{2}} f_{t(e_{1})}$$
$$ \underline{\mathrm{coev}}(1)= \sum _{e\in E} \ovl{e}\otimes \ovr{e}\hspace{1cm} \underline{\mathrm{ev}}(\ovr{e_{1}}\otimes \ovl{e_{2}})=\delta_{e_{1},e_{2}} f_{s(e_{1})}$$
for any $e_{1},e_{2}\in E$, so that $\XA^{1}$ is both left dual and right dual to $\Omega^{1}$.
\end{ex}
Not every parallelised calculus is pivotal. However, the class of bicovariant calculi over Hopf algebras have this additional property:
\begin{ex}\label{EPivHopf}[Hopf Bimodules] Recall that a Hopf bimodule $\Omega^{1}$ for a Hopf algebra $A$, decomposes as a free right module $\Lambda\otimes_{\field} A$. When the antipode of $A$, $s$, is invertible, we utilise the following isomorphism to move between free right $A$-modules and free left $A$-modules:
\begin{equation*}
\xymatrix@R-27pt{\Phi : \Lambda\otimes_{\field} A\rightarrow A\otimes_{\field} \Lambda &\Phi^{-1}:A\otimes_{\field} \Lambda\rightarrow \Lambda\otimes_{\field} A
\\ \lambda\otimes_{\field}a\mapsto a_{(2)}\otimes_{\field}  s^{-1}(a_{(1)})\triangleright\lambda &a\otimes_{\field}\lambda \mapsto a_{(1)}\triangleright \lambda\otimes_{\field} a_{(2)} }
\end{equation*}
Observe that the left $A$-action translates to $\Phi(b\triangleright (\lambda\otimes_{\field}a) )= ba\otimes_{\field} \lambda$, making $\Omega^{1}$ free as a left $A$-module as well with $\Omega^{1} \cong A\otimes_{\field}\Lambda$ and $(\Omega^{1})^{\vee}\cong \Lambda^{\star}\otimes_{\field}A$. We denote elements of $A\otimes_{\field}\Lambda$ and $\Lambda^{\star}\otimes_{\field}A$ by $\bm{a}\otimes_{\field}\bm{\lambda}$ and $\bm{f}\otimes_{\field} \bm{a} $, respectively. Observe that as bimodules:
\begin{align*}
b( \bm{a}\otimes_{\field}\bm{\lambda}) &= \bm{ba}\otimes_{\field}\bm{\lambda}\hspace{0.7cm} ( \bm{a}\otimes_{\field}\bm{\lambda}) b=( \bm{ab_{(2)}}\otimes_{\field}s^{-1}(b_{(1)})\triangleright \bm{\lambda}) 
\\ (\bm{f}\otimes_{\field} \bm{a})b&=\bm{f}\otimes_{\field} \bm{ab} \hspace{0.7cm} b(\bm{f}\otimes_{\field} \bm{a})= (\bm{f}\triangleleft s^{-1}(b_{(1)})\otimes_{\field} \bm{b_{(2)}a})
\end{align*} 
for $b\in A$. Hence, the evaluation and coevaluation morphisms for $\Omega^{1}$ are calculated as follows 
$$\mathrm{coev}(1)= \sum_{i=1}^{n} (\lambda_{i}\otimes_{\field} 1)\otimes (1 \otimes_{\field} f_{i}), \quad \mathrm{ev}\big((a\otimes_{\field} f)\otimes (\lambda\otimes_{\field}b)\big)= ab f(\lambda ) $$
$$\underline{\mathrm{coev}}(1)= \sum_{i=1}^{n}  (\bm{f}_{i}\otimes_{\field} \bm{1})\otimes (\bm{1} \otimes_{\field}\bm{\lambda}_{i}), \quad \underline{\mathrm{ev}}\big((\bm{a}\otimes_{\field}\bm{\lambda})\otimes ( \bm{f}\otimes_{\field} \bm{b})\big)=ab f(\lambda )$$
where $\lambda\in \Lambda$, $f\in \Lambda^{\star}$ and $a,b\in A$. Furthermore, $\Omega^{1}$ is pivotal and the isomorphism between $\pr{(\Omega^{1})}$ and $(\Omega^{1})^{\vee} $ is provided by 
 \begin{align*}
\xymatrix@R-22pt@C-10pt{ \pr{(\Omega^{1})}= A\otimes_{\field} \Lambda^{\star}\longleftrightarrow \Lambda^{\star}\otimes_{\field}A= (\Omega^{1})^{\vee} 
\\ (a\otimes_{\field}f)\quad  \longmapsto \quad \sum_{i=1}^{n} (\bm{f_{i}}\otimes_{\field}\bm{a_{(2)}} )f\big( s\big((\lambda_{i})_{(-1)}a_{(1)}\big)\triangleright (\lambda_{i})_{(0)} \big)
\\\sum_{i=1}^{n} (a_{(2)} \otimes_{\field} f_{i}) f\big( s^{-2}\big( a_{(1)}(\lambda_{i})_{(-1)} \big)\triangleright (\lambda_{i})_{(0)}\big)\quad \ \reflectbox{\ensuremath{\longmapsto}}\  \quad (\bm{f}\otimes_{\field} \bm{a}) }
\end{align*}
where $\lambda_{(-1)}\otimes_{\field} \lambda_{(0)}=\delta_{L}(\lambda)$ denotes the left coaction of $\Lambda$ as a Yetter-Drinfeld modules. The category of Hopf modules over a Hopf algebra $A$, has a natural monoidal structure lifting that of $A$-bimodules. In particular, when the antipode $s$ of $A$ is invertible, the category of Hopf bimodules has a braided monoidal structure and is monoidal equivalent to the category of left Yetter-Drinfeld modules \cite{bespalov1997crossed}. Using this equivalence and the fact that in a braided monoidal category, left and right duals of an object are isomorphic, we obtain the above isomorphism. 
\end{ex}
\subsection{Resulting Hopf Algebroid Structure}\label{SPivHpf}
From this point onwards we assume that $\Omega^{1}$ is a pivotal bimodule and modify our notation from previous sections. We denote evaluation and coevaluation maps as before, but with applying the isomorphism $\XA^{1}\cong \mathfrak{Y}^{1} $ so that  
\begin{align}
\mathrm{coev}: A \rightarrow \Omega^{1}\otimes\XA^{1}, \quad \mathrm{coev}(1)&= \sum_{i} \omega_{i}\otimes x_{i}, \quad \mathrm{ev}: \XA^{1}\otimes\Omega^{1} \rightarrow A
\\ \underline{\mathrm{coev}}: A \rightarrow \XA^{1}\otimes\Omega^{1}, \quad \underline{\mathrm{coev}}(1)&= \sum_{j} y_{j}\otimes \rho_{j}, \quad \underline{\mathrm{ev}}: \Omega^{1}\otimes \XA^{1}\rightarrow A
\end{align}
With this notation we define $H(\Omega^{1} )$ to be the quotient of $IB(\Omega^{1} )$ by the additional relations 
\begin{align}
  (y_{j},\omega)\sbt (\rho_{j},x)&=\ov{\mathrm{ev}(x, \omega)} \label{EqRelHpf1}
\\  (\omega, x_{i})\sbt(x,\omega_{i})&=\underline{\mathrm{ev}}(\omega,x)\label{EqRelHpf2}
\end{align}
for any $x\in \XA^{1}$ and $\omega \in\Omega^{1}$.
\begin{lemma}\label{LPiv} The comultiplication and counit of $IB(\Omega^{1})$, are well-defined on the quotient algebra, $H(\Omega^{1} )$, and give rise to an $A$-bialgebroid structure on $H(\Omega^{1} )$. 
\end{lemma}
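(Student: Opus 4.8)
The plan is to follow the proof of Lemma~\ref{LInvBialg} essentially verbatim. Since the bialgebroid structure on $IB(\Omega^{1})$ was defined multiplicatively on generators, the coproduct and counit will automatically satisfy all of the $A|A$-coring and bialgebroid axioms on any quotient, \emph{provided} they descend to well-defined maps. Hence the only thing to verify is that $\Delta$ and $\epsilon$ are compatible with the two new relations (\ref{EqRelHpf1}) and (\ref{EqRelHpf2}); the relations already present in $IB(\Omega^{1})$ were handled in Lemma~\ref{LInvBialg}. Recall that on the two families of generators the coproduct reads $\Delta((x,\omega))=(x,\omega_{i})\otimes (x_{i},\omega)$ and, by the symmetric construction of Section~\ref{SInter}, $\Delta((\rho,y))=(\rho,y_{j})\otimes(\rho_{j},y)$, with counits $\epsilon((x,\omega))=\mathrm{ev}(x,\omega)$ and $\epsilon((\rho,y))=\underline{\mathrm{ev}}(\rho,y)$.

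For the coproduct applied to the left-hand side of (\ref{EqRelHpf1}) I would expand $\Delta$ multiplicatively, apply (\ref{EqRelHpf1}) to the inner tensor factor $\sum_{j}(y_{j},\omega_{k})\sbt(\rho_{j},y_{l})=\ov{\mathrm{ev}(y_{l},\omega_{k})}$, push the resulting $\ov{(-)}$ scalar across the balanced tensor product $|B\otimes|B$ using its defining identification, collapse the remaining dual-basis sum $\sum_{k}\mathrm{ev}(y_{l},\omega_{k})x_{k}=y_{l}$ by the zig-zag identity for $(\mathrm{ev},\mathrm{coev})$, and finally apply (\ref{EqRelHpf1}) a second time to the whole expression. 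Schematically,
\begin{align*}
\Delta\big((y_{j},\omega)\sbt(\rho_{j},x)\big)
&=(y_{j},\omega_{k})\sbt(\rho_{j},y_{l})\otimes (x_{k},\omega)\sbt(\rho_{l},x)\\
&=\ov{\mathrm{ev}(y_{l},\omega_{k})}\otimes (x_{k},\omega)\sbt(\rho_{l},x)
=1\otimes (y_{l},\omega)\sbt(\rho_{l},x)\\
&=1\otimes\ov{\mathrm{ev}(x,\omega)}=\Delta\big(\ov{\mathrm{ev}(x,\omega)}\big).
\end{align*}
The counit check is shorter: using $\epsilon(m\sbt n)=\epsilon(m\,\epsilon(n))$ together with $\epsilon((\rho_{j},x))=\underline{\mathrm{ev}}(\rho_{j},x)$, the right $A$-action (\ref{EqMfrak}) rewrites the left-hand side as $\mathrm{ev}\big(\sum_{j}y_{j}\underline{\mathrm{ev}}(\rho_{j},x),\omega\big)$, and the zig-zag identity for $(\underline{\mathrm{ev}},\underline{\mathrm{coev}})$ collapses $\sum_{j}y_{j}\underline{\mathrm{ev}}(\rho_{j},x)=x$, leaving $\mathrm{ev}(x,\omega)=\epsilon(\ov{\mathrm{ev}(x,\omega)})$. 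Relation (\ref{EqRelHpf2}) is entirely symmetric: it is obtained from (\ref{EqRelHpf1}) by interchanging the roles of $(\XA^{1},\mathrm{coev},\mathrm{ev})$ and $(\mathfrak{Y}^{1},\underline{\mathrm{coev}},\underline{\mathrm{ev}})$ under the pivotal identification $\XA^{1}\cong\mathfrak{Y}^{1}$, so the same two manipulations prove it and I would leave it to the reader.

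I expect the only real subtlety, rather than a genuine difficulty, to be the bookkeeping: keeping the two dual bases $\sum_{i}\omega_{i}\otimes x_{i}$ and $\sum_{j}y_{j}\otimes\rho_{j}$ straight, tracking exactly where the pivotal isomorphism $\XA^{1}\cong\mathfrak{Y}^{1}$ is invoked so that the symbols in (\ref{EqRelHpf1})--(\ref{EqRelHpf2}) parse as genuine elements of $\mathfrak{M}$ and $\Omega^{1}\otimes_{\field}\mathfrak{Y}^{1}$, and justifying the scalar move across $|B\otimes|B$ from the coring axioms. The pattern ``apply the relation, collapse a dual basis by a zig-zag, apply the relation again'' is precisely the one already used in Lemma~\ref{LInvBialg}, so no new idea should be required; once well-definedness of $\Delta$ and $\epsilon$ is established, the remaining coring and bialgebroid identities are inherited from $IB(\Omega^{1})$ because both maps are defined multiplicatively.
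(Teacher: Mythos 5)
Your proposal is correct and is essentially the paper's own argument: the paper proves Lemma \ref{LPiv} by declaring it ``completely symmetric to that of Lemma \ref{LInvBialg}'' and leaving the details to the reader, and your computation (check only the new relations (\ref{EqRelHpf1})--(\ref{EqRelHpf2}), expand $\Delta$ multiplicatively, apply the relation, collapse the dual basis by a zig-zag identity, apply the relation again, with the counit handled via $\epsilon(m\sbt n)=\epsilon(m\,\epsilon(n))$ and a second zig-zag) is exactly that symmetric argument carried out explicitly. The multiplicativity observation that disposes of the remaining coring and bialgebroid axioms on the quotient matches the paper's reasoning in Lemma \ref{LInvBialg} as well.
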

\begin{proof} The proof is completely symmetric to that of Lemma \ref{LInvBialg} and is left to the reader. \end{proof}
By Theorem \ref{TIBXA}, an $IB(\Omega^{1})$-module can be viewed as an $A$-bimodule with an invertible $\Omega^{1}$-intertwining $\sigma :M\otimes \Omega^{1} \rightarrow \Omega^{1}\otimes M$. Hence, we can translate the additional relations in $B\XA^{1}$, to the maps 
\begin{align}
(\mathrm{ev}\otimes \mathrm{id}_{M\otimes \XA^{1}})(\mathrm{id}_{\XA^{1}}\otimes\sigma\otimes \mathrm{id}_{\XA^{1}})(\mathrm{id}_{\XA^{1}\otimes M}\otimes \mathrm{coev}): \XA^{1}\otimes M\rightarrow M\otimes\XA^{1} &\label{EqsigX}
\\ (\mathrm{id}_{\XA^{1}\otimes M}\otimes \underline{\mathrm{ev}} )(\mathrm{id}_{\XA^{1}}\otimes\sigma^{-1}\otimes \mathrm{id}_{\XA^{1}})(\underline{\mathrm{coev}}\otimes \mathrm{id}_{M\otimes \XA^{1}}): M\otimes\XA^{1} \rightarrow \XA^{1}\otimes M &\label{EqXsig}
\end{align}
being each others inverses. Notice that when $M$ is right fgp, the second map being invertible is equivalent to $\sigma^{\sharp}$ being invertible, which is what we desire in a closed subcategory of $\prescript{}{A}{\mathcal{M}}^{\Omega^{1}}_{A}$. If $\Omega^{1}$ were not pivotal, we would have to write $\mathfrak{Y}^{1} $ instead of $\XA^{1}$ in the second morphism, and the two morphisms could not be inverses. 
\begin{thm}\label{TPivXA} The category of $H(\Omega^{1})$-modules is isomorphic to the category of $A$-bimodules with invertible $\Omega^{1}$-intertwining maps $\sigma$, such that bimodule maps (\ref{EqsigX}), (\ref{EqXsig}) are inverses. We denote this category by $\prescript{\XA^{1}}{A}{I\mathcal{M}}^{\Omega^{1}}_{A}$. 
\end{thm}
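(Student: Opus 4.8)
The plan is to exploit that $H(\Omega^{1})$ is, by construction, the quotient of $IB(\Omega^{1})$ by the two-sided ideal generated by the relations (\ref{EqRelHpf1}) and (\ref{EqRelHpf2}), with Lemma \ref{LPiv} guaranteeing that this quotient is again a bialgebroid. Restriction of scalars along the quotient map identifies $\prescript{}{H(\Omega^{1})}{\mathcal{M}}$ with the full subcategory of $\prescript{}{IB(\Omega^{1})}{\mathcal{M}}$ whose objects have an action annihilating both relations, and by Theorem \ref{TIBXA} this is the full subcategory of $\prescript{}{A}{\mathcal{IM}}_{A}^{\Omega^{1}}$ cut out by the same two conditions. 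So the entire theorem reduces to the following claim about a fixed $(M,\sigma)$ with $\sigma$ invertible: the action annihilates (\ref{EqRelHpf1}) and (\ref{EqRelHpf2}) if and only if the maps (\ref{EqsigX}) and (\ref{EqXsig}) are mutually inverse. Granting this, functoriality is automatic, since morphisms on both sides are $A$-bimodule maps intertwining $\sigma$ and the forgetful functors are faithful.

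First I would unfold the two maps on $M$. Using the explicit intertwinings from Theorem \ref{TIBXA}, $\sigma(m\otimes\omega)=\omega_{k}\otimes(x_{k},\omega)m$ and $\sigma^{-1}(\omega\otimes m)=(\omega,y_{l})m\otimes\rho_{l}$, together with the action-versus-intertwining identities from Theorem \ref{TBOmga} and its symmetric counterpart, namely $\mathrm{ev}(x,\omega_{k})(x_{k},\omega)m=(x,\omega)m$ and $(\rho,y_{l})m\,\underline{\mathrm{ev}}(\rho_{l},x)=(\rho,x)m$, the maps collapse to the clean forms
$$(\ref{EqsigX}):\ x\otimes m\longmapsto (x,\omega_{i})m\otimes x_{i}, \qquad (\ref{EqXsig}):\ m\otimes x\longmapsto y_{j}\otimes(\rho_{j},x)m.$$
Composing gives $(\ref{EqsigX})\circ(\ref{EqXsig}):m\otimes x\mapsto (y_{j},\omega_{i})(\rho_{j},x)m\otimes x_{i}$ and $(\ref{EqXsig})\circ(\ref{EqsigX}):x\otimes m\mapsto y_{j}\otimes(\rho_{j},x_{i})(x,\omega_{i})m$. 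If (\ref{EqRelHpf1}) holds then $(y_{j},\omega_{i})(\rho_{j},x)=\ov{\mathrm{ev}(x,\omega_{i})}$, so the first composite becomes $m\,\mathrm{ev}(x,\omega_{i})\otimes x_{i}=m\otimes\mathrm{ev}(x,\omega_{i})x_{i}=m\otimes x$ by the zigzag identity $\mathrm{ev}(x,\omega_{i})x_{i}=x$; symmetrically (\ref{EqRelHpf2}) forces the second composite to be the identity via $y_{j}\,\underline{\mathrm{ev}}(\rho_{j},x)=x$.

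For the converse I would run these calculations backwards. Assuming $(\ref{EqsigX})\circ(\ref{EqXsig})=\mathrm{id}_{M\otimes\XA^{1}}$, I contract the right $\XA^{1}$-leg of the identity $(y_{j},\omega_{i})(\rho_{j},x)m\otimes x_{i}=m\otimes x$ against an arbitrary $\omega\in\Omega^{1}$ via $\mathrm{ev}$. Pushing the resulting scalar $\mathrm{ev}(x_{i},\omega)$ through as a right $A$-action $\ov{\mathrm{ev}(x_{i},\omega)}$ and using relation (\ref{EqXAOMAop}) to absorb it, $\ov{\mathrm{ev}(x_{i},\omega)}(y_{j},\omega_{i})=(y_{j},\omega_{i}\mathrm{ev}(x_{i},\omega))=(y_{j},\omega)$ by the zigzag $\omega_{i}\mathrm{ev}(x_{i},\omega)=\omega$, recovers exactly $(y_{j},\omega)(\rho_{j},x)m=\ov{\mathrm{ev}(x,\omega)}m$, i.e.\ relation (\ref{EqRelHpf1}) as an operator on $M$. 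The symmetric contraction of $(\ref{EqXsig})\circ(\ref{EqsigX})=\mathrm{id}$ against $\omega$ through $\underline{\mathrm{ev}}$, using $a\sbt(\rho,y)=(a\rho,y)$ from (\ref{EqYFrak}) and the zigzag $\underline{\mathrm{ev}}(\omega,y_{j})\rho_{j}=\omega$, recovers (\ref{EqRelHpf2}).

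The main obstacle I anticipate is the index- and side-bookkeeping in these contractions: there are two distinct dualities ($\XA^{1}=\pr{\Omega^{1}}$ and, via the pivotal identification, $\XA^{1}\cong\mathfrak{Y}^{1}=(\Omega^{1})^{\vee}$), hence four distinct zigzag identities, each of which must be applied to the correct tensor leg, while the left and right $A$- and $A^{op}$-actions must be commuted past the generator actions using only the defining relations (\ref{EqXAOMA})--(\ref{EqXAOMAop}). The genuinely delicate point is the reversibility in the converse direction: I must verify that contracting against every $\omega$ (respectively every $x$) loses no information, so that the operator identity obtained is the full relation (\ref{EqRelHpf1}) (respectively (\ref{EqRelHpf2})) rather than a mere consequence of it, which is precisely where the nondegeneracy encoded in the zigzag identities is used.
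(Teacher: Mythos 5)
Your proposal is correct and follows essentially the same route as the paper: the paper's proof likewise passes through Theorem \ref{TIBXA}, unfolds (\ref{EqsigX}) and (\ref{EqXsig}) into exactly the forms $x\otimes m\mapsto (x,\omega_{i})m\otimes x_{i}$ and $m\otimes x\mapsto y_{j}\otimes(\rho_{j},x)m$, and then asserts that both directions follow from (\ref{EqRelHpf1}) and (\ref{EqRelHpf2}). Your composition computation and the converse contraction-by-evaluation argument are precisely the details the paper leaves to the reader with "follows directly" and "follows trivially."
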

\begin{proof} Under the correspondence described in Theorem \ref{TIBXA}, an $H(\Omega^{1})$-module $M$ has an induced invertible $\Omega^{1}$-intertwining $\sigma$. By recalling the definition of $\sigma$, the morphisms (\ref{EqsigX}) and (\ref{EqXsig}) translate to 
$$ (\mathrm{ev}\otimes \mathrm{id}_{M\otimes \XA^{1}})(x \otimes \sigma (m \otimes \omega_{i})\otimes x_{i})= (x,\omega_{i})m\otimes x_{i}$$ 
and 
$$(\mathrm{id}_{\XA^{1}\otimes M}\otimes \underline{\mathrm{ev}} )(y_{j}\otimes\sigma^{-1}(\rho_{j}\otimes m)\otimes x)=  y_{j}\otimes (\rho_{j},x)m$$
respectively, for any $x\in \XA^{1}$ and $m\in M$. In this form, the morphisms being inverses follows directly from (\ref{EqRelHpf1}) and (\ref{EqRelHpf2}). The converse direction also follow trivially.\end{proof}
In the above paragraph, we already hinted at the fact that the left (right) duals, of right (left) fgp bimodules with $\Omega^{1}$-intertwinings in $\prescript{\XA^{1}}{A}{I\mathcal{M}}^{\Omega^{1}}_{A}$, will have invertible $\Omega^{1}$-intertwinings. We now show that in fact $\prescript{\XA^{1}}{A}{I\mathcal{M}}^{\Omega^{1}}_{A}$ is closed and $H(\Omega^{1})$ is a Schauenburg Hopf algebroid. In fact, $H(\Omega^{1} )$ admits an invertible antipode and has the form of a B{\"o}hm-Szlach{\'a}nyi Hopf algebroid.
\begin{thm}\label{THOmega} The map $S:H(\Omega^{1} )\rightarrow H(\Omega^{1} )$ is defined by 
\begin{align*}
S(a)=\ov{a},\hspace{0.7cm} & S((x,\omega))=(\omega,x)
\\S(\ov{a})=a,  \hspace{0,7cm} & S((\omega,x))=(x,\omega)
\end{align*}
for $a\in A$, $\omega\in \Omega^{1}$ and $x\in \XA^{1}$ and extended anti-multiplicatively to $H(\Omega^{1})$. The map $S$ is a well-defined anti-algebra automorphism of algebra $H(\Omega^{1} )$, with $S^{-1}=S$ and satisfies the conditions in Definition \ref{DHgebroid} (II). 
\end{thm}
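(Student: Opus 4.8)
The plan is to treat the three claims of the theorem separately: well-definedness of $S$ on the quotient $H(\Omega^1)$, the fact that $S$ is an involutive anti-automorphism, and the verification of the three conditions of Definition \ref{DHgebroid}(II). Well-definedness is where pivotality is indispensable. Since $S$ is prescribed on the generating sets $A^e$, $\XA^1\otimes_{\field}\Omega^1$ and $\Omega^1\otimes_{\field}\XA^1$ and then extended anti-multiplicatively, I would only need to check that $S$ sends each defining relation of $H(\Omega^1)$ to another one. On $A^e=A\otimes_{\field}A^{op}$ the rule $a\ov{b}\mapsto b\ov{a}$ is an algebra anti-automorphism, so the $A^e$-algebra structure is respected, and applying $S$ to the bimodule relations (\ref{EqMfrak}) interchanges them with (\ref{EqYFrak}) because $S$ swaps $(x,\omega)\leftrightarrow(\omega,x)$ while reversing the order of the surrounding scalars. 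The essential observation is that $S$ carries relation (\ref{EqRelInv1}) to relation (\ref{EqRelHpf2}) and relation (\ref{EqRelInv2}) to relation (\ref{EqRelHpf1}): anti-multiplicativity reverses the left-hand product of (\ref{EqRelInv1}) into $(\omega,x_i)\sbt(x,\omega_i)$ and sends the right-hand side $\ov{\underline{\mathrm{ev}}(\omega\otimes x)}\in A^{op}$ to $\underline{\mathrm{ev}}(\omega\otimes x)\in A$, which agree exactly by (\ref{EqRelHpf2}). This pairing is possible only because $\Omega^1$ is pivotal, so that $\mathfrak{Y}^1$ is identified with $\XA^1$ and the two pairs of relations become mirror images under $S$. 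Hence $S$ descends to an anti-endomorphism of $H(\Omega^1)$.

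Involutivity is then immediate: $S^2$ fixes every generator and, being the composite of two anti-endomorphisms, is an algebra endomorphism, so $S^2=\mathrm{id}_{H(\Omega^1)}$. Thus $S$ is a bijective anti-automorphism with $S^{-1}=S$, and the first B{\"o}hm--Szlach{\'a}nyi condition $S(\ov{a})=a$ holds by definition.

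For the antipode identities (\ref{EqSCond}) and (\ref{EqSinvCond}) I would show that the elements satisfying them form a subalgebra, and then verify the generating cases. On $A^e$ the checks are immediate from $\Delta(a)=a\otimes 1$, $\Delta(\ov{a})=1\otimes\ov{a}$ and the identification $\ov{a}\diamond 1=1\diamond a$ in $B\diamond B$. For $b=(x,\omega)$ one has $b_{(1)}=(x,\omega_i)$, $b_{(2)}=(x_i,\omega)$ and $\Delta(S(b_{(1)}))=\Delta((\omega_i,x))=(\omega_i,y_j)\otimes(\rho_j,x)$, so the left side of (\ref{EqSCond}) equals $(\omega_i,y_j)\sbt(x_i,\omega)\diamond(\rho_j,x)$; collapsing the first factor by (\ref{EqRelInv1}) gives $\ov{\underline{\mathrm{ev}}(\omega\otimes y_j)}\diamond(\rho_j,x)=1\diamond\underline{\mathrm{ev}}(\omega\otimes y_j)(\rho_j,x)$, and the snake identity $\sum_{j}\underline{\mathrm{ev}}(\omega\otimes y_j)\rho_j=\omega$ then yields $1\diamond(\omega,x)=1\diamond S(b)$, as required. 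The case $b=(\omega,x)$ is symmetric, using (\ref{EqRelHpf1})--(\ref{EqRelHpf2}) and the coevaluation $\mathrm{coev}$, and (\ref{EqSinvCond}) is handled in the same fashion on each generator.

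The step I expect to be the main obstacle is the multiplicative extension from generators to all of $H(\Omega^1)$. These identities live in the one-sided quotient $B\diamond B$, on which only right multiplication of either tensor leg descends; the naive ``factor out the generator'' argument available for Hopf algebras therefore fails, since the terms $S(b_{(1)})_{(1)}$ and $S(b_{(1)})_{(2)}$ would have to be left-multiplied into the two legs. Closing the induction $P(b)\wedge P(c)\Rightarrow P(bc)$ requires first recording the twisted anti-comultiplicativity of $S$ relating $\Delta S$ to the flip of $\Delta$, and then combining the two instances of the identity while scrupulously respecting the $\diamond$-, $\odot$- and $\otimes_{A^{op}}$-identifications. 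This bookkeeping, rather than any of the generator computations, is the technical heart of the proof; once it is in place, $S^{-1}=S$ delivers the B{\"o}hm--Szlach{\'a}nyi structure of Definition \ref{DHgebroid}(II).
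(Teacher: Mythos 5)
Your treatment of well-definedness, involutivity and the generator computations coincides with the paper's own proof: $S$ interchanges relations (\ref{EqRelInv1})--(\ref{EqRelInv2}) with (\ref{EqRelHpf2})--(\ref{EqRelHpf1}) (exactly where pivotality enters), $S^{2}$ fixes the generators, and your verification of (\ref{EqSCond}) on $(x,\omega)$ is literally the computation in the paper. The genuine gap is the step you yourself call the technical heart: the passage from generators to arbitrary elements of $H(\Omega^{1})$ is never carried out. You list desiderata ("recording the twisted anti-comultiplicativity of $S$", "bookkeeping") and then assume the induction is "in place"; a proof cannot defer its self-declared central step, so as written the proposal establishes (\ref{EqSCond}) and (\ref{EqSinvCond}) only on generators. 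Moreover the remedy you gesture at points the wrong way: no anti-comultiplicativity of $S$ is needed (and proving such a statement would run into the very same extension problem), so this route would stall.

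The missing lemma is the one the paper compresses into the sentence that the image of $\Delta$ falls in the Takeuchi $\times$-product. Concretely, for any $z\in B$ the Takeuchi condition $\sum z_{(1)}\ov{s}\otimes z_{(2)}=\sum z_{(1)}\otimes z_{(2)}s$ implies that the leg-wise left multiplication $u\diamond v\mapsto \sum z_{(1)}\sbt u\diamond z_{(2)}\sbt v$ is well defined on $B\diamond B$: apply it to a defining relation $\ov{s}u\otimes v-u\otimes sv$ and use that the span of such elements is stable under right multiplication in each tensor leg. Granting this, if (\ref{EqSCond}) holds for $b$ and $c$, then by $\Delta(bc)=\Delta(b)\Delta(c)$ and anti-multiplicativity of $S$ the left-hand side for $bc$ equals $\sum S(c_{(1)})_{(1)}\sbt\big[S(b_{(1)})_{(1)}\sbt b_{(2)}\big]\sbt c_{(2)}\diamond S(c_{(1)})_{(2)}\sbt S(b_{(1)})_{(2)}$; applying the well-defined map above with $z=S(c_{(1)})$ (combined with right multiplication by $c_{(2)}$ in the first leg, which always descends) to the identity for $b$, and then the identity for $c$ followed by right multiplication by $S(b)$ in the second leg, yields $1\diamond S(c)\sbt S(b)=1\diamond S(bc)$. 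The same argument closes (\ref{EqSinvCond}) under multiplication. With this lemma supplied, your generator computations complete the proof of Definition \ref{DHgebroid} (II).
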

\begin{proof} We have defined $S$ on the generators of the algebra, and must verify that $S$ is well-defined by looking at the relations. Notice that relations (\ref{EqMfrak}) and (\ref{EqYFrak}) are symmetric under $S$ and $S$ is well-defined on relation (\ref{EqRelInv1}) due to relation (\ref{EqRelHpf2}): 
\begin{align*}
S\left(  (\omega_{i},x)\sbt (x_{i},\omega )\right)&=  S((x_{i},\omega ))\sbt S((\omega_{i},x)) 
\\ &= (\omega, x_{i})\sbt(x,\omega_{i})=\underline{\mathrm{ev}}(\omega,x)=S\left(\ov{\underline{\mathrm{ev}}(\omega,x)}\right)
\end{align*} 
where $x\in \XA^{1}$ and $\omega\in\Omega^{1}$. Similar arguments apply for the other relations and one can conclude that $S$ is well defined and by definition $S=S^{-1}$. Since the image of the coproduct falls in the Takeuchi $\times$-product, we only need to check the antipode conditions (\ref{EqSCond}) and (\ref{EqSinvCond}) on the generators of the bialgebroid. For generators $(x,\omega)\in\XA^{1}\otimes_{\field} \Omega^{1}$, 
\begin{align*}
S((x,\omega)_{(1)})_{(1)}\sbt & (x,\omega)_{(2)}\diamond S((x,\omega)_{(1)})_{(2)}=(\omega_{i},x)_{(1)}\sbt(x_{i},\omega)\diamond (\omega_{i},x)_{(2)}
\\ &=(\omega_{i},y_{j})\sbt(x_{i},\omega)\diamond (\rho_{j},x)=   \ov{\underline{\mathrm{ev}}(\omega ,y_{j})} \diamond (\rho_{j},x)
\\&= 1\diamond  \underline{\mathrm{ev}}(\omega ,y_{j})\sbt (\rho_{j},x)=1\diamond (\omega,x)=1\diamond S(x,\omega )
\end{align*}
and 
\begin{align*}
S^{-1}((x,\omega)_{(2)})_{(1)}&\diamond  S^{-1}((x,\omega)_{(2)})_{(2)}(x,\omega)_{(1)}=(\omega , x_{i})_{(1)} \diamond(\omega , x_{i})_{(1)}\sbt (x,\omega_{i})
\\ &=(\omega , y_{j})\diamond (\rho_{j} , x_{i})\sbt (x,\omega_{i})= (\omega , y_{j})\otimes  \underline{\mathrm{ev}}(\rho_{j},x)
\\&=\ov{ \underline{\mathrm{ev}}(\rho_{j},x)} \sbt(\omega , y_{j})\diamond 1 = (\omega,x)\diamond 1= S^{-1}((x,\omega))\diamond 1
\end{align*}
hold. A symmetric argument applies for generators of the form $(\omega,x)\in \Omega^{1}\otimes_{\field}\XA^{1}$.\end{proof}
Using the antipode we can describe the closed structure of $\prescript{\XA^{1}}{A}{I\mathcal{M}}^{\Omega^{1}}_{A}$, which lifts that of $\bim$. For a pair of $H(\Omega^{1})$-modules $M$ and $N$, we recover the action of $H(\Omega^{1})$ by (\ref{EqClosAntip}):
\begin{align}
[(x, \omega)f ](m)= (x,\rho_{j}) f((\omega,y_{j})m),  & \hspace{0.7cm} [(\rho, y)f ](m)= (\rho,x_{i}) f((y,\omega_{i})m) 
\\ [(x, \omega)g ](m)= (y_{j},\omega) g((\rho_{j},x)m),  & \hspace{0.7cm} [(\rho, y)g ](m)= (\omega_{i},y) g((x_{i},\rho)m)
\end{align}
for any $m\in M$, $(x,\omega)\in \XA^{1}\otimes_{\field}\Omega^{1}$, $(\rho,y)\in \Omega^{1}\otimes_{\field}\XA^{1}$, $f\in \mathrm{Hom}_{A}(M,N)$ and $g\in \prescript{}{A}{\mathrm{Hom}}(M,N) $.

\textbf{Notation.} We have used the notation $[hf](m)= h_{(+)}f(h_{(-)}m)$ to distinguish between $[hf](m)$, where $[hf]$ is the morphism obtained by $h\in H(\Omega^{1})$ acting on the morphism $f\in \mathrm{Hom}_{A}(M,N)$ and $hf(m)$, where $h$ acts on $f(m)$ as an element of $N$. In what follows, we will continue to adapt this notation. 

Now we look at bimodule connections whose underlying intertwinings belong to $\prescript{\XA^{1}}{A}{I\mathcal{M}}^{\Omega^{1}}_{A}$. At this point it should be clear that to do this we need to take the quotient of $IB\XA^{1}$ by the ideal generated by the set of relations (\ref{EqRelHpf1}) and (\ref{EqRelHpf2}). We denote this algebra by $H\XA^{1}$. From the arguments in Lemmas \ref{LInvBialg} and \ref{LPiv} it follows that the resulting algebra carries down the left $A$-bialgebroid structure of $B\XA^{1}$. Moreover, $H\XA^{1}$ is a Schauenburg Hopf algebroid. Observe that in order to demonstrate this, we only need to prove that the category of $H\XA^{1}$-modules lifts the closed structure of $\bim$. Since we have already described the action of $H(\Omega^{1})$ on $\mathrm{Hom}_{A}(M,N)$ and $\prescript{}{A}{\mathrm{Hom}}(M,N) $, we only need to present a well-defined action of elements of $\XA^{1}$ in $H\XA^{1}$, or in particular a connection on $\mathrm{Hom}_{A}(M,N)$ and $\prescript{}{A}{\mathrm{Hom}}(M,N) $.
\begin{thm}\label{THXAHopf} For $H\XA^{1}$-modules $M,N$, we can extend the actions of $H(\Omega^{1} )$ on the inner homs, to actions of $H\XA^{1}$ by defining the action of elements $x\in \XA^{1}$ by
\begin{align}
[xf](m) =& x f(m) -(x,\rho_{j}) f\big((\omega_{i},y_{j})\sbt x_{i}m\big)
\\ [xg](m)=&   y_{j} g\big( (\rho_{j},x)m\big)- g(y_{j}\sbt(\rho_{j},x)m)
\end{align}
where $m\in M$, $f\in \mathrm{Hom}_{A}(M,N)$ and $g\in \prescript{}{A}{\mathrm{Hom}}(M,N) $, so that the closed monoidal structure of $\bim$ lifts to the category of $H\XA^{1}$-modules. 
\end{thm}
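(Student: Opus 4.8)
The plan is to apply the criterion recalled just after (\ref{EqClosHpf}): to show that the forgetful functor $\prescript{}{H\XA^{1}}{\ct{M}}\to\bim$ is closed---equivalently, that $H\XA^{1}$ is a Schauenburg Hopf algebroid whose module category lifts the closed structure of $\bim$---it is enough to exhibit well-defined $H\XA^{1}$-actions on the inner homs $\mathrm{Hom}_{A}(M,N)$ and $\prescript{}{A}{\mathrm{Hom}}(M,N)$ for which all four units and counits $\varrho^{M}_{N},\varepsilon^{M}_{N},\Theta^{M}_{N},\Pi^{M}_{N}$ of (\ref{EqAdj}) are $H\XA^{1}$-module morphisms. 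The $A^{e}$- and $H(\Omega^{1})$-components of these actions are already fixed, coming from the antipode $S$ of Theorem \ref{THOmega} and (\ref{EqClosAntip}), which realise the closed structure of $\prescript{\XA^{1}}{A}{I\mathcal{M}}^{\Omega^{1}}_{A}$. Since $H\XA^{1}$ is generated over $H(\Omega^{1})$ by $\XA^{1}$, the only genuinely new datum is the action of $x\in\XA^{1}$, given by the two stated formulas, so the whole proof reduces to verifying these.

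First I would check that the formulas extend the $H(\Omega^{1})$-action to a well-defined $H\XA^{1}$-action. This means, on the one hand, that $[xf]$ is right $A$-linear and $[xg]$ is left $A$-linear, so that they indeed lie in $\mathrm{Hom}_{A}(M,N)$ and $\prescript{}{A}{\mathrm{Hom}}(M,N)$; and on the other, that the relations (\ref{EqAX}), (\ref{EqXA}) and (\ref{EqBXA}) hold as operator identities on the inner homs. The relation (\ref{EqBXA}), $x\sbt\ov{a}=\ov{a}\sbt x+(x,da)$, is the essential one, since it links the new $x$-action to the already-defined action of the generator $(x,da)\in\XA^{1}\otimes_{\field}\Omega^{1}$. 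Verifying it, and the right $A$-linearity of $[xf]$, amounts to expanding $x\cdot(f(m)a)$ and $x_{i}\cdot(ma)$ through (\ref{EqBXA}) on $M$ and $N$ and then collapsing the result with the triangle identities for $\mathrm{coev},\mathrm{ev},\underline{\mathrm{coev}},\underline{\mathrm{ev}}$ and the pivotal relations (\ref{EqRelInv1}), (\ref{EqRelInv2}), (\ref{EqRelHpf1}), (\ref{EqRelHpf2}); these relations are exactly what forces the correction terms to telescope.

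Next I would verify that the adjunction (co)units are module maps, where only the $x$-action remains to be checked. For the counit $\varepsilon^{M}_{N}$ this is immediate: acting by $x$ on $f\otimes m$ through $\Delta(x)=x\otimes 1+(x,\omega_{i})\otimes x_{i}$ and applying $\varepsilon^{M}_{N}$ gives $[xf](m)+[(x,\omega_{i})f](x_{i}m)$, and by the $H(\Omega^{1})$-action formula the second summand is precisely the correction term subtracted in the definition of $[xf]$, leaving $x\cdot f(m)$. The counit $\Pi^{M}_{N}$ is handled similarly, now using (\ref{EqRelHpf2}), the triangle identity $\sum_{j}y_{j}\underline{\mathrm{ev}}(\rho_{j},x)=x$ and (\ref{EqXA}), so that the $A$-valued correction terms cancel by left $A$-linearity of $g$. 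In other words, the two formulas are engineered so that $\varepsilon^{M}_{N}$ and $\Pi^{M}_{N}$ become module maps; the remaining units $\varrho^{M}_{N}$ and $\Theta^{M}_{N}$ then follow, either by the triangle identities relating them to the counits or by the symmetric computation, now carried out on the tensor-product targets $N\otimes M$ and $M\otimes N$ via $\Delta(x)$ and $\Delta((x,\omega))=(x,\omega_{i})\otimes(x_{i},\omega)$.

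I expect the main obstacle to be the well-definedness step, and within it the relation (\ref{EqBXA}) together with the $A$-linearity of $[xf]$ and $[xg]$. Here the action of $x$ meets the right $A$-action through the correction term $(x,da)$, and unravelling this requires several copies of the evaluation/coevaluation and pivotal relations to be combined in the correct order before the numerous correction terms cancel. The difficulty is organisational rather than conceptual: once the cancellation pattern dictated by (\ref{EqRelHpf1})--(\ref{EqRelHpf2}) and the triangle identities is identified, each verification becomes routine bookkeeping.
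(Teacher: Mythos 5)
Your proposal follows the paper's own proof essentially step for step: the paper likewise reduces everything to (i) checking relations (\ref{EqAX}), (\ref{EqXA}), (\ref{EqBXA}) as operator identities on the two inner homs (your extra check that $[xf]$ and $[xg]$ are right/left $A$-linear is a sensible addition the paper leaves implicit), and (ii) verifying directly that all four maps in (\ref{EqAdj}) are $H\XA^{1}$-module morphisms, with the counit computations for $\varepsilon^{M}_{N}$ and $\Pi^{M}_{N}$ using exactly the cancellations you describe. One caveat: your first suggested route for the units $\varrho^{M}_{N}$ and $\Theta^{M}_{N}$ --- deducing their module-map property from that of the counits via the triangle identities --- is not valid: the counit being a module morphism only makes the transpose of a module map $P\to\mathrm{Hom}_{A}(M,N)$ into a module map $P\otimes M\to N$, i.e.\ one direction of the lifted adjunction, and the unit conditions carry independent content (this is precisely where relations (\ref{EqRelInv2}) and (\ref{EqRelHpf1}) enter in the paper's proof, whereas the counits use (\ref{EqRelHpf2}) and the coproduct formula). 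Your fallback option, the direct computation on the targets $N\otimes M$ and $M\otimes N$ via $\Delta(x)$ and $\Delta((x,\omega))$, is what the paper actually does, so carrying out that alternative completes the argument.
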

\begin{proof} We must first check that the $H\XA^{1}$-actions defined above are well defined. We then proceed to showing that the units and counits of the adjunctions providing the closed structure of $\bim$, (\ref{EqAdj}), are $H\XA^{1}$-module morphisms. Since the actions of elements in $\Omega^{1}\otimes_{\field}\XA^{1}$ and $\XA^{1}\otimes_{\field}\Omega^{1}$ are lifted from $H(\Omega^{1})$, we only need to check these facts on the generators of the form $x\in\XA^{1}$. In particular, we only need to look at relations (\ref{EqAX}), (\ref{EqXA}) and (\ref{EqBXA}) for the $H\XA^{1}$-action to be well-defined:
\begin{align*}
[(a\sbt x)&f](m) = a\left( x (f(m)) -(x,\rho_{j})f \big((\omega_{i},y_{j})\sbt x_{i}m\big)\right)= [(a x)f](m)
\\ [(x\sbt a)&f](m)= x (af(m)) -(x,\rho_{j}) \left( af\big((\omega_{i},y_{j})\sbt x_{i}m\big)\right)
\\=&(xa)f(m) + \mathrm{ev}(x,da)f(m)-(xa,\rho_{j})  f\big((\omega_{i},y_{j})\sbt x_{i}m\big)
\\=& [(xa)f](m)+[\mathrm{ev}(x,da)f](m)
\\ [(x\sbt \ov{a})&f] (m)=x f(am) -(x,\rho_{j})  f\big(a\sbt(\omega_{i},y_{j})\sbt x_{i}m\big)
\\=& x f(am) -(x,\rho_{j})  f\big((\omega_{i},y_{j})\sbt (x_{i}a) m\big)= x f(am)
\\& -\left((x,\rho_{j})  f\big((\omega_{i},y_{j})\sbt x_{i}\sbt a m\big) +(x,\rho_{j})  f\big((\omega_{i},y_{j})\sbt \mathrm{ev}(x_{i}\otimes da) m\big)\right)
\\ =&[\ov{a}\sbt x f](m) + [(x,da)f](m) 
\end{align*}
where $a\in A$, $x\in \XA^{1}$ and $f\in \mathrm{Hom}_{A}(M,N)$. Similarly for $g\in \prescript{}{A}{\mathrm{Hom}}(M,N) $, we note that the right $A$-action on $M$ arises from the action of $A^{op}\subset H\XA^{1}$.
\begin{align*}
[(a\sbt x)&g](m) = y_{j} g\big((\rho_{j},x)\sbt \ov{a} m\big)-g(y_{j}\sbt(\rho_{j},x)\sbt \ov{a}m)=[(ax)g](m)
\\ [(x\sbt a)&g](m)= y_{j}g\big(\ov{a}\sbt (\rho_{j},x) m\big)-g\big(\ov{a}\sbt y_{j}\sbt(\rho_{j},x)m\big)
\\=&y_{j}g\big((\rho_{j},ax) m\big)-g(y_{j}\sbt(\rho_{j},ax)m)+g\big((y_{j},da)\sbt(\rho_{j},x)m\big)
\\=&[(xa)g](m)+ [\mathrm{ev}(x,da)g](m)
\\ [(x\sbt \ov{a})&g](m)= y_{j}\sbt \ov{a} g\big((\rho_{j},x)m\big)-\ov{a}g\big(y_{j}\sbt(\rho_{j},x)m\big)
\\=& [(\ov{a}\sbt x)g](m)+ (y_{j}, da) g\big((\rho_{j},x)m\big)= [(\ov{a}\sbt x)g](m)+[(x,da)g](m)
\end{align*} 
Hence, the actions of $H(\Omega^{1})$ on the inner homs extend to well-defined actions of $H\XA^{1}$. We now show that the the unit and counit, $\varrho^{M}$ and $\varepsilon^{M}$, of the adjunction $-\otimes M\dashv \mathrm{Hom}_{A}(M,-) $, respect the $H\XA^{1}$-actions. Let $x\in \XA^{1}$, $f\in \mathrm{Hom}_{A}(M,N)$, $m\in M$ and $n\in N$.
\begin{align*} 
[x \varrho^{M}_{N}(n)](m)=& xf_{n}(m) -(x,\rho_{j}) f_{n}\big((\omega_{i},y_{j})\sbt x_{i}m\big)
\\ =&x (n\otimes m) -(x,\rho_{j}) \big(n\otimes(\omega_{i},y_{j})\sbt x_{i}m \big)= xn\otimes m 
\\ &+(x,\omega_{l})n \otimes x_{l}m - (x,\omega_{k})n\otimes (x_{k},\rho_{j})\sbt (\omega_{i},y_{j})\sbt x_{i}m
\\ = &xn \otimes m=f_{xn}(m)=[\varrho^{M}_{N}(xn)](m)
\\ \varepsilon^{M}_{N} \big(x(f\otimes m)\big)=& \varepsilon^{M}_{N}\left( xf \otimes m+ (x,\omega_{i})f\otimes x_{i}m\right)=[xf](m)
\\&+  [(x,\omega_{i})f](x_{i}m) =x f(m)-(x,\rho_{k}) f\big((\omega_{l},y_{k})\sbt x_{l}m \big)
\\& +(x,\rho_{j}) f\big((\omega_{i},y_{j})\sbt x_{i} m\big)= xf(m)=x \varepsilon^{M}_{N} (f\otimes m)
\end{align*}
Similarly, we look at the unit and counit, $\Theta^{M}$ and $\Pi^{M}$, of the adjunction $M\otimes -\dashv \prescript{}{A}{\mathrm{Hom}}(M,-) $ respecting the $H\XA^{1}$-actions. Let $x\in \XA^{1}$, $g\in \prescript{}{A}{\mathrm{Hom}}(M,N)$, $m\in M$ and $n\in N$.
\begin{align*}
[x \Theta^{M}_{N}&(n)](m)= y_{j} g_{n}\big((\rho_{j},x)m\big)-g_{n}\big(y_{j}\sbt(\rho_{j},x)m\big)
\\ =&y_{j}\big( (\rho_{j},x)m \otimes n \big)-y_{j}\sbt (\rho_{j},x)m \otimes n
\\ =& y_{j} (\rho_{j},x)m \otimes n +(y_{j},\omega_{i})\sbt(\rho_{j},x)m \otimes x_{i}n -y_{j}\sbt(\rho_{j},x)m \otimes n
\\ =& \ov{\mathrm{ev}(x,\omega_{i})}m \otimes x_{i}n =m\otimes xn =[\Theta^{M}_{N}(xn)](m)
\\ \Pi^{M}_{N} \big(x&(m\otimes g)\big)= \Pi^{M}_{N} \left(xm\otimes g + (x,\omega_{i})m \otimes x_{i}g\right)
\\=& g(xm)+[x_{i}g]\big((x,\omega_{i})m\big)=g(xm) + y_{j}g\big((\rho_{j},x_{i})\sbt (x,\omega_{i})m\big)
\\& -g\big(y_{j}\sbt(\rho_{j},x_{i})\sbt (x,\omega_{i})m\big)=g(xm)+y_{j} g\big(\underline{\mathrm{ev}}(\rho_{j}\otimes x)m\big)
\\& -g\big(y_{j}\sbt\underline{\mathrm{ev}}(\rho_{j}\otimes x)m\big)=g(xm) +xg(m)-g(xm) 
\\& +\mathrm{ev}(y_{j}\otimes d\underline{\mathrm{ev}}(\rho_{j}\otimes x)) g(m)- g\big(\mathrm{ev}(y_{j}\otimes d\underline{\mathrm{ev}}(\rho_{j}\otimes x)) m\big) 
\\ =&g(xm)= x\Pi^{M}_{N}(m\otimes g) 
\end{align*}\end{proof}
The Hopf algebroid $H\XA^{1}$ is not expected to admit an antipode in general. For the existence of an antipode, we require a linear map $\Upsilon :\XA^{1}\rightarrow A$ satisfying 
\begin{equation}\label{EqSpade}\Upsilon (xa)=\Upsilon (x)a+\mathrm{ev}(x\otimes da), \hspace{1cm} \Upsilon (ax)=a\Upsilon (x)+\underline{\mathrm{ev}}(da \otimes x) 
\end{equation}
for any $x\in \XA^{1}$ and $a\in A$. In fact, the existence of such a map is equivalent to $H\XA^{1}$ admitting an antipode.
\begin{thm}\label{THXaAnti} The Hopf algebroid $H\XA^{1}$ admits an invertible antipode if and only if there exists a linear map $\Upsilon: \XA^{1}\rightarrow A$ satisfying (\ref{EqSpade}). In particular, if such $\Upsilon$ exists, the maps $S$ and $S^{-1}$ defined by 
\begin{align}
S(x)&=-(\omega_{i},x)\sbt x_{i}-\ov{\Upsilon (x) }
\\ S^{-1}(x)&=-\left( y_{j}+\Upsilon (y_{j})\right)\sbt(\rho_{j},x)
\end{align}
for $x\in \XA^{1}$, extend $S$ and $S^{-1}$ from Theorem \ref{THOmega}, to well-defined anti-algebra morphisms on $H\XA^{1}$ and are inverses. Furthermore, they satisfy the conditions in Definition \ref{DHgebroid} (II). 
\end{thm}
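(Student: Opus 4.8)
The statement is an equivalence, and the plan is to treat the two implications separately, in each case reducing everything to the generators $x\in\XA^{1}$: both $S$ and $S^{-1}$ are prescribed to extend the antipode of $H(\Omega^{1})$ from Theorem \ref{THOmega}, so all relations and all conditions of Definition \ref{DHgebroid}(II) involving only the generators of $H(\Omega^{1})$ are already settled. The relations of $H\XA^{1}$ over $H(\Omega^{1})$ are exactly (\ref{EqAX}), (\ref{EqXA}) and (\ref{EqBXA}), so these, together with the coproduct $\Delta(x)=x\otimes 1+(x,\omega_{i})\otimes x_{i}$, are the only new data to handle.

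For the implication assuming $\Upsilon$ exists, I would first verify that $S$ is a well-defined anti-algebra map by applying it anti-multiplicatively to the three new relations. Using $S(a)=\ov{a}$, the identity $\ov{a}\sbt(\omega_{i},x)=(\omega_{i},xa)$ coming from (\ref{EqYFrak}), and $\ov{a}\sbt\ov{\Upsilon(x)}=\ov{\Upsilon(x)a}$, relation (\ref{EqXA}) collapses to precisely the first half of (\ref{EqSpade}); relation (\ref{EqAX}), after rewriting $x_{i}\sbt\ov{a}$ via (\ref{EqBXA}) and using $\sum_{i}(\omega_{i},x)\sbt(x_{i},da)=\ov{\underline{\mathrm{ev}}(da,x)}$ from (\ref{EqRelInv1}), collapses to the second half of (\ref{EqSpade}); and relation (\ref{EqBXA}) holds identically once one checks the combinatorial identity $\sum_{i}(a\omega_{i},x)\sbt x_{i}=\sum_{i}(\omega_{i},x)\sbt(x_{i}a)$, which follows from the dual-basis relations together with (\ref{EqAX}). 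The same computations run with $S^{-1}$, the $\Upsilon(y_{j})$ terms there invoking (\ref{EqSpade}) in the mirrored form. I would then confirm $SS^{-1}=S^{-1}S=\mathrm{id}$ on each $x$, using $S=S^{-1}$ on $H(\Omega^{1})$ and the relations (\ref{EqRelHpf1}), (\ref{EqRelHpf2}), and finally check the conditions of Definition \ref{DHgebroid}(II) on $x$ exactly as in Theorem \ref{THOmega}.

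For the converse, suppose $H\XA^{1}$ carries an invertible antipode $S_{0}$. Restricting to the sub-bialgebroid $H(\Omega^{1})$, whose coproduct closes on its generators, $S_{0}$ restricts to an antipode there and hence agrees with the map of Theorem \ref{THOmega}; in particular $S_{0}(\ov{a})=a$ and $S_{0}((\omega,x))=(x,\omega)$. I would then set $z(x):=S_{0}(x)+\sum_{i}(\omega_{i},x)\sbt x_{i}$ and show $\Delta(z(x))=1\otimes z(x)$, using $\Delta(x)$, the known coproducts on $H(\Omega^{1})$, and the antipode axiom (\ref{EqSCond}) for $b=x$; the counit condition (\ref{EqCoun}) then forces $z(x)=\ov{\epsilon(z(x))}\in\eta(A^{op})$. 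Defining $\Upsilon(x):=-\epsilon(z(x))$ gives $S_{0}(x)=-\sum_{i}(\omega_{i},x)\sbt x_{i}-\ov{\Upsilon(x)}$, and since $S_{0}$ respects (\ref{EqXA}) and (\ref{EqAX}), rerunning the forward computations yields (\ref{EqSpade}).

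The main obstacle is the membership $z(x)\in\eta(A^{op})$ in the converse, i.e. the identity $\Delta(z(x))=1\otimes z(x)$. This is where the antipode axioms genuinely do their work, and it requires care translating between the coproduct's codomain $|B\otimes|B$ and the quotient $B\diamond B$ in which (\ref{EqSCond}) and (\ref{EqSinvCond}) are phrased, as well as careful bookkeeping of the source and target embeddings: because $\epsilon$ is only a one-sided $A$-linear map on $|B$, a careless counit manipulation can illegitimately interchange $a\Upsilon(x)$ and $\Upsilon(x)a$, so one must isolate $S_{0}(x)$'s precise form rather than collapse it through $\epsilon$ prematurely. By contrast, the forward direction is routine, closely paralleling Theorems \ref{THOmega} and \ref{THXAHopf}, with (\ref{EqSpade}) emerging exactly as the well-definedness condition for $S$ and $S^{-1}$.
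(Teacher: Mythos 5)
Your forward implication (from $\Upsilon$ to the antipode) is correct and is essentially the paper's own argument: you correctly identify that well-definedness of $S$ against relation (\ref{EqXA}) is equivalent to the first identity in (\ref{EqSpade}), that relation (\ref{EqAX}) --- after rewriting $x_{i}\sbt\ov{a}$ with (\ref{EqBXA}) and invoking (\ref{EqRelInv1}) --- is equivalent to the second, and that (\ref{EqBXA}) imposes no condition on $\Upsilon$ (your dual-basis identity, plus (\ref{EqXA}) to trade $x_{i}a$ for $x_{i}\sbt a$, is exactly how the paper disposes of it). The paper then, as you propose, mirrors these computations for $S^{-1}$, checks that $S$ and $S^{-1}$ are mutually inverse using the relations of $H(\Omega^{1})$ together with (\ref{EqSpade}), and verifies (\ref{EqSCond}) and (\ref{EqSinvCond}) only on generators, which suffices since $\Delta$ lands in the Takeuchi product.

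The converse is where you have a genuine gap. You assume that an arbitrary invertible antipode $S_{0}$ of $H\XA^{1}$ restricts to $H(\Omega^{1})$ and ``hence agrees with the map of Theorem \ref{THOmega}''. Neither part is justified: there is no a priori reason $S_{0}$ should preserve the subalgebra $H(\Omega^{1})$, and, more importantly, antipodes of Hopf algebroids are not unique --- the remark immediately following Theorem \ref{THXaAnti} in the paper shows that replacing $\Upsilon$ by $\Upsilon+\phi$ for any bimodule map $\phi:\XA^{1}\rightarrow A$ yields a different antipode on $H\XA^{1}$ itself. So ``being an antipode on $H(\Omega^{1})$'' cannot pin down the values $S_{0}((x,\omega))=(\omega,x)$, and those values are precisely what your grouplike argument consumes: to deduce $\Delta(z(x))=1\otimes z(x)$ from (\ref{EqSCond}) you must substitute $S_{0}((x,\omega_{i}))=(\omega_{i},x)$ into $S(x_{(1)})_{(1)}\sbt x_{(2)}\diamond S(x_{(1)})_{(2)}$. (The remainder of that argument is sound: $B\diamond B$ coincides with $|B\otimes |B$, and (\ref{EqCoun}) does force $z(x)=\ov{\epsilon(z(x))}$.) The paper avoids determining the form of $S_{0}(x)$ altogether: citing \cite{krahmer2015lie}, an antipode induces a right $H\XA^{1}$-action on $A$ by $a\triangleleft h=\epsilon(S(h)\sbt a)$ extending multiplication by $A^{op}$; one sets $\Upsilon(x):=-\epsilon(S(x))$ and derives both halves of (\ref{EqSpade}) by applying $\epsilon\circ S$ directly to relations (\ref{EqXA}) and (\ref{EqBXA}), using anti-multiplicativity and (\ref{EqCounrs}), so that only the values of $S$ on $A$, $A^{op}$ and $\epsilon(S((x,da)))$ are ever needed, never $\Delta(S_{0}(x))$. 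Admittedly the paper, too, tacitly uses $S(a)=\ov{a}$ and $S((x,da))=(da,x)$ --- i.e.\ it reads the statement, as its own wording ``extend $S$ and $S^{-1}$ from Theorem \ref{THOmega}'' suggests, as concerning antipodes compatible with Theorem \ref{THOmega} --- but it never appeals to a uniqueness claim, which is the step in your argument that is actually false.
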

\begin{proof} ($\Rightarrow$) First we recall the following elementary fact stated in \cite{krahmer2015lie}: if a Hopf algerboid admits an antipode $S: H\XA^{1} \rightarrow H\XA^{1}$ as defined in Definition \ref{DHgebroid} (II), then
$$ a\triangleleft h= \epsilon (S(h)\sbt a) , \quad a\in A, \ h\in H\XA^{1}$$
defines a right action of the algebra $H\XA^{1}$ on $A$, such that the action $A^{op}\subset H\XA^{1}$ coincides with left multiplication i.e. $a_{1}\triangleleft\ov{a_{2}}=a_{2}a_{1}$ for $a_{1},a_{2}\in A$. Hence, we define the map $\Upsilon :\XA^{1}\rightarrow A$ by $\Upsilon (x) := -\epsilon (S(x))=-1\triangleleft x$. It is then straightforward to check that (\ref{EqSpade}) holds:
\begin{align*}
\Upsilon (ax)=& -\epsilon (S(ax))= -\epsilon(S(x)\sbt \ov{a})=-\epsilon (S(x)\sbt a) = -\epsilon (S(\ov{a}\sbt x))
\\ = & -\epsilon (S(x\sbt \ov{a}))+\epsilon (S((x,da)))=-a\epsilon (S(x))+\epsilon ((da,x))
\\&  = a\Upsilon(x) + \underline{\mathrm{ev}}(da \otimes  x) 
\\ \Upsilon (x)a=&-\epsilon (S(x))a= -\epsilon (\ov{a}\sbt S(x))= -\epsilon (S(x\sbt a))
\\ =& -\epsilon (S(xa)+S(\mathrm{ev}(x\otimes  da)))= \Upsilon (xa)-\mathrm{ev}(x\otimes da)
\end{align*}

($\Leftarrow$) We assume such a map $\Upsilon$ exist. Hence, we have defined $S$ and $S^{-1}$ on the generators of $H\XA^{1}$ and must first check whether they are well defined on $H\XA^{1}$. For the relations present in $H(\Omega^{1})$, this has already been done in the proof of Theorem \ref{THOmega}. Hence, we only have to check relations (\ref{EqAX}), (\ref{EqXA}) and (\ref{EqBXA}). First we demonstrate this for $S$ 
\begin{align*}
S(a\sbt x)&=S(x)\sbt \ov{a}= -(\omega_{i},x)\sbt x_{i}\sbt\ov{a}-\ov{\Upsilon (x) }\sbt\ov{a}
\\ &= -(\omega_{i},ax)\sbt x_{i} -(\omega_{i},x)\sbt (x_{i},da) -\ov{a\Upsilon (x) }
\\ &= -(\omega_{i},ax)\sbt x_{i} -\ov{\Upsilon (ax) }= S(ax)
\\S(x\sbt a) &= \ov{a}\sbt S(x) =  -(\omega_{i},xa)\sbt x_{i}-\ov{\Upsilon (x)a }
\\ &= -(\omega_{i},xa)\sbt x_{i}-\ov{\Upsilon (xa) }+\ov{\mathrm{ev}(x\otimes  da)}= S(xa+\mathrm{ev}(x\otimes  da))
\\S(x\sbt \ov{a})&= a\sbt S(x)= -(a\omega_{i},x)\sbt x_{i}-a\ov{\Upsilon (x) }=-(\omega_{i},x)\sbt x_{i}a-\ov{\Upsilon (x)}a 
\\ &=S(x)\sbt a+(\omega_{i},x)\sbt \mathrm{ev}(x_{i}\otimes da) = S(\ov{a}\sbt x+(x,da))
\end{align*}
and for $S^{-1}$ 
\begin{align*}
S^{-1}(a\sbt x)&=S^{-1}(x)\sbt \ov{a} = -\left( y_{j}+\Upsilon (y_{j})\right)\sbt(\rho_{j},x)\sbt\ov{a}     =S^{-1}(a x)
\\ S^{-1}(x\sbt a) &= \ov{a}\sbt S^{-1}(x)=-\ov{a}\sbt\left( y_{j}+\Upsilon (y_{j})\right)\sbt(\rho_{j},x)
\\ &=-\left( y_{j}+\Upsilon (y_{j})\right)\sbt\ov{a}\sbt(\rho_{j},x) + ( y_{j},da)\sbt(\rho_{j},x)
\\& =S^{-1}(xa) +  \ov{\mathrm{ev}(x\otimes da)}= S^{-1}(xa+ \mathrm{ev}(x\otimes  da))
\\S^{-1}(x\sbt \ov{a})&= a\sbt S^{-1}(x)= -a\sbt\left( y_{j}+\Upsilon (y_{j})\right)\sbt(\rho_{j},x)
\\ &=-\left( ay_{j}+\Upsilon (ay_{j})\right)\sbt(\rho_{j},x)+  \underline{\mathrm{ev}}(da\otimes  y_{j})(\rho_{j},x)
\\ &= S^{-1}(x)\sbt a +(da,x)= S^{-1}(\ov{a}\sbt x+(x,da))
\end{align*}
where $a\in A$ and $x\in \XA^{1}$. We must also check that $S$ and $S^{-1}$ are inverse. Let $x\in \XA^{1}$.
\begin{align*}
S^{-1}S(x)&=-S^{-1}(x_{i})\sbt (x,\omega_{i}) -\Upsilon (x) 
\\&= \left( y_{j}+\Upsilon (y_{j})\right)\sbt (\rho_{j},x_{i})\sbt (x,\omega_{i}) -\Upsilon (x) 
\\&=\left( y_{j}+\Upsilon (y_{j})\right)\sbt \underline{\mathrm{ev}}(x\otimes  \rho_{j})-\Upsilon (x) =x
\\SS^{-1}(x)&= -(x,\rho_{j})\sbt\left( S(y_{j})+\ov{\Upsilon (y_{j})}\right)
\\&= (x,\rho_{j})\sbt \left((\omega_{i},y_{j})\sbt x_{i}+\ov{\Upsilon (y_{j})}\right) -(x,\rho_{j})\ov{\Upsilon (y_{j})}
\\&= (x,\rho_{j})\sbt(\omega_{i},y_{j})\sbt x_{i} =  \mathrm{ev}(x\otimes \omega_{i})x_{i}=x
\end{align*}
Since the coproduct falls in the Takeuchi product, we only need to verify axioms (\ref{EqSCond}) and (\ref{EqSinvCond}) on the generators of the bialgebroid. Let $x\in \XA^{1}$.
\begin{align*}
S(x_{(1)})_{(1)}&\sbt x_{(2)}\diamond S(x_{(1)})_{(2)}= (\omega_{i},x)_{(1)}\sbt x_{i}\diamond (\omega_{i},x)_{(2)}+S(x)_{(1)}\diamond S(x)_{(2)} 
\\ =&(\omega_{i},y_{j})\sbt x_{i}\diamond (\rho_{j},x)-\left((\omega_{i},x)\sbt x_{i}\right)_{(1)}\diamond \left((\omega_{i},x)\sbt x_{i}\right)_{(2)} 
\\&-1\diamond \ov{\Upsilon (x)} = - (\omega_{i},y_{j})\sbt (x_{i},\omega_{k})\diamond (\rho_{j},x)\sbt x_{k}-1\diamond \ov{\Upsilon (x)}
\\ =& -\ov{\underline{\mathrm{ev}}(\omega_{k}\otimes y_{j})}\diamond (\rho_{j},x)\sbt x_{k}-1\diamond \ov{\Upsilon (x)}=1\diamond S(x) &
\end{align*}
\begin{align*} S^{-1}(&x_{(2)})_{(1)}\diamond S^{-1}(x_{(2)})_{(2)}\sbt x_{(1)}= 1\diamond x +  S^{-1}(x_{i})_{(1)}\diamond S^{-1}(x_{i})_{(2)}\sbt (x,\omega_{i})
\\=& 1\diamond x  -\left( y_{j}\right)_{(1)}\sbt(\rho_{j},y_{k})\diamond \left( y_{j}\right)_{(2)}\sbt(\rho_{k}, x_{i})\sbt (x,\omega_{i})
\\& -\Upsilon (y_{j}) \sbt(\rho_{j},y_{k})\diamond (\rho_{k}, x_{i})\sbt (x,\omega_{i})
\\=& 1\diamond x- \Upsilon (y_{j})\sbt(\rho_{j},y_{k})\diamond \underline{\mathrm{ev}}(\rho_{k}\otimes  x)  
-y_{j} \sbt(\rho_{j},y_{k})\diamond \underline{\mathrm{ev}}(\rho_{k}\otimes  x)
\\&  - ( y_{j}, \omega_{l})\sbt(\rho_{j},y_{k})\diamond x_{l}\sbt \underline{\mathrm{ev}}(\rho_{k}\otimes  x)
\\=& -\left( y_{j}+\Upsilon (y_{j})\right)\sbt(\rho_{j},x)\diamond 1  +1\diamond x - \ov{\mathrm{ev}( y_{k}\otimes\omega_{l} )}\diamond x_{l}\  \underline{\mathrm{ev}}(\rho_{k}\otimes x)
\\=&S^{-1}(x)\diamond 1
\end{align*}
In the algebraic manipulations above, both properties of (\ref{EqSpade}) have been used but the additional terms have been omitted.\end{proof}
For any pair of $H\XA^{1}$-modules $M$ and $N$, one can easily check that the induced connections on the inner homs $\mathrm{Hom}_{A}(M,N)$ and $\prescript{}{A}{\mathrm{Hom}}(M,N) $ calculated via the antipode, (\ref{EqClosHpf}), agrees with those presented in Theorem \ref{THXAHopf}. In particular, the terms including $\Upsilon$ cancel out in the calculation of (\ref{EqClosHpf}).

\begin{rmk} In the classical theory of Hopf algebras, if a bialgebra admits an antipode, the antipode is unique. However, as demonstrated by the above theorem, this is not true for Hopf algebroids. In fact, one can add any bimodule morphism $\phi :\XA^{1} \rightarrow A$ to $\Upsilon$ and $\Upsilon +\phi$ will again satisfy (\ref{EqSpade}).  
\end{rmk} 

\subsection{Examples of Hopf Algebroids}\label{SExHopf}
As a corollary of Theorem \ref{THXaAnti}, several of the Hopf algebroids constructed here will admit antipodes. In particular, if the calculus $\Omega^{1}$ is a finitely generated free $A$-bimodule with basis $\lbrace f_{i}\rbrace_{i=1}^{n}$ for $\XA^{1}$, then $\Upsilon (\sum_{i} a_{i}f_{i})=\sum_{i}\mathrm{ev}(da_{i}\otimes f_{i})$ satisfies (\ref{EqSpade}), for any collection of elements $a_{i}\in A$.  
\begin{ex}\label{EHopfDer} [Derivation Calculus] Recall the bialgebroid constructed in Example \ref{EBiaDeriv} for a derivation $d:A\rightarrow A$. To obtain $H\XA^{1}$, a new generator $E = (1,1)\in \Omega^{1}\otimes_{\field}\XA^{1}$ is added and the new relations are equivalent to $F\sbt E=1= E\sbt F$. Hence, $H\XA^{1}=A^{e}.\field\langle D,F,F^{-1}\rangle$ with the commutation relations in Example \ref{EBiaDeriv}. The coproduct, counit and antipode are extended as follows
\begin{align*}
\Delta(F^{-1})=F^{-1}\otimes F^{-1}& , \quad \quad\epsilon (F^{-1})=1 
\\ S(D)=-F^{-1}D \quad S(F)=&F^{-1}\quad S(F^{-1})=F
\end{align*}
\end{ex}
\begin{ex}\label{ExHopfEM2C}[$M_{2}(\mathbb{C})$] For the differential calculus of Example \ref{EM2C}, $\Omega^{1}$ is a free bimodule and the Hopf algebroid $H\XA^{1}$ factorises as $A^{e}.\mathbb{C}\langle f_{i},\omx{i}{\gamma}{j},\omx{i}{\kappa}{j}\mid i,j\in \lbrace \mathsf{s},\mathsf{t}\rbrace\rangle$ with the relations of $B\XA^{1}$ presented in Example \ref{EBiaM2C} and additional relations
\begin{align*} [\omx{i}{\kappa}{j},a\ov{b}]=&0
\\\omx{\mathsf{s}}{\gamma}{i}\sbt \omx{\mathsf{s}}{\kappa}{j}+ \omx{\mathsf{t}}{\gamma}{i}\sbt \omx{\mathsf{t}}{\kappa}{j}=\delta_{i,j}&= \omx{i}{\gamma}{\mathsf{s}}\sbt \omx{j}{\kappa}{\mathsf{s}}+ \omx{i}{\gamma}{\mathsf{t}}\sbt \omx{j}{\kappa}{\mathsf{t}}
\\ \omx{\mathsf{s}}{\kappa}{i}\sbt \omx{\mathsf{s}}{\gamma}{j}+ \omx{\mathsf{t}}{\kappa}{i}\sbt \omx{\mathsf{t}}{\gamma}{j}=\delta_{i,j}&= \omx{i}{\kappa}{\mathsf{s}}\sbt \omx{j}{\gamma}{\mathsf{s}}+ \omx{i}{\kappa}{\mathsf{t}}\sbt \omx{j}{\gamma}{\mathsf{t}}
\end{align*} 
for all $i,j  \in \lbrace \mathsf{s},\mathsf{t}\rbrace$ and $a\ov{b}\in A^{e}$. The coproduct, counit and antipode extend similarly by
\begin{align*}
\Delta ( \omx{i}{\kappa}{j}) =\omx{i}{\kappa}{\mathsf{s}}\otimes \omx{\mathsf{s}}{\kappa}{j}+\omx{i}{\kappa}{\mathsf{t}}\otimes \omx{\mathsf{t}}{\kappa}{j},&\quad \epsilon (\omx{i}{\kappa}{j})=\delta_{i,j}
\\S(f_{i})=-\omx{\mathsf{s}}{\kappa}{i}\sbt f_{\mathsf{s}} -\omx{\mathsf{t}}{\kappa}{i}\sbt f_{\mathsf{t}}, \quad S(\omx{i}{\gamma}{j}) &= \omx{j}{\kappa}{i}, \quad S(\omx{i}{\kappa}{j})=\omx{j}{\gamma}{i}
\end{align*}
for all $i,j  \in \lbrace \mathsf{s},\mathsf{t}\rbrace$.
\end{ex}
\begin{ex}\label{EHopfGrph} [Finite Quiver] For a finite quiver $\Gamma =(V,E)$, we described $B\XA^{1}$ as an extension of the quiver path algebra $\field\Gamma$ in Example \ref{EBiaGrph}. The resulting Hopf algebroid on $\field (V)$ can also be described with relation to the quiver path algebra and additional generators as
$$\field\Gamma\left\langle \ov{f_{p}},\ (\ovl{e_{1}},\ovr{e_{2}}) ,\ (\ovr{e_{1}},\ovl{e_{2}}) \mid p\in S, \ e_{1},e_{2}\in E \right\rangle$$ 
with the relations presented in Example \ref{EBiaGrph} and additional relations
\begin{align*}
 (\ovr{e_{1}},\ovl{e_{2}})\sbt f_{p}\sbt \ov{f_{q}} &= (\ovr{e_{1}},\ovl{e_{2}})\delta_{p,t(e_{1})}\delta_{q,t(e_{2})}
\\ f_{p}\sbt \ov{f_{q}}\sbt(\ovr{e_{1}},\ovl{e_{2}}) &= (\ovr{e_{1}},\ovl{e_{2}})\delta_{p,s(e_{1})}\delta_{q,s(e_{2})}
\\\sum_{e\in E} (\ovl{e_{1}},\ovr{e}) \sbt (\ovr{e_{2}},\ovl{e}) =f_{t(e_{1})} \delta_{e_{1},e_{2}},& \quad \sum_{e\in E} (\ovr{e},\ovl{e_{1}}) \sbt (\ovl{e},\ovr{e_{2}})=\ov{f_{s(e_{1})}} \delta_{e_{1},e_{2}}
\\ \sum_{e\in E} (\ovr{e_{1}},\ovl{e}) \sbt (\ovl{e_{2}},\ovr{e}) = f_{s(e_{1})}\delta_{e_{1},e_{2}},&\quad \sum_{e\in E} (\ovl{e},\ovr{e_{1}}) \sbt (\ovr{e},\ovl{e_{2}})=\ov{f_{t(e_{1})}} \delta_{e_{1},e_{2}}
\end{align*}
for all $e_{1},e_{2}\in E$ and $p,q\in V$. The coproduct and counit of the new generators are given by 
$$ \Delta ( (\ovr{e_{1}},\ovl{e_{2}}))=\sum_{e\in E} (\ovr{e_{1}},\ovl{e})\otimes (\ovr{e},\ovl{e_{2}}),\quad \epsilon ((\ovr{e_{1}},\ovl{e_{2}}))=\delta_{e_{1},e_{2}} f_{s(e_{1})} $$ 
for any $e_{1},e_{2}\in E$. In fact $H\XA^{1}$ admits an antipode since the map $\Upsilon :\XA^{1} \rightarrow A$ defined by $\Upsilon (\ovl{e})=f_{s(e)}-f_{t(e)}$, for $e\in E$, satisfies (\ref{EqSpade}). Translating this data in terms of $\field \Gamma$, the antipode takes the form 
\begin{align*}
S(\ovl{e_{1}}) = -\sum_{e\in E} (\ovr{e},\ovl{e_{1}})\sbt \ovl{e} - \sum_{e\in E} (\ovr{e},\ovl{e_{1}}) - \ov{f_{s(e_{1})}}
\\ S( (\ovr{e_{1}},\ovl{e_{2}}) )= (\ovl{e_{2}},\ovr{e_{1}}), \quad S( (\ovl{e_{1}},\ovr{e_{2}}) )= (\ovr{e_{2}},\ovl{e_{1}})
\end{align*}
for any $e_{1},e_{2}\in E$.
\end{ex}
\begin{ex}\label{EHopfBico} [Bicovariant Calculi] If $A$ is a Hopf algebra and $\Omega^{1}$ a bicovariant calculus over $A$, then as demonstrated in Example \ref{EPivHopf}, $\Omega^{1}$ is free as a left $A$-module so that $\Omega^{1}\otimes_{\field}(\Omega^{1})^{\vee}\cong A^{e}\otimes_{\field} (\Lambda\otimes_{\field} \Lambda^{\star})$ as a left $A^{e}$-module. Hence the Hopf algebroid $H\XA^{1}$ factorises as $A^{e}.T\mathfrak{W}$ where $\mathfrak{W}=\Lambda^{\star}\oplus (\Lambda^{\star}\otimes_{\field} \Lambda )\oplus (\Lambda\otimes_{\field} \Lambda^{\star})$, with the relations present in Example \ref{EBiaHopf} and additional commutation relations
\begin{align*}
(\bm{\lambda}_{j},\bm{f}_{k})\sbt a\ov{b} = a_{(2)}\ov{b_{(2)}}\sbt \big( s^{-1}(a_{(1)}&)\triangleright\bm{\lambda}_{j} , \bm{f}_{j}\triangleleft s^{-1}(b_{(1)})\big)
\\ \sum_{i=1}^{n} (\bm{\lambda}_{i},\bm{f}_{k})\sbt (f_{i},\lambda_{j}) = \delta_{j,k} =&\sum_{i=1}^{n} (f_{j},\lambda_{i})\sbt (\bm{\lambda}_{k},\bm{f}_{i})
\\ \sum_{i=1}^{n} (f_{i},\lambda_{j})\sbt \big(s^{-2}((\lambda_{i})_{(-1)} )\triangleright \bm{(\lambda_{i})_{(0)}} ,\bm{f}_{k}\big) &= f_{k} \big( s^{-2}((\lambda_{j})_{(-1)} )\triangleright (\lambda_{j})_{(0)}\big) 
\\ \sum_{i=1}^{n} (\bm{\lambda}_{j},\bm{f}_{i}) \sbt \big(f_{k},s((\lambda_{i})_{(-1)})\triangleright (\lambda_{i})_{(0)}\big) =&f_{k}\big( s((\lambda_{j})_{(-1)})\triangleright (\lambda_{j})_{(0)} \big)
\end{align*}
for all $1\leq j,k\leq n$ and $a,b\in A$. The coproduct and counit of $B\XA^{1}$ extend to $H\XA^{1}$ by 
$$\Delta ( (\bm{\lambda}_{i},\bm{f}_{j}) )= \sum_{k=1}^{n} (\bm{\lambda}_{i},\bm{f}_{k})\otimes (\bm{\lambda}_{k},\bm{f}_{j}) \quad \epsilon ( (\bm{\lambda}_{i},\bm{f}_{j})) =\delta_{i,j}$$ 
for all $1\leq i,j\leq n$.
\end{ex}
\begin{ex}\label{ExHopfD6} [$\mathbb{C}D_{6}$] For the differential calculus of Example \ref{EBiaD6}, the left coaction on $\Lambda$ is trivial. Hence, the Hopf algebroid $H\XA^{1}$ over the group algebra $\mathbb{C}D_{6}$, factorises as $A^{e}.\mathbb{C}\langle f_{i},\omx{i}{\gamma}{j},\omx{i}{\kappa}{j}\mid i,j\in \lbrace \xi,\tau\rbrace\rangle$ with the relations of $B\XA^{1}$ as presented in Example \ref{EBiaD6} and additional relations
\begin{align*} 
\omx{\xi}{\kappa}{i}\sbt a = \frac{1}{2} (\omx{\xi}{\kappa}{i} +\sqrt{3}\omx{\tau}{\kappa}{i}), &\quad \omx{\tau}{\kappa}{i}\sbt a = \frac{1}{2} (-\sqrt{3}\omx{\xi}{\kappa}{i} +\omx{\tau}{\kappa}{i}) 
\\\omx{i}{\kappa}{\xi}\sbt \ov{a} = \frac{1}{2} (\omx{i}{\kappa}{\xi} -\sqrt{3}\omx{i}{\kappa}{\tau}), &\quad \omx{i}{\kappa}{\tau}\sbt \ov{a} = \frac{1}{2} (\sqrt{3}\omx{i}{\kappa}{\xi} +\omx{i}{\kappa}{\tau})  
\\ \omx{\xi}{\kappa}{i}\sbt b =\omx{\xi}{\kappa}{i}, \quad \omx{\tau}{\kappa}{i}\sbt b =-\omx{\tau}{\kappa}{i} ,&\quad  \omx{i}{\kappa}{\xi}\sbt \ov{b}= \omx{i}{\kappa}{\xi},\quad  \omx{i}{\kappa}{\tau}\sbt \ov{b}= -\omx{i}{\kappa}{\tau}
\\\omx{\xi}{\gamma}{j}\sbt \omx{\xi}{\kappa}{j}+ \omx{\tau}{\gamma}{j}\sbt \omx{\tau}{\kappa}{j}=&\delta_{i,j}= \omx{i}{\gamma}{\xi}\sbt \omx{j}{\kappa}{\xi}+ \omx{i}{\gamma}{\tau}\sbt \omx{j}{\kappa}{\tau}
\\ \omx{\xi}{\kappa}{j}\sbt \omx{\xi}{\gamma}{j}+ \omx{\tau}{\kappa}{j}\sbt \omx{\tau}{\gamma}{j}=&\delta_{i,j}= \omx{i}{\kappa}{\xi}\sbt \omx{j}{\gamma}{\xi}+ \omx{i}{\kappa}{\tau}\sbt \omx{j}{\gamma}{\tau}
\end{align*}
for $i,j\in \lbrace \xi,\tau\rbrace$. The coproduct and counit extend as
\begin{align*}
\Delta ( \omx{i}{\kappa}{j}) =\omx{i}{\kappa}{\xi}\otimes \omx{\xi}{\kappa}{j}+\omx{i}{\kappa}{\tau}\otimes \omx{\tau}{\kappa}{j},&\quad \epsilon (\omx{i}{\kappa}{j})=\delta_{i,j}
\end{align*}
for $i,j\in \lbrace \xi,\tau\rbrace$.
\end{ex}
\section{Flat Bimodule Connections}\label{SFlat}
Classically, the curvature on connections is defined using the Lie bracket on vector fields or alternatively, the exterior derivative from the space of 1-forms to the space of 2-forms. In this section, we assume $d:A\rightarrow \Omega^{1}$ is part of a dga $\Omega^{\sbt}$. However, we only require the bimodule $\Omega^{2}$ and linear maps $d:\Omega^{1}\rightarrow \Omega^{2}$ and $\wedge: \Omega^{1} \otimes\Omega^{1} \rightarrow \Omega^{2}$, satisfying the relevant properties, as additional data. We briefly recall the definitions of curvature, flat connections and the sheaf of differential operators from \cite{beggs2019quantum}.

If $(M,\cnc )$ is a left connection, then the \emph{curvature} of $\cnc$ is a map $R_{M}:M\rightarrow \Omega^{2}\otimes M$ defined by 
$$R_{M}=(d \otimes \mathrm{id}_{ M} - \mathrm{id}_{\Omega^{1}} \wedge \cnc ) \cnc $$
We say $(M,\cnc)$ is a \emph{flat left connection}, if $R_{M}=0$ and denote the subcategory of flat left connections in $\prescript{}{A}{\ct{E}}$, by $\prescript{}{A}{\ct{F}}$. 

In Chapter 6 of \cite{beggs2019quantum}, the category $\prescript{}{A}{\ct{F}}$ is shown to be isomorphic to the category of modules over an algebra, $\ct{D}_{A}$, when $\Omega^{2}$ is right fgp. We denote the left dual bimodule of $\Omega^{2}$ by $\XA^{2}$ and denote the respective coevaluation and evaluation maps by $\fr{coev}$ and $\fr{ev}$ and denote $\fr{coev}(1)= \sum_{i} x^{2}_{i}\otimes\omega^{2}_{i}$. In this case, $\prescript{}{A}{\ct{F}}\cong \prescript{}{\ct{D}_{A}}{\ct{M}}$, where $\ct{D}_{A}$ is the algebra obtained as the quotient of $T\XA^{1}_{\sbt}$ by the ideal generated by relations
\begin{equation}\label{EqFlat}
 \fr{ev}(x^{2}\otimes d\omega_{i})\sbt x_{i} - \mathfrak{ev}(x^{2}\otimes \omega_{j}\wedge\omega_{k}) \sbt x_{k}\sbt x_{j} =0
\end{equation}
for all $x^{2}\in \XA^{2}$. It is easy to verify that the mentioned ideal annihilating a $T\XA^{1}_{\sbt}$-module is equivalent to the induced connection on the module being flat [Corollary 6.24 \cite{beggs2019quantum}].

Although one could quotient out the algebra $B\XA^{1}$ by the same relations, (\ref{EqFlat}), and discuss bimodule connection which have a flat left connection, the tensor product of two such connections will not have zero curvature. To discuss a monoidal category of flat bimodule connections, we must assume the bimodule connections are \emph{extendable}. We say an $\Omega^{1}$-intertwining map $\sigma :M\otimes\Omega^{1} \rightarrow \Omega^{1} \otimes M$ is extendable if there exists an $\Omega^{2}$-intertwining map $\sigma_{2}:M\otimes\Omega^{2} \rightarrow \Omega^{2} \otimes M$ such that the equation
\begin{equation}\label{EqExtend}
(\wedge \otimes \mathrm{id}_{M})(\mathrm{id}_{\Omega^{1}}\otimes \sigma )(\sigma \otimes \mathrm{id}_{\Omega^{1}}) = \sigma_{2}(\mathrm{id}_{M}\otimes \wedge )
\end{equation}
holds as an equality of bimodule morphisms with domain $M\otimes \Omega^{1}\otimes\Omega^{1}$ and codomain $\Omega^{2}\otimes M$. An additional condition is required when the calculus is not surjective. The equation
\begin{equation}\label{EqFlExtraa}
(\wedge \otimes M)[ (\mathrm{id}_{\Omega^{1}}\otimes\sigma)(\cnc \otimes \mathrm{id}_{\Omega^{1}}) +( \mathrm{id}_{\Omega^{1}}\otimes \cnc)\sigma]= (d \otimes \mathrm{id}_{M})\sigma - \sigma_{2}(\mathrm{id}_{M}\otimes d)
\end{equation} 
must hold for linear maps with domain $M\otimes \Omega^{1}$ and codomain $\Omega^{2}\otimes M$. This condition appears implicitly in Lemma 4.12 of \cite{beggs2019quantum} and is said to be equivalent to the curvature being a right module morphism. However, if the calculus is not surjective, this is an additional condition. The subcategory of left bimodule connections which are flat, extendable and satisfy condition (\ref{EqFlExtraa}), is a monoidal subcategory of $\biml$ and is denoted by $\prescript{l}{A}{\mathcal{F}}_{A}$. This is discussed in Section 4.5.1 of \cite{beggs2019quantum}. To obtain the bialgebroid whose category of modules is isomorphic to $\prescript{l}{A}{\mathcal{F}}_{A}$, we must adjoin additional generators of the form $\XA^{2}\otimes_{\field}\Omega^{2}$ to $B\XA^{1}$, to induce $\Omega^{2}$-intertwinings and quotient out the corresponding relations for flatness (\ref{EqFlat}), extendability (\ref{EqExt1}) and the additional condition (\ref{EqFlExt}). However, the category $\prescript{l}{A}{\mathcal{F}}_{A}$ will again not lift the closed structure of $\bim$. Instead, we will look at the relevant closed monoidal subcategory of flat bimodule connections in $\prescript{}{H\XA^{1}}{\ct{M}} $, and the construction of the relevant bialgebroid for $\prescript{l}{A}{\mathcal{F}}_{A}$ will also be implicitly present in our work. 

\subsection{Hopf Algebroid $\ct{D}\XA$ in Flat Case}\label{SDXA}
The closed subcategory of flat bimodule connections with extendable $\Omega^{1}$-intertwining which we would like to consider, should lift the closed structure of $\bim$. Since the extendability condition adds an underlying $\Omega^{2}$-intertwining to our connection, the underlying $\Omega^{2}$-intertwining of such bimodules must belong to the appropriate closed subcategory of $\Omega^{2} $-intertwinings. Hence, as for $\Omega^{1}$-intertwinings in Section \ref{SPivHpf}, we assume $\Omega^{2}$ is left and right fgp and pivotal as an $A$-bimodule. We denote the relevant coevaluation and evaluation maps between $\Omega^{2}$ and its left and right dual $\XA^{2}$, by $\fr{coev}$, $\underline{\fr{coev}}$, $\mathfrak{ev}$ and $\underline{\mathfrak{ev}}$. We utilise the following notation $\fr{coev}(1)= \sum_{i}x^{2}_{i}\otimes \omega^{2}_{i} \in \XA^{2}\otimes\Omega^{2}$ and $\underline{\fr{coev}}(1)= \sum_{i}\rho^{2}_{i}\otimes y^{2}_{i} \in \Omega^{2}\otimes \XA^{2}$.

Additionally, we require $\wedge$ to be a \emph{pivotal bimodule morphism} i.e. for any $x^{2}\in \XA^{2}$, the equation 
\begin{equation}\label{EqWdgPiv}
\mathfrak{ev} (x^{2} \otimes \omega_{i}\wedge \omega_{j}) x_{j}\otimes x_{i} = y_{i}\otimes y_{j} \underline{\mathfrak{ev}}(\rho_{j}\wedge \rho_{i} \otimes x^{2} ) 
\end{equation}
holds for elements of $\XA^{1}\otimes\XA^{1}$. Since both $\Omega^{1}$ and $\Omega^{2}$ are both pivotal, $\wedge$ provides two bimodule morphisms from $\XA^{2}$ to $ \XA^{1}\otimes\XA^{1}$, presented on either side of the equation above, and condition (\ref{EqWdgPiv}) requires these two bimodule morphisms to be equal. 

We note that the free product of two Hopf algebroids over an algbera $A$, as $A^{e}$-algebras will again be a Hopf algebroid over $A$. Since modules over the free product are just $A$-bimodules with additional actions of each algebra, the action of both Hopf algebroids on tensor products and inner homs simply lift to the category of modules over the free product. Hence, We obtain a new Hopf algebroid by considering the free product of $A\otimes_{\field}A^{op}$-algebras $H\XA^{1}$ and $H(\Omega^{2})$ and denote it by $F$. We define $\ct{D}\XA$ as the quotient of $F$ by the ideal generated by relations (\ref{EqFlat}) and 
\begin{align}
\mathfrak{ev}( x^{2}\otimes \omega_{i}\wedge \omega_{j}) [x_{j}\sbt(x_{i},\omega)+(x_{j},\omega)\sbt x_{i}] =& \mathfrak{ev}(x^{2}\otimes d\omega_{i} )(x_{i},\omega ) - (x^{2},d\omega) \label{EqFlExt}
\\ \mathfrak{ev}(x^{2}\otimes \omega_{i}\wedge \omega_{j}) \sbt (x_{j},\rho)\sbt(x_{i},\omega)=& (x^{2},\omega\wedge \rho) \label{EqExt1}
\\ \ov{\underline{\mathfrak{ev}}(\rho_{i}\wedge \rho_{j}\otimes x^{2})}\sbt(\omega ,y_{i})\sbt(\rho, y_{j} ) =&(\omega\wedge\rho , x^{2})\label{EqExt2}
\end{align}
for all $x^{2}\in\XA^{2}$ and $\omega, \rho\in \Omega^{1}$. 

Firstly, note that an $F$-module $M$ is an $A$-bimodules with a left bimodule connection $(M,\cnc,\sigma)$, so that $(M,\sigma)$ lies in $\prescript{\XA^{1}}{A}{I\mathcal{M}}^{\Omega^{1}}_{A}$ and an invertible $\Omega^{2}$-intertwining $\sigma_{2}$ such that $(M,\sigma_{2})$ lies in $\prescript{\XA^{2}}{A}{I\mathcal{M}}^{\Omega^{2}}_{A}$. By constructing $\sigma$ and $\sigma^{2}$ for an $F$-module, as described in Theorem \ref{TBOmga}, we can deduce that the annihilation of the module by relations (\ref{EqExt1}) and (\ref{EqExt2}) is equivalent to $\sigma$ and $\sigma^{-1}$ extending to $\sigma_{2}$ and $\sigma_{2}^{-1}$, respectively. Since $\wedge$ is a pivotal morphism and (\ref{EqWdgPiv}) holds, relations (\ref{EqExt1}) and (\ref{EqExt2}) are equivalent to relations 
\begin{align}
  (y_{i},\rho)\sbt(y_{j},\omega)\sbt\underline{\mathfrak{ev}}(\rho_{j}\wedge \rho_{i} \otimes x^{2} ) =& (x^{2},\omega\wedge \rho) \label{EqExt3}
\\ (\omega ,x_{i})\sbt(\rho, x_{j} )\sbt\ov{ \mathfrak{ev} (x^{2} \otimes \omega_{i}\wedge \omega_{j}) } =&(\omega\wedge\rho , x^{2})\label{EqExt4}
\end{align}
respectively. Recall that any $H(\Omega^{1})$-module has a pair of induced $\XA^{1}$-intertwinings, (\ref{EqXsig}) and (\ref{EqsigX}), which are inverses. Relations (\ref{EqExt3}) and (\ref{EqExt4}) annihilating an $F$-module, are equivalent to the induced $\XA^{1}$-intertwinings on the module, extending to the corresponding $\XA^{2}$-intertwinings. 

Relation (\ref{EqFlExt}) annihilating an $F$-module, is equivalent to the induced bimodule connection and intertwinings of the $F$-module satisfying the additional condition (\ref{EqFlExtraa}). We previously noted that, when the calculus in question is surjective i.e. $\Omega^{1}$ is generated by elements of the form $bda$ where $a,b\in A$, then relation (\ref{EqFlExt}) follows from (\ref{EqFlat}) and (\ref{EqExt1}):
\begin{align*}
 0=&0\sbt \ov{ba}= \big(\mathfrak{ev}(x^{2}\otimes d\omega_{i})\sbt x_{i} - \mathfrak{ev}(x^{2}\otimes \omega_{j}\wedge\omega_{k}) \sbt x_{k}\sbt x_{j} \big)\sbt \ov{ba}
\\ =& \big[\ov{a}\mathfrak{ev}(x^{2}\otimes  d\omega_{i})\sbt x_{i} + \mathfrak{ev}(x^{2}\otimes  d\omega_{i})\sbt ( x_{i},da) - \ov{a}\mathfrak{ev}(x^{2}\otimes \omega_{j}\wedge\omega_{k}) \sbt x_{k}\sbt x_{j}
 \\   &-\mathfrak{ev}(x^{2}\otimes \omega_{j}\wedge\omega_{k}) \sbt [x_{k}\sbt (x_{j},da) + (x_{k},da)\sbt x_{j}] \big]\sbt \ov{b}  
 \\ =& \big[\mathfrak{ev}(x^{2}\otimes  d\omega_{i}) ( x_{i},da)-\mathfrak{ev}(x^{2}\otimes \omega_{j}\wedge\omega_{k})  [x_{k}\sbt (x_{j},da) + (x_{k},da)\sbt x_{j}] \big]\sbt \ov{b}
 \\   =& \mathfrak{ev}(x^{2}\otimes  d\omega_{i}) ( x_{i},bda)-\mathfrak{ev}(x^{2}\otimes \omega_{j}\wedge\omega_{k})  [x_{k}\sbt (x_{j},bda) + (x_{k},bda)\sbt x_{j}] 
 \\ &+ \mathfrak{ev}(x^{2}\otimes \omega_{j}\wedge\omega_{k})(x_{k},da)\sbt (x_{j},db) 
 \\ &\Longrightarrow \mathfrak{ev}(x^{2}\otimes \omega_{j}\wedge\omega_{k})  [x_{k}\sbt (x_{j},bda) + (x_{k},bda)\sbt x_{j}]  
 \\ &\hspace{3cm} = \mathfrak{ev}(x^{2}\otimes  d\omega_{i}) ( x_{i},bda)- (x^{2}, db\wedge da )
\end{align*}
for any $x^{2}\in \XA^{2}$ and $a,b\in A$.
\begin{rmk}\label{RWdSplit} If $\wedge : \Omega^{1} \otimes\Omega^{1} \rightarrow \Omega^{2}$ splits as a bimodule map, we do not need to add additional generators to $H\XA^{1}$ to capture the intertwining map extending. In other words, when $\wedge$ is surjective, the relations imposed in $\ct{D}\XA$, describe the additional generators of $H(\Omega^{2})$ in terms of elements of $H\XA^{1}$. Additionally when $\wedge$ splits, the extendabillity conditions would simply be equivalent to relations
$$\mathfrak{ev}(x^{2}\otimes \omega_{i}\wedge \omega_{j}) \sbt (x_{j},\rho)\sbt(x_{i},\omega)=0=\ov{\underline{\mathfrak{ev}}(\rho_{i}\wedge \rho_{j}\otimes x^{2})}\sbt(\omega ,y_{i})\sbt(\rho, y_{j} ) $$ 
on $H\XA^{1}$, for all $\omega\wedge \rho\in \mathrm{ker} (\wedge)$. 
\end{rmk}

\textbf{Notation.} From this point onwards, whenever the action of elements of $A$ and $A^{op}$ agrees with a module action on elements in the algebras constructed, we avoid writing $\sbt$ for brevity. For example, for elements $a\in A$, $x\in\XA^{1}$ and $\omega\in \Omega^{1}$, we simply write $ax$ and $a(x,\omega)$ instead of $a\sbt x$ and $a\sbt (x,\omega) $, respectively.

\begin{thm}\label{TDXAbia} The algebra $\ct{D}\XA$ inherits the bialgebroid structure of $F$. 
\end{thm}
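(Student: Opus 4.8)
Since $F$ is already a Hopf algebroid with $\Delta,\epsilon$ defined multiplicatively on generators, all coring and bialgebroid identities are statements about these multiplicative maps and pass automatically to any quotient on which $\Delta$ and $\epsilon$ remain well defined. Writing $I\subseteq F$ for the ideal generated by (\ref{EqFlat}), (\ref{EqFlExt}), (\ref{EqExt1}) and (\ref{EqExt2}), the plan is therefore to verify only that $\epsilon(I)=0$ and that $\Delta(I)$ lies in the kernel $K$ of the projection $F\otimes F\twoheadrightarrow\ct{D}\XA\otimes\ct{D}\XA$; since $K\supseteq I\otimes F+F\otimes I$, when testing membership in $K$ I am free to reduce either tensor leg modulo any of the four defining relations. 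This is exactly the well-definedness check performed for $IB(\Omega^1)$ and $H(\Omega^1)$ in Lemmas \ref{LInvBialg} and \ref{LPiv}. The counit is immediate: $\epsilon(x)=0$ forces $\epsilon$ to vanish on every monomial of positive $\XA^1$-degree, killing (\ref{EqFlat}) and (\ref{EqFlExt}), while on (\ref{EqExt1}) and (\ref{EqExt2}) one collapses the left-hand side using $\epsilon((x,\omega))=\mathrm{ev}(x,\omega)$, the zig-zag $\sum_i\omega_i\,\mathrm{ev}(x_i,\omega)=\omega$, and the $A$-balancing of $\wedge$, recovering $\fr{ev}(x^2\otimes\omega\wedge\rho)$ on the right.

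For $\Delta$, the relations (\ref{EqExt1}) and (\ref{EqExt2}) are of comatrix type, and I would dispatch them exactly as in Lemma \ref{LInvBialg}: feeding the matrix coproducts $\Delta((x,\omega))=(x,\omega_i)\otimes(x_i,\omega)$ and its $\XA^2$-analogue into the multiplicative $\Delta$, the two legs separate and close up after one application of the pivotal wedge condition (\ref{EqWdgPiv}), which is precisely the ingredient identifying (\ref{EqExt1}) with (\ref{EqExt3}) and (\ref{EqExt2}) with (\ref{EqExt4}).

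The crux is the flatness relation $r=\fr{ev}(x^2\otimes d\omega_i)\,x_i-\fr{ev}(x^2\otimes\omega_j\wedge\omega_k)\,x_k x_j$. Expanding $\Delta(r)$ with $\Delta(x)=x\otimes1+(x,\omega_i)\otimes x_i$ and sorting by the $\XA^1$-degree of the right leg yields three blocks. The degree-zero block is $r\otimes1\in I\otimes F$. After reindexing, the degree-one block carries the bracket of (\ref{EqFlExt}), so modulo $I\otimes F$ it equals $\sum_a(x^2,d\omega_a)\otimes x_a$; the degree-two block carries the bracket of (\ref{EqExt1}), so modulo $I\otimes F$ it equals $-\sum_{b,c}(x^2,\omega_c\wedge\omega_b)\otimes x_b x_c$. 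The one genuine obstacle is to see that these two surviving families together lie in $F\otimes I$. My plan is to reassemble them as $\sum_m(x^2,\omega^2_m)\otimes r(x^2_m)$, where $r(x^2_m)$ is the flatness element (\ref{EqFlat}) attached to $x^2_m$ and $\fr{coev}(1)=\sum_m x^2_m\otimes\omega^2_m$: the duality (zig-zag) identity for $\Omega^2$ pushes the scalar evaluations $\fr{ev}(x^2_m\otimes\eta)$ across $\otimes_A$ into the first leg, turning $\sum_m(x^2,\omega^2_m)\fr{ev}(x^2_m\otimes d\omega_a)$ into $(x^2,d\omega_a)$ and likewise for the wedge terms, while a relabelling $b\leftrightarrow c$ matches $\omega_c\wedge\omega_b\otimes x_bx_c$ with $\omega_b\wedge\omega_c\otimes x_cx_b$. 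Hence $\Delta(r)\in I\otimes F+F\otimes I$.

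Finally, relation (\ref{EqFlExt}) is treated by the same bookkeeping, now mixing the $B(\Omega^1)$-type coproduct terms with the flatness terms and invoking the already-verified descent of (\ref{EqFlat}) together with (\ref{EqExt1}), (\ref{EqExt2}) and (\ref{EqWdgPiv}). With $\epsilon$ and $\Delta$ shown to respect all four defining relations, both descend to $\ct{D}\XA$, and the latter inherits the bialgebroid structure of $F$. I expect the reassembly of the two residual families into a flatness element in the second tensor leg to be the only step that is not purely mechanical; everything else parallels the earlier lemmas.
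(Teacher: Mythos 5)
Your overall strategy coincides with the paper's own proof: since $\Delta$ and $\epsilon$ are defined multiplicatively, one only checks that they are well defined on the quotient, and your treatment of the crux — the coproduct of the flatness relation — is exactly the paper's computation (degree-zero block $r\otimes 1$, degree-one block closed by (\ref{EqFlExt}), degree-two block closed by (\ref{EqExt1}), and the two residual families reassembled as $(x^{2},\omega^{2}_{t})\otimes\big[\fr{ev}(x_{t}^{2}\otimes \omega_{m}\wedge\omega_{l})x_{l}\sbt x_{m}-\fr{ev}(x_{t}^{2}\otimes d\omega_{l})x_{l}\big]\in F\otimes I$ via the dual basis of $\Omega^{2}$). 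One attribution is off, though: condition (\ref{EqWdgPiv}) is never used in this theorem's proof. The coproduct checks for (\ref{EqExt1}) and (\ref{EqExt2}) close up using only the relation itself applied to each tensor leg together with the zig-zag identity for $\Omega^{2}$; pivotality of $\wedge$ enters only later, in Lemma \ref{LRel} and Theorems \ref{TDXAHpf} and \ref{TDXAAnti}.

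The genuine flaw is your counit argument for (\ref{EqFlExt}). The counit is not multiplicative into $A$; it satisfies $\epsilon(b\sbt b')=\epsilon\big(b\,\epsilon(b')\big)$, so $\epsilon$ kills any word whose \emph{rightmost} factor lies in $\XA^{1}$ — which is why (\ref{EqFlat}) is immediate — but it does not vanish on monomials merely because they contain an $\XA^{1}$ factor, since $F$ is only filtered, not graded, by $\XA^{1}$-degree (relation (\ref{EqXA}) mixes degrees). Concretely, $\epsilon\big(x_{j}\sbt(x_{i},\omega)\big)=\epsilon\big(x_{j}\sbt\mathrm{ev}(x_{i}\otimes\omega)\big)=\mathrm{ev}\big(x_{j}\otimes d\,\mathrm{ev}(x_{i}\otimes\omega)\big)$, which is nonzero in general, and the right-hand side of (\ref{EqFlExt}) even contains the $\XA^{1}$-degree-zero term $(x^{2},d\omega)$ with $\epsilon\big((x^{2},d\omega)\big)=\fr{ev}(x^{2}\otimes d\omega)\neq 0$; so neither side of (\ref{EqFlExt}) is annihilated by $\epsilon$, and your claim is internally inconsistent. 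What is true — and what the paper proves — is that the two counits agree: one must differentiate the dual-basis expansion $\omega=\omega_{i}\,\mathrm{ev}(x_{i}\otimes\omega)$ with the graded Leibniz rule to convert $\fr{ev}\big(x^{2}\otimes\omega_{i}\wedge d\,\mathrm{ev}(x_{i}\otimes\omega)\big)$ into $\fr{ev}(x^{2}\otimes d\omega_{i})\,\mathrm{ev}(x_{i}\otimes\omega)$ minus $\fr{ev}(x^{2}\otimes d\omega)$, which is precisely $\epsilon$ of the right-hand side. So this step of your proposal would fail as stated; it is repaired not by a degree count but by this short Leibniz-rule computation.
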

\begin{proof} We only need to check that the comultiplication and counit of $F$ are well defined on its quotient $\ct{D}\XA$. We first look at the comultiplication and the extendibility relations. Let $x^{2}\in \XA^{2}$ and $\omega,\rho\in \Omega^{1}$ and consider relation (\ref{EqExt1}):
\begin{align*}
\Delta \big(\mathfrak{ev}&(x^{2}\otimes \omega_{i}\wedge \omega_{j}) \sbt (x_{j},\rho)\sbt(x_{i},\omega)\big) 
\\=&\mathfrak{ev}(x^{2}\otimes \omega_{i}\wedge \omega_{j}) \sbt (x_{j},\omega_{l})\sbt(x_{i},\omega_{t})\otimes (x_{l},\rho)\sbt(x_{t},\omega)
\\=& (x^{2}, \omega_{t}\wedge \omega_{l})\otimes (x_{l},\rho)\sbt(x_{t},\omega)= (x^{2},\omega_{i}^{2})\otimes  \mathfrak{ev}(x_{i}^{2}\otimes \omega_{t}\wedge \omega_{l})(x_{l},\rho)\sbt(x_{t},\omega)
\\=&(x^{2},\omega_{i}^{2})\otimes (x_{i}^{2},\omega\wedge \rho)= \Delta\big((x^{2},\omega\wedge \rho)\big)
\end{align*}
The computations for relation (\ref{EqExt2}) are completely symmetric and are left to the reader. We now look at relation (\ref{EqFlat}) and see that the additional condition (\ref{EqFlExt}) is essential for the comultiplication to be well-defined for flat bimodule connections: 
\begin{align*}
\Delta \big(  \mathfrak{ev}(&x^{2}\otimes \omega_{j}\wedge\omega_{k}) \sbt x_{k}\sbt x_{j}\big) = \mathfrak{ev}(x^{2}\otimes \omega_{j}\wedge\omega_{k}) \big[  x_{k}\sbt x_{j}\otimes 1 +  
\\ &+ x_{k}\sbt (x_{j},\omega_{l})\otimes x_{l}+ (x_{k},\omega_{l})\sbt x_{j}\otimes x_{l} +  (x_{k},\omega_{l})\sbt (x_{j},\omega_{m})\otimes x_{l}\sbt x_{m}\big]
\\ =& \mathfrak{ev}(x^{2}\otimes d\omega_{i})\sbt x_{i}\otimes 1 +(x^{2}, \omega_{m}\wedge\omega_{l})\otimes x_{l}\sbt x_{m}
\\ &+ \big[\mathfrak{ev}(x^{2}\otimes d\omega_{i} )(x_{i},\omega_{l} ) - (x^{2},d\omega_{l}) \big]\otimes x_{l}
\\=&\Delta \big( \mathfrak{ev}(x^{2}\otimes d\omega_{i})x_{i}\big) + (x^{2}, \omega^{2}_{t})\otimes \big[ \mathfrak{ev}(x_{t}^{2}\otimes \omega_{m}\wedge\omega_{l})x_{l}\sbt x_{m} 
\\ & - \mathfrak{ev}(x_{t}^{2}\otimes d\omega_{l}) x_{l}\big]= \Delta \big( \mathfrak{ev}(x^{2}\otimes d\omega_{i})x_{i}\big)
\end{align*}
where $x^{2}\in\XA^{2}$. To check relation (\ref{EqFlat}) itself, let $x^{2}\in\XA^{2}$ and $\omega\in \Omega^{1}$: 
\begin{align*}
\Delta \big(&\mathfrak{ev}(x^{2}\otimes \omega_{i}\wedge \omega_{j}) [x_{j}\sbt(x_{i},\omega)+(x_{j},\omega)\sbt x_{i}]\big) 
\\ =& \mathfrak{ev}(x^{2}\otimes \omega_{i}\wedge \omega_{j})\big[ x_{j}\sbt (x_{i},\omega_{t})\otimes (x_{t},\omega) +  (x_{j},\omega_{l})\sbt (x_{i},\omega_{t})\otimes x_{l}\sbt (x_{t},\omega)
\\ &+ (x_{j},\omega_{l})\sbt x_{i}\otimes (x_{l},\omega) + (x_{j},\omega_{l})\sbt (x_{i},\omega_{t})\otimes (x_{l},\omega)\sbt x_{t} \big] 
\\ =&(x^{2}, \omega_{t}\wedge \omega_{l}) \otimes \big[x_{l}\sbt (x_{t},\omega)+(x_{l},\omega)\sbt x_{t} \big] +\big[\mathfrak{ev}(x^{2}\otimes d\omega_{i} )(x_{i},\omega_{t} ) 
\\ &- (x^{2},d\omega_{t}) \big]\otimes (x_{t},\omega) = (x^{2}, \omega^{2}_{l}) \otimes\big[ \mathfrak{ev}(x_{l}^{2}\otimes d\omega_{i} )(x_{i},\omega ) - (x_{l}^{2},d\omega)\big] 
\\& +\big[\mathfrak{ev}(x^{2}\otimes d\omega_{i} )(x_{i},\omega_{t} ) - (x^{2},d\omega_{t}) \big]\otimes (x_{t},\omega)
\\=&\Delta\big( \mathfrak{ev}(x^{2}\otimes d\omega_{i} )(x_{i},\omega ) - (x^{2},d\omega)\big)
\end{align*}
For the counit to be well-defined, all computations follow in a straightforward manner. Let $x^{2}\in\XA^{2}$ and $\omega, \rho\in \Omega^{1}$:
\begin{align*}
\epsilon &\big( \mathfrak{ev}(x^{2}\otimes d\omega_{i})\sbt x_{i}\big) =0 =\epsilon\big( \mathfrak{ev}(x^{2}\otimes \omega_{j}\wedge\omega_{k}) \sbt x_{k}\sbt x_{j}\big)
\\\epsilon&\big (\mathfrak{ev}( x^{2}\otimes \omega_{i}\wedge \omega_{j}) [x_{j}\sbt(x_{i},\omega)+(x_{j},\omega)\sbt x_{i}]\big) 
\\ &=  \mathfrak{ev}( x^{2}\otimes \omega_{i}\wedge \omega_{j}) \mathrm{ev}(x_{j}\otimes d \mathrm{ev}(x_{i}\otimes\omega)) + 0 
\\& = \mathfrak{ev}( x^{2}\otimes \omega_{i}\wedge d \mathrm{ev}(x_{i}\otimes\omega)) = -\mathfrak{ev}( x^{2}\otimes (d\omega_{i}) \mathrm{ev}(x_{i}\otimes\omega))+ \mathfrak{ev}(x^{2}\otimes d\omega)
\\&= \epsilon \big( \mathfrak{ev}(x^{2}\otimes d\omega_{i} )(x_{i},\omega ) - (x^{2},d\omega)\big)
\\\epsilon & \big(\ov{\underline{\mathfrak{ev}}(\rho_{i}\wedge \rho_{j}\otimes x^{2})}\sbt(\omega ,y_{i})\sbt(\rho, y_{j} )\big) =  \underline{ev}(\omega \underline{\mathrm{ev}} (\rho\otimes y_{j} )\otimes y_{i})\underline{\mathfrak{ev}}(\rho_{i}\wedge \rho_{j}\otimes x^{2})
\\& = \underline{\mathfrak{ev}} (\omega\wedge\rho \otimes x^{2})=\epsilon \big( (\omega\wedge\rho , x^{2})\big)
\\\epsilon &\big( \mathfrak{ev}(x^{2}\otimes \omega_{i}\wedge \omega_{j}) \sbt (x_{j},\rho)\sbt(x_{i},\omega)\big)=  \mathfrak{ev}(x^{2}\otimes \omega_{i}\wedge \omega_{j})\mathrm{ev}(x_{j}\otimes \mathrm{ev}(x_{i}\otimes\omega)\rho)
\\& = \mathfrak{ev}(x^{2},\omega\wedge \rho)= \epsilon \big( (x^{2},\omega\wedge \rho)\big) \qedhere
\end{align*}\end{proof}
To prove that $\ct{D}\XA$ has a Hopf algebroid structure we need to describe some additional nontrivial relations which hold in $\ct{D}\XA$.
\begin{lemma}\label{LRel} The following additional relations hold in $\ct{D}\XA$:
\begin{equation}\label{EqI}
\ov{\underline{\mathfrak{ev}}(d\rho_{j}\otimes x^{2}) }(\omega_{i},y_{j})\sbt x_{i} + \ov{\underline{\mathfrak{ev}}(\rho_{m}\wedge \rho_{n}\otimes x^{2})}(\omega_{k},y_{m})\sbt x_{k}\sbt (\omega_{l},y_{n})\sbt x_{l}=0
\end{equation}
\begin{align}\label{EqII} \ov{\underline{\mathfrak{ev}}(\rho_{m}\wedge \rho_{n}\otimes x^{2})}[ (\omega_{t},y_{m})\sbt x_{t}\sbt (\omega,y_{n})&+ (\omega,y_{m})\sbt(\omega_{l},y_{n})\sbt x_{l}] 
\\ =&(d\omega ,x^{2}) -  \ov{\underline{\mathfrak{ev}}(d\rho_{i} \otimes x^{2})}(\omega, y_{i})\nonumber
\end{align}
\begin{align}\label{EqIII}(\omega , &x_{i})\sbt y_{t}\sbt (\rho_{t}, x_{j})\ov{\mathfrak{ev} (x^{2}\otimes \omega_{i}\wedge \omega_{j}) } + (\omega, x_{i}) \ov{\mathfrak{ev}(x^{2}\otimes d\omega_{i})}
\\ =& \underline{\mathrm{ev}}(\omega\otimes \mathfrak{ev}(y_{i}^{2}\otimes d\omega_{l})x_{l})(\rho_{i}^{2},x^{2})- \underline{\mathrm{ev}}(\omega\otimes \mathfrak{ev}(y_{i}^{2}\otimes \omega_{j}\wedge \omega_{k})x_{k})x_{j}\sbt(\rho_{i}^{2},x^{2}) \nonumber
\\ &+\underline{\mathrm{ev}}\big( \omega \otimes \mathrm{ev}[y_{t} \otimes d \underline{\mathrm{ev}} (\rho_{t} \otimes \mathfrak{ev} (y_{i}^{2}\otimes \omega_{j}\wedge \omega_{k})x_{k})] x_{j} \big)(\rho_{i}^{2},x^{2})\nonumber 
\end{align}
for all $x^{2}\in \XA^{2}$ and $\omega\in\Omega^{1}$.
\end{lemma}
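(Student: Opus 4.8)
The plan is to derive each of (\ref{EqI}), (\ref{EqII}), (\ref{EqIII}) as consequences of the defining relations of $\ct{D}\XA$ — namely the flatness relation (\ref{EqFlat}), the curvature--compatibility relation (\ref{EqFlExt}) and the extendability relations (\ref{EqExt1}), (\ref{EqExt2}) together with their pivotal reformulations (\ref{EqExt3}), (\ref{EqExt4}) — by direct manipulation inside $F$ modulo the defining ideal. The conceptual content guiding the computation is that (\ref{EqI}) and (\ref{EqII}) are the \emph{right-handed} mirror images of (\ref{EqFlat}) and (\ref{EqFlExt}). Indeed, by Remark \ref{RTXop} the elements $\tilde{y}:=(\omega_i,y)\sbt x_i$ are precisely the image of the right vector fields $\mathfrak{Y}^{1}\cong\XA^{1}$, so rewriting (\ref{EqI}) as $\ov{\underline{\mathfrak{ev}}(d\rho_j\otimes x^{2})}\,\tilde{y}_j+\ov{\underline{\mathfrak{ev}}(\rho_m\wedge\rho_n\otimes x^{2})}\,\tilde{y}_m\sbt\tilde{y}_n=0$ exhibits it as the flatness relation for the induced right connection, obtained from (\ref{EqFlat}) by replacing $\mathrm{ev}$, $\mathfrak{ev}$ and the left vector fields $x_i$ with their right-dual counterparts $\underline{\mathrm{ev}}$, $\underline{\mathfrak{ev}}$ and $\tilde{y}_i$, the reversal of the wedge ordering accounting for the sign change between the two relations. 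Geometrically this is just the statement that a flat, extendable left bimodule connection induces a flat right connection; the task is to translate this fact into the algebra rather than to invoke it on modules.

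First I would treat (\ref{EqI}) and (\ref{EqII}). Starting from the left-handed relations (\ref{EqFlat}) and (\ref{EqFlExt}), I would conjugate by the invertible $\Omega^{1}$-intertwining data, i.e. multiply through by appropriate $(\omega_i,y_j)$-generators and $x_i$'s and collapse using the duality identities (\ref{EqRelHpf1}), (\ref{EqRelHpf2}) and the coevaluation normalisations, so as to replace every left vector field $x$ by the corresponding $\tilde{y}$. The key structural input is the extendability pair (\ref{EqExt1})--(\ref{EqExt4}): they let me trade a left-dualised wedge $\mathfrak{ev}(x^{2}\otimes\omega_i\wedge\omega_j)$ for its right-dualised form $\underline{\mathfrak{ev}}(\rho_j\wedge\rho_i\otimes x^{2})$, which is exactly what is needed to move the $\Omega^{2}$-level evaluation from one side to the other. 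The pivotal wedge condition (\ref{EqWdgPiv}) is invoked here to guarantee that the two dualisations of $\wedge\colon\XA^{2}\to\XA^{1}\otimes\XA^{1}$ agree, so that the bookkeeping of dual bases closes up. Throughout, the Leibniz rule for $d$ applied to terms such as $d\,\underline{\mathrm{ev}}(\rho_j\otimes\,\cdot\,)$ and the commutation relations (\ref{EqAX}), (\ref{EqXA}), (\ref{EqBXA}) of $H\XA^{1}$ are used to absorb the extra $A$- and $A^{op}$-coefficients; the appearance of $(d\omega,x^{2})$ and $\ov{\underline{\mathfrak{ev}}(d\rho_i\otimes x^{2})}(\omega,y_i)$ on the right of (\ref{EqII}) is precisely the residue of these Leibniz corrections, mirroring the term $\mathfrak{ev}(x^{2}\otimes d\omega_i)(x_i,\omega)-(x^{2},d\omega)$ in (\ref{EqFlExt}).

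Relation (\ref{EqIII}) I would handle last, as it mixes both dual structures and the $\Omega^{2}$-level flatness simultaneously. The plan is to begin from its left-hand side, use (\ref{EqExt1}) to introduce the $\XA^{2}\otimes\Omega^{2}$ generators $(\rho^{2}_i,x^{2})$, then substitute the flatness and curvature relations (\ref{EqFlat}) and (\ref{EqFlExt}) ``inside'' the resulting expression, and finally convert back using the invertibility relations (\ref{EqRelHpf1}), (\ref{EqRelHpf2}) and a Leibniz expansion of $d\,\underline{\mathrm{ev}}(\rho_t\otimes\,\cdot\,)$ — the latter explaining the third, curvature-type summand on the right of (\ref{EqIII}). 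The main obstacle in all three cases is organisational rather than conceptual: one must control the simultaneous proliferation of dual-basis summations over $\{\omega_i,x_i\}$, $\{y_j,\rho_j\}$, $\{x^{2}_i,\omega^{2}_i\}$ and $\{\rho^{2}_i,y^{2}_i\}$, keep the $A$-versus-$A^{op}$ (barred) coefficients on the correct side as they are commuted past the $\XA^{1}$- and $\Omega^{1}$-generators via (\ref{EqAX})--(\ref{EqBXA}), and apply (\ref{EqWdgPiv}) with the correct orientation so that the antisymmetry of $\wedge$ produces the displayed signs and index orderings. Verifying that no spurious terms survive — in particular that the Leibniz corrections recombine exactly into the stated right-hand sides — is the delicate point and is where I would concentrate the effort.
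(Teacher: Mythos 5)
Your proposal is correct and follows essentially the same route as the paper: both derive (\ref{EqI})--(\ref{EqIII}) by direct manipulation in $F$ modulo the defining ideal, absorbing the barred evaluation coefficients via the $\Omega^{2}$-level invertibility/Hopf relations, expanding $\Omega^{2}$-generators through extendability (\ref{EqExt1})--(\ref{EqExt4}) with the pivotal condition (\ref{EqWdgPiv}), and finishing with (\ref{EqFlExt}) and (\ref{EqFlat}) — the paper simply runs the computation in the reverse direction, reducing each left-hand side to zero rather than conjugating the defining relations forward. Your ``right-handed mirror'' reading of (\ref{EqI}) and (\ref{EqII}) via Remark \ref{RTXop} is a sound conceptual gloss that the paper leaves implicit, though note that the sign discrepancy between (\ref{EqFlat}) and (\ref{EqI}) is ultimately produced by the relative minus sign in (\ref{EqFlExt}) rather than by wedge-order reversal alone.
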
 
\begin{proof} Let $x^{2}\in \XA^{1}$. We prove identity (\ref{EqI}) holds, using relations (\ref{EqRelInv1}) in $H(\Omega^{2})$, (\ref{EqExt1}), (\ref{EqRelInv1}) in $H(\Omega^{1})$, (\ref{EqFlExt}) and (\ref{EqFlat}), respectively: 
\begin{align*}
&\ov{\underline{\mathfrak{ev}}(d\rho_{j}\otimes x^{2}) }(\omega_{i},y_{j})\sbt x_{i} + \ov{\underline{\mathfrak{ev}}(\rho_{m}\wedge \rho_{n}\otimes x^{2})}(\omega_{k},y_{m})\sbt x_{k}\sbt (\omega_{l},y_{n})\sbt x_{l}
\\& =(\omega_{t}^{2},x^{2} )\sbt\big[ (x_{t}^{2},d\rho_{j})\sbt(\omega_{i},y_{j})\sbt x_{i} + (x_{t}^{2},\rho_{m}\wedge \rho_{n})\sbt (\omega_{k},y_{m})\sbt x_{k}\sbt (\omega_{l},y_{n})\sbt x_{l}\big]
\\& =(\omega_{t}^{2},x^{2} )\sbt\big[  \mathfrak{ev}(x_{t}^{2}\otimes \omega_{i}\wedge \omega_{j}) (x_{j}, \rho_{n})\sbt (x_{i},\rho_{m}) \sbt (\omega_{k},y_{m})\sbt x_{k}
\\ & \quad +(x_{t}^{2},d\rho_{n}) \big]\sbt(\omega_{l},y_{n})\sbt x_{l}
\\&= (\omega_{t}^{2},x^{2} )\sbt\big[  \mathfrak{ev}(x_{t}^{2}\otimes \omega_{i}\wedge \omega_{j}) (x_{j}, \rho_{n})\sbt x_{i}+(x_{t}^{2},d\rho_{n}) \big]\sbt(\omega_{l},y_{n})\sbt x_{l}
\\&= (\omega_{t}^{2},x^{2} )\sbt\big[ \mathfrak{ev}(x_{t}^{2}\otimes d\omega_{t})(x_{t},\rho_{n}) -\mathfrak{ev}(x_{t}^{2}\otimes \omega_{i}\wedge \omega_{j}) x_{j}\sbt (x_{i}, \rho_{n}) \big]\sbt(\omega_{l},y_{n})\sbt x_{l}
\\ & = (\omega_{t}^{2},x^{2} )\sbt\big[\mathfrak{ev}(x_{t}^{2}\otimes d\omega_{t})x_{t} + \mathfrak{ev}(x_{t}^{2}\otimes \omega_{i}\wedge \omega_{j}) x_{j}\sbt x_{i}\big]=0
\end{align*}
Let $\omega\in\Omega^{1}$. Identity (\ref{EqII}) follows from relations (\ref{EqRelInv1}) in $H(\Omega^{2})$ and (\ref{EqExt1}), (\ref{EqRelInv1}) in $H(\Omega^{1})$ and (\ref{EqFlExt}):
\begin{align*}
&\ov{\underline{\mathfrak{ev}}(\rho_{m}\wedge \rho_{n}\otimes x^{2})}[ (\omega_{t},y_{m})\sbt x_{t}\sbt (\omega,y_{n})+ (\omega,y_{m})\sbt(\omega_{l},y_{n})\sbt x_{l}] 
\\ &=(\omega^{2}_{i},x^{2})\mathfrak{ev}(x^{2}_{i}\otimes \omega_{j}\wedge\omega_{k})(x_{k},\rho_{n})\sbt (x_{j},\rho_{m})\sbt \big[(\omega_{t},y_{m})\sbt x_{t}\sbt (\omega,y_{n})
\\ &\quad + (\omega,y_{m})\sbt(\omega_{l},y_{n})\sbt x_{l}]
\\ &= (\omega^{2}_{i},x^{2})\mathfrak{ev}(x^{2}_{i}\otimes \omega_{j}\wedge\omega_{k})\big[(x_{k},\rho_{n})\sbt x_{j}\sbt (\omega ,y_{n}) + x_{k}\mathrm{ev}(x_{j}\otimes \omega)\big]
\\ &=(\omega^{2}_{i},x^{2})\mathfrak{ev}(x^{2}_{i}\otimes \omega_{j}\wedge\omega_{k})\big[(x_{k},\rho_{n})\sbt x_{j}\sbt (\omega ,y_{n}) + x_{k}\sbt \mathrm{ev}(x_{j}\otimes \omega) 
\\&\quad - \mathrm{ev}(x_{k}\otimes d\mathrm{ev}(x_{j}\otimes \omega)) \big]
\\& =(\omega^{2}_{i},x^{2})\mathfrak{ev}(x^{2}_{i}\otimes \omega_{j}\wedge\omega_{k})\big[(x_{k},\rho_{n})\sbt x_{j}+ x_{k}\sbt (x_{j},\rho_{n})\big]\sbt (\omega , y_{n}) 
\\ &\quad - (\omega^{2}_{i},x^{2})\mathfrak{ev}(x^{2}_{i}\otimes \omega_{j}\wedge\omega_{k})\mathrm{ev}(x_{k}\otimes d\mathrm{ev}(x_{j}\otimes \omega))
\\&=(\omega^{2}_{i},x^{2})\big[\mathfrak{ev}(x^{2}_{i}\otimes d\omega_{l}) (x_{l},\rho_{n})- (x^{2}_{i},d\rho_{n}) \big]\sbt (\omega , y_{n}) 
\\ &\quad - (\omega^{2}_{i},x^{2})\mathfrak{ev}(x^{2}_{i}\otimes \omega_{j}\wedge d\mathrm{ev}(x_{j}\otimes \omega)) = -\ov{\underline{\mathfrak{ev}}(x^{2}\otimes d\rho_{n})} (\omega ,y_{n}) 
\\ &\quad + (\omega^{2}_{i},x^{2})\sbt \big[\mathfrak{ev}(x^{2}_{i}\otimes (d\omega_{l})\mathrm{ev}(x_{l},\omega) ) - \mathfrak{ev}(x^{2}_{i}\otimes \omega_{l}\wedge d\mathrm{ev}(x_{l}\otimes \omega)) \big] 
\\&= -\ov{\underline{\mathfrak{ev}}(x^{2}\otimes d\rho_{n})} (\omega ,y_{n}) + (d\omega, x^{2})
\end{align*}
We prove identity (\ref{EqIII}) by a similar manipulation, using relations (\ref{EqRelHpf1}) in $H(\Omega^{2})$, (\ref{EqExt1}), (\ref{EqRelHpf2}) in $H(\Omega^{1})$ and (\ref{EqFlExt}):
\begin{align*}
(\omega &, x_{i})\sbt y_{t}\sbt (\rho_{t}, x_{j})\ov{\mathfrak{ev} (x^{2}\otimes \omega_{i}\wedge \omega_{j}) } + (\omega, x_{i}) \ov{\mathfrak{ev}(x^{2}\otimes d\omega_{i})}
\\= &(\omega , x_{i})\sbt y_{t}\sbt (\rho_{t}, x_{j})\sbt (y^{2}_{l},\omega_{i}\wedge \omega_{j})\sbt (\rho_{l}^{2},x^{2}) +(\omega, x_{i})\sbt (y^{2}_{l},d\omega_{i})\sbt (\rho_{l}^{2},x^{2})
\\=& (\omega , x_{i})\sbt y_{t}\sbt (\rho_{t}, x_{j})\fr{ev}(y^{2}_{l}\otimes\omega_{m}\wedge \omega_{n})(x_{n},\omega_{j})\sbt(x_{m},\omega_{i}) \sbt(\rho_{l}^{2},x^{2})
\\&+ (\omega , x_{i})\sbt(y^{2}_{l},d\omega_{i}) \sbt(\rho_{l}^{2},x^{2})
\\=& (\omega , x_{i})\sbt\big[y_{t}\sbt \und{\mathrm{ev}}\big(\rho_{t}\otimes \fr{ev}(y^{2}_{l}\otimes\omega_{m}\wedge \omega_{n})x_{n}\big)\sbt(x_{m},\omega_{i})+ (y^{2}_{l},d\omega_{i})\big]\sbt (\rho_{l}^{2},x^{2})
\\=&(\omega , x_{i})\sbt\big[\fr{ev}(y^{2}_{l}\otimes\omega_{m}\wedge \omega_{n})x_{n}\sbt(x_{m},\omega_{i})+ (y^{2}_{l},d\omega_{i})\big]\sbt (\rho_{l}^{2},x^{2})
\\ &+(\omega , x_{i})\mathrm{ev}\big( y_{t}\otimes d \und{\mathrm{ev}}\big(\rho_{t}\otimes \fr{ev}(y^{2}_{l}\otimes\omega_{m}\wedge \omega_{n})x_{n}\big) \big)(x_{m},\omega_{i})\sbt (\rho_{l}^{2},x^{2})
\\=&(\omega , x_{i})\sbt\big[\fr{ev}(y^{2}_{l}\otimes d\omega_{t})(x_{t},\omega_{i})-\fr{ev}(y^{2}_{l}\otimes\omega_{m}\wedge \omega_{n})(x_{n},\omega_{i})\sbt x_{m}\big]\sbt (\rho_{l}^{2},x^{2})
\\ &+\und{\mathrm{ev}}\big(\omega \otimes \mathrm{ev}\big( y_{t}\otimes d \und{\mathrm{ev}}[\rho_{t}\otimes \fr{ev}(y^{2}_{l}\otimes\omega_{m}\wedge \omega_{n})x_{n}] \big)x_{m}\big)(\rho_{l}^{2},x^{2})
\\ =& \underline{\mathrm{ev}}(\omega\otimes \mathfrak{ev}(y_{i}^{2}\otimes d\omega_{l})x_{l})(\rho_{i}^{2},x^{2})- \underline{\mathrm{ev}}(\omega\otimes \mathfrak{ev}(y_{i}^{2}\otimes \omega_{j}\wedge \omega_{k})x_{k})x_{j}\sbt(\rho_{i}^{2},x^{2}) 
\\ &+\underline{\mathrm{ev}}\big( \omega \otimes \mathrm{ev}[y_{t} \otimes d \underline{\mathrm{ev}} (\rho_{t} \otimes \mathfrak{ev} (y_{i}^{2}\otimes \omega_{j}\wedge \omega_{k})x_{k})] x_{j} \big)(\rho_{i}^{2},x^{2}) \qedhere
\end{align*}\end{proof}
\begin{thm}\label{TDXAHpf} The bialgebroid $\ct{D}\XA$ has a Hopf algebroid structure.
\end{thm}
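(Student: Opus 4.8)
The plan is to prove that the forgetful functor $\prescript{}{\ct{D}\XA}{\ct{M}}\to\bim$ is closed, which by the convention fixed in Section \ref{SBial} is exactly what it means for the bialgebroid $\ct{D}\XA$ of Theorem \ref{TDXAbia} to carry a Hopf algebroid structure. The starting point is that $F$ is already a Hopf algebroid, so $\prescript{}{F}{\ct{M}}$ lifts the closed monoidal structure of $\bim$: for any pair of $F$-modules $M,N$ the inner homs $\mathrm{Hom}_{A}(M,N)$ and $\prescript{}{A}{\mathrm{Hom}}(M,N)$ carry canonical $F$-actions, assembled from the $H\XA^{1}$-action of Theorem \ref{THXAHopf} and the $H(\Omega^{2})$-action supplied by the antipode of $H(\Omega^{2})$. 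Since $\ct{D}\XA$ is a quotient of $F$, the category $\prescript{}{\ct{D}\XA}{\ct{M}}$ is precisely the full monoidal subcategory of $\prescript{}{F}{\ct{M}}$ consisting of the modules annihilated by the defining relations (\ref{EqFlat}), (\ref{EqFlExt}), (\ref{EqExt1}) and (\ref{EqExt2}). Hence it suffices to show that this subcategory is stable under the inner homs of $\prescript{}{F}{\ct{M}}$: once $\mathrm{Hom}_{A}(M,N)$ and $\prescript{}{A}{\mathrm{Hom}}(M,N)$ are again $\ct{D}\XA$-modules whenever $M,N$ are, the units and counits (\ref{EqAdj}), being $F$-module maps, restrict to $\prescript{}{\ct{D}\XA}{\ct{M}}$ and exhibit it as closed.

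Concretely, I would fix $\ct{D}\XA$-modules $M,N$ and verify that each of the four families of defining relations acts as zero on both inner homs. The $H(\Omega^{2})$ part of the $F$-action already makes the inner homs $H(\Omega^{2})$-modules carrying the induced $\XA^{2}$-intertwinings, and this is the input I will use. For each defining relation I would write out its action on $f\in\mathrm{Hom}_{A}(M,N)$ and on $g\in\prescript{}{A}{\mathrm{Hom}}(M,N)$ using the explicit closed-structure formulas for $F$ (coming from (\ref{EqClosHpf}) through $\beta^{-1}$ and $\vartheta^{-1}$), and then rearrange the resulting endomorphism of $M$ so that the original relations, the ones assumed to annihilate $M$ and $N$, are exposed.

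This is where Lemma \ref{LRel} is essential and is the heart of the argument. The identities (\ref{EqI}), (\ref{EqII}) and (\ref{EqIII}) are precisely the transforms of the defining relations under passage through the inner-hom actions: (\ref{EqI}) and (\ref{EqII}) repackage the flatness relation (\ref{EqFlat}) and the compatibility condition (\ref{EqFlExt}) into the shape in which they act on $\prescript{}{A}{\mathrm{Hom}}(M,N)$, while (\ref{EqIII}) does the same for $\mathrm{Hom}_{A}(M,N)$; the extendability relations (\ref{EqExt1}), (\ref{EqExt2}) are treated in the same spirit together with the pivotal morphism condition (\ref{EqWdgPiv}). Feeding these identities into the computation collapses the action of each defining relation on an inner hom into a composite of the original relations applied to $M$ and $N$, which vanishes by hypothesis. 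Having shown that all four families annihilate both inner homs, we conclude that $\prescript{}{\ct{D}\XA}{\ct{M}}$ is closed, and therefore that $\ct{D}\XA$ is a Schauenburg Hopf algebroid.

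I expect the main obstacle to lie in the bookkeeping of this last verification rather than in any conceptual difficulty. The closed-structure action of a product of generators unwinds, through the structure maps of $F$, into long sums over the dual bases of $\Omega^{1},\Omega^{2},\XA^{1},\XA^{2}$, and the Leibniz-type terms $\mathrm{ev}(x\otimes da)$ and $\underline{\mathrm{ev}}(da\otimes y)$, together with the pivotality of $\wedge$, must be tracked carefully for the cancellations to materialise. The identities of Lemma \ref{LRel} are engineered precisely to absorb these terms, so the real labour is in matching each defining relation to the correct identity and confirming that the intermediate sums agree, not in devising new manipulations.
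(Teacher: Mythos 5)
Your proposal is correct and follows essentially the same route as the paper: reduce to showing that the four families of defining relations annihilate both inner homs of any pair of $\ct{D}\XA$-modules, observe that the units and counits of (\ref{EqAdj}) are already $F$-module morphisms and hence restrict to $\ct{D}\XA$-module morphisms, and use Lemma \ref{LRel} to collapse the transformed relations. One detail is swapped, however: in the paper the identities (\ref{EqI}) and (\ref{EqII}) are the transforms applied to $M$ (inside $f$) when checking (\ref{EqFlat}) and (\ref{EqFlExt}) on the \emph{right} inner hom $\mathrm{Hom}_{A}(M,N)$, while (\ref{EqIII}) serves the \emph{left} inner hom $\prescript{}{A}{\mathrm{Hom}}(M,N)$ --- the opposite of your attribution --- and in each verification the untransformed relations holding on $N$ are used alongside the identity holding on $M$, a pairing your sketch leaves implicit.
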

\begin{proof} We have an induced action of $H\XA^{1}$ and $H(\Omega^{2})$ on the inner homs by Theorems \ref{THOmega} and \ref{THXAHopf}. Hence, we only need to check whether the relations imposed on $F$ fall in the annihilator of the induced actions on the inner homs of $\ct{D}\XA$-modules. If the relations for $\ct{D}\XA$ annihilate the inner homs, since the unit and counits for the adjunctions are $F$-module morphisms and automatically become $\ct{D}\XA$-module morphisms, thereby making $\ct{D}\XA$ a Hopf algebroid. 

We check the above for relation (\ref{EqExt1}) and leave the similar calculation for (\ref{EqExt2}) to the reader. Let $M$ and $N$ be $\ct{D}\XA$-modules, $f\in \mathrm{Hom}_{A}(M,N)$, $x^{2}\in\XA^{2}$ and $\omega,\rho\in \Omega^{1}$. We show that relation (\ref{EqExt1}) is annihalted for the induced action on $\mathrm{Hom}_{A}(M,N)$, by using (\ref{EqExt1}) for $N$ and relation (\ref{EqExt2}) for $M$:
\begin{align*}
\big[\mathfrak{ev}(x^{2}&\otimes \omega_{i}\wedge \omega_{j})  (x_{j},\rho)\sbt(x_{i},\omega)f\big](m)
\\=& \mathfrak{ev}(x^{2}\otimes \omega_{i}\wedge \omega_{j})(x_{j},\rho_{n})\sbt(x_{i},\rho_{m})f\big( (\omega,y_{m})\sbt(\rho,y_{n})m\big)
\\=&(x^{2},\rho_{m}\wedge \rho_{n}) f\big( (\omega,y_{m})\sbt(\rho,y_{n})m\big)
\\=& (x^{2},\rho_{l}^{2}) \ov{\underline{\fr{ev}}( \rho_{m}\wedge \rho_{n} \otimes y_{l}^{2}) }f\big( (\omega,y_{m})\sbt(\rho,y_{n})m\big)
\\ =& (x^{2},\rho_{l}^{2})f\big( \underline{\fr{ev}}( \rho_{m}\wedge \rho_{n} \otimes y_{l}^{2}) (\omega,y_{m})\sbt(\rho,y_{n})m\big)
\\=& (x^{2},\rho_{l}^{2})f\big((\omega\wedge \rho,y_{l}^{2})m\big) =\big[ (x^{2},\omega\wedge \rho) f\big](m)
\end{align*}

What remains to be checked is that for $\ct{D}\XA$-modules $M$ and $N$, the induced connections on $\mathrm{Hom}_{A}(M,N)$ and $\prescript{}{A}{\mathrm{Hom}}(M,N) $ are flat and satisfy the additional condition (\ref{EqFlExtraa}). To show that (\ref{EqFlat}) annihilates $\mathrm{Hom}_{A}(M,N)$, we use the identities (\ref{EqFlat}) and (\ref{EqFlExt}) for $N$ and (\ref{EqI}) for $M$. Let $f\in \mathrm{Hom}_{A}(M,N)$, $m\in M$ and $x^{2}\in\XA^{2}$:
\begin{align*}
\big[\fr{ev}&(x^{2}\otimes d\omega_{i})x_{i}f - \fr{ev}(x^{2}\otimes \omega_{i}\wedge \omega_{j})x_{j}\sbt x_{i} f\big](m) 
\\ =& \fr{ev}(x^{2}\otimes d\omega_{i})x_{i}f(m)- \fr{ev}(x^{2}\otimes d\omega_{i})(x_{i},\rho_{l})f\big( (\omega_{t},y_{l})\sbt x_{t} m\big) 
\\&- \fr{ev}(x^{2}\otimes \omega_{i}\wedge \omega_{j})x_{j}\sbt x_{i} f(m)
\\ &+\fr{ev}(x^{2}\otimes \omega_{i}\wedge \omega_{j})\big( x_{j} \sbt (x_{i},\rho_{l})+ (x_{j},\rho_{l})\sbt x_{i}\big) f\big( (\omega_{t},y_{l})\sbt x_{t}m\big)
\\ &- \fr{ev}(x^{2}\otimes \omega_{i}\wedge \omega_{j}) (x_{j},\rho_{l})\sbt  (x_{i},\rho_{m})f\big( (\omega_{n},y_{m})\sbt x_{n}\sbt(\omega_{t},y_{l})\sbt x_{t}m\big)
\\ =& -(x^{2},d\rho_{l})f\big( (\omega_{t},y_{l})\sbt x_{t}m\big) - (x^{2},\rho_{m}\wedge \rho_{l}) f\big( (\omega_{n},y_{m})\sbt x_{n}\sbt(\omega_{t},y_{l})\sbt x_{t}m\big)
\\ = &-(x^{2},\rho_{i}^{2})f\big( \ov{\underline{\mathfrak{ev}}(d\rho_{l}\otimes y_{i}^{2}) }(\omega_{t},y_{l})\sbt x_{t}m \big)
\\ &+ (x^{2},\rho_{i}^{2})f\big(\ov{\underline{\mathfrak{ev}}(\rho_{m}\wedge \rho_{l}\otimes x^{2})}(\omega_{n},y_{m})\sbt x_{n}\sbt(\omega_{t},y_{l})\sbt x_{t}m\big) =0
\end{align*}
To show that (\ref{EqFlExt}) annihilates $\mathrm{Hom}_{A}(M,N)$, we use the identities (\ref{EqFlExt}) for $N$ and (\ref{EqII}) for $M$. Let $f\in \mathrm{Hom}_{A}(M,N)$, $m\in M$, $x^{2}\in\XA^{2}$ and $\omega\in \Omega^{1}$: 
\begin{align*}
\big[&\mathfrak{ev}( x^{2}\otimes \omega_{i}\wedge \omega_{j}) \big(x_{j}\sbt(x_{i},\omega)+(x_{j},\omega)\sbt x_{i}\big) f\big] (m)
\\ =& \mathfrak{ev}( x^{2}\otimes \omega_{i}\wedge \omega_{j}) \big(x_{j}\sbt(x_{i},\rho_{l})+(x_{j},\rho_{l})\sbt x_{i}\big)f \big((\omega,y_{l})m \big)
\\&-\mathfrak{ev}( x^{2}\otimes \omega_{i}\wedge \omega_{j}) (x_{j},\rho_{l})\sbt(x_{i},\rho_{m})f\big( (\omega ,y_{m})\sbt (\omega_{t},y_{l} )\sbt x_{t} m \big)
\\&-\mathfrak{ev}( x^{2}\otimes \omega_{i}\wedge \omega_{j}) (x_{j},\rho_{l})\sbt(x_{i},\rho_{m}) f\big((\omega_{t},y_{m} )\sbt x_{t}\sbt (\omega, y_{l})m\big)
\\ =&\big(\mathfrak{ev}(x^{2}\otimes d\omega_{i} )(x_{i},\rho_{l} ) - (x^{2},d\rho_{l})\big)f \big((\omega,y_{l})m \big)
\\ &- (x^{2},\rho_{n}^{2}) \ov{\underline{\mathfrak{ev}}(\rho_{m}\wedge \rho_{l}\otimes y_{n}^{2})}f \big(\big[(\omega ,y_{m})\sbt (\omega_{t},y_{l} )\sbt x_{t}  +(\omega_{t},y_{m} )\sbt x_{t}\sbt (\omega, y_{l})\big]m\big)
\\ =& \big(\mathfrak{ev}(x^{2}\otimes d\omega_{i} )(x_{i},\rho_{l} ) - (x^{2},d\rho_{l})\big)f \big((\omega,y_{l})m \big)
\\ &-(x^{2},\rho_{n}^{2}) f \big( \big[(d\omega ,y_{n}^{2}) -  \ov{\underline{\mathfrak{ev}}(d\rho_{i} \otimes y_{n}^{2})}(\omega, y_{i})\big]m\big) 
\\=&\big[\mathfrak{ev}(x^{2}\otimes d\omega_{i} )(x_{i},\omega )f - (x^{2},d\omega)f\big](m)
\end{align*}
Now we demonstrate that (\ref{EqFlat}) annihilates $\prescript{}{A}{\mathrm{Hom}}(M,N) $. Let $g\in\prescript{}{A}{\mathrm{Hom}}(M,N) $, $m\in M$ and $x^{2}\in \XA^{2}$:
\begin{align*}
\big[\fr{ev}&(x^{2}\otimes d\omega_{i})x_{i}g - \fr{ev}(x^{2}\otimes \omega_{i}\wedge \omega_{j})x_{j}\sbt x_{i} g\big](m) 
\\=& y_{l}g\big( (\rho_{l},x_{i})\ov{\fr{ev}(x^{2}\otimes d\omega_{i})} m\big)- g\big(y_{l}\sbt (\rho_{l},x_{i})\ov{\fr{ev}(x^{2}\otimes d\omega_{i})} m \big)
\\ &- y_{m}\sbt y_{l} g\big( (\rho_{l},x_{i})\sbt (\rho_{m},x_{j})\ov{\fr{ev}(x^{2}\otimes \omega_{i}\wedge \omega_{j})} m\big) 
\\&- g\big(y_{l}\sbt (\rho_{l},x_{i})\sbt y_{m}\sbt (\rho_{m},x_{j})\ov{\fr{ev}(x^{2}\otimes \omega_{i}\wedge \omega_{j})} m\big)
\\ & + y_{l} g\big( [(\rho_{l},x_{i})\sbt y_{m}\sbt (\rho_{m},x_{j})+ y_{m}\sbt (\rho_{m},x_{i})\sbt (\rho_{l},x_{j})]\ov{\fr{ev}(x^{2}\otimes \omega_{i}\wedge \omega_{j})} m\big)
\\ =& y_{l}g\big( \big[(\rho_{l},x_{i})\ov{\fr{ev}(x^{2}\otimes d\omega_{i})} + (\rho_{l},x_{i})\sbt y_{m}\sbt (\rho_{m},x_{j})\ov{\fr{ev}(x^{2}\otimes \omega_{i}\wedge \omega_{j})} \big]m\big)
\\&- g \big( y_{l}\sbt \big[ (\rho_{l},x_{i})\ov{\fr{ev}(x^{2}\otimes d\omega_{i})} + (\rho_{l},x_{i})\sbt y_{m}\sbt (\rho_{m},x_{j})\ov{\fr{ev}(x^{2}\otimes \omega_{i}\wedge \omega_{j})} \big] m\big) 
\\&- y_{m}\sbt y_{l} g\big( (\rho_{l},x_{i})\sbt (\rho_{m},x_{j})\ov{\fr{ev}(x^{2}\otimes \omega_{i}\wedge \omega_{j})} m\big) 
\\ & + y_{l} g\big( y_{m}\sbt (\rho_{m},x_{i})\sbt (\rho_{l},x_{j})\ov{\fr{ev}(x^{2}\otimes \omega_{i}\wedge \omega_{j})} m\big)
\end{align*}
Since $g$ is a left $A$-module morphism, then for any $x\in \XA^{1}$ and $a\in A$,
\begin{equation}\label{Eqg} xg(am)- g((x\sbt a) m) =(xa)g(m) - g((xa) m)
\end{equation}
holds, where the terms $\mathrm{ev}(x\otimes da)g(m)$ cancel each other. Going back to our calculation, we utilise identity (\ref{EqIII}) and relation (\ref{EqFlat}) for $M$:
\begin{align*}
\big[\fr{ev}&(x^{2}\otimes d\omega_{i})x_{i}g - \fr{ev}(x^{2}\otimes \omega_{i}\wedge \omega_{j})x_{j}\sbt x_{i} g\big](m) 
\\=& \mathfrak{ev}(y_{i}^{2}\otimes d\omega_{l})x_{l} g\big((\rho_{i}^{2},x^{2})m\big) - \mathfrak{ev}(y_{i}^{2}\otimes \omega_{j}\wedge \omega_{k})x_{k}g \big( x_{j}\sbt (\rho_{i}^{2},x^{2}) m\big)
\\ &+\mathrm{ev}(y_{t} \otimes d \underline{\mathrm{ev}} [\rho_{t} \otimes \mathfrak{ev} (y_{i}^{2}\otimes \omega_{j}\wedge \omega_{k})x_{k}] )x_{j} g\big(  (\rho_{i}^{2},x^{2})m\big) 
\\ &- g\big(\big[\mathfrak{ev}(y_{i}^{2}\otimes d\omega_{l})x_{l} -\mathfrak{ev}(y_{i}^{2}\otimes \omega_{j}\wedge \omega_{k})x_{k}\sbt x_{j}\big]\sbt (\rho_{i}^{2},x^{2}) m\big)
\\ &-\mathrm{ev}(y_{t} \otimes d \underline{\mathrm{ev}} [\rho_{t} \otimes \mathfrak{ev} (y_{i}^{2}\otimes \omega_{j}\wedge \omega_{k})x_{k}] ) g\big( x_{j}\sbt  (\rho_{i}^{2},x^{2})m\big) 
\\&- y_{l}\sbt y_{m} g\big( (\rho_{m}\wedge\rho_{l},x^{2}) m\big)  + y_{l} g\big( y_{m}\sbt (\rho_{m}\wedge\rho_{l},x^{2}) m\big)
\\=& \fr{ev}(y_{i}^{2}\otimes d\omega_{l})x_{l} g\big((\rho_{i}^{2},x^{2})m\big) - \fr{ev}(y_{i}^{2},\omega_{j}\wedge \omega_{k})x_{k}g \big( x_{j}\sbt (\rho_{i}^{2},x^{2}) m\big)
\\&- y_{l}\sbt y_{m} g\big( \und{\fr{ev}}(\rho_{m}\wedge\rho_{l},y^{2}_{t})(\rho^{2}_{t},x^{2}) m\big)  + y_{l} g\big( y_{m}\sbt \und{\fr{ev}}(\rho_{m}\wedge\rho_{l},y^{2}_{t})(\rho^{2}_{t},x^{2}) m\big)
\\ &+\mathrm{ev}(y_{t} \otimes d \underline{\mathrm{ev}} [\rho_{t} \otimes \mathfrak{ev} (y_{i}^{2}\otimes \omega_{j}\wedge \omega_{k})x_{k}] )x_{j} g\big(  (\rho_{i}^{2},x^{2})m\big) 
\\ &-\mathrm{ev}(y_{t} \otimes d \underline{\mathrm{ev}} [\rho_{t} \otimes \mathfrak{ev} (y_{i}^{2}\otimes \omega_{j}\wedge \omega_{k})x_{k}] ) g\big( x_{j}\sbt  (\rho_{i}^{2},x^{2})m\big) 
\end{align*}
Since $\wedge$ is a pivotal module morphism, (\ref{EqWdgPiv}), then
\begin{align*} y_{l}&\sbt y_{m}\sbt \und{\fr{ev}}(\rho_{m}\wedge \rho_{l} \otimes x^{2} )= y_{l}\sbt \mathrm{ev}\big( y_{m}\otimes d\und{\fr{ev}}(\rho_{m}\wedge \rho_{l} \otimes x^{2} )\big)
\\&+ \mathfrak{ev} (x^{2} \otimes \omega_{i}\wedge \omega_{j}) x_{j}\sbt x_{i}  + \mathrm{ev}\big(y_{m}\otimes d\underline{\mathrm{ev}}[\rho_{m} \otimes\fr{ev}(x^{2}\otimes \omega_{i}\wedge \omega_{j})  x_{j}]\big) x_{i} 
\end{align*}
holds for any $x^{2}\in \XA^{2}$. Using this fact and relation (\ref{EqFlat}) for $N$, we see that all terms in our calculation cancel out:
\begin{align*}
\big[\fr{ev}&(x^{2}\otimes d\omega_{i})x_{i}g - \fr{ev}(x^{2}\otimes \omega_{i}\wedge \omega_{j})x_{j}\sbt x_{i} g\big](m) 
\\=& \fr{ev}(y_{i}^{2}\otimes d\omega_{l})x_{l} g\big((\rho_{i}^{2},x^{2})m\big) - \fr{ev}(y_{i}^{2},\omega_{j}\wedge \omega_{k})x_{k}g \big( x_{j}\sbt (\rho_{i}^{2},x^{2}) m\big)
\\&- \big[ y_{l}\sbt \mathrm{ev}\big( y_{m}\otimes d\und{\fr{ev}}(\rho_{m}\wedge \rho_{l} \otimes y_{t}^{2} )\big)+ \mathfrak{ev} (y_{t}^{2} \otimes \omega_{i}\wedge \omega_{j}) x_{j}\sbt x_{i} \big] g\big((\rho^{2}_{t},x^{2}) m\big)  
\\&- \mathrm{ev}\big(y_{m}\otimes d\underline{\mathrm{ev}}[\rho_{m}\otimes \fr{ev}(y_{t}^{2}\otimes \omega_{i}\wedge \omega_{j}) x_{j}]\big) x_{i} g\big((\rho^{2}_{t},x^{2}) m\big)
\\ & + y_{l}\sbt \mathrm{ev}\big( y_{m}\otimes d\und{\fr{ev}}(\rho_{m}\wedge \rho_{l} \otimes y_{t}^{2} )\big)g \big( (\rho_{i}^{2},x^{2}) m\big) 
\\ &+  \mathfrak{ev} (y_{t}^{2} \otimes \omega_{i}\wedge \omega_{j}) x_{j}g\big( x_{i} \sbt (\rho^{2}_{t},x^{2}) m\big)
\\& + \mathrm{ev}\big(y_{m}\otimes d\mathrm{ev}[\rho_{m} \fr{ev}(y_{t}^{2}\otimes \omega_{i}\wedge \omega_{j}) ] x_{j}\big)g\big( x_{i} \sbt (\rho^{2}_{t},x^{2}) m\big) 
\\ &+\mathrm{ev}(y_{t} \otimes d \underline{\mathrm{ev}} [\rho_{t} \otimes \mathfrak{ev} (y_{i}^{2}\otimes \omega_{j}\wedge \omega_{k})x_{k}] )x_{j} g\big(  (\rho_{i}^{2},x^{2})m\big) 
\\ &-\mathrm{ev}(y_{t} \otimes d \underline{\mathrm{ev}} [\rho_{t} \otimes \mathfrak{ev} (y_{i}^{2}\otimes \omega_{j}\wedge \omega_{k})x_{k}] ) g\big( x_{j}\sbt  (\rho_{i}^{2},x^{2})m\big) 
\\&=\big[ \fr{ev}(y_{i}^{2}\otimes d\omega_{l})x_{l} -  \mathfrak{ev} (y_{i}^{2} \otimes \omega_{i}\wedge \omega_{j}) x_{j}\sbt x_{i} \big]   g\big((\rho_{i}^{2},x^{2})m\big)=0
\end{align*} 
It remains to show that relation (\ref{EqFlExt}) annihilates $\prescript{}{A}{\mathrm{Hom}}(M,N) $. For this computation we use the facts mentioned above about left $A$-module morphisms and $\wedge$ being pivotal, in addition to identity (\ref{EqIII}) annihilating $M$ and relation (\ref{EqFlExt}) annihilating $N$. Let $g\in\prescript{}{A}{\mathrm{Hom}}(M,N) $ and $m\in M$:
\begin{align*}
\big[&\fr{ev}( x^{2}\otimes \omega_{i}\wedge \omega_{j}) \big((x_{j},\omega)\sbt x_{i}+x_{j}\sbt(x_{i},\omega)\big) g-\fr{ev}( x^{2}\otimes d\omega_{l})(x_{l},\omega) g\big] (m)
\\ &= (y_{m},\omega) \sbt y_{n} g\big( (\rho_{n},x_{i})\sbt (\rho_{m},x_{j})\ov{\fr{ev}( x^{2}\otimes \omega_{i}\wedge \omega_{j})}  m\big)
\\ &\quad - (y_{m},\omega)  g\big( y_{n}\sbt (\rho_{n},x_{i})\sbt (\rho_{m},x_{j})\ov{\fr{ev}( x^{2}\otimes \omega_{i}\wedge \omega_{j})}  m\big)
\\& \quad + y_{m} \sbt (y_{n},\omega) g\big( (\rho_{n},x_{i})\sbt (\rho_{m},x_{j})\ov{\fr{ev}( x^{2}\otimes \omega_{i}\wedge \omega_{j})}  m\big)
\\ &\quad - (y_{m},\omega)  g\big(  (\rho_{m},x_{i})\sbt y_{n}\sbt(\rho_{n},x_{j})\ov{\fr{ev}( x^{2}\otimes \omega_{i}\wedge \omega_{j})} m\big)
\\ &\quad - (y_{m},\omega)  g\big( (\rho_{m},x_{i})\ov{\fr{ev}(x^{2}\otimes d\omega_{i})}m\big)
\\&= [(y_{m},\omega) \sbt y_{n}+ y_{m} \sbt (y_{n},\omega)] g\big( \und{\fr{ev}}(\rho_{n}\wedge \rho_{m}\otimes y^{2}_{t}) (\rho_{t}^{2},x^{2})m\big) 
\\ &\quad- (y_{m},\omega)  g\big( y_{n}\sbt \und{\fr{ev}}(\rho_{n}\wedge \rho_{m}\otimes y^{2}_{t}) (\rho_{t}^{2},x^{2})m\big) 
\\ &\quad - (y_{m},\omega)g\big( \und{\mathrm{ev}}(\rho_{m}\otimes \mathfrak{ev}(y_{i}^{2}\otimes d\omega_{l})x_{l})(\rho_{i}^{2},x^{2})m\big) 
\\ &\quad + (y_{m},\omega)g\big(\und{\mathrm{ev}}(\rho_{m}\otimes \mathfrak{ev}(y_{i}^{2}\otimes \omega_{j}\wedge \omega_{k})x_{k})x_{j}\sbt(\rho_{i}^{2},x^{2})  m\big)
\\ &\quad - (y_{m},\omega)g\big(\underline{\mathrm{ev}}\big( \rho_{m}\otimes \mathrm{ev}(y_{t} \otimes d \underline{\mathrm{ev}} [\rho_{t} \otimes \mathfrak{ev} (y_{i}^{2}\otimes \omega_{j}\wedge \omega_{k})x_{k}]) x_{j} \big)(\rho_{i}^{2},x^{2}) m\big)
\\ &= \fr{ev}(y_{i}^{2}\otimes x_{i}\wedge x_{j} )[(x_{j},\omega) \sbt x_{i} + x_{j}\sbt (x_{i},\omega) ]g\big( (\rho_{t}^{2},x^{2})m\big)
\\&\quad + \mathrm{ev}\big( y_{m} \otimes d \und{\mathrm{ev}}(\rho_{m} \otimes\fr{ev}[y_{i}^{2}\otimes x_{i}\wedge x_{j}]x_{j})\big) (x_{i}, \omega) g\big( (\rho_{t}^{2},x^{2})m\big)
\\&\quad -\fr{ev}(y_{i}^{2}\otimes x_{i}\wedge x_{j} )(x_{j},\omega) g\big( x_{i} \sbt (\rho_{t}^{2},x^{2})m\big)
\\ &\quad - \mathfrak{ev}(y_{i}^{2}\otimes d\omega_{l})(x_{l},\omega)g\big(  (\rho_{i}^{2},x^{2})m\big) + \mathfrak{ev}(y_{i}^{2}\otimes \omega_{j}\wedge \omega_{k})(x_{k},\omega)g\big(x_{j}\sbt(\rho_{i}^{2},x^{2})  m\big)
\\ &\quad - \mathrm{ev}\big( y_{t} \otimes d \underline{\mathrm{ev}} [\rho_{t} \otimes \mathfrak{ev} (y_{i}^{2}\otimes \omega_{j}\wedge \omega_{k})x_{k}]\big)(x_{j},\omega)g\big( (\rho_{i}^{2},x^{2}) m\big)
\\ &= \fr{ev}(y_{i}^{2}\otimes x_{i}\wedge x_{j} )[(x_{j},\omega) \sbt x_{i} + x_{j}\sbt (x_{i},\omega) ]g\big( (\rho_{t}^{2},x^{2})m\big)
\\ &\quad - \mathfrak{ev}(y_{i}^{2}\otimes d\omega_{l})(x_{l},\omega)g\big(  (\rho_{i}^{2},x^{2})m\big) 
\\&= -  (y_{i}^{2}, d\omega ) g\big(  (\rho_{i}^{2},x^{2})m\big)= -\big[ (x^{2},d\omega)g \big] (m) \qedhere
\end{align*}\end{proof}
In Theorem \ref{THXaAnti}, we provided a criterion for when $H\XA^{1}$ admits an antipode in the sense of B{\"o}hm and Szlach{\'a}nyi. We now extend this result to $\ct{D}\XA$. 
\begin{thm}\label{TDXAAnti} The Hopf algebroid $\ct{D}\XA$ is a B{\"o}hm-Szlach{\'a}nyi Hopf algebroid, if and only if there exists a linear map $\Upsilon :\XA^{1}\rightarrow A$ satisfying (\ref{EqSpade}) and additional relations 
\begin{align}
\Upsilon (\fr{ev}(x^{2}\otimes d\omega_{i})x_{i} ) +\Upsilon \big(\Upsilon&(\fr{ev}(x^{2}\otimes \omega_{j}\wedge \omega_{k})x_{k} )x_{j}\big)=0\label{EqUpFla}
\\\underline{\fr{ev}}(d\omega\otimes x^{2})  -\underline{\mathrm{ev}}(\omega \otimes \fr{ev} ( x^{2}\otimes d\omega_{l})x_{l} ) & - \underline{\mathrm{ev}}( d\mathrm{ev}(\omega \otimes\fr{ev}(x^{2}\otimes \omega_{j}\wedge \omega_{k})x_{k}) \otimes x_{j})\nonumber
\\ =\underline{\mathrm{ev}}\big[ \omega \otimes \big(\Upsilon ( \fr{ev}(x^{2}\otimes \omega_{j}\wedge \omega_{k})&x_{k} )x_{j}+\fr{ev}(x^{2}\otimes \omega_{j}\wedge \omega_{k})x_{k}\Upsilon (x_{j})\big)\big] \label{EqUpFlEx}
\end{align}
hold for any $x^{2}\in \XA^{2}$ and $\omega\in \Omega^{1}$. 
\end{thm}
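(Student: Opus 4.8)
The plan is to treat $\ct{D}\XA$ as the quotient of the free product $F=H\XA^{1}\star H(\Omega^{2})$ (as $A^{e}$-algebras) by the ideal $I$ generated by the flatness and extendability relations (\ref{EqFlat}), (\ref{EqFlExt}), (\ref{EqExt1}) and (\ref{EqExt2}), and to build the antipode out of the antipodes already available on the two factors. Since $\Omega^{2}$ is pivotal, Theorem \ref{THOmega} applied to $\Omega^{2}$ supplies an involutive antipode $S_{2}$ on $H(\Omega^{2})$ interchanging the generators $(x^{2},\omega^{2})\leftrightarrow(\omega^{2},x^{2})$ and restricting to $a\mapsto\ov{a}$, $\ov{a}\mapsto a$ on $A^{e}$. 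Assuming a map $\Upsilon$ satisfying (\ref{EqSpade}) exists, Theorem \ref{THXaAnti} supplies an invertible antipode $S_{1}$ on $H\XA^{1}$ restricting to the same map on $A^{e}$. As $S_{1}$ and $S_{2}$ agree on the shared subalgebra $A^{e}$, they assemble into a single anti-algebra automorphism $S$ (with inverse $S^{-1}$ assembled analogously) on $F$.

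For the $(\Leftarrow)$ direction I would first verify that $S$ and $S^{-1}$ descend to $\ct{D}\XA$, i.e. preserve $I$. The extendability relations (\ref{EqExt1}) and (\ref{EqExt2}) are interchanged by $S$ after the usual evaluation/conjugation bookkeeping, exactly as relations (\ref{EqRelInv1}) and (\ref{EqRelHpf2}) were handled in Theorem \ref{THOmega}; these checks are symmetric and introduce no new constraints. The real content lies in the flatness relation (\ref{EqFlat}) and the compatibility relation (\ref{EqFlExt}): applying $S$ anti-multiplicatively, substituting $S(x)=-(\omega_{i},x)\sbt x_{i}-\ov{\Upsilon(x)}$, and reducing the resulting expressions with the auxiliary identities (\ref{EqI})--(\ref{EqIII}) of Lemma \ref{LRel}, the image of the flatness relation (\ref{EqFlat}) under $S$ collapses precisely to condition (\ref{EqUpFla}), and that of the compatibility relation (\ref{EqFlExt}) to condition (\ref{EqUpFlEx}). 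Thus $S$ respects $I$ exactly when (\ref{EqUpFla}) and (\ref{EqUpFlEx}) hold. With $S$ well-defined on the quotient, the B{\"o}hm--Szlach{\'a}nyi axioms (\ref{EqSCond}) and (\ref{EqSinvCond}) need only be checked on generators, since the coproduct lands in the Takeuchi product; on the $H(\Omega^{2})$-generators and on $(x,\omega),(\omega,x)$ they hold by Theorem \ref{THOmega}, and on $x\in\XA^{1}$ by Theorem \ref{THXaAnti}, all being stable under the quotient.

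For the $(\Rightarrow)$ direction, if $\ct{D}\XA$ admits an antipode $S$, I would restrict along the canonical map $H\XA^{1}\to\ct{D}\XA$ and set $\Upsilon(x):=-\epsilon(S(x))=-1\triangleleft x$, where $a\triangleleft h=\epsilon(S(h)\sbt a)$ is the induced right $\ct{D}\XA$-action on $A$; the argument of Theorem \ref{THXaAnti} then yields (\ref{EqSpade}). Applying the map $h\mapsto\epsilon(S(h)\sbt a)$, together with its left-handed analogue, to the relations (\ref{EqFlat}) and (\ref{EqFlExt}) --- which vanish in $\ct{D}\XA$ --- and unwinding via anti-multiplicativity of $S$ recovers (\ref{EqUpFla}) and (\ref{EqUpFlEx}) respectively.

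The hard part will be the well-definedness check in the $(\Leftarrow)$ direction, namely showing that the $S$-images of the flatness relation (\ref{EqFlat}) and the compatibility relation (\ref{EqFlExt}) reduce to (\ref{EqUpFla}) and (\ref{EqUpFlEx}). This is a lengthy symbolic manipulation, and it is precisely what motivates the form of the three nontrivial identities collected in Lemma \ref{LRel}; establishing those identities in advance is what makes the reduction tractable.
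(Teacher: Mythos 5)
Your proposal follows essentially the same route as the paper's proof: extend the antipodes of $H\XA^{1}$ (Theorem \ref{THXaAnti}) and $H(\Omega^{2})$ (Theorem \ref{THOmega}) to the free product $F$, recover $\Upsilon (x)=-\epsilon (S(x))$ and the conditions (\ref{EqUpFla}), (\ref{EqUpFlEx}) by applying $\epsilon\circ S$ to the vanishing relations (\ref{EqFlat}) and (\ref{EqFlExt}) for the forward direction, and for the converse verify that $S$ and $S^{-1}$ descend to the quotient using the identities of Lemma \ref{LRel} together with pivotality of $\wedge$, checking the B{\"o}hm--Szlach{\'a}nyi axioms only on generators via the Takeuchi product. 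The one minor imprecision is that the two conditions do not separate as cleanly as you suggest --- in the paper's computation the well-definedness of $S$ on the flatness relation (\ref{EqFlat}) uses both (\ref{EqUpFla}) and a consequence of (\ref{EqUpFlEx}), not (\ref{EqUpFla}) alone --- but this does not affect the correctness of your plan.
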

\begin{proof} ($\Rightarrow$) The argument is similar to that of Theorem \ref{THXaAnti}. If $\ct{D}\XA$ were to admit an antipode, $S$, we can use it to recover $\Upsilon$ by $\Upsilon(x)=-\epsilon (S(x))$ for $x\in \XA^{1}$. Hence, $\Upsilon$ would satisfy relations (\ref{EqSpade}) and additional relations arising from the flat relation (\ref{EqFlat}) and the additional condition (\ref{EqFlExt}). Let $x^{2}\in \XA^{2}$, then relation (\ref{EqUpFla}) arises directly from relation (\ref{EqFlat}): 
\begin{align*}
0=&-\epsilon \big( S( \fr{ev}(x^{2}\otimes d\omega_{i}) x_{i} - \mathfrak{ev}(x^{2}\otimes \omega_{j}\wedge\omega_{k})  x_{k}\sbt x_{j}  ) \big)
\\ =&\Upsilon (\fr{ev}(x^{2}\otimes d\omega_{i})x_{i} ) - \epsilon \big( -S(x_{j})\sbt S(\mathfrak{ev}(x^{2}\otimes \omega_{j}\wedge\omega_{k}) x_{k})\big) 
\\ =&\Upsilon (\fr{ev}(x^{2}\otimes d\omega_{i})x_{i} ) - \epsilon \big( -S(x_{j})\sbt \ov{\epsilon (S(\mathfrak{ev}(x^{2}\otimes \omega_{j}\wedge\omega_{k}) x_{k}))}\big) 
\\ =&\Upsilon (\fr{ev}(x^{2}\otimes d\omega_{i})x_{i} ) - \epsilon \big( S(x_{j})\sbt \ov{\Upsilon (\mathfrak{ev}(x^{2}\otimes \omega_{j}\wedge\omega_{k}) x_{k})}\big) 
\\ =&\Upsilon (\fr{ev}(x^{2}\otimes d\omega_{i})x_{i} ) +\Upsilon \big(\Upsilon(\fr{ev}(x^{2}\otimes \omega_{j}\wedge \omega_{k})x_{k} )x_{j}\big)
\end{align*}
Relation (\ref{EqUpFlEx}) arises from relation (\ref{EqFlExt}) where $\omega\in\Omega^{1}$:
\begin{align*}
\underline{\mathrm{ev}}(&d\omega \otimes x^{2}) -\underline{\mathrm{ev}}(\omega \otimes\mathfrak{ev}(x^{2}\otimes d\omega_{i} )x_{i}) = -\epsilon \big( S\big(\mathfrak{ev}(x^{2}\otimes d\omega_{i} )(x_{i},\omega ) - (x^{2},d\omega)\big)\big)
\\ =&-\epsilon\big( S\big(\mathfrak{ev}( x^{2}\otimes \omega_{i}\wedge \omega_{j}) [x_{j}\sbt(x_{i},\omega)+(x_{j},\omega)\sbt x_{i}] \big)\big)
\\=& -\epsilon\big( (\omega , x_{i})\sbt S(\mathfrak{ev}( x^{2}\otimes \omega_{i}\wedge \omega_{j}) x_{j})+ S(x_{i})\sbt (\omega ,\mathfrak{ev}( x^{2}\otimes \omega_{i}\wedge \omega_{j}) x_{j}) \big)
\\=& -\epsilon\big( (\omega , x_{i})\sbt \epsilon (S(\mathfrak{ev}( x^{2}\otimes \omega_{i}\wedge \omega_{j}) x_{j}))+ S(x_{i})\sbt \epsilon ((\omega ,\mathfrak{ev}( x^{2}\otimes \omega_{i}\wedge \omega_{j}) x_{j})) \big)
\\ =& \underline{\mathrm{ev}}\big( \omega \otimes \Upsilon ( \fr{ev}(x^{2}\otimes \omega_{i}\wedge \omega_{j})x_{j} )x_{i} \big) +  \Upsilon \big(\underline{\mathrm{ev}}( \omega \otimes\fr{ev}(x^{2}\otimes \omega_{j}\wedge \omega_{k})x_{k}\big)x_{j}\big)
\\ =& \underline{\mathrm{ev}}\big[ \omega \otimes \big(\Upsilon ( \fr{ev}(x^{2}\otimes \omega_{i}\wedge \omega_{j})x_{j} )x_{i}+\fr{ev}(x^{2}\otimes \omega_{i}\wedge \omega_{j})x_{j}\Upsilon (x_{i})\big)\big]
\\& +\underline{\mathrm{ev}}( d\mathrm{ev}(\omega \otimes\fr{ev}(x^{2}\otimes \omega_{j}\wedge \omega_{k})x_{k}) \otimes x_{j})
\end{align*}
($\Leftarrow$) We assume that such a linear map $\Upsilon$ satisfying (\ref{EqSpade}), (\ref{EqUpFla}) and (\ref{EqUpFlEx}) exists. A consequence of (\ref{EqUpFlEx}) which we use during the proof is that for any $x^{2}\in\XA^{2}$:
\begin{align*}
y_{l}\underline{\fr{ev}}(d \rho_{l}\otimes x^{2})  -\fr{ev} ( x^{2}\otimes d\omega_{l})x_{l}  & - y_{l}\underline{\mathrm{ev}}( d\underline{\mathrm{ev}}(\rho_{l} \otimes\fr{ev}(x^{2}\otimes \omega_{j}\wedge \omega_{k})x_{k}) \otimes x_{j})
\\ = \Upsilon ( \fr{ev}(x^{2}\otimes \omega_{j}\wedge \omega_{k})&x_{k} )x_{j}+\fr{ev}(x^{2}\otimes \omega_{j}\wedge \omega_{k})x_{k}\Upsilon (x_{j})
\end{align*}
holds. We extend the antipode $S$ of $H\XA^{1}$ and $H(\Omega^{2})$ as defined in Theorems \ref{THXAHopf} and \ref{THOmega} to $\ct{D}\XA$ and need to show that the antipode $S$ well-defined on $\ct{D}\XA$. In particular, we need to check relations (\ref{EqFlat}) and (\ref{EqFlExt}). Let $x\in \XA^{2}$. We prove that $S$ is well-defined for (\ref{EqFlat}) by first applying the properties of $\Upsilon$, then applying identity (\ref{EqI}) and using the fact that $\wedge$ satisfies (\ref{EqWdgPiv}), as we did in the proof of Theorem \ref{TDXAHpf}: 
\begin{align*}
-&S\big( \fr{ev}(x^{2}\otimes d\omega_{i})x_{i} - \mathfrak{ev}(x^{2}\otimes \omega_{j}\wedge\omega_{k})  x_{k}\sbt x_{j}\big) 
\\ =& (\omega_{l},\fr{ev}(x^{2}\otimes d\omega_{i})x_{i} )\sbt x_{l} + \ov{\Upsilon ( \fr{ev}(x^{2}\otimes d\omega_{i})x_{i} )} + \ov{\Upsilon (x_{j})} \sbt \ov{\Upsilon ( \mathfrak{ev}(x^{2}\otimes \omega_{j}\wedge\omega_{k})  x_{k})}
\\ & + (\omega_{l} ,x_{j})\sbt x_{l}\sbt \ov{\Upsilon ( \mathfrak{ev}(x^{2}\otimes \omega_{j}\wedge\omega_{k})  x_{k})}+\ov{\Upsilon (x_{j}) }\sbt (\omega_{t} , \mathfrak{ev}(x^{2}\otimes \omega_{j}\wedge\omega_{k})  x_{k}) \sbt x_{t}
\\&+(\omega_{l} ,x_{j})\sbt x_{l}\sbt (\omega_{t} , \mathfrak{ev}(x^{2}\otimes \omega_{j}\wedge\omega_{k})  x_{k}) \sbt x_{t}
\\ =& (\omega_{l},\fr{ev}(x^{2}\otimes d\omega_{i})x_{i} )\sbt x_{l} + \ov{\Upsilon ( \fr{ev}(x^{2}\otimes d\omega_{i})x_{i} )} + \ov{\Upsilon ( \Upsilon (\mathfrak{ev}(x^{2}\otimes \omega_{j}\wedge\omega_{k})  x_{k})x_{j})} 
\\ &- \ov{ \underline{\mathrm{ev}}(d\Upsilon (\mathfrak{ev}(x^{2}\otimes \omega_{j}\wedge\omega_{k})  x_{k}) \otimes x_{j})}+(\omega_{l} ,x_{j}) (x_{l},d\Upsilon ( \mathfrak{ev}(x^{2}\otimes \omega_{j}\wedge\omega_{k})x_{k}))
\\ &+ \big(\omega_{l} ,\Upsilon ( \mathfrak{ev}(x^{2}\otimes \omega_{j}\wedge\omega_{k})  x_{k})x_{j}\big)\sbt x_{l}+\big(\omega_{l} , \mathfrak{ev}(x^{2}\otimes \omega_{j}\wedge\omega_{k})  x_{k}\Upsilon (x_{j} )\big) \sbt x_{l}
\\&+(\omega_{l} ,x_{j})\sbt x_{l}\sbt (\omega_{t} , \mathfrak{ev}(x^{2}\otimes \omega_{j}\wedge\omega_{k})  x_{k}) \sbt x_{t}
\\ =&(\omega_{l}, y_{i}\underline{\fr{ev}}(d\rho_{i}\otimes x^{2}))\sbt x_{l}  - \big(\omega_{l} ,y_{t}\underline{\mathrm{ev}}( d\underline{\mathrm{ev}}(\rho_{t} \otimes\fr{ev}(x^{2}\otimes \omega_{j}\wedge \omega_{k})x_{k}) \otimes x_{j})\big)\sbt x_{l}
\\&+(\omega_{l} ,x_{j})\sbt x_{l}\sbt (\omega_{t} , \mathfrak{ev}(x^{2}\otimes \omega_{j}\wedge\omega_{k})  x_{k}) \sbt x_{t} 
\\ =& -\ov{\underline{\mathfrak{ev}}(\rho_{m}\wedge \rho_{n}\otimes x^{2})}(\omega_{k},y_{m})\sbt x_{k}\sbt (\omega_{l},y_{n})\sbt x_{l} 
\\&- (\omega_{t}, x_{j})(x_{t},d\underline{\mathrm{ev}}(\rho_{l} \otimes\fr{ev}(x^{2}\otimes \omega_{j}\wedge \omega_{k})x_{k}))\sbt(\omega_{l} ,y_{l}) \sbt x_{l}
\\ &+ (\omega_{l} ,x_{j})\sbt x_{l}\sbt (\omega_{t} , \mathfrak{ev}(x^{2}\otimes \omega_{j}\wedge\omega_{k})  x_{k}) \sbt x_{t} = 0 
\end{align*}
Let $\omega\in\Omega^{1}$. We prove $S$ is well-defined for relation (\ref{EqFlExt}) by using the properties of $\Upsilon$, then applying identity (\ref{EqII}) and using the fact that $\wedge$ satisfies (\ref{EqWdgPiv})
\begin{align*}
&-S\big(\mathfrak{ev}( x^{2}\otimes \omega_{i}\wedge \omega_{j}) [x_{j}\sbt(x_{i},\omega)+(x_{j},\omega)\sbt x_{i}] - \mathfrak{ev}(x^{2}\otimes d\omega_{i} )(x_{i},\omega )   \big) 
\\=&  (\omega, x_{i}) \sbt \ov{\Upsilon(\mathfrak{ev}( x^{2}\otimes \omega_{i}\wedge \omega_{j}) x_{j})}+\ov{\Upsilon(x_{i})}\sbt(\omega,\mathfrak{ev}( x^{2}\otimes \omega_{i}\wedge \omega_{j}) x_{j})
\\ &+(\omega,\mathfrak{ev}(x^{2}\otimes d\omega_{i} )x_{i}) +(\omega, x_{i}) \sbt (\omega_{l},\mathfrak{ev}( x^{2}\otimes \omega_{i}\wedge \omega_{j}) x_{j})\sbt x_{l}
\\&+(\omega_{l},x_{i})\sbt x_{l}\sbt(\omega,\mathfrak{ev}( x^{2}\otimes \omega_{i}\wedge \omega_{j}) x_{j})
\\=&  \big(\omega ,y_{l}\underline{\mathrm{ev}}( d\underline{\mathrm{ev}}(\rho_{l}\otimes\fr{ev}(x^{2}\otimes \omega_{j}\wedge \omega_{k})x_{k}) \otimes x_{j})\big)+ ( \omega, y_{l} \underline{\fr{ev}}(d \rho_{l},x^{2}) ) 
\\ & +(\omega,y_{m}\underline{\mathfrak{ev}}(\rho_{m}\wedge \rho_{n}\otimes x^{2}))\sbt(\omega_{l},y_{n})\sbt x_{l}+(\omega_{l},x_{i})\sbt x_{l}\sbt(\omega,\mathfrak{ev}( x^{2}\otimes \omega_{i}\wedge \omega_{j}) x_{j})) 
\\=&  -(\omega_{t}, x_{j})\sbt(x_{t},d\underline{\mathrm{ev}}(\rho_{l} \otimes\fr{ev}(x^{2}\otimes \omega_{j}\wedge \omega_{k})x_{k})\sbt(\omega ,y_{l}\big) +(d\omega ,x^{2}) 
\\ &-(\omega_{l},y_{m}\underline{\mathfrak{ev}}(\rho_{m}\wedge \rho_{n}\otimes x^{2}))\sbt x_{l}\sbt (\omega,y_{m})+(\omega_{l},x_{i})\sbt x_{l}\sbt(\omega,\mathfrak{ev}( x^{2}\otimes \omega_{i}\wedge \omega_{j}) x_{j})) 
\\ =&(d\omega ,x^{2})  =S((x^{2},d\omega))= -S(-(x^{2},d\omega))
\end{align*}
We also need to check relations (\ref{EqFlat}) and (\ref{EqFlExt}) for the inverse of the antipode $S^{-1}$. We prove that $S^{-1}$ is well-defined for (\ref{EqFlat}) by using identity (\ref{EqIII}) and relations (\ref{EqExt4}), (\ref{EqFlat}) and $\wedge$ satisfies (\ref{EqWdgPiv}), respectively: 
\begin{align*}
&-S^{-1}\big( \fr{ev}(x^{2}\otimes d\omega_{i})x_{i} - \mathfrak{ev}(x^{2}\otimes \omega_{j}\wedge\omega_{k})  x_{k}\sbt x_{j}\big)  
\\=&(y_{l}+\Upsilon (y_{l}))\sbt \big[(\rho_{l}, \fr{ev}(x^{2}\otimes d\omega_{i})x_{i}) +  (\rho_{l},x_{j})\sbt y_{t}\sbt (\rho_{t},\mathfrak{ev}(x^{2}\otimes \omega_{j}\wedge\omega_{k})  x_{k})\big]
\\ &+ (y_{l}+\Upsilon (y_{l}))\sbt(\rho_{l},x_{j}) \sbt \Upsilon (y_{t})\sbt (\rho_{t},\mathfrak{ev}(x^{2}\otimes \omega_{j}\wedge\omega_{k})  x_{k})
\\ =&  \mathfrak{ev}(y_{i}^{2}\otimes d\omega_{l})x_{l}\sbt (\rho_{i}^{2},x^{2})-  \fr{ev}(y_{i}^{2}\otimes\omega_{j}\wedge \omega_{k})x_{k}\sbt x_{j}\sbt(\rho_{i}^{2},x^{2}) 
\\ &+ \mathrm{ev}[y_{t} \otimes d \underline{\mathrm{ev}} (\rho_{t} \otimes \mathfrak{ev} (y_{i}^{2}\otimes \omega_{j}\wedge \omega_{k})x_{k})] x_{j} \sbt (\rho_{i}^{2},x^{2})
\\ &+  \Upsilon (\mathfrak{ev}(y_{i}^{2}\otimes d\omega_{l})x_{l}) (\rho_{i}^{2},x^{2})-  \Upsilon (\mathfrak{ev}(y_{i}^{2}\otimes \omega_{j}\wedge \omega_{k})x_{k}) x_{j}\sbt(\rho_{i}^{2},x^{2}) 
\\ &+\Upsilon\big( \mathrm{ev}[y_{t} \otimes d \underline{\mathrm{ev}} (\rho_{t} \otimes \mathfrak{ev} (y_{i}^{2}\otimes \omega_{j}\wedge \omega_{k})x_{k})] x_{j}\big)  (\rho_{i}^{2},x^{2})
\\ &+ (y_{l}+\Upsilon (y_{l}))\sbt \underline{\fr{ev}}(\rho_{l}\wedge \Upsilon (y_{t})\rho_{t}\otimes y_{i}^{2})\sbt(\rho_{i}^{2},x^{2})
\\ =&\mathrm{ev}[y_{t} \otimes d \underline{\mathrm{ev}} (\rho_{t} \otimes \mathfrak{ev} (y_{i}^{2}\otimes \omega_{j}\wedge \omega_{k})x_{k})] x_{j} \sbt (\rho_{i}^{2},x^{2})
\\ &+  \Upsilon (\mathfrak{ev}(y_{i}^{2}\otimes d\omega_{l})x_{l}) (\rho_{i}^{2},x^{2})-  \Upsilon (\mathfrak{ev}(y_{i}^{2}\otimes \omega_{j}\wedge \omega_{k})x_{k}) x_{j}\sbt(\rho_{i}^{2},x^{2}) 
\\ &+\Upsilon\big( \mathrm{ev}[y_{t} \otimes d \underline{\mathrm{ev}} (\rho_{t} \otimes \mathfrak{ev} (y_{i}^{2}\otimes \omega_{j}\wedge \omega_{k})x_{k})] x_{j}\big)  (\rho_{i}^{2},x^{2})
\\ &+\underline{\mathrm{ev}}\big(\Upsilon(y_{t})\rho_{t}\otimes \fr{ev}(y^{2}_{i}\otimes\omega_{j}\wedge \omega_{k}\big)x_{k} )x_{j} \sbt  (\rho_{i}^{2},x^{2})
\\&+  \Upsilon\big(\underline{\mathrm{ev}}\big(\Upsilon(y_{t})\rho_{t}\otimes \fr{ev}(y^{2}_{i}\otimes\omega_{j}\wedge \omega_{k}\big)x_{k} )x_{j} \big)  (\rho_{i}^{2},x^{2})
\\=& \Upsilon (\mathfrak{ev}(y_{i}^{2}\otimes d\omega_{l})x_{l}) (\rho_{i}^{2},x^{2})+  \Upsilon\big(\Upsilon (\fr{ev}(y^{2}_{i}\otimes\omega_{j}\wedge \omega_{k}\big)x_{k} )x_{j} \big)  (\rho_{i}^{2},x^{2})
\\& -  \Upsilon (\mathfrak{ev}(y_{i}^{2}\otimes \omega_{j}\wedge \omega_{k})x_{k}) x_{j}\sbt(\rho_{i}^{2},x^{2})  +\Upsilon (\fr{ev}(y^{2}_{i}\otimes\omega_{j}\wedge \omega_{k}\big)x_{k} )x_{j} \sbt  (\rho_{i}^{2},x^{2})=0
\end{align*}
Finally, we prove that $S^{-1}$ is well-defined for relation (\ref{EqFlExt}) by using identity (\ref{EqIII}), the manipulation used previously for $\wedge$, (\ref{EqWdgPiv}), and the properties of $\Upsilon$, (\ref{EqSpade}) and (\ref{EqUpFlEx}):
\begin{align*}
&-S^{-1}\big(\mathfrak{ev}( x^{2}\otimes \omega_{i}\wedge \omega_{j}) [x_{j}\sbt(x_{i},\omega)+(x_{j},\omega)\sbt x_{i}] - \mathfrak{ev}(x^{2}\otimes d\omega_{i} )(x_{i},\omega )   \big) 
\\ =&(\omega,\mathfrak{ev}(x^{2}\otimes d\omega_{i} )x_{i})+(\omega,x_{i})\sbt y_{l}\sbt (\rho_{l},\mathfrak{ev}(x^{2}\otimes \omega_{i}\wedge\omega_{j})  x_{j})
\\ &+(\omega\Upsilon (y_{l}),x_{i})\sbt ( \rho_{l},\mathfrak{ev}(x^{2}\otimes \omega_{i}\wedge\omega_{j})  x_{j})\\&+ (y_{l}+\Upsilon(y_{l}))\sbt (\rho_{l},x_{i}) \sbt (\omega,\mathfrak{ev}(x^{2}\otimes \omega_{i}\wedge\omega_{j})  x_{j})
\\ =& \underline{\mathrm{ev}}(\omega\otimes \mathfrak{ev}(y_{i}^{2}\otimes d\omega_{l})x_{l})(\rho_{i}^{2},x^{2})- \underline{\mathrm{ev}}(\omega\otimes \mathfrak{ev}(y_{i}^{2}\otimes \omega_{j}\wedge \omega_{k})x_{k})x_{j}\sbt(\rho_{i}^{2},x^{2}) 
\\ &+\underline{\mathrm{ev}}\big( \omega \otimes \mathrm{ev}[y_{t} \otimes d \underline{\mathrm{ev}} (\rho_{t} \otimes \mathfrak{ev} (y_{i}^{2}\otimes \omega_{j}\wedge \omega_{k})x_{k})] x_{j} \big)(\rho_{i}^{2},x^{2})
\\&+\underline{\fr{ev}}( \omega \wedge \Upsilon (y_{l}) \rho_{l}\otimes y_{i}^{2}) (\rho_{i}^{2},x^{2}) + (y_{l}+\Upsilon(y_{l}))\sbt\underline{\fr{ev}}( \omega \wedge \rho_{l}\otimes y_{i}^{2}) (\rho_{i}^{2},x^{2}) 
\\ =& \underline{\mathrm{ev}}(\omega\otimes \mathfrak{ev}(y_{i}^{2}\otimes d\omega_{l})x_{l})(\rho_{i}^{2},x^{2})- \underline{\mathrm{ev}}(\omega\otimes \mathfrak{ev}(y_{i}^{2}\otimes \omega_{j}\wedge \omega_{k})x_{k})x_{j}\sbt(\rho_{i}^{2},x^{2}) 
\\ &+\underline{\mathrm{ev}}\big( \omega \otimes \mathrm{ev}[y_{t} \otimes d \underline{\mathrm{ev}} (\rho_{t} \otimes \mathfrak{ev} (y_{i}^{2}\otimes \omega_{j}\wedge \omega_{k})x_{k})] x_{j} \big)(\rho_{i}^{2},x^{2})
\\&+\underline{\mathrm{ev}}\big( \omega \otimes \underline{\mathrm{ev}}[ \Upsilon (y_{l}) \rho_{l}\otimes \mathfrak{ev} (y_{i}^{2}\otimes\omega_{j}\wedge \omega_{k})x_{k}] x_{j} \big) (\rho_{i}^{2},x^{2}) 
\\&+ \underline{\mathrm{ev}}(\omega\otimes \mathfrak{ev}(y_{i}^{2}\otimes \omega_{j}\wedge \omega_{k})x_{k})x_{j}\sbt(\rho_{i}^{2},x^{2}) 
\\ &+ \Upsilon\big(\underline{\mathrm{ev}}(\omega\otimes \underline{\mathrm{ev}}(y_{i}^{2},\omega_{j}\wedge \omega_{k})x_{k})x_{j}\big)(\rho_{i}^{2},x^{2})
\\= & \underline{\mathrm{ev}}(\omega\otimes \underline{\mathrm{ev}}(y_{i}^{2}\otimes d\omega_{l})x_{l})(\rho_{i}^{2},x^{2}) + \Upsilon\big(\underline{\mathrm{ev}}(\omega\otimes \mathfrak{ev}(y_{i}^{2}\otimes \omega_{j}\wedge \omega_{k})x_{k})x_{j}\big)(\rho_{i}^{2},x^{2})
\\ &+\underline{\mathrm{ev}}\big( \omega \otimes \mathrm{ev}[y_{t} \otimes d \underline{\mathrm{ev}} (\rho_{t} \otimes \mathfrak{ev} (y_{i}^{2}\otimes \omega_{j}\wedge \omega_{k})x_{k})] x_{j} \big)(\rho_{i}^{2},x^{2})
\\&+\underline{\mathrm{ev}}\big( \omega \otimes \Upsilon (y_{l})\underline{\mathrm{ev}}[  \rho_{l}\otimes \mathfrak{ev} (y_{i}^{2}\otimes\omega_{j}\wedge \omega_{k})x_{k}] x_{j} \big) (\rho_{i}^{2},x^{2}) 
\\= & \underline{\mathrm{ev}}(\omega\otimes \underline{\mathrm{ev}}(y_{i}^{2}\otimes d\omega_{l})x_{l})(\rho_{i}^{2},x^{2}) + \Upsilon\big(\underline{\mathrm{ev}}(\omega\otimes \mathfrak{ev}(y_{i}^{2}\otimes \omega_{j}\wedge \omega_{k})x_{k})x_{j}\big)(\rho_{i}^{2},x^{2})
\\ &+\underline{\mathrm{ev}}\big( \omega \otimes \Upsilon(\mathfrak{ev} (y_{i}^{2}\otimes\omega_{j}\wedge \omega_{k})x_{k})x_{j}\big) 
\\ = &  \Upsilon\big(\underline{\mathrm{ev}}(\omega\otimes \mathfrak{ev}(y_{i}^{2}\otimes \omega_{j}\wedge \omega_{k})x_{k})x_{j}\big)(\rho_{i}^{2},x^{2}) +\underline{\fr{ev}}(d\omega \otimes y_{i}^{2}) (\rho_{i}^{2},x^{2}) 
\\ &- \underline{\mathrm{ev}}(\omega\otimes \mathfrak{ev}(y_{i}^{2}\otimes \omega_{j}\wedge \omega_{k})x_{k})\Upsilon (x_{j}) (\rho_{i}^{2},x^{2}) 
\\ &- \underline{\mathrm{ev}}(d\underline{\mathrm{ev}}(\omega\otimes \mathfrak{ev}(y_{i}^{2}\otimes \omega_{j}\wedge \omega_{k})x_{k}) \otimes x_{j} ) (\rho_{i}^{2},x^{2})
\\ =& (d\omega ,x^{2}) = - S^{-1}\big( -(x^{2},d\omega) \big) \qedhere
\end{align*}
\end{proof}
\begin{rmk}\label{RSujAnti} In the proof of Theorem \ref{TDXAAnti}, it is implicit that $\Upsilon$ arises from a right action of $\ct{D}\XA$ on $A$, where the action of elements of $H(\Omega^{1})$ and $H(\Omega^{2})$ agree with the counit. As mentioned previously if the calculus is surjective, then relation (\ref{EqFlExt}) follows from the flatness relation (\ref{EqFlat}). Hence, to obtain a right action of $\ct{D}\XA$ on $A$, one would only need to check the flatness condition, which translates to (\ref{EqUpFla}) for $\Upsilon$ and condition (\ref{EqUpFlEx}) would follow.  
\end{rmk}
\subsection{Examples}\label{SExFlat}
In this section we calculate $\ct{D}\XA$ explicitly in the cases of finite quivers and the bicovariant calculus of Example \ref{EBiaD6} on $\mathbb{C}D_{6}$. Observe that for any first order calculus there can be several choices for $\Omega^{2}$, but for our construction we require $\Omega^{2}$ to be a pivotal bimodule and $\wedge$ a pivotal bimodule morphism, satisfying (\ref{EqWdgPiv}).
\begin{ex}\label{DXQuiv}[Finite Quivers] For simplicity we assume the finite quiver $\Gamma=(V,E)$, does not have any loops i.e. there no edge $e\in E$ has the same source and target. There are several choices of $\Omega^{2}$ for the calculus on finite quivers $\Gamma=(V,E)$ [Proposition 1.40 \cite{beggs2019quantum}]. Here we take $\Omega^{2}$ to be the quotient of $\Omega^{1}\otimes\Omega^{1}$ by the sub-bimodule spanned by sums 
$ \sum_{s(e_{1})=p, t(e_{2})=q} \ovr{e_{1}}\otimes\ovr{e_{2}} $
corresponding to each pair of vertices $p,q\in V$. The bimodule morphism $\wedge$ is the natural projection $\Omega^{1}\otimes\Omega^{1}\twoheadrightarrow \Omega^{2}$ and the differential $d:\Omega^{1}\rightarrow \Omega^{2}$ is defined by 
$$d\ovr{e_{1}}= \sum_{e\in E} \ovr{e}\wedge \ovr{e_{1}} - \sum_{e\in E} \ovr{e_{1}}\wedge \ovr{e} $$
for any $\ovr{e_{1}}\in \Omega^{1}$. The left and right dual of $\Omega^{2}$ is the quotient of $\XA^{1}\otimes\XA^{1}$, by the same relations $ \sum_{s(e_{1})=p, t(e_{2})=q} \ovl{e_{2}}\otimes\ovl{e_{1}} $
corresponding to each pair of vertices $p,q\in V$. To define a pair of evaluation and coevaluation maps, we nominate a \emph{2-step} $(a_{p,q},b_{p,q})\in E\times E$ for each pair of vertices $p,q\in V$, such that $s(a_{p,q})=p$,  $t(b_{p,q})=q$ and $s(a_{p,q})=t(b_{p,q})$. We denote the set of nominated 2-steps by $N=\lbrace (a_{p,q},b_{p,q}) \in E\times E\mid p,q \in V \rbrace$. Notice that for any pair of vertices $p,q\in V$,
$$\ovr{a_{p,q}}\wedge \ovr{b_{p,q}}= \sum_{\substack{s(e_{1})=p,\  t(e_{2})=q\\ (e_{1},e_{2})\neq (a_{p,q},b_{p,q})}} -\ovr{e_{1}}\wedge \ovr{e_{2}}$$
Thereby $\Omega^{2}$ is spanned by elements $\ovr{e_{1}}\wedge \ovr{e_{2}}$ whose underlying 2-steps are not nominated and lie in $E_{2}= \lbrace (e_{1},e_{2})\in E\times E \mid t(e_{1})=s(e_{2})\rbrace \setminus N$. With this basis, we can describe the coevaluation and evaluation maps by 
$$\fr{coev}(1) = \sum_{(e_{1},e_{2}) \in E_{2} } \ovr{e_{1}}\wedge \ovr{e_{2}}\otimes \ovl{e_{2}}\wedge \ovl{e_{1}},\quad \fr{ev}(\ovl{e_{2}}\wedge \ovl{e_{1}}\otimes \ovr{e_{3}}\wedge \ovr{e_{4}} )=\delta_{e_{1},e_{3}} \delta_{e_{2},e_{4}}f_{t(e_{4})}$$
for any $\ovr{e_{3}}\wedge \ovr{e_{4}}\in \Omega^{2}$ and $\ovl{e_{2}}\wedge \ovl{e_{1}}\in \XA^{2}$. It is trivial to check that $\wedge$ is a pivotal bimodule morphism. By Remark \ref{RWdSplit}, since $\wedge$ is surjective and splits, we do not need to add any additional generators to $H\XA^{1}$ which was constructed in Example \ref{EHopfGrph}. We only need to quotient out the additional relations for extendability. First notice that by (\ref{EqExt1}) the elements defining the action of $\XA^{2}\otimes_{\field}\Omega^{2}$ will be given by 
$$(\ovl{e_{2}}\wedge \ovl{e_{1}}, \ovr{e_{3}}\wedge \ovr{e_{4}}) := (\ovl{e_{2}},\ovr{e_{4}})\sbt (\ovl{e_{1}},\ovr{e_{3}})- (\ovl{b_{p,q}},\ovr{e_{4}})\sbt (\ovl{a_{p,q}},\ovr{e_{3}}) $$
where $p=s(e_{1})$ and $q=t(e_{2})$. By a similar deduction via (\ref{EqExt2}), the extendability relations which we quotient out $H\XA^{1}$ by are 
\begin{align*}
\sum_{s(e_{3})=v, t(e_{4})=w} (\ovl{e_{2}},\ovr{e_{4}})\sbt (\ovl{e_{1}},\ovr{e_{3}})- (\ovl{b_{p,q}},\ovr{e_{4}})\sbt (\ovl{a_{p,q}},\ovr{e_{3}}) &=0
\\\sum_{s(e_{3})=v, t(e_{4})=w} (\ovr{e_{3}},\ovl{e_{1}})\sbt ( \ovr{e_{4}},\ovl{e_{2}}) - (\ovr{e_{3}},\ovl{a_{p,q}})\sbt ( \ovr{e_{4}},\ovl{b_{p,q}}) &=0
\end{align*}
for all pair of vertices $v,w\in V$ and any $\ovl{e_{2}}\wedge \ovl{e_{1}}\in \XA^{2}$, where $p=s(e_{1})$ and $q=t(e_{2})$. The flatness relation (\ref{EqFlat}) reduces to relations $\ovl{e_{2}} -\ovl{b_{p,q}}+\ovl{b_{p,q}}\sbt \ovl{a_{p,q}} = \ovl{e_{2}}\sbt \ovl{e_{1}}$ holding for all $\ovl{e_{2}}\wedge \ovl{e_{1}}\in \XA^{2}$, where $p=s(e_{1})$ and $q=t(e_{2})$. However, this is in  terms of elements of $T\XA^{1}_{\sbt}$ and we have described $H\XA^{1}$ in terms of $\field\Gamma$. In terms of the quiver path algebra, the relations translate to 
$$\ovl{b_{p,q}}\sbt \ovl{a_{p,q}} = \ovl{e_{2}}\sbt \ovl{e_{1}}$$
holding for all $\ovl{e_{2}}\wedge \ovl{e_{1}}\in \XA^{2}$, where $p=s(e_{1})$ and $q=t(e_{2})$. Similarly, The additional condition (\ref{EqFlExt}) reduces to 
\begin{align*}
\ovl{e_{2}}&\sbt (\ovl{e_{1}}, \ovr{e_{3}})+(\ovl{e_{2}}, \ovr{e_{3}})\sbt \ovl{e_{1}} - \ovl{b_{p,q}}\sbt (\ovl{a_{p,q}}, \ovr{e_{3}})-(\ovl{b_{p,q}}, \ovr{e_{3}})\sbt \ovl{a_{p,q}}-(\ovl{b_{p,q}}, \ovr{e_{3}}) 
\\ &=(\ovl{e_{2}}, \ovr{e_{3}}) - \sum_{e\in E}\big[(\ovl{e_{2}},\ovr{e_{3}})\sbt (\ovl{e_{1}},\ovr{e})- (\ovl{b_{p,q}},\ovr{e_{4}})\sbt (\ovl{a_{p,q}},\ovr{e_{3}})\big]
\\&\quad - \sum_{e\in E}\big[ (\ovl{e_{2}},\ovr{e})\sbt (\ovl{e_{1}},\ovr{e_{3}})- (\ovl{b_{p,q}},\ovr{e})\sbt (\ovl{a_{p,q}},\ovr{e_{3}})\big]
\end{align*}
for all $\ovl{e_{2}}\wedge \ovl{e_{1}}\in \XA^{2}$ and $\ovr{e_{3}}\in \Omega^{1}$, where $p=s(e_{1})$ and $q=t(e_{2})$. Not only is $\ct{D}\XA$ a Hopf algebroid, but it also admits an antipode. One can show that $\Upsilon$ as defined for $H\XA^{1}$ in Example \ref{EHopfGrph} satisfies the conditions presented in Theorem \ref{TDXAAnti}. Recall $\Upsilon (\ovl{e}) = f_{s(e)}-f_{t(e)}$ for any $e\in E$. For a 2-step $(e_{1},e_{2})\in E_{2}$ with $p=s(e_{1})$ and $q=t(e_{2})$, condition (\ref{EqUpFla}) translates to 
\begin{align*}
\Upsilon \big(\ovl{e_{2}} -\ovl{b_{p,q}}\big) + \Upsilon \big( \Upsilon (\ovl{e_{2}})\ovl{e_{1}} -  \Upsilon (\ovl{b_{p,q}})\ovl{a_{p,q}}\big) = f_{s(e_{2})}-f_{s(b_{p,q})} - f_{t(e_{1})}+ f_{t(a_{p,q})}
\end{align*}
which is trivially equal to zero. Condition (\ref{EqUpFlEx}) also follows from a straightforward calculation for the four nontrivial cases where $\omega$ is one of $\ovr{e_{1}}$, $\ovr{e_{1}}$, $\ovr{a_{p,q}}$ or $\ovr{b_{p,q}}$.
\end{ex}

\begin{ex}$ $[Finite Groups] The calculus presented for a finite group algebra $\field G$ in Example \ref{EGrpAlg}, can be extended by setting $\Omega^{2}= \bigwedge_{\field}^{2}(\Lambda)\otimes_{\field}\field G$, where $\bigwedge_{\field}^{2}(\Lambda)$ is the exterior power of the vector space $\Lambda$. The differential $d:\Omega^{1}\rightarrow \Omega^{2}$ is defined as $d(\lambda \otimes_{\field} g)= \lambda\wedge \zeta (g)\otimes_{\field} g$. The left action of $\field G$ on $\Omega^{2}$ is the induced the action on the tensor product of left Yetter-Drinfeld modules and is described by 
$$g\triangleright (\lambda_{1}\wedge \lambda_{2} \otimes_{\field} h )= (g\triangleright\lambda_{1})\wedge (g\triangleright\lambda_{2} ) \otimes_{\field} gh $$ 
where $g\in G$ and $\lambda_{1}\wedge \lambda_{2} \otimes_{\field} h\in \bigwedge_{\field}^{2}(\Lambda) \otimes_{\field}\field G$. Since $\Lambda$ is finite dimensional, $\Omega^{2}$ is a finitely generated free right module. Additionally, by construction $\Omega^{2}$ is a Hopf bimodule and by Example \ref{EPivHopf}, it is a pivotal bimodule. It is a straightforward calculation to check that $\wedge$ is a pivotal bimodule morphism. Hence, we can construct $\ct{D}\XA$ for the calculus on the Dihedral group $D_{6}$ described in Example \ref{EBiaD6}. Since $\wedge$ is surjective, the generators of the from $\Omega^{2}\otimes_{\field}\XA^{2}$ are redundant: for $i,j,k,l\in \lbrace \xi,\tau\rbrace$
$$(f_{j}\wedge f_{i}, k\wedge l) =\omx{i}{\gamma}{l}\sbt \omx{j}{\gamma}{k}  -\omx{j}{\gamma}{l}\sbt \omx{i}{\gamma}{k}$$ 
Hence $\ct{D}\XA$ reduces to imposing the relevant extendability relations
\begin{align*}
\omx{i}{\gamma}{l}\sbt \omx{j}{\gamma}{k}&  -\omx{j}{\gamma}{l}\sbt \omx{i}{\gamma}{k} = -\omx{i}{\gamma}{k}\sbt \omx{j}{\gamma}{l}  +\omx{j}{\gamma}{k}\sbt \omx{i}{\gamma}{l}
\\ \omx{i}{\kappa}{l}\sbt \omx{j}{\kappa}{k}&  -\omx{j}{\kappa}{l}\sbt \omx{i}{\kappa}{k} = -\omx{i}{\kappa}{k}\sbt \omx{j}{\kappa}{l}  +\omx{j}{\kappa}{k}\sbt \omx{i}{\kappa}{l}
\end{align*}
for all $i,j,k,l\in \lbrace \xi,\tau\rbrace$, on $H\XA^{1}$ constructed in Example \ref{ExHopfD6}. The flat condition then translates to 
$$ f_{\xi}\sbt f_{\tau}= f_{\tau}\sbt f_{\xi}$$
Since the calculus is surjective, the additional condition (\ref{EqFlExt}) follows directly.
\end{ex}
\subsection{Commutative Case and Lie-Rinehart Algebras}\label{SCommLie}
In this section, we assume that the algebra $A$ is commutative and recover several known Hopf algebroid structures in the commutative setting, as quotients of $H\XA^{1}$ and $\ct{D}\XA$. 

When $A$ is a commutative algebra, the ordinary category of connections $\prescript{}{A}{\ct{E}}$ is well known to have a monoidal closed structure. Since $A^{op}\cong A$, every left $A$-module has a natural $A$-bimodule structure with the right and left actions agreeing. We call such bimodules over a commutative algebra \emph{symmetric} bimodules. If $\Omega^{1}$ is a symmetric bimodule (referred to as the classical case in \cite{andre2001differentielles}), every symmetric $A$-bimodule $M$ has a natural $\Omega^{1}$-intertwining, namely the \emph{flip} map, $\fr{fl}:M\otimes\Omega^{1} \rightarrow \Omega^{1}\otimes M$ defined by $\fr{fl}(m\otimes\omega )=\omega \otimes m$, for all $m\in M$ and $\omega\in \Omega^{1}$. Hence, every left connection has the structure of an invertible left bimodule connection via the flip map. From our point of view, if $\Omega^{1}$ is fgp as a left (or right) module, then the $\XA^{1}$-intertwinings (\ref{EqXsig}) and (\ref{EqsigX}) are inverses for any symmetric bimodule with $\fr{fl}$ as its $\Omega^{1}$-intertwining. Hence, the classical category of connections embeds as a subcategory of $\prescript{}{H\XA^{1}}{\ct{M}}$. This subcategory is of course represented by $T\XA^{1}_{\sbt}$, and the Hopf algebroid structure of $T\XA^{1}_{sbt}$ can be recovered by viewing it as the quotient of $H\XA^{1}$ by relations 
\begin{equation}\label{EqQuotRel}
a=\ov{a},\quad (x,\omega)=\mathrm{ev}(x\otimes \omega), \quad (\omega,x )=\und{\mathrm{ev}}(\omega \otimes x) 
\end{equation}  
where $a\in A$, $x\in \XA^{1}$ and $\omega\in \Omega^{1}$. First, observe that when the calculus is surjective, second pair of relations follow from $a=\ov{a}$ holding for all $a\in A$:
\begin{align*} a=\ov{a}\quad &\Rightarrow (x,da)=[x,\ov{a}]= [x,a]=\mathrm{ev}(x\otimes da) 
\\&\Rightarrow  \underline{\mathrm{ev}}(\omega,y)=\ov{\underline{\mathrm{ev}}(\omega,y)}=(\omega_{i},y)\sbt (x_{i},\omega )=(\omega_{i},y)\mathrm{ev}(x_{i}\otimes\omega )= (\omega ,y) 
\end{align*} 
Secondly, notice that under these relations, the Hopf relations on $H\XA^{1}$, (\ref{EqRelInv1}), (\ref{EqRelInv2}), (\ref{EqRelHpf1}) and (\ref{EqRelHpf2}) all become trivial. We recover the induced action of $T\XA^{1}_{\sbt}$ on the usual tensor product of connections and inner homs, for any pair of left connections $M$ and $N$:
\begin{equation}\label{EqTXHopf} x (m\otimes n) = xm\otimes n+ m\otimes xn,\quad [xf](m)= xf(m)-f(xm)
\end{equation}
where $f\in \mathrm{Hom}_{A}(M,N)$, $m\in M$ and $m\otimes n\in M\otimes N$. Notice that left inner homs and right inner homs agree for symmetric bimodules. 

In Section 2.4 of \cite{andre2001differentielles} the \emph{semi-classical} case is considered, where $A$ is a commutative algebra and $\Omega^{1}$ is a surjective calculus, not necessarily assumed to be symmetric, while the connections are still regarded as symmetric bimodules with invertible bimodule connections. The author then recovers the induced connections on inner homs of this category of connections, by noting that this is possible when the $\Omega$-intertwinings of the bimodule connections in consideration are invertible, Theorem 2.4.2.2 \cite{andre2001differentielles}. One can deduce from the calculations above that for a surjective calculus, the $\Omega^{1}$-intertwining of invertible bimodule connection on symmetric bimodule will be forced to be the flip map. Additionally, the Hopf algebroid representing the category in consideration would be the quotient of $H\XA^{1}$ by relations $a=\ov{a}$ for all $a\in A$. The author of \cite{andre2001differentielles} only needed to discuss invertible bimodule connections on symmetric bimodules since the additional Hopf conditions (\ref{EqRelHpf1}) and (\ref{EqRelHpf2}), hold immediately when quotienting $IB\XA^{1}$ by the relation $a=\ov{a}$. In other words, the quotients of $IB\XA^{1}$ and $H\XA^{1}$ by the relation $a=\ov{a}$ are isomorphic, and produce the Hopf algebroid in question. Observe that when $\Omega^{1}$ is not symmetric, the quotient will not necessarily be isomorphic to $T\XA^{1}_{\sbt}$, but the additional relations $\mathrm{ev}(xa\otimes \omega)=\mathrm{ev}(x\otimes \omega)a$ and $a\underline{\mathrm{ev}}(\omega\otimes x)=\underline{\mathrm{ev}}(\omega a\otimes x)  $ arise from the relations of $IB(\Omega^{1})$. These additional relations can be seen to arise directly when we require the flip map and its inverse, between $\Omega^{1}$ and a symmetric bimodule, to be bimodule maps.

To understand $\ct{D}\XA$ in the commutative setting, we first recall the definition of \emph{Lie-Rinehart} algebras and their associated family of Hopf algebroids from \cite{kowalzig2009hopf,kowalzig2010duality}. A pair $(A,\XA^{1})$ is called a Lie-Rinehart algebra if $A$ is a commutative algebra, $\XA^{1}$ an $A$-module with a linear maps $[,]:\XA^{1}\otimes_{\field} \XA^{1}\rightarrow \XA^{1}$ and $\tau :\XA^{1}\rightarrow Der(A)$ such that  
\begin{enumerate}[(I)]
\item $[,]$ is antisymmetric
\item $[,]$ satisfies the Jacobi identity
\item $[x,y](a)= x(y(a))-y(x(a))$ for all $x,y\in \XA^{1}$ and $a\in A$
\item $(ax)(b) = a\big( x(b)\big)$ for all $x\in \XA^{1}$ and $a,b\in A$
\item $[x,ay]= x(a)y+a[x,y]$ for all $x,y\in \XA^{1}$ and $a\in A$
\end{enumerate}
where for any $x\in \XA^{1}$ and $a\in A$, we abuse notation and denote $\tau(x)(a)$ by $x(a)$. Observe that axioms (I) and (II) make $(\XA^{1},[,])$ a Lie algebra and (III) simply states that $\tau :\XA^{1}\rightarrow Der(A)$ is a Lie algebra morphism, where the Lie bracket on $Der(A)$ is defined by $[\phi,\psi]= \phi\psi-\psi\phi$, for $\phi,\psi\in Der(A)$.

The universal enveloping algebra of a Lie-Rinehart algebra $(A,\XA^{1})$, denoted by  $V(A,\XA^{1} )$, was described by Rinehart in \cite{rinehart1963differential}. Originally, this algebra was defined as the universal enveloping algebra of a Lie structure on $A\oplus \XA^{1}$. Alternatively, one can formulate $V(A,\XA^{1})$ as the quotient of the free algebra $A \star T\XA^{1}$ by relations 
\begin{equation}
a\sbt x = ax \quad, ax = x\sbt a + x(a), \quad x\sbt y = y\sbt x + [x,y] 
\end{equation}
for all $x,y\in \XA^{1}$ and $a\in A$. It is now common knowledge that $V(A,\XA^{1} ) $ admits a Hopf algebroid structure \cite{kowalzig2010duality,moerdijk2008enveloping}, which induces the same actions described in (\ref{EqTXHopf}). The principle geometric example this construction is generalising is the \emph{algebra of differential operators} on a smooth manifold: if $A$ is the algebra of smooth functions on a smooth finite dimensional manifold and $\XA^{1}$ the Lie algebra of smooth vector fields on the manifold, then $V(A,\XA^{1})$ is isomorphic to the algebra of differential operators on the manifold and $(A,\XA^{1})$-modules or equivalently $V(A,\XA^{1} )$-modules are known to be equivalent to the usual notion of \emph{flat connections} \cite{huebschmann1990poisson}.

Let $\XA^{1}$ be a fgp $A$-module with dual $\Omega^{1}$. Since $\XA^{1}$ is a symmetric bimodule, it does not matter, whether we ask $\XA^{1}$ to be left or right fgp and $\Omega^{1}$ will also be symmetric as a bimodule. We have a bijection between linear maps $\tau :\XA^{1}\rightarrow Der(A)$ satisfying (IV) and first order calculi on $\Omega^{1}$ i.e. linear maps $d:A\rightarrow \Omega^{1}$ satisfying the Leibnitz rule  : 
$$x (a) = \mathrm{ev}(x\otimes da)  \quad \longleftrightarrow \quad d(a)= x_{i}(a)\omega_{i} $$
where we denote the evaluation and coevaluation maps as in previous sections. In this setting the linear map $[,]:\XA^{1}\otimes_{\field} \XA^{1}\rightarrow \XA^{1}$ allows one to extend the calculus to $\Omega^{2}= \bigwedge^{2}(\Omega^{1})$, where $\bigwedge^{2}(\Omega^{1})$ is the exterior power of $\Omega^{1}$ as an $A$-module. If $[,]$ is antisymmetric and (V) holds, then define $d:\Omega^{1} \rightarrow \Omega^{2}$ by 
$$d \omega = \sum_{i<j}\left[x_{i}(\mathrm{ev}(x_{j}\otimes\omega))-x_{j}(\mathrm{ev}(x_{i}\otimes\omega))- \mathrm{ev}([x_{i},x_{j}]\otimes\omega ) \right] \omega_{i}\wedge\omega_{j} $$
for $\omega\in \Omega^{1}$. In particular, condition (V) makes $d$ into a well-defined map with $\bigwedge^{2}(\Omega^{1})$ as its codomain. By definition $d$ satisfies the Leibnitz rule, $d(a\omega)=da\wedge \omega + ad\omega$, but $d$ extending the differential of the calculus i.e. $d^{2}=0$, is equivalent to (III) holding. Since $\Omega^{1}$ is a fgp module, then $\bigwedge^{2}(\Omega^{1})$ is also fgp with $\XA^{2}= \bigwedge^{2}(\XA^{1})$ as its dual, and the coevalation and evaluation maps defined by $\fr{coev} (1) = \omega_{i_{1}}\wedge \omega_{i_{2}} \otimes x_{i_{1}}\wedge x_{i_{2}}$ and 
\begin{align*}
\fr{ev}(x\wedge y\otimes \omega\wedge \rho)=  \big[ \mathrm{ev} (x\otimes \omega)\mathrm{ev}(y\otimes\rho) - \mathrm{ev}(y\otimes\omega) \mathrm{ev}(x\otimes \rho)\big]
\end{align*}
respectively, where we use the notation $i_{1}$ and $i_{2}$ to denote the sum over indices such that $i_{1}<i_{2}$. In this setting, observe that the sheaf of differential operators $\ct{D}_{A}$, as defined in \cite{beggs2019quantum}, which is the quotient of $T\XA^{1}_{\sbt}$ by the flat relation (\ref{EqFlat}), is exactly isomorphic to $V(A,\XA^{1})$: for any $x\wedge y\in \XA^{2}$, we can use identities $[x,y]a=x(a)y-[x,ay]$ and $[x,y]a=y(a)x+[xa,y]$ to expand the relation (\ref{EqFlat}) 
\begin{align*}
0&=\fr{ev}(x\wedge y\otimes d\omega_{i})\sbt x_{i} -\fr{ev}(x\wedge y\otimes \omega_{j}\wedge\omega_{k}) \sbt x_{k}\sbt x_{j} 
\\ =&\big[x_{i_{1}}(\mathrm{ev}(x_{i_{2}}\otimes\omega_{l}))-x_{i_{2}}(\mathrm{ev}(x_{i_{1}}\otimes\omega_{l})) \big]\fr{ev}\big(x\wedge y\otimes \omega_{i_{1}}\wedge\omega_{i_{2}}\big) x_{l}
\\ &\hspace{-0.25cm}- \fr{ev}\big(x\wedge y\otimes \omega_{i_{1}}\wedge\omega_{i_{2}}\big) \mathrm{ev}([x_{i_{1}},x_{i_{2}}]\otimes \omega_{l})x_{l}
\\ &\hspace{-0.25cm}- \big[ \mathrm{ev} (x\otimes \omega_{j})\mathrm{ev}(y\otimes\omega_{k}) - \mathrm{ev}(y\otimes\omega_{j}) \mathrm{ev}(x\otimes \omega_{k})\big]x_{k}\sbt x_{j}
\\  =& \big(x(\mathrm{ev}(x_{j}\otimes\omega_{l}))\mathrm{ev}(y\otimes \omega_{j})-y(\mathrm{ev}(x_{i}\otimes\omega_{l}))\mathrm{ev}(x\otimes\omega_{i})\big)x_{l} 
\\&\hspace{-0.25cm}-\big([x_{i_{1}}\mathrm{ev}(x_{i_{1}}\otimes x),x_{i_{2}}\mathrm{ev}(x_{i_{2}}\otimes y)]- [x_{i_{1}}\mathrm{ev}(x_{i_{1}}\otimes y),x_{i_{2}}\mathrm{ev}(x_{i_{2}}\otimes x)]
\\ &\hspace{-0.25cm}+ y(\mathrm{ev}(x\otimes \omega_{i}))x_{i}- x(\mathrm{ev}(y\otimes \omega_{i}))x_{i}\big)- y\sbt x +x\sbt y+y(\mathrm{ev}(x\otimes \omega_{j}))x_{j}
\\ &\hspace{-0.25cm}  -x(\mathrm{ev} (y\otimes \omega_{j}))x_{j}= x\sbt y -y\sbt x- [x,y]
\end{align*}
As mentioned previously, the Hopf algebroid structure of $V(A,\XA^{1})$ induces the same actions described in (\ref{EqTXHopf}). These actions can be recovered from the actions of $\ct{D}\XA$ after quotienting out the relations (\ref{EqQuotRel}). Note that under these relations, the additional relation in $\ct{D}\XA$, (\ref{EqFlExtraa}), holds trivially and by the above calculation, this quotient is exactly isomorphic to $\ct{D}_{A}$ and $V(A,\XA^{1})$. 

Observe the statement of Theorem \ref{TDXAAnti} mirrors that of Proposition 3.11 in \cite{kowalzig2011cyclic}. In \cite{kowalzig2011cyclic}, it is proven that $V(A,\XA^{1})$, as a left Hopf algebroid, admits an antipode in the sense of B{\"o}hm and Szlach{\'a}nyi, if and only if there exists a right action of $V(A,\XA^{1})$ on $A$. In Theorems \ref{THXaAnti} and \ref{TDXAAnti}, we have demonstrated that $\ct{D}\XA$ admits an antipode if and only if $A$ has a right action of $\ct{D}\XA$, and thereby recovered the map $\Upsilon :\XA^{1}\rightarrow A$. In particular, our construction of the antipode presented in Theorem \ref{THXaAnti} agrees with the antipode presented in \cite{kowalzig2011cyclic}, after applying relations (\ref{EqQuotRel}). It is a consequence of the work in \cite{kowalzig2011cyclic} and Theorem 3 of \cite{huebschmann1998lie} that Lie-Rinehart Hopf algebroids admit an antipode when $\XA^{1}$ is finitely generated and projective. It is not clear to the author, whether $\ct{D}\XA$ also admits an antipode for any Lie-Rinehart algebra $(A,\XA^{1})$, although $H\XA^{1}$ admits an antipode with $\Upsilon (x)=x_{i}(\mathrm{ev}(x\otimes\omega_{i})) $. We must also note that all examples of Hopf algebroids which do not admit antipodes, presented in \cite{krahmer2015lie,rovi2015lie}, do not cross over to our work, since $\XA^{1}$ is not finitely generated and projective in these examples. 

Note that while $V(A,\XA^{1} )$ is often constructed as the universal enveloping algebra of a certain Lie algebra, the Jacobi identity (II) holding is not required to define $V(A,\XA^{1})$. From the point of view of differential forms, flat connections only need $\Omega^{1}$ and $\Omega^{2}$ in the dga, to be defined. The Jacobi identity holding is actually equivalent to the calculus extending to $\Omega^{3}=\bigwedge^{3}(\Omega^{1})$. This is additional datum which we do not require for our construction. Furthermore, while in the commutative case the existence of $d:\Omega^{1}\rightarrow \Omega^{2}$ is equivalent to the existence a Lie-like bracket on $\XA^{1}$, in the noncommutative case, this does not necessarily provide an asymmetric map on $\XA^{1}\otimes\XA^{1}$. We refer the reader to Section 6.1 of \cite{beggs2019quantum} for a brief discussion on this.

\textbf{Acknowledgements.} The author would like to thank Shahn Majid for pointing out the original question this work is based on, in Chapter 6 of \cite{beggs2019quantum}, reviewing early drafts of this work and numerous stimulating discussions. We would also like to show gratitude to Edwin Beggs for fruitful conversations and Laiachi El Kaoutit for referring the author to \cite{andre2001differentielles} and useful suggestions which lead to the addition of Section \ref{SCommLie}.
\bibliographystyle{plain}
\bibliography{hopf}
\end{document}